\theoremstyle{plain}
\newtheorem{theorem}{Theorem}[section]
\newtheorem{lemma}[theorem]{Lemma}
\newtheorem{corollary}[theorem]{Corollary}
\newtheorem{proposition}[theorem]{Proposition}
\theoremstyle{definition}
\newtheorem{remark}[theorem]{Remark}
\theoremstyle{definition}
\newtheorem{definition}[theorem]{Definition}
\numberwithin{equation}{section}
\newcommand{\ellfunc}{\lambda}
\newcommand{\lawstopped}{\nu}
\newcommand{\e}{\varepsilon}
\title{Particle systems and the supercooled Stefan problem with non-integrable initial data}
\author{Thomas Blore\thanks{Mathematical Institute, University of Oxford. \textit{Email:} thomas.blore@maths.ox.ac.uk },  D.G.M. Flynn\thanks{Mathematical Institute, University of Oxford. \textit{Email:} donald.flynn@maths.ox.ac.uk } and Ben Hambly\thanks{Mathematical Institute, University of Oxford. \textit{Email:} hambly@maths.ox.ac.uk }} 
\begin{document}
\maketitle

\begin{abstract}
    We consider an infinite system of particles on the positive real line, initiated from a Poisson point process, which move according to Brownian motion up until the hitting time of a barrier. The barrier increases when it is hit, allowing for the possibility of sequences of successive jumps to occur instantaneously. Under certain conditions, the scaling limit gives a representation for the supercooled Stefan problem and its free boundary. This allows us to give a precise asymptotic limit for the barrier and determine the rate of convergence. From this representation, we also investigate properties of the supercooled Stefan problem for initial data not in $L^1(\mathbb{R}^+)$. In a critical case, where the jump size matches the density of the Poisson process and the Stefan problem has an instantaneous explosion, we instead recover a scaling limit result dependent on the initial fluctuations of the point process.
\end{abstract}

\tableofcontents
\section{Introduction}

Consider a system of particles initially placed via a point process on $\mathbb{R}^+$ and a moving barrier, which starts at the origin. Each particle moves diffusively, independently of the other particles, until it hits the barrier, at which point the particle ``sticks" and the barrier height increases by a fixed amount. If the barrier moves over particles, they too ``stick" and cause further growth of the barrier. This provides a simple model for particle aggregation, where the barrier represents the boundary of a sticky aggregate, and a particle that makes contact with the boundary sticks to the aggregate, thereby causing the boundary of the aggregate to move.

In this work, we investigate the following specific version of this aggregation model. Let $g$ be a non-negative, measurable, locally integrable function, and $a>0$ a positive constant. At time zero a random configuration of particles is placed via a Poisson point process of intensity $g$ on the positive half line at points
$$0\leq W_0^{1}\leq ...\leq W_0^{n}\leq ... .$$

Particles diffusive in that, independently of the initial particle configuration, the time $t$ position of particle $i$ is given by
$$W^{i}_t=W^{i}_0+B^{i}_t,$$
where each $B^{i}$ is an independent standard Brownian motion. 

This is coupled to a moving barrier with position $\xi^g_t$ at time $t$, which is defined recursively in the following manner. At time $0$, we set $\xi^g_0=0$. $\xi^g_t$ remains at 0, until time $T_0$, the first hitting time of 0 by any particle. At this time, the barrier height increases to $ak$, where $k$ is the smallest integer such that the interval $[\xi^g_{T_0^-},\xi^g_{T_0^-}+ak]$ contains $k$ particles. Any particles that are now below $\Lambda_t$ are referred to as ``absorbed" and have no further effect on the movement of $\xi$. The barrier remains at $\xi^g_{T_0}$ until $T_1$, the first hitting time of $\xi^g_{T_0}$ by any non-absorbed particle. At time $T_1$, the barrier height again increases by $ak$, where $k$ is the smallest integer such that the interval $[\xi^g_{T_1^-},\xi^g_{T_1^-}+ak]$ contains $k$ particles, and these particles are ``absorbed". This mechanism for barrier growth is repeated at each subsequent hitting time $T_n$ of $\xi^g_{T_{n-1}}$ by any non-absorbed particle, which leads to the following equations for the system dynamics:
\begin{align}
\label{xidefnequation}
    &W_t^i=W_0^i+B_t^i,\nonumber\\
    &\tau_i=\inf\{t\geq 0:W_t^i\leq \xi_t^g\},\nonumber\\
    &\xi^g_t=\sum_{i=1}^\infty \mathbbm{1}_{\{\tau_i\leq t\}},\\
    &\Delta \xi^g_t=\inf\{x:\rho_{t-}([\xi^g_{t^-},\xi^g_{t^-}+a x])<x\},\nonumber\\
    &\rho_{t^-}= \left(\sum_{i=1}^\infty\delta_{W_{t}^i}\mathbbm{1}_{\{\tau_i\geq t\}}\right).\nonumber
\end{align}

A simulation of the system is shown in Figure \ref{fig1}.

\begin{figure}
    \centering
    \includegraphics[width=0.9\linewidth]{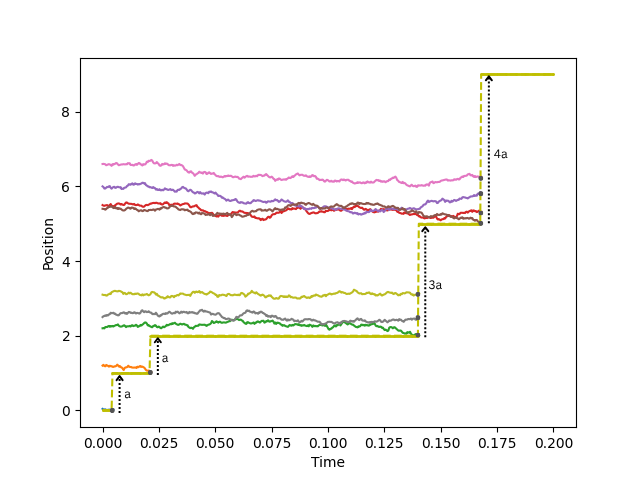}
    \caption{Dynamics for the particle system, and moving barrier, with $a=1$ and the lowest $9$ initial particles.}
    \label{fig1}
\end{figure}

In \cite{7}, a variant of this model is studied, where $g\equiv 1$, and $a<1.$ We define $\xi_t=\xi_t^g$ for the system \eqref{xidefnequation} with $g\equiv 1$. The authors of \cite{7} conjecture that under a rescaling, the barrier height
$\xi_t/\sqrt{t}$ converges to some distribution. We investigate this conjecture, and the further behaviour of the particle system, by connecting the system to the supercooled Stefan problem, a simplified model for the freezing of a supercooled liquid, which has attracted attention recently in the probability literature \cite{2,3,minimal,PDEresult,NSsurfacetension,nadtochiy2022tension}. By adapting a monotonicity property from \cite{Dembo_2019}, and extending previous work on probabilistic formulations of the supercooled Stefan problem \cite{delarue2015particle, 5} to the case of non-integrable initial data, we are able to determine the exact form of the limit for $\xi_t/\sqrt{t}$ when $a<1$, and the rate of convergence. In a critical case where $a=1$, we instead obtain a limit process for $\xi_t/t^{2/3}$, which is essentially determined by the fluctuations of the initial Poisson point process against its mean.

We also use the probabilistic formulation to discuss the behaviour of the free boundary in the supercooled Stefan problem when started from different initial conditions. We will show that it can jump to infinity, have infinitely many finite size jumps or can move continuously at different speeds. 

\subsection{The supercooled Stefan problem}

The one-dimensional one-phase supercooled Stefan problem can be formulated as the following free boundary problem for a function $V:\mathbb{R}_+\times\mathbb{R}_+\to \mathbb{R}$ and a free boundary $\Lambda:\mathbb{R}_+\to\mathbb{R}_+$:
\begin{align}
    \partial_t V(t,x) &=\frac{1}{2}\partial_{xx} V(t,x), \quad x>\Lambda_t,\nonumber\\
     \dot{\Lambda}_t &=\frac{1}{2}\partial_x V(t,\Lambda_t),\label{pde}\\
    V(t,\Lambda_t) &=0, \qquad\qquad t\geq 0 &\nonumber\\
    V(0,x) &=g(x),\qquad\;\; x>0.\nonumber
\end{align}

The function $-V(t,x)$ is a rescaling of temperature at a time $t$ and position $x$, where the freezing temperature is taken as zero. The free boundary $\Lambda_t$ is the position of the interface between the solid and liquid phases. As the initial data $g$ is non-negative, the temperature $-V(0,x)\leq 0$ is below the freezing point, leading to growth of the solid phase.  
The limiting behaviour of the barrier $\xi_t$ in our particle system will be determined by the behaviour of the free boundary in the Stefan problem \eqref{pde} for non-integrable initial data $g$. Two types of explicit special solutions for this system are known: similarity solutions and travelling wave solutions. These, respectively, have initial data given by $g(x)=a$ for any $a<1$, and $g(x)=1-e^{-vx}$ for any $v>0$. The free boundary is then given, respectively, by $\Lambda_t=K_{a}\sqrt{t}$, and $\Lambda_t=\frac{vt}{2}$, where the constant $K_a$ is the unique solution to
\begin{align}
\label{kalpha}
a=K_a\sqrt{2\pi} (1-\Phi(K_a))e^{K_a^2/2}.
\end{align} 
We use $\Phi$ to denote the standard Gaussian cumulative distribution function.

Explicit solutions to \eqref{pde} cannot be determined in general, however, 
for non-integrable initial data the long-time asymptotic behaviour of classical solutions to \eqref{pde} can be obtained
for various classes of initial data \cite{Ricci_Weiqing_1991,4,chadam}.

\subsection{Probabilistic reformulation of the Stefan problem}

For a generic function $g$, classical solutions to \eqref{pde} need not exist. In particular, the PDE may exhibit blow-up in finite time, where the derivative of $\Lambda$ ceases to exist, and jump discontinuities in the movement of the free boundary may occur \cite{2}. This causes weak-formulations of the system to be ill-posed. This issue was resolved for a large class of initial densities by Delarue et al. \cite{3} via a reformulation of the system as the solution to the following McKean-Vlasov type equation:
\begin{align}
    X_t &=X_{0^-}+B_t-\Lambda_t,\nonumber\\
    \tau &=\inf\{t\geq0:X_t\leq 0\},\label{mv}\\
    \Lambda_t &=\alpha\mathbb{P}(\tau\leq t),\nonumber\\
    \Delta\Lambda_t&=\inf\{x>0:\mathbb{P}(X_t\in [0,x],\tau>t)<x\}.\nonumber
\end{align}
In the above equation $\alpha=||g||_{L^1(\mathbb{R^+})}<\infty$, the initial condition $X_{0^-}$ has the probability density $g/\alpha$, and $X_{0^-}$ is independent of $B$, a standard Brownian motion on $\mathbb{R}$. The last equation in \eqref{mv} is called the physical jump condition and captures the jump discontinuities in the free boundary. This provides a weak formulation of the supercooled Stefan problem \eqref{pde}, where the free boundary is given by $\Lambda_t$, and $V(t,x)$ corresponds to the density of the McKean-Vlasov process at time $t$:
$$V(t,x)=\alpha\mathbb{P}(X_t+\Lambda_t\in dx,\tau>t).$$

The above probabilistic formulation \eqref{mv} requires $g\in L^1(\mathbb{R}^+)$ and so the free boundary $\Lambda_t$ for this system is uniformly bounded over time. We are interested in the situation where $g\notin L^1(\mathbb{R}^+)$, and the free boundary can grow to infinity. Therefore, we consider the following generalisation of the McKean-Vlasov system:
\begin{align}
    X^x_t &=x+B_t-\Lambda_t,\nonumber\\
    \tau^x &=\inf\{t\geq 0:X^x_t\leq 0\},\label{alternativetomvr}\\
    \Lambda_t &=\int_0^\infty \mathbb{P}(\tau^x\leq t)g(x)dx.\nonumber
\end{align}
As above, $B$ is a standard Brownian motion, and $g$ is a positive, measurable function. We also enforce that the free boundary has ``physical" jumps:
\begin{align}
\label{alternativephysical}
 \Delta \Lambda_t=\inf\left\{x>0:\int_0^\infty\mathbb{P}(X_t^y\in[0,x],\tau^y>t)g(y)dy<x\right\}.
\end{align}

\subsection{Particle system reformulation}
\label{reformulationsec}
In order to relate the behaviour of the McKean-Vlasov system \eqref{alternativetomvr}, to the particle system studied in \eqref{xidefnequation}, we introduce the following family of particle systems which are parametrised by integers $N$:
\begin{align}
 &W_t^{i,N} = W_{0}^{i,N}+B_t^{i,N},\nonumber \\
&{} \tau_{i}^N=\inf \left\{t \geq 0: W_t^{i,N} \leq \Lambda^N_t\right\},
\label{aligned:Particlealphanocentre} \nonumber\\
&\Lambda^N_t  =\frac{1 }{N}\sum_{i=1}^\infty \mathbbm{1}_{\left\{ \tau_{i}^N\leq t\right\}}{}_{,}\\
&\Delta \Lambda^N_t=\frac{1}{N}\inf\left\{k\in\mathbb{N}:\rho^N_{t^-}\left(\left[\Lambda^N_{t^-},\Lambda^N_{t^-}+\frac{k}{N}\right]\right)\leq \frac{k}{N}\right\}_{,}\nonumber\\
&\rho^N_{t^-}=\frac{1}{N}\sum_{i=1}^\infty \delta_{W_t^{i,N}}\mathbbm{1}_{\{t\geq \tau_{i}^N\}}\nonumber.
\end{align}
The points $(W_0^{i,N})_{i\in\mathbb{N}}$ are the points of a Poisson process of intensity $Ng$, which are independent from the collection of i.i.d. standard Brownian motions $(B_t^{i,N})_{i\in\mathbb{N}}$. The measure $\rho^N_{t^-}$ is the empirical measure of the non-absorbed particles just before time $t$. For a fixed $N$, this is exactly the system \eqref{xidefnequation}, with $a=1/N$, and with the intensity function $g$ replaced by $Ng$. Under certain conditions, the McKean-Vlasov equation \eqref{alternativetomvr} can be obtained from taking a limit over these particle systems.

In the case that $g\equiv a$, the system \eqref{aligned:Particlealphanocentre} corresponds to a rescaling of \eqref{xidefnequation}. We set $W_t^i=N\delta W_{t/N^2a^2}^{i,N}$ and $\xi_t=N\delta\Lambda^N_{t/N^2a^2}$. From this we obtain a system with identical dynamics to \eqref{xidefnequation} with initial intensity $g\equiv 1$. From this scaling relation, $\xi_{N^2a^2t}=Na\Lambda^N_{t}$, we may transfer information on the long time behaviour of system \eqref{xidefnequation}, with intensity $g\equiv 1$, to information on the system \eqref{aligned:Particlealphanocentre}, and vice versa.

\subsection{Main Results}

Our aims are to understand properties of the particle system \eqref{xidefnequation}, as well as the free boundary $\Lambda_t$ in the related McKean-Vlasov equation \eqref{alternativetomvr}.

Our first key result, in Theorem \ref{maintheorem}, is to establish the global existence of physical solutions to the McKean-Vlasov type equation \eqref{alternativetomvr} under conditions on the initial data to avoid explosion of $\Lambda$ to infinity in finite time. We obtain this existence result by demonstrating that the particle system \eqref{aligned:Particlealphanocentre} converges to a solution of the McKean-Vlasov type equation \eqref{alternativetomvr}. 

Under the further restriction that $g$ satisfies the weak-feedback constraint considered in \cite{5} a unique solution $\Lambda_t$ to \eqref{alternativetomvr} exists. In Theorem \ref{bdtheorem}, we find that under this constraint, $\Lambda^N_t-\Lambda_t$ is of size $1/\sqrt{N}$. As a consequence, we determine the limiting behaviour of $\xi_t/\sqrt{t}$ in the case of $a<1$, and that the fluctuations around this limit are of size $t^{-1/4}$.

\begin{theorem}
\label{conjtheorem}
For $a<1$, the barrier $\xi_t$ satisfies
 \[ \xi_t/\sqrt{t} \to K_a, \mbox{ in probability as $t\to\infty$}.  \]
 The constant $K_a$ is given by \eqref{kalpha}.  Also, there exists a constant $C_a>0$ such that for $x<t^{1/4}$, $$\mathbb{P}(|\xi_t/\sqrt{t}-K_a|>xt^{-1/4})\leq 2e^{-C_a x^2}.$$
\end{theorem}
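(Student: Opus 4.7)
The plan is to reduce the large-time behaviour of $\xi_t$ directly to a fixed-time fluctuation statement for the particle system \eqref{aligned:Particlealphanocentre} via the scaling relation highlighted at the end of Section \ref{reformulationsec}, and then apply Theorem \ref{bdtheorem}. The key observation is that with $g\equiv a$ (and $a<1$) the McKean-Vlasov equation \eqref{alternativetomvr} is solved by the similarity solution $\Lambda_t=K_a\sqrt{t}$, where $K_a$ is given by \eqref{kalpha}, and this is precisely the limiting object against which the rescaled particle system concentrates.

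First I would recall the scaling identity: for each positive integer $N$, the processes $(\xi_t)_{t\ge 0}$ and $(Na\,\Lambda^N_{t/(N^2a^2)})_{t\ge 0}$ agree in law, where $\Lambda^N$ is the particle system \eqref{aligned:Particlealphanocentre} run with constant intensity $g\equiv a$. Given a large time $T$, I would choose $N=N(T)$ to be the integer satisfying $N^2a^2\le T<(N+1)^2a^2$, and set $s_T:=T/(N^2a^2)\in[1,(1+1/N)^2]$. The scaling then yields
\[
 \xi_T/\sqrt{T}\;\stackrel{d}{=}\;\Lambda^N_{s_T}/\sqrt{s_T},
\]
so that $\xi_T/\sqrt{T}-K_a$ is equal in law to $(\Lambda^N_{s_T}-K_a\sqrt{s_T})/\sqrt{s_T}$, comparing the particle system at time $s_T\approx 1$ to the similarity solution $\Lambda_{s_T}=K_a\sqrt{s_T}$ of \eqref{alternativetomvr} at the same time.

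Second, I would invoke Theorem \ref{bdtheorem}, which states that under the weak-feedback condition—here satisfied because $a<1$ guarantees that the similarity solution is smooth and sublinear in $\sqrt{t}$—the difference $\Lambda^N_t-\Lambda_t$ is of order $1/\sqrt{N}$ with Gaussian tails. Applied at $t=s_T\in[1,4]$, this gives a constant $c_a>0$ and a constant $C>0$ such that for $x$ in the stated range
\[
 \mathbb{P}\bigl(|\Lambda^N_{s_T}-K_a\sqrt{s_T}|>Cx/\sqrt{N}\bigr)\le 2e^{-c_a x^2}.
\]
Since $1/\sqrt{N}\asymp a/T^{1/4}$ and $\sqrt{s_T}$ lies in a bounded interval around $1$, dividing by $\sqrt{s_T}$ absorbs into a redefinition of constants, producing
\[
 \mathbb{P}\bigl(|\xi_T/\sqrt{T}-K_a|>x T^{-1/4}\bigr)\le 2e^{-C_a x^2}
\]
for some $C_a>0$ and all $x<T^{1/4}$. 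Convergence in probability is then immediate.

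The routine work is checking that $g\equiv a$ with $a<1$ satisfies the hypotheses of Theorem \ref{bdtheorem} and carrying the constants from that theorem through the scaling without losing the quadratic exponent. The main obstacle, I expect, is (ii) tracking those constants in a way uniform over $T$: the rounding of $\sqrt{T}/a$ to an integer $N$ introduces an $O(1/N)$ discrepancy in $s_T$ that must be controlled together with the $\sqrt{s_T}$ in the denominator, so that the final bound does not degrade near critical ranges of $x$ close to $T^{1/4}$. Provided Theorem \ref{bdtheorem} is stated with quantitative constants depending only on $a$ (and not on the time horizon), this is a matter of bookkeeping, and the overall structure—scaling plus one-time fluctuation estimate—delivers both the law of large numbers and the concentration inequality in a single stroke.
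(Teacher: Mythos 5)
Your proposal is essentially the paper's argument: rescale via $\xi_t = Na\,\Lambda^N_{t/(N^2a^2)}$ with $N=N(t)$ chosen so the rescaled time lies near $1$, identify the similarity solution $K_a\sqrt{t}$ as the unique physical solution of \eqref{alternativetomvr} (via Proposition~\ref{minimal}), and push the quantitative bound of Theorem~\ref{bdtheorem} through the scaling to get both the law of large numbers and the concentration bound; your explicit bookkeeping for non-integer $\sqrt{t}/a$ is exactly what makes the paper's ``it suffices to consider $\Lambda^N_1$'' rigorous for all $t$. One small correction of reasoning (not of conclusion): condition \hyperlink{W}{(W)} is a pointwise hypothesis on the \emph{initial density}, not on the similarity solution --- it holds simply because $g\equiv a\leq 1$ with $\sup_{x\leq\epsilon_g}g(x)=a<1$ for every $\epsilon_g>0$; the freedom to take $\epsilon_g$ arbitrarily large is in fact what allows the constraint $y\leq\sqrt{N}\epsilon_g/2$ in Theorem~\ref{bdtheorem} to accommodate the full range $x<t^{1/4}$.
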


The methodology we use to prove Theorem \ref{conjtheorem} is subsequently extended to the critical case of $a=1$, where we need a different scaling, and therefore the resultant limiting process is not related to the supercooled Stefan problem.
\begin{theorem}
\label{theoremoncrit}
For $a=1$, the barrier $\xi_t$ satisfies that 
$$\xi_{Nt}/N^{2/3}\implies R_t\textrm{ as }N\rightarrow\infty$$
where $R_t$ is defined via
$$R_t:=\inf\left\{x:\int_0^x2(B(s))_+ds>t\right\}$$
for a standard Brownian motion $B(s)$.
\end{theorem}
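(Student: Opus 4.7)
The approach parallels the structure of Theorem~\ref{conjtheorem} but exploits the critical scaling $\xi \sim N^{2/3}$, $t \sim N$, at which the Stefan problem \eqref{pde} degenerates ($K_1=\infty$ in \eqref{kalpha}) and the limit is driven purely by the initial Poisson fluctuations. Write $M(y) := \sum_i \mathbbm{1}_{\{W_0^i \le y\}} - y$ for the compensated initial Poisson process, and set
\begin{equation*}
B^N(y) := N^{-1/3}\,M(yN^{2/3}).
\end{equation*}
By Donsker's invariance principle, $B^N \implies B$ in distribution on compact intervals, where $B$ is a standard Brownian motion. Over time $Nt$ the Brownian displacement of an individual particle is $O(N^{1/2}) = o(N^{2/3})$, so the particles are spatially frozen at the barrier's scale; the Brownian motion nonetheless continues to drive the timing of the barrier's advance through hits.

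The key heuristic is that, in the limit, $\dot{\xi}_t = 1/(2 M_+(\xi_t))$. To see this, note that in a surplus region ($M > 0$) the barrier at position $\xi$ has absorbed $\xi + M(\xi)$ particles, i.e.\ $M(\xi)$ beyond the Stefan-critical expectation. In quasi-equilibrium the non-absorbed density above the barrier assumes the travelling-wave profile $1 - e^{-2v(x-\xi)}$ at speed $v$, with depletion length $1/(2v)$; mass balance forces $1/(2v) = M_+(\xi)$, giving the stated rate. In deficit regions ($M \le 0$) the physical jump condition permits the barrier to jump instantaneously to the next strict descending ladder of $M$, so no time accrues. Integrating, $t = 2\int_0^{\xi_t} M_+(u)\,du$, and after rescaling one obtains $\xi_{Ns}/N^{2/3} \approx \inf\{x : \int_0^x 2 B^N_+(u)\,du > s\}$, which, by continuity of the first-passage functional together with $B^N \implies B$, converges in distribution to $R_s$.

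To implement this rigorously I would proceed in three steps: (i) establish tightness of $\{\xi_{Ns}/N^{2/3}\}_N$, using the strict growth and almost-sure continuity of $y \mapsto \int_0^y 2 B_+(u)\,du$ at generic passage times of $R$; (ii) identify the limit by a sandwich between slightly subcritical particle systems (with $g \equiv 1-\eps_N$ for $\eps_N\to 0$ at a rate to be matched with $N$), for which Theorems~\ref{conjtheorem} and \ref{bdtheorem} provide explicit Stefan-similarity controls, extracting the critical correction from the expansion $K_{1-\eps}\sim\eps^{-1/2}$ derived from \eqref{kalpha}; (iii) pass to the limit by continuous mapping on paths. The main obstacle will be step (ii): making the rate $\dot{\xi}_t = 1/(2M_+(\xi_t))$ rigorous requires controlling the kinetics of the non-absorbed particles near the barrier, validating the quasi-equilibrium travelling-wave profile, and checking that the discrete jump structure of the physical condition does not contribute time at the $O(N^{1/3})$ scale of the fluctuations. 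The monotonicity property from \cite{Dembo_2019} used in Theorem~\ref{maintheorem} is the natural tool for setting up the sandwich.
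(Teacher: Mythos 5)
Your heuristic skeleton is in the right place: the scaling $\xi\sim N^{2/3}$, $t\sim N$; the Donsker convergence of the compensated initial count; the observation that individual Brownian displacements are $O(N^{1/2})=o(N^{2/3})$ so the initial fluctuations propagate unchanged (compare Lemma~\ref{lemmabm2}); the speed law $\dot\xi = 1/(2M_+(\xi))$ in surplus regions; and the instantaneous jump across deficit regions. These all match what the paper proves. However, there are two substantive issues with the proposed route to rigour.

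First, your step (ii) — sandwiching between particle systems with \emph{deterministic} subcritical intensities $g\equiv 1-\eps_N$ and matching via $K_{1-\eps}\sim\eps^{-1/2}$ — does not capture the random limit. The comparison system with $g\equiv 1-\eps_N$ has, after rescaling, a deterministic similarity barrier $K_{1-\eps_N}\sqrt t$; the fluctuations you need are killed by averaging over the Poisson initial data. The limit $R_t$ is a measurable functional of a single realisation $B_0$, so any sandwich must be realisation-wise conditional on the initial configuration. The paper's mechanism is different in kind: it applies the monotonicity Lemma~\ref{Alemmatoboundathing} with respect to deterministic \emph{barrier functions} (the linear functions $l^{c,d}$ fed into the operator $\Gamma_N$), not with respect to deterministic \emph{intensity functions}. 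The explicit hitting-time formula of Lemma~\ref{critlem}, $\Gamma(l^{c,-d})_t - (ct-d)\to 1/(2c)$, then replaces the role your $K_{1-\eps}$ expansion plays, and the randomness of the realised initial count enters through the shift $x^*$ (the ladder epoch of $B_0^N$) as in the proof of Theorem~\ref{crit}, together with the events indexed by $y\in\frac1N\mathbb Z$ in Propositions~\ref{propcritupper}--\ref{propcritlower}. So while you are right that the Dembo--Tsai monotonicity is the key tool, you are invoking it for the wrong argument of $\Gamma_N$, and that is exactly where the randomness would be lost.

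Second, a small bookkeeping error in the heuristic: by the jump rule, a barrier at position $\xi$ has absorbed exactly $\xi$ particles, not $\xi+M(\xi)$; $M(\xi)$ is the number initially below $\xi$ that have escaped upward minus those originally above that have been dragged in. The travelling-wave matching then produces a profile with excess mass near the barrier, not the deficit profile $1-e^{-2v(x-\xi)}$, so the mass-balance identity $1/(2v)=M_+(\xi)$ needs a more careful derivation. The paper sidesteps this entirely by working with $\Gamma(l^{c,-d})$ directly, which bakes the Brownian-diffusion correction $1/(2c)$ into the hitting probability and avoids any quasi-equilibrium argument. Also missing from the proposal: a mechanism for identifying the limiting process (the paper's Lemmas~\ref{deriv}--\ref{denseness} plus the uniqueness argument of Theorem~5.15) and the martingale/Chernoff concentration (Lemmas~\ref{lemma1incritlabel}--\ref{lemmaz}) needed to upgrade expectations of $\Gamma_N$ to uniform-in-$y$ high-probability bounds. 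Your step (i) (tightness via continuity of the first-passage functional) is the correct idea and is implemented in Lemma~\ref{Rtight} building on Theorem~\ref{crit}.
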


We make the meaning of the convergence in distribution, $\xi_{Nt}/N^{2/3}\implies R_t$, precise in Theorem \ref{crittheorem}.

For $g\notin L^1$ the free boundary in the McKean-Vlasov equation \eqref{alternativetomvr} can display a range of behaviours. We show, in Proposition~\ref{prop:infinitejumps}, that there are initial conditions which allow for the existence of solutions with an infinite number of jump discontinuities in $\Lambda$. We show also in Theorem \ref{speed} that in the presence of blow-ups, the asymptotic behaviour of solutions to this system is determined by the asymptotic behaviour of the corresponding classical solutions to the supercooled Stefan problem \eqref{pde}. Aside from the square root and linear growth shown for the free boundary we also note, in Remark~\ref{rem:speeds}, that there is a wide range of polynomial growth rates for the free boundary that are possible.

\subsection{Related Literature}

The particle system \eqref{xidefnequation} may be considered a continuous space (off-lattice) analogue of the frictionless multiparticle diffusion limited aggregation model (MDLA) studied in \cite{Dembo_2019}. In that model, $N_i$  particles are initially placed at each integer $i$, where each $N_i$ are independent Poisson random variables of mean 1. Particles move as independent, continuous time, simple symmetric random walks on $\mathbb{Z}$; and cause a barrier of height $\xi_t$ to grow when moving into contact with this barrier. The authors consider the critical case where $a=1$, and obtain an identical scaling limit for $\xi_t/t^{2/3}$ as we give in Theorem \ref{theoremoncrit}.
Such MDLA processes have previously been connected to the supercooled Stefan problem. In the work of \cite{dlachayesswindle}, a version of MDLA in one dimension is considered, where the particles move as a continuous time exclusion process on the lattice. It is then shown that a Stefan problem is obtained in the hydrodynamic limit. Such connections between MDLA and Stefan problems are also shown to hold in dimension two in \cite{dlaNS}, in which the authors prove that MDLA converges to a probabilistic analogue to the Stefan problem. 

The McKean-Vlasov formulation of the supercooled Stefan problem \eqref{mv} arose from problems in neuroscience \cite{DIRT1, delarue2015particle} and in systemic risk \cite{2}. The initial work on the neuroscience variant of this system, in \cite{DIRT1, delarue2015particle}, showed that there could be discontinuities and established existence of solutions under the physical jump condition. Subsequently, further properties of this system have been investigated, such as regularity \cite{2,3}, uniqueness of solutions \cite{5, mustapha2023wellposednesssupercooledstefanproblem, 3}, and different solution concepts \cite{minimal,zeroundercool}. 

The supercooled Stefan problem with a cumulative distribution function for its initial condition also arises in a model for cell polarization \cite{Calvez2012, BHJ2025}.

\subsection{Outline of the paper}

The structure of the paper is as follows. In Section \ref{setupsection} we discuss further the particle system representation \eqref{aligned:Particlealphanocentre} and give conditions under which we can expect global in time solutions to \eqref{alternativetomvr} to exist. In Section \ref{convergencestatementsection} we describe our general result on convergence of the particle system \eqref{alternativetomvr}, and give a sketch of the proof. In Section \ref{uniquenesssection} we give results for a ``weak feedback" regime, in particular we prove the quantitative rate of convergence given in Theorem \ref{conjtheorem}. We then extend these results to the case where $g$ is a non-decreasing function, and connect this to the model of \cite{BHJ2025} to improve the convergence results of \cite[Theorem 1.14]{BHJ2025}. In Section \ref{critsec}, we further extend the arguments of Section \ref{uniquenesssection} to prove Theorem \ref{theoremoncrit}.
In Section \ref{alternateg} we then discuss different conditions on $g$ which allow for blow-ups after any time point and also for polynomial asymptotic speeds for the barrier $\Lambda$. In Section \ref{asymptoticsection} we prove that the asymptotic speed of $\Lambda$ is unaffected by earlier discontinuities. In Section \ref{uniquenessappendix} we comment on conditions for uniqueness of solutions to \eqref{alternativetomvr}.
Finally, in Section \ref{proofsection}, we give the proof of our main convergence result in detail.

\section{Further details on the particle systems}
\label{setupsection}
\subsection{Reformulations of the particle systems}

As stated in Section \ref{reformulationsec}, the particle system for the barrier $\xi_t$, can be seen as a rescaling of the family of systems \eqref{aligned:Particlealphanocentre}, and properties of $\Lambda^N_t$ and $\xi_t$ can be related to each other. We therefore consider the family of particle systems \eqref{aligned:Particlealphanocentre}. To directly compare these particle systems to those appearing in earlier work on probabilistic formulations of the supercooled Stefan problem, it is convenient to re-centre the particle system so that the origin is shifted to the position $\Lambda^N_t$. We therefore define $X_t^{i,N}=W_t^{i,N}-\Lambda^N_t$ and re-write the system \eqref{aligned:Particlealphanocentre} in the following way:
\begin{align}
 &X_t^{i,N} = W_{0}^{i,N}+B_t^{i,N}-\Lambda^N_t,\nonumber \\
&{} \tau_{i}^N =\inf \left\{t \geq 0: X_t^{i,N} \leq 0\right\},
\label{aligned:Particlealpha}\nonumber \\
&\Lambda^N_t  =\frac{1 }{N}\sum_{i=1}^\infty \mathbbm{1}_{\left\{ \tau_{i}^N \leq t\right\}}{}_{,}\\
&\Delta \Lambda^N_t=\frac{1}{N}\inf\left\{k\in\mathbb{N}:\nu_{t^-}^N([0,\frac{k}{N}])\leq \frac{k}{N}\right\},\nonumber\\
&\nu^N_{t^-}=\frac{1}{N}\sum_{i=1}^\infty \delta_{X_{t^-}^{i}}\mathbbm{1}_{\{t\geq \tau_{i}^N\}}.\nonumber
\end{align}
If we were to replace the initial Poisson placed points $w^i$, by $N$ i.i.d. random variables $(W_0^{i,N})_{i=1,...,N}$ then this re-centred system would then become identical to a simplified version of the particle system considered in \cite{feinstein2022contagious}.

The particle system \eqref{aligned:Particlealpha} can also be viewed as $N$ interacting systems by relabelling the particles appropriately, so that we thin the Poisson process into $N$ independent Poisson processes of intensity $g$. We couple the above particle system \eqref{aligned:Particlealphanocentre} with a collection $(p_i)_{i\in\mathbb{N}}$ of i.i.d. uniform random variables on $\{1,...,N \}$. We relabel the particles using this collection in the following manner. For each $i\in\mathbb{N}$ we assign to $W^{i,N}_t$ the random index $(p_i,I_i)=(p_i,\sum_{j=1}^i\mathbbm{1}_{\{p_j=p_i\}})\in\{1,...,N\}\times\mathbb{N}$. 
Abusing notation, we then write $W^{j,k}_t$ to denote $W^{i,N}_t$, where $i$ is the unique integer such that $(p_i,\sum_{j=1}^i\mathbbm{1}_{\{p_j=p_i\}})=(j,k)$. We omit the dependence on $N$ in this notation as it is clear when we use this relabelling that we are considering the system parametrised by $N$.

    By standard results on the thinning of Poisson random measures, for each $i=1,..,N$, the initial configuration of $W^{i,k}$ particles, $(W_0^{i,k})_{k\in\mathbb{N}}$, are the points of $N$ independent Poisson point processes of intensity $g$. We may rewrite the re-centred system \eqref{aligned:Particlealpha} using these relabelled particles in the following manner:
\begin{align}
 &X_t^{i,k} = W_{0}^{i,k}+B_t^{i,k}-\Lambda^N_t,\nonumber \\
&{} \tau_{i}^k =\inf \left\{t \geq 0: X_t^{i,k} \leq 0\right\},\label{aligned:Particlewarticle} \\
&\Lambda^N_t  =\frac{1 }{N}\sum_{i=1}^N\sum_{k=1}^\infty \mathbbm{1}_{\left\{ \tau_{i}^k \leq t\right\}}{}_{.}\nonumber
\end{align}

The barrier height, $\Lambda^N$, can now be seen explicitly as the empirical average of $N$ interacting terms, suggestive of the following McKean-Vlasov type limit
\begin{align}
   &X_t^i=X_{0^-}^i+B_t^i-\Lambda_t,\nonumber\\
   &\tau_i=\inf\{t\geq0:X_t^i\leq 0\},\label{aligned:MVR}\\
   &\Lambda_t=\mathbb{E}\left(\sum_{i=1}^\infty\mathbbm{1}_{\{\tau_i\leq t\}}\right).\nonumber
\end{align}

The collection $(X_{0^-}^i)_{i\in\mathbb{N}}$ are the points of a Poisson point process of intensity $g$, and which are independent of a collection $(B_t^i)_{i\in\mathbb{N}}$ of independent Brownian motions.

Solutions to this system are equivalent to solutions of \eqref{alternativetomvr}, and the ``physical" jump condition \eqref{alternativephysical} is equivalent to the following condition on jump sizes:
\begin{align}
\Delta \Lambda_t=\inf\{x>0:\nu_{t^-}([0,x])<x\}, \label{phys3}
\end{align}
where
$$\nu_{t-}(A)=\sum_{i=1}^\infty \mathbb{P}(X_{t-}^i\in A,\tau_i\geq t).$$

\subsection{Assumptions on the density g}
From now on we shall refer to the intensity function $g$ as a density, in keeping with the language of the PDE and McKean-Vlasov formulations. For certain densities, such as $g\equiv 1+\epsilon$ for some $\epsilon>0$, the particle system \eqref{aligned:Particlealphanocentre} is only defined up to a finite random time $T^{*,N}$ at which $\Lambda^N$ explodes to infinity. Further, it is possible that solutions to \eqref{alternativetomvr} exist only on a finite time interval $[0,T^*)$. To avoid such behaviour we place the following assumptions on the density $g$.
\begin{itemize}
 \item \hypertarget{A1}{(A1)}:\label{A1} The density $g$ is uniformly bounded by some constant $C$.
 \item \hypertarget{A2}{(A2)}:\label{A2} There exists some $x^*$ such that $\int_0^{x^*}(g(x)-1)dx<0$, and $\int_{x^*}^y(g(x)-1)dx\leq 0$ for all $y\geq x^*$.

 \end{itemize}

We will assume that these conditions hold on $g$ throughout this work, except for Section \ref{alternateg} where we discuss alternative conditions on the density $g$.
\begin{remark}
The condition \hyperlink{A1}{(A1)} is a technical condition used to simplify our proofs, while the condition \hyperlink{A2}{(A2)} is used to provide global in time bounds on $\Lambda^N$. This condition is stronger than necessary for solutions of \eqref{alternativetomvr} to exist. However, many cases of interest satisfy \hyperlink{A2}{(A2)}. For example this condition is satisfied by $g$ when $g$ is a CDF, when $g$ is such that solutions of \eqref{alternativetomvr} exhibit asymptotic travelling wave behaviour, and by certain densities which have fluctuations out to infinity, such as $g(x)=1+\cos(x)$ for $x\geq x^*$. Further, while the above conditions are not sharp, weaker conditions are more complicated to state, and it is less clear how to transfer bounds on the mean field system \eqref{alternativetomvr} to the particle system.
\end{remark}
\begin{remark}
It is unclear exactly what a sharp condition for existence of global in time physical solutions to \eqref{alternativetomvr} should be, however we expect the following to hold:
\begin{enumerate} 
\item Suppose that there exists a constant $\vartheta>0$, and a sequence $y_n\uparrow\infty$ such that $\int_0^{y_n}(g(x)-1)dx\leq -\vartheta$. Then a global in time solution to \eqref{alternativetomvr} exists.
\item  Suppose that there exists $x^*$ such that $\int_0^y(g(x)-1)dx>0$ for all $y\geq x^*$. Then a global in time solution to \eqref{alternativetomvr} does not exist.
\end{enumerate}
\end{remark}

Under the conditions  \hyperlink{A1}{(A1)},\hyperlink{A2}{(A2)} on $g$, a linear bound holds on $\Lambda^N$ with high probability.
\begin{lemma}
\label{lemma on bounding}
Let $T>0.$ There exists a constant $z$ such that 
$$\limsup_{N\rightarrow \infty }\mathbb{P}(\Lambda_T^N>z)=0.$$
In particular, $z$ may be taken to be $z=cT+x^*$ for a constant $c$, and $x^*$ as in condition \hyperlink{A2}{(A2)}, independently of $T$.
\end{lemma}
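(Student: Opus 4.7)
The plan is to exploit the elementary fact that every particle absorbed by time $T$ has had its Brownian trajectory dip at or below the final barrier position $\Lambda^N_T$. Setting
\[
M(y) \;:=\; \tfrac{1}{N}\,\#\bigl\{\,i \,:\, W_0^{i,N} + \textstyle\min_{t\leq T} B_t^{i,N} \leq y\,\bigr\},
\]
we therefore have $\Lambda^N_T \leq M(\Lambda^N_T)$, whence $\{\Lambda^N_T > z\} \subseteq \{\exists\, y > z : M(y) \geq y\}$. By the mapping theorem for Poisson processes, $NM$ is a non-homogeneous Poisson counting process in $y$, obtained by thinning the initial Poisson configuration under $(w,B)\mapsto w+\min_{t\leq T}B_t$; so $NM(y)$ is Poisson-distributed for each $y$, which sets up a first-moment-plus-concentration argument.

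The second step is to bound $\mathbb{E}[M(y)]$ for $y \geq x^*$. Using the half-Gaussian law of $\min_{t\leq T}B$, splitting at $w=y$, writing $g = 1+(g-1)$, and integrating by parts against the cumulative deficit $\Psi(y) := \int_{x^*}^y(g-1)\,dx$ from \hyperlink{A2}{(A2)} (which is non-positive), the boundary and interior contributions combine to give
\[
\mathbb{E}[M(y)] \;\leq\; y - \eta + \sqrt{2T/\pi}, \qquad y \geq x^*,
\]
where $\eta := -\int_0^{x^*}(g-1)\,dx > 0$; here \hyperlink{A1}{(A1)} enters to keep the tail integral finite and to bound the Poisson intensity of the minima process pointwise.

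For a horizon $T_0$ short enough that $\sqrt{2T_0/\pi} < \eta$, the above gives a strict linear gap $\mathbb{E}[M(y)] \leq y - \delta$ for some $\delta > 0$. Since $NM(y) - Ny$ increases only at the unit jumps of $NM$ and decreases linearly in between, the event $\{\exists\, y > x^* : NM(y) \geq Ny\}$ is contained in a union $\bigcup_k \{NM(y_k) \geq Ny_k\}$ along a $1/N$-grid of points $y_k$. A Chernoff tail bound for the Poisson distribution makes each such event $O(\exp(-cN))$ uniformly over $y_k$ in a compact range, so summing over $O(N)$ grid points in that range tends to zero with $N$. This establishes the result on the short horizon $T_0$ with some constant $z_0$. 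To extend to arbitrary $T$, I would iterate via the strong Markov property: partition $[0,T]$ into subintervals of length $T_0$, restart at each boundary $kT_0$, and use that each subinterval contributes an increment of size at most $z_0$ with high probability; summing yields $\Lambda^N_T \leq x^* + (T/T_0)\, z_0$, so $c = z_0/T_0$.

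The main technical obstacle is the iteration step. At an intermediate time $kT_0$, the surviving (non-absorbed) particles form a Cox process whose density above the current barrier depends on the full history and is no longer a clean Poisson of intensity $Ng$. To apply the short-time bound on each $[kT_0,(k+1)T_0]$ with constants independent of $k$, one has to verify that this configuration still obeys the pointwise density bound inherited from \hyperlink{A1}{(A1)} (preserved under heat-kernel convolution) together with a cumulative deficit analogue of \hyperlink{A2}{(A2)}. Making this inheritance quantitative uniformly in $k$ is the substantive work and is ultimately what forces the linear-in-$T$ (rather than $\sqrt{T}$) scaling in the advertised bound.
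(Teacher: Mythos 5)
Your first stage is sound: the observation $\Lambda^N_T \leq M(\Lambda^N_T)$ is correct, the first-moment bound $\mathbb{E}[M(y)] \leq y - \eta + \sqrt{2T/\pi}$ does follow from the integration-by-parts against the cumulative deficit $\Psi$ under \hyperlink{A2}{(A2)}, and the Poisson Chernoff plus grid argument works for $T$ small enough that $\sqrt{2T/\pi} < \eta$. But the iteration step you flag as ``the substantive work'' is not a technicality you can wave through --- it is a genuine gap. Conditioned on the history up to time $kT_0$, the surviving point configuration is not a Poisson process of deterministic intensity; it is at best a Cox process whose intensity depends on the realised barrier path, so the Chernoff bound cannot be applied directly, and it is far from obvious that a cumulative-deficit analogue of \hyperlink{A2}{(A2)} survives the killing and diffusion with a constant uniform in $k$. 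As written, the argument does not close.

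The paper avoids the iteration entirely by comparing against a \emph{linear} test barrier $f_t = ct + x^*$ from the outset, via the comparison Lemma~\ref{Alemmatoboundathing}, rather than the constant barrier $\Lambda^N_T$. The point is that a Brownian particle started at $w > x^*$ catches up to the line $cs + x^*$ with probability $\mathbb{P}(\inf_{s\geq 0}(B_s - cs) \leq x^* - w) = e^{-2c(w-x^*)}$, so
\[
\mathbb{E}\bigl[\Gamma_1(l^{c,x^*})_t\bigr] \;\leq\; \int_0^{x^*} g + \int_{x^*}^\infty e^{-2c(w-x^*)}g(w)\,dw \;\leq\; (x^* - \eta) + \frac{C}{2c}
\]
\emph{uniformly in $t$}. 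There is no $\sqrt{T}$ term because the drifting barrier outruns the particles, so for $c > C/(2\eta)$ one gets a fixed slack $\epsilon > 0$, a one-shot SLLN uniform convergence of $\Gamma_N(l^{c,x^*})$, and then Lemma~\ref{Alemmatoboundathing} gives $\Lambda^N_T < cT + x^*$ with high probability for every $T$ simultaneously. Your constant-barrier bound inevitably costs $\sqrt{T}$ because a zero-drift Brownian motion always eventually reaches any fixed level, and that is precisely what forces the problematic iteration. The missing idea is to let the test barrier move.
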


To determine the bound given in Lemma \ref{lemma on bounding}, we consider the behaviour of the system with a deterministic barrier $f_t$. If the number of particles absorbed by a barrier of height $f_t$ would be too few to increase the barrier height to the value $f_t$, then $\Lambda^N_t$ must be bounded above by $f_t$.

We define a random operator $\Gamma_N$, on the space of cadlag functions. For a function in this space, we define $\Gamma_N(f)$ to be the function given by
\begin{align*}
    &X_t^{i,N}(f)=W_0^{i,N}+B_t^{i,N}-f_t,\\
    &\tau_{i}^N(f)=\inf\{t\geq 0: X_t^{i,N}(f)\leq 0\},\\
    &\Gamma_N(f)_t=\frac{1}{N}\sum_{i=1}^\infty\mathbbm{1}_{\{\tau_{i}^N(f)\leq t\}}.
\end{align*}

In subsequent sections, we apply this operator to linear functions which we denote by $l_t^{c,d}=ct+d$ and $l_t^c=l_t^{c,0}.$

\begin{lemma}
\label{Alemmatoboundathing}
    Consider a fixed $T>0,$ and an arbitrary $f$ in the space of continuous, non-decreasing functions on $[0,T].$ If $\Gamma_N(f)_t\leq f_t$ for all $t\in[0,T]$, and $f(0)\geq 0$, then $\Lambda^{N}_t\leq f_t$ for all $t\in[0,T]$. Similarly, if $\Gamma_N(f)_{t}\geq f_t$ for all $t\in[0,T]$, and $f(0)\leq 0,$ then $\Lambda^N_t\geq f_t$ for all $t\in[0,T]$.
\end{lemma}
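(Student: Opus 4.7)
The plan is to proceed via the monotonicity of the hitting-time operator $\Gamma_N$ together with a first-crossing/inductive argument. The key observations are: (i) $\Gamma_N$ is monotone in the pointwise order of barriers: if $g_1 \leq g_2$ then $\tau_i^N(g_2) \leq \tau_i^N(g_1)$ for every $i$, so $\Gamma_N(g_1) \leq \Gamma_N(g_2)$; (ii) $\Lambda^N$ is itself a fixed point of $\Gamma_N$, since the hitting times $\tau_i^N$ in \eqref{aligned:Particlealpha} are precisely those defining $\Gamma_N(\Lambda^N)$; (iii) on $[0,T]$ the process $\Lambda^N$ has at most countably many jumps $0 < T_1 < T_2 < \ldots$ and is constant between them.

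For the first assertion I would induct on the jump times, with base case $\Lambda^N_0 = 0 \leq f_0$. Assuming $\Lambda^N_s \leq f_s$ for $s \leq T_n$, the inequality extends to $s < T_{n+1}$ because $\Lambda^N$ is constant on $[T_n, T_{n+1})$ while $f$ is non-decreasing, so it only remains to control the jump at $T_{n+1}$. Writing $j = N\Delta \Lambda^N_{T_{n+1}} \geq 1$ and $j^* = \lfloor N(f_{T_{n+1}} - \Lambda^N_{T_{n+1}^-}) \rfloor$, I would assume for contradiction that $j \geq j^* + 1$. When $j^* \geq 1$, the minimality in the physical jump condition of \eqref{aligned:Particlealpha} (failing at $k=j^*$) forces strictly more than $j^*$ non-absorbed particles to lie in the interval $[\Lambda^N_{T_{n+1}^-},\, \Lambda^N_{T_{n+1}^-} + j^*/N] \subseteq [\Lambda^N_{T_{n+1}^-}, f_{T_{n+1}}]$, each of which satisfies $W^i_{T_{n+1}} \leq f_{T_{n+1}}$ and hence $\tau_i^N(f) \leq T_{n+1}$. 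Combined with the fact that every particle absorbed in the $\Lambda^N$-system before $T_{n+1}$ is also absorbed in the $f$-system by time $T_{n+1}$ (by monotonicity applied to $\Lambda^N \leq f$ on $[0, T_{n+1})$), this produces at least $N\Lambda^N_{T_{n+1}^-} + j^* + 1$ disjoint particles counted by $N\Gamma_N(f)_{T_{n+1}}$; thus $\Gamma_N(f)_{T_{n+1}} \geq \Lambda^N_{T_{n+1}^-} + (j^*+1)/N > f_{T_{n+1}}$ by the definition of $j^*$, contradicting $\Gamma_N(f) \leq f$. The borderline case $j^* = 0$ is settled by the same idea using just the trigger particle at $\Lambda^N_{T_{n+1}^-}$, which is not absorbed in the $\Lambda^N$-system before $T_{n+1}$ but satisfies $\tau_i^N(f) \leq T_{n+1}$.

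The reverse statement is proved by an analogous first-crossing argument, but the crossing is now necessarily continuous: since $\Lambda^N$ has only upward jumps and $f$ is continuous, right-continuity and monotonicity force $\Lambda^N_{\tau_0} = f_{\tau_0}$ at $\tau_0 = \inf\{t : \Lambda^N_t < f_t\}$, and on a right-neighbourhood $(\tau_0, \tau_0+\delta)$ where $\Lambda^N$ is still constant and $f$ has strictly increased, the hypothesis $\Gamma_N(f) \geq f$ produces strictly more than $N \Lambda^N_t$ particles with $\tau_i^N(f) \leq t$; unpacking these extra absorptions and comparing with the jump condition for $\Lambda^N$ forces $\Lambda^N$ to have jumped already, a contradiction. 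The main obstacle is the careful unpacking of the physical jump condition at $T_{n+1}$: one has to convert the minimality of $j$ in the cascade into a lower bound on the number of non-absorbed particles within the $f$-ceiling, and keep the bookkeeping of ``absorbed before $T_{n+1}$'' versus ``caught in the jump at $T_{n+1}$'' strictly disjoint so the counts add cleanly against $\Gamma_N(f)_{T_{n+1}} \leq f_{T_{n+1}}$.
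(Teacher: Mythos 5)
The paper does not actually spell out a proof of Lemma~\ref{Alemmatoboundathing}; it cites a minor modification of \cite[Proposition 2.1]{Dembo_2019}. Your proposal is a self-contained argument in the same general spirit (monotonicity of $\Gamma_N$ plus a comparison by first crossing), so the right question is whether it is correct, and here the two halves fare differently.

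The upper-bound half is sound. The only slip is cosmetic: the base case should be stated as $\Lambda^N_{0^-}=0\le f(0)$ rather than $\Lambda^N_0=0$, since a jump at time $0$ is permitted and is handled by the same $T_{n+1}$ argument applied to $T_0=0$. The bookkeeping at $T_{n+1}$ — using the failure of the minimal-$k$ jump condition at $k=j^\ast$ to extract at least $j^\ast+1$ non-absorbed particles inside $[\Lambda^N_{T_{n+1}^-},\Lambda^N_{T_{n+1}^-}+j^\ast/N]\subseteq[\Lambda^N_{T_{n+1}^-},f_{T_{n+1}}]$, together with the $N\Lambda^N_{T_{n+1}^-}$ particles previously absorbed which by the induction hypothesis are also $f$-absorbed before $T_{n+1}$ — gives $N\Gamma_N(f)_{T_{n+1}}\ge N\Lambda^N_{T_{n+1}^-}+j^\ast+1>Nf_{T_{n+1}}$, and these two contributions are indeed disjoint. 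One should also note that if $\Gamma_N(f)\le f$ on $[0,T]$ then the induction itself precludes an accumulation of jumps before $T$ (else $\Lambda^N$ would exceed $f_T<\infty$ before the accumulation point), so the enumeration $T_1<T_2<\ldots$ is finite and the induction terminates.

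The lower-bound half has a genuine gap in the closing step. After establishing $\Lambda^N_{\tau_0}=f_{\tau_0}$ and hence (by monotonicity of $\Gamma_N$ applied to $f\le\Lambda^N$ on $[0,\tau_0]$, combined with the hypothesis) $\Gamma_N(f)_{\tau_0}=f_{\tau_0}$, you correctly observe that for $t\in(\tau_0,\tau_0+\delta)$ there are strictly more than $N\Lambda^N_t=Nf_{\tau_0}$ particles with $\tau_i^N(f)\le t$. But the claim that these extra absorptions ``force $\Lambda^N$ to have jumped already'' does not follow: a particle newly absorbed by the $f$-barrier at some time $s\in(\tau_0,t]$ satisfies $W^{i,N}_s\le f_s$, and since $f_s>f_{\tau_0}=\Lambda^N_s$ the particle may well sit strictly between the two barriers and never touch $\Lambda^N$, so the jump condition for $\Lambda^N$ is not triggered. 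The correct way to close the argument does not involve the $\Lambda^N$ jump rule at all. Since $\Gamma_N(f)$ is right-continuous with values in $\tfrac{1}{N}\mathbb{Z}_{\ge0}$ and has finitely many jumps on $[0,T]$, it is constant on some interval $[\tau_0,\tau_0+\eta)$; on the other hand, because $\tau_0$ is the infimum of $\{t:\Lambda^N_t<f_t\}$ and $\Lambda^N$ is constant at $f_{\tau_0}$ just after $\tau_0$, one must have $f_t>f_{\tau_0}$ for all $t\in(\tau_0,\tau_0+\delta)$. Hence for $t\in(\tau_0,\tau_0+\min(\eta,\delta))$ we get $\Gamma_N(f)_t=f_{\tau_0}<f_t$, directly contradicting the hypothesis $\Gamma_N(f)\ge f$. (Equivalently: the nested non-empty finite sets $S_t=\{i:\tau_i^N(f)\le t\}\setminus\{i:\tau_i^N(f)\le\tau_0\}$, $t\downarrow\tau_0$, have non-empty intersection, forcing some $j$ with $\tau_j^N(f)\le\tau_0$ and simultaneously $\tau_j^N(f)>\tau_0$.) With this replacement the lower bound goes through; as written, the phrase ``forces $\Lambda^N$ to have jumped already'' asserts something that is not true.
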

This follows through minor modifications to the arguments of \cite[Proposition 2.1]{Dembo_2019}. 
\begin{remark}
The upper bound in the above lemma also holds for cadlag functions, as well as continuous functions. The lower bound holds for cadlag functions, as long as the jumps are only downwards.
\end{remark}
With this result, we sketch the proof of Lemma \ref{lemma on bounding}.
\begin{proof}
 We take $f_t=l^{c,x^*}_t$, and shall apply the result of Lemma \ref{Alemmatoboundathing} for this function $f$. The expectation $\mathbb{E}(\Gamma_1(l^{c,x^*})_t)$ can be computed, and is bounded above by $ct+x^*-\epsilon$ for all suitably large $c$, and suitably small $\epsilon$. By applying the Strong Law of Large Numbers, we may determine that the process $\Gamma_N(l^{c,x^*})_t$ converges uniformly in probability to $\mathbb{E}(\Gamma_1(l^{c,x^*})_t)\leq ct+x^*-\epsilon$. Therefore: $$\mathbb{P}(\Gamma_N(l^{c,x^*})_t)<l_t^{c,x^*} \quad \forall t\leq T)\rightarrow 1 \textrm{ as } N\rightarrow\infty.$$ Applying Lemma \ref{Alemmatoboundathing}, it then follows that $\mathbb{P}(\Lambda^{N}_T<cT+x^*)\rightarrow 1$ as $N\rightarrow\infty$. 
 \end{proof}

\begin{remark}
\label{remark on g^n}
The intensity $g$ of the Poisson point process can be taken to vary over the parameter $N\in\mathbb{N}$. In particular, for each integer $N$ we may replace the intensity $g$ by $g^N$, and may obtain particle systems and moving barriers defined in the same manner as the particle system \eqref{aligned:Particlealphanocentre} given above. Unless specified otherwise, our results extend to sequences $g^N$ which are uniformly bounded and converge pointwise to a function $g$ which satisfies conditions \hyperlink{A1}{(A1)} and \hyperlink{A2}{(A2)}.
\end{remark}

\section{General convergence results}
\label{convergencestatementsection}

We view the particle system on $[0,T]$ for arbitrary positive $T$, and shall determine the convergence to a limiting McKean-Vlasov type system. The results extend naturally to $t\in[0,\infty)$ by considering the processes on $[0,T_n]$ for a sequence $T_n\uparrow\infty$.

Since we do not assume any regularity on the density $g$ near zero, to obtain convergence we extend the particle system to a larger set of times by setting 
$$
\bar X^{i,N}_t = 
     \begin{cases}
       W^{i,N}_0 &t\in[-1,0),\\
       X^{i,N}_t & t\in[0,T],\\
       X^{i,N}_T+B_t^{i,N}-B_T^{i,N}&t\in(T,T+1].
     \end{cases}
$$
We also extend $W^{i,N}, B^{i,N}$, and $\Lambda^N$ in a similar manner;
\begin{align*}
&\bar W^{i,N}_t = 
     \begin{cases}
       W^{i,N}_0 &t\in[-1,0),\\
       W^{i,N}_t & t\in[0,T],\\
       W^{i,N}_T+B_t^{i,N}-B_T^{i,N}&t\in(T,T+1],
     \end{cases}\\
&\bar B^{i,N}_t = 
     \begin{cases}
       0 &t\in[-1,0),\\
       B^{i,N}_t & t\in[0,T+1],
     \end{cases}\\
&\bar \Lambda^{N}_t = 
     \begin{cases}
       0 &\qquad \qquad \quad \textrm{ }t\in[-1,0),\\
       \Lambda^{N}_t & \qquad \qquad \quad \textrm{ }t\in[0,T],\\
       \Lambda^N_T& \qquad \qquad \quad \textrm{ } t\in(T,T+1].
     \end{cases}
\end{align*}

We write $\mathcal{C}$ for the space of continuous functions on $[-1,T+1]$ to $\mathbb{R}$, and endow this space with the uniform topology. We view $\bar W^i$ as elements of this space. We also denote by $\tilde D$ the space of cadlag functions on $[-1,T+1]$ to $\mathbb{R}$, which are left continuous at $T+1$, and endow this space with the Skorokhod M1 topology as described in \cite{whitt}. This topology allows for convergence of multiple small jumps to a single jump in the limit, and in general it is necessary to weaken the topology in such a manner so that the processes $\bar \Lambda^N$ converge.
The restriction of the space of cadlag functions is a standard technicality, since such a restriction is used for \cite[Theorem 12.2.2]{whitt} and the consequences thereof. We view $\bar X$ and $\bar \Lambda$ as elements of this space.

The extension to $[-1,T+1]$ was introduced in \cite{minimal} and allows us to establish the tightness of the processes $\bar X^{1},\bar \Lambda^N$ without additional conditions on $g(x)$ for $x$ near 0. The left extension is unnecessary with stricter conditions on the initial density $g$. In particular, if $g<1$ in a neighbourhood of zero, then the argument of \cite[Proposition 2.2]{3} can be modified to give $1/2$-H\"older type bounds on $\Lambda^N_t$ for sufficiently large $N$ and small $t$. Such bounds are sufficient to establish tightness.

From now on we omit the bar $\bar{z}$ on a variable $z$ and identify the processes $X^{i,N},W^{i,N},\Lambda^N$ with their extension on $[-1,T+1]$. We later consider the law of an extended Brownian motion on $[-1,T+1]$, by which we will mean the law of the Brownian motion extended in the manner of $\bar B^1$.

For a Polish space $\mathcal{X}$, we define $M(\mathcal{X})$ to be the space of locally finite Borel measures on $\mathcal{X}$. We endow this space with the vague topology corresponding to the topology on $\mathcal{X}$. For completeness, we recall the definition of the vague topology.
\begin{definition}
\label{defdef}
    For a measure $V$ and a measurable function $f$, write $V(f)$ for $\int f dV$.
    Let $\mathcal{X}$ be a Polish space. A sequence $V^N\subset M(\mathcal{X})$ converges vaguely to $V$ if for all functions $f:\mathcal{X}\rightarrow R$ which are bounded, continuous, and with bounded support, $$V^N(f)\rightarrow V(f)\textrm{ as }N\rightarrow\infty.$$ 
\end{definition}
By \cite[Lemma 4.1]{kallenberg} this is equivalent to considering only functions $f$ which are positive, uniformly continuous, or Lipschitz continuous. We write $\mathcal{B}(\mathcal{X})$ to denote the space of Borel subsets of $\mathcal{X}$, and $\mathcal{\hat B}(\mathcal{X})$
the subset of bounded, Borel sets. Vague convergence of $V^N$ to $V$ is further equivalent to
$$V(B^o)\leq \liminf_{N\rightarrow\infty} V^N(B)\leq \limsup_{N\rightarrow\infty} V^N(B)\leq V(\overline{B})
    \textrm{ for all }B\in\mathcal{\hat B}(\mathcal{X}).$$
For a Polish space $\mathcal{X}$, the vague topology on $M(\mathcal{X})$ is metrisable, and $M(\mathcal{\mathcal{X}})$ is Polish. For further details see \cite[Section 4]{kallenberg}. 

The empirical measure of $W^i$ particles, an element of $M(\mathcal{C})$, and the empirical measure of $X^i$ particles, as an element of $M(\tilde D)$, are defined by
\begin{align}
    &\upsilon^N:=\frac{1}{N}\sum_{i=1}^\infty\delta_{W^{i,N}}=\frac{1}{N}\sum_{i=1}^N\sum_{k=1}^\infty\delta_{W^{i,k}} {}_, \\
    &\mu^N:=\frac{1}{N}\sum_{i=1}^\infty\delta_{X^{i,N}}=\frac{1}{N}\sum_{i=1}^N\sum_{k=1}^\infty\delta_{X^{i,k}} {}_.
\end{align}
We define a map $\tilde h:M(\mathcal{C})\times \tilde D\rightarrow M(\tilde D)$ by:
$$\tilde h(\zeta,l)(A)=\zeta(A+l) \textrm{ for }A\in\mathcal{B}(\tilde D).$$

From the definition of $\mu^N$, it follows that $\mu^N=\tilde h(\upsilon^N,\Lambda^N)$. The function $\tilde h$ is continuous and therefore convergence results for $\mu^N$ may be deduced from those for $\upsilon^N$ and $\Lambda^N$.
\begin{lemma}
\label{continuity of tilde h}
    The function $\tilde h$ is a continuous function from $M(\mathcal{C})\times \tilde D$ viewed with the product topology, to $M(\tilde D)$.
\end{lemma}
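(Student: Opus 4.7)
The plan is to verify sequential continuity, which suffices as both source and target are metrisable Polish spaces. I would fix a convergent sequence $(\zeta^N, l^N) \to (\zeta, l)$ in the product topology on $M(\mathcal{C}) \times \tilde D$. Since $\tilde h(\zeta, l)$ is by definition the pushforward of $\zeta$ under $\phi_l : w \mapsto w - l$, the change-of-variables identity $\int f\, d\tilde h(\zeta, l) = \int f(w - l)\, d\zeta(w)$ combined with the Lipschitz characterisation of vague convergence \cite[Lemma 4.1]{kallenberg} reduces the goal to showing
\[
\int_{\mathcal{C}} f(w - l^N)\, d\zeta^N(w) \longrightarrow \int_{\mathcal{C}} f(w - l)\, d\zeta(w)
\]
for every bounded Lipschitz $f : \tilde D \to \mathbb{R}$ with bounded support.

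The key ingredient is the joint continuity of the subtraction map $\Psi : \mathcal{C} \times \tilde D \to \tilde D$, $(w, l) \mapsto w - l$. Given $w_n \to w$ uniformly on $[-1, T+1]$ and $l^n \to l$ in M1 with parametric representations $(u^n, r^n) \to (u, r)$ uniformly on $[0,1]$, the pair $(w_n \circ r^n - u^n, r^n)$ is a parametric representation of $w_n - l^n$, and converges uniformly to $(w \circ r - u, r)$, a parametric representation of $w - l$, by the bound
\[
\|w_n \circ r^n - w \circ r\|_\infty \leq \|w_n - w\|_\infty + \omega_w(\|r^n - r\|_\infty) \to 0,
\]
with $\omega_w$ the modulus of continuity of the continuous function $w$. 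Consequently $F(w, l) := f(w - l)$ is continuous and bounded on $\mathcal{C} \times \tilde D$, with support contained in the bounded set $\{(w, l) : w - l \in \mathrm{supp}(f)\}$.

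To finish I would split
\begin{align*}
\int F(w, l^N)\, d\zeta^N - \int F(w, l)\, d\zeta = \int [F(w, l^N) - F(w, l)]\, d\zeta^N + \int F(w, l)\, [d\zeta^N - d\zeta].
\end{align*}
The second term tends to $0$ by vague convergence $\zeta^N \to \zeta$ against the bounded continuous test function $w \mapsto F(w, l)$, whose support in $\mathcal{C}$ is the bounded translate $\mathrm{supp}(f) + l$. For the first term, the Lipschitz estimate $|F(w, l^N) - F(w, l)| \leq L\, d_{M_1}(w - l^N, w - l)$ together with the uniform bound on $\zeta^N$-mass of bounded sets (inherited from vague convergence) reduces matters to $\sup_{w} d_{M_1}(w - l^N, w - l) \to 0$ over the effective support in $\mathcal{C}$. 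The main obstacle is precisely this uniform control, since bounded subsets of $\mathcal{C}$ are not a priori relatively compact; I would address it by restricting to an equicontinuous sub-support via Arzela--Ascoli, using that vague convergence allows the $\zeta^N$-mass outside a uniformly equicontinuous family to be made arbitrarily small, so that the required uniform M1 decay follows from joint continuity of $\Psi$ on a compact product set.
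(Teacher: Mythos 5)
Your overall strategy is the same as the paper's: reduce to Lipschitz test functions of bounded support via Kallenberg, rewrite $\tilde h(\zeta,l)(f)$ as $\int f(w-l)\,d\zeta(w)$, and split the difference $\int f(\cdot-l^N)\,d\zeta^N - \int f(\cdot-l)\,d\zeta$ into a ``shift-error'' term and a vague-convergence term. Where you part ways with the paper is instructive, so let me spell it out.

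The paper closes the shift-error term by asserting that the $1$-Lipschitz property of $f$ gives $|f(x-l^n)-f(x-l)|\le d_{M1}(l^n,l)$ uniformly over $x$ in the bounded set $A_3$, i.e.\ that $d_{M1}(x-l^n,x-l)\le d_{M1}(l^n,l)$. As your own parametric-representation computation makes visible, this inequality is not a contraction: composing with the time-change $r^n$ introduces the modulus of continuity $\omega_x$, so the honest bound reads $d_{M1}(x-l^n,x-l)\le d_{M1}(l^n,l)+\omega_x(d_{M1}(l^n,l))$, and the second term cannot be controlled uniformly over $x$ in a sup-norm ball, which is bounded but not equicontinuous. (Concretely: take $l=\mathbbm{1}_{[1/2,1]}$, $l^n=\mathbbm{1}_{[1/2+1/n,1]}$, and $x_n$ rising by $M_1+M_2$ on $[1/2,1/2+1/n]$; then $d_{M1}(x_n-l^n,x_n-l)$ stays of order $1$ while $d_{M1}(l^n,l)\to 0$.) You correctly flag this as ``the main obstacle'' rather than sweeping it under the rug, which is a genuine improvement in rigour over the paper's own argument.

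Your proposed repair — restrict to a compact, hence equicontinuous, subfamily $K\subset\mathcal{C}$, use joint continuity of $(w,l)\mapsto w-l$ on the compact product $K\times\{l,l^1,l^2,\dots\}$ to get the uniform decay, and argue the residual mass on $A_3\setminus K$ is small — is the right idea, but the sentence ``vague convergence allows the $\zeta^N$-mass outside a uniformly equicontinuous family to be made arbitrarily small'' is doing more work than it is entitled to as stated. Vague convergence by itself gives you only one-sided Portmanteau inequalities on bounded sets. To obtain uniform tightness of the restricted family $\{\zeta^N|_{A_3}\}$ you need to first upgrade to weak convergence of these finite measures, which requires the boundary condition $\zeta(\partial A_3)=0$ — harmless, since at most countably many radii $M_1+M_2$ fail this and you are free to enlarge $M_1+M_2$ slightly — and then invoke Prohorov's theorem on the Polish space $\mathcal{C}$ to extract the compact $K$ with $\sup_N\zeta^N(A_3\setminus K)<\epsilon$. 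Arzel\`a--Ascoli then identifies $K$ as equibounded and equicontinuous. With those two missing ingredients (the boundary-regularity adjustment and the explicit appeal to Prohorov) made precise, your argument closes the gap and is a sound, and in fact more careful, version of the proof.
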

The proof is elementary, and details are provided in Section \ref{Continuity of h} of the appendix. 

The main result of this section is the convergence of the empirical measure $\mu^N$ to physical solutions of the McKean-Vlasov system \eqref{alternativetomvr}. 
\begin{theorem}
\label{maintheorem}
Consider a fixed $T>0$. Suppose that $g$ satisfies assumptions \hyperlink{A1}{(A1)} and \hyperlink{A2}{(A2)}.
The processes $(\mu^N,\Lambda^N)$ are tight on $M(\tilde D)\times \tilde D$ viewed with the product topology. Denote by $\Pi^N$ the law of $(\mu^N,\Lambda^N)$. For any subsequential limit $\Pi^\infty$, almost surely under $\Pi^\infty$, $(\mu,\Lambda)$ form a physical solution to the McKean-Vlasov system \eqref{alternativetomvr} up to time $T$.

In particular, for any realisation of the limiting process, we define processes indexed by $x\in\mathbb{R}^+$ via
\begin{align*}
    &X_t^x=x+B_t-\Lambda_t \textrm{ for }B \textrm{ an extended Brownian motion},\\
    &\tau^x=\inf\{t\geq 0:X_t^x\leq 0\}.
\end{align*}
For $\Pi^\infty$ almost every realisation of $(\mu,\Lambda)$, and arbitrary $A\in\mathcal{B}(\tilde D)$, it holds that
$$\mu(A)=\int_0^\infty \mathbb{P}(X_{.}^x\in A)g(x)dx,$$ $$\Lambda_t=\int_0^\infty \mathbb{P}(\tau^x\leq t)g(x)dx.$$ 
Further the discontinuities of $\Lambda$ satisfy the physicality condition \eqref{alternativephysical}. 
\end{theorem}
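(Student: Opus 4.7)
The plan is to establish tightness of the pair $(\upsilon^N,\Lambda^N)$ on $M(\mathcal{C})\times\tilde D$, transfer this to tightness of $(\mu^N,\Lambda^N)$ via Lemma \ref{continuity of tilde h}, and then identify every subsequential weak limit as a physical solution of \eqref{alternativetomvr}. For tightness of $\upsilon^N$ on $M(\mathcal{C})$, since the vague topology on $M(\mathcal{C})$ is Polish and metrisable, tightness reduces to controlling the total mass of $\upsilon^N$ on a family of compact subsets exhausting $\mathcal{C}$. This mass is the Poisson count of particles whose extended trajectory stays within a given window, and can be controlled uniformly in $N$ using the bound on $g$ from \hyperlink{A1}{(A1)}, standard Brownian modulus estimates, and Markov's inequality. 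For tightness of $\Lambda^N$ in the M1 topology, I use that each $\Lambda^N$ is non-decreasing with $\Lambda^N_{-1}=0$ and $\Lambda^N_{T+1}\le cT+x^*$ with probability tending to one (Lemma \ref{lemma on bounding}). A uniformly bounded family of non-decreasing cadlag functions has vanishing M1 oscillation modulus, so \cite[Theorem 12.12.3]{whitt} yields M1 tightness. Tightness of the product then follows.

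Next I pass to a subsequence along which $(\upsilon^N,\Lambda^N,\mu^N)$ converges in law to some $(\upsilon^\infty,\Lambda,\mu)$ and invoke Skorokhod representation to realise the convergence almost surely on some common probability space. By Lemma \ref{continuity of tilde h} we have $\mu = \tilde h(\upsilon^\infty,\Lambda)$ almost surely. To identify $\upsilon^\infty$, I use that $\upsilon^N$ is the empirical measure of a Poisson point process with intensity $Ng(x)\,dx\otimes \mathbb{W}$ on $\mathbb{R}^+\times\mathcal{C}$, so by a law of large numbers for Poisson integrals applied to the bounded continuous test functions defining the vague topology, $\upsilon^\infty$ is almost surely the deterministic measure $g(x)\,dx\otimes\mathbb{W}$. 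Combining these yields, for any $A\in\mathcal{B}(\tilde D)$,
\[
\mu(A) \;=\; \int_0^\infty \mathbb{P}(X^x_{\cdot}\in A)\,g(x)\,dx.
\]
The identity for $\Lambda_t$ is then obtained by applying this to $A=\{f\in\tilde D : \inf_{s\le t}f(s)\le 0\}$, together with Fubini; the only care needed is at the (at most countable) set of $t$'s where $\mathbb{P}(\tau^x=t)>0$ for a nonnegligible set of $x$, which is handled by passing to $t$ outside this exceptional set and using monotonicity plus M1 convergence of $\Lambda^N$.

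The main obstacle, and the delicate step, is the physical jump condition \eqref{alternativephysical}. M1 convergence only forces compatibility of the left and right limits along convergent times; a jump of $\Lambda$ may arise as the amalgamation of many small jumps of $\Lambda^N$ accumulated in a shrinking time window. I plan to split this into two inequalities at each jump time $t$. For the lower bound $\Delta\Lambda_t \ge \inf\{x>0:\nu_{t-}([0,x])<x\}$, the key identity $\Lambda_t - \Lambda_{t-}=\int \mathbb{P}(\tau^x\in(t-,t],X_{t-}^x\ge 0)g(x)\,dx$ together with the fact that any particle absorbed at time $t$ must lie in $[0,\Delta\Lambda_t]$ immediately before $t$ gives $\nu_{t-}([0,\Delta\Lambda_t])\ge \Delta\Lambda_t$, which yields the lower bound via the infimum. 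For the matching upper bound (minimality), I would follow the strategy of \cite{minimal, delarue2015particle, 5}: track a sequence of prelimit jump times $t^N\to t$, apply the finite-$N$ minimality rule $\nu^N_{(t^N)-}([0,\Delta\Lambda^N_{t^N}-1/N))<\Delta\Lambda^N_{t^N}-1/N$, and pass to the limit using the vague convergence $\nu^N_{(t^N)-}\Rightarrow \nu_{t-}$ (which itself needs to be justified using left-continuity of $\nu$ at suitably chosen approximating times, plus M1 continuity along monotone approximations). This minimality transfer is the genuinely subtle part of the argument, and I would allocate most of the proof to carefully verifying it.
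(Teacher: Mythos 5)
Your overall architecture matches the paper's: tightness of $(\upsilon^N,\Lambda^N)$ via Lemma~\ref{lemma on bounding} and \cite{whitt}, transfer to $\mu^N$ through $\tilde h$, Skorokhod representation, identification of $\upsilon^\infty = \theta_*(G\times\mathcal{L}(B))$ by a Poisson law of large numbers, and a passage to the limit for physicality following \cite{minimal}. The paper proves convergence of $\upsilon^N$ directly by SLLN (Proposition~\ref{convergence of Hn}) rather than first establishing tightness and then identifying the limit, but your two-step route is acceptable. There are, however, two places where the proposal elides a genuine obstacle.

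First, the identity $\Lambda_t = \int_0^\infty \mathbb{P}(\tau^x\le t)\,g(x)\,dx$ is not a formal consequence of the structural identity $\mu=\tilde h(\theta_*(G\times\mathcal{L}(B)),\Lambda)$; it requires passing the prelimit relation $\Lambda^N_t=\mu^N(\{\tau\le t\})$ to the limit. You acknowledge an ``exceptional set of $t$'s'' but this is the wrong obstruction, and it misses the two real issues. (i) The limit $\mu$ has infinite total mass and the set $\{\tau\le t\}$ is unbounded in $\tilde D$, so vague convergence gives you nothing for this set directly. The paper handles this by restricting to $S_K$ to get genuine weak convergence of finite measures (Section~\ref{7.1}), and then controlling the contribution from particles starting above $K$ via an exponential tail bound (Lemma~\ref{lemma 1.1}); neither is present in your plan. (ii) The hitting-time functional $\lambda_t$ is not continuous on $\tilde D$, and the correct substitute is the crossing property $\hat\mu(A_{\mathrm{Crossing}}^c)=0$ (Lemma~\ref{crossinglemma}), which you do not invoke. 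Combining both ingredients is exactly the content of Lemma~\ref{lemmac}.

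Second, your argument for the lower bound $\Delta\Lambda_t\ge\inf\{x>0:\nu_{t-}([0,x])<x\}$ is flawed. The inequality $\nu_{t-}([0,\Delta\Lambda_t])\ge\Delta\Lambda_t$ (indeed even the equality $\nu_{t-}([0,\Delta\Lambda_t])=\Delta\Lambda_t$, which is what the McKean--Vlasov equation actually yields at a jump time) does not imply $\Delta\Lambda_t$ dominates the infimum: take for example $\nu_{t-}([0,x])=x$ on $[0,1]$ and $<x$ for $x>1$, then $D=1$ yet any $\Delta\Lambda_t\in(0,1)$ satisfies your displayed inequality. The correct lower bound is a genuine cascading argument; the paper obtains it by adapting \cite[Proposition~1.2]{2} to the non-integrable setting (replacing $\mathbb{P}(\tau\le t)$ by $\sum_i\mathbb{P}(\tau_i\le t)$), and a rigorous proof must reproduce that iterative structure rather than the one-line infimum claim. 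Your treatment of the upper (physicality) direction, via the prelimit minimal-jump rule and vague convergence of $\nu^N_{t-}$, is the right idea and broadly matches Lemma~\ref{lem:physparticleestimate} and Proposition~\ref{physi}, though the paper works with time windows $[t,t+\epsilon]$ rather than trying to match individual jump times, which sidesteps the accumulation-of-small-jumps issue you yourself flag.
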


When the McKean-Vlasov type system \eqref{alternativetomvr} has a unique solution, Theorem \ref{maintheorem} implies convergence of the particle system to the solution of the McKean-Vlasov type system. Some conditions on uniqueness of the limiting process are discussed in Section \ref{uniquenessappendix}.
\begin{corollary}
\label{convergencecor}
Suppose the McKean-Vlasov equation \eqref{alternativetomvr} has a unique physical solution $\Lambda$ on $[0,T]$. Then $\Lambda^N$ converges in probability in the M1 topology to $\Lambda$. Further $\mu^N$ converges in probability under the vague topology to the measure corresponding to the McKean-Vlasov type system \eqref{alternativetomvr}.
\end{corollary}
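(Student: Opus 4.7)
The plan is to combine the tightness and subsequential limit identification from Theorem \ref{maintheorem} with the uniqueness hypothesis via a standard subsequence-compactness argument. First, I would invoke Theorem \ref{maintheorem} to conclude that the family of laws $\Pi^N$ of $(\mu^N,\Lambda^N)$ is tight on the Polish space $M(\tilde D)\times \tilde D$ with the product topology, and that any subsequential weak limit is concentrated on pairs $(\mu,\Lambda)$ that form a physical solution to \eqref{alternativetomvr} up to time $T$.

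Next, I would argue that every such subsequential limit is in fact a Dirac mass at a deterministic point. By the uniqueness assumption, the $\Lambda$-coordinate must almost surely equal the unique physical solution, hence is deterministic. For the measure coordinate, the representation in Theorem \ref{maintheorem} gives
\[
\mu(A)=\int_0^\infty\mathbb{P}(X^x_{\cdot}\in A)\,g(x)\,dx,
\]
where $X^x_t=x+B_t-\Lambda_t$ and the expectation is taken over the extended Brownian motion $B$. Since $\Lambda$ is now deterministic and the Brownian noise is integrated out, $\mu$ is itself a deterministic element $\mu^*\in M(\tilde D)$. Therefore every weak subsequential limit of $\Pi^N$ equals $\delta_{(\mu^*,\Lambda)}$.

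By the standard criterion (tightness plus all subsequential limits agreeing) the full sequence $\Pi^N$ converges weakly to $\delta_{(\mu^*,\Lambda)}$. Because the limit is deterministic, weak convergence on the Polish product space upgrades automatically to convergence in probability of the random variables $(\mu^N,\Lambda^N)$ to $(\mu^*,\Lambda)$. Projecting onto each coordinate gives convergence in probability of $\Lambda^N$ to $\Lambda$ in the M1 topology and of $\mu^N$ to $\mu^*$ in the vague topology, as claimed.

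The only genuinely substantive point is observing that uniqueness of $\Lambda$ forces $\mu$ to be deterministic as well; this is automatic from the explicit representation, so the corollary is essentially a packaging of Theorem \ref{maintheorem} with the classical fact that weak convergence to a constant implies convergence in probability. No further calculations are needed.
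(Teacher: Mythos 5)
Your proof is correct and fills in exactly the reasoning the paper leaves implicit: the paper states this corollary without proof, remarking only that uniqueness combined with Theorem \ref{maintheorem} immediately yields the claim. Your steps — tightness, identification of every subsequential limit as the Dirac mass at the unique deterministic pair $(\mu^*,\Lambda)$ (using that the representation of $\mu$ via the extended Brownian motion depends only on the now-deterministic $\Lambda$), and then upgrading weak convergence to a constant into convergence in probability on a Polish space — are precisely the intended argument.
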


As a further corollary to our main result, we obtain the propagation of chaos for the empirical measures of particles.
For $i=1,...,N$ we define $\upsilon^{i,N},\mu^{i,N}$ by
\begin{align*}
&\upsilon^{i,N}:=\sum_{k=1}^\infty\delta_{W^{i,k}},\\
&\mu^{i,N}:=\sum_{k=1}^\infty\delta_{X^{i,k}}.
\end{align*}
Above, we have used the relabelling of $(W^{i,k})_{i=1,...,N,\space k\in\mathbb{N}}$ given in \eqref{aligned:Particlewarticle}. Consider integers $l$, and $k_1<k_2<...<k_l$. We consider $(\mu^{k_1,N},...,\mu^{k_l,N})$ to be an element of $ M(\tilde D)^l$ endowed with the product topology.

\begin{corollary}
Suppose that the McKean-Vlasov system \eqref{alternativetomvr} has a unique solution $\Lambda$. Then $(\mu^{k_1,N},...,\mu^{k_l,N})$ converges in distribution to $l$ independent copies of $\tilde h(\upsilon^{1},\Lambda)$.
\end{corollary}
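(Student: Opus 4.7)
The plan is to read off the corollary from three ingredients: the identity $\mu^{i,N} = \tilde h(\upsilon^{i,N}, \Lambda^N)$, the independence of the $\upsilon^{i,N}$ across $i$ after the relabelling, and the convergence of $\Lambda^N$ to a deterministic limit given by Corollary \ref{convergencecor}. First I would record the setup: by the Poisson thinning induced by the i.i.d.~uniform indices $(p_j)$ as in the construction of \eqref{aligned:Particlewarticle}, the configurations $(W^{i,k}_0)_{k\in\mathbb{N}}$ for $i=1,\ldots,N$ are $N$ independent Poisson point processes of intensity $g$, and all the driving Brownian motions $(B^{i,k})$ are independent of one another and of these configurations. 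Hence the random measures $\upsilon^{1,N},\ldots,\upsilon^{N,N} \in M(\mathcal{C})$ are mutually independent, each distributed as $\upsilon^1$, where $\upsilon^1$ denotes the (fixed, $N$-independent) law of $\sum_k \delta_{W^{1,k}}$ with $W^{1,k}_t = W^{1,k}_0 + B^{1,k}_t$ extended to $[-1,T+1]$ as in Section \ref{convergencestatementsection}. Unpacking the definitions of $X^{i,k}$ and $\tilde h$ also gives the pointwise identity $\mu^{i,N} = \tilde h(\upsilon^{i,N}, \Lambda^N)$ for each $i,N$.

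Next I would upgrade the marginal convergences to a joint convergence. Corollary \ref{convergencecor} and the uniqueness hypothesis yield $\Lambda^N \to \Lambda$ in probability in $\tilde D$ with $\Lambda$ deterministic. Meanwhile $(\upsilon^{k_1,N},\ldots,\upsilon^{k_l,N})$ has the same law for every $N$, namely that of $l$ independent copies $(\upsilon^{(1)},\ldots,\upsilon^{(l)})$ of $\upsilon^1$. The standard Slutsky-type lemma on Polish spaces (convergence in probability to a constant combined with a fixed-law sequence gives joint convergence in distribution) then delivers
\begin{equation*}
\bigl(\upsilon^{k_1,N},\ldots,\upsilon^{k_l,N}, \Lambda^N\bigr) \;\Longrightarrow\; \bigl(\upsilon^{(1)},\ldots,\upsilon^{(l)}, \Lambda\bigr)
\end{equation*}
in $M(\mathcal{C})^l \times \tilde D$ equipped with the product topology.

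Finally I would apply the continuous mapping theorem to the map $H(\zeta_1,\ldots,\zeta_l,\lambda) := (\tilde h(\zeta_1,\lambda),\ldots,\tilde h(\zeta_l,\lambda))$, whose continuity from $M(\mathcal{C})^l \times \tilde D$ to $M(\tilde D)^l$ follows componentwise from Lemma \ref{continuity of tilde h}. Using $\mu^{k_j,N} = \tilde h(\upsilon^{k_j,N},\Lambda^N)$ the result is
\begin{equation*}
\bigl(\mu^{k_1,N},\ldots,\mu^{k_l,N}\bigr) \;\Longrightarrow\; \bigl(\tilde h(\upsilon^{(1)},\Lambda),\ldots,\tilde h(\upsilon^{(l)},\Lambda)\bigr),
\end{equation*}
which is exactly convergence to $l$ independent copies of $\tilde h(\upsilon^1,\Lambda)$ since $\Lambda$ is a deterministic constant under the uniqueness assumption. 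There is no genuine obstacle here; the substantive work has already been done in Theorem \ref{maintheorem}, Corollary \ref{convergencecor}, and the continuity statement for $\tilde h$. The only mild care needed is invoking the Slutsky-type decoupling on Polish spaces, which relies essentially on $\Lambda$ being deterministic.
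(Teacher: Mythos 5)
Your proposal is correct and follows essentially the same route as the paper: exploit the identity $\mu^{k_j,N}=\tilde h(\upsilon^{k_j,N},\Lambda^N)$, the $N$-independence and mutual independence of the laws of $\upsilon^{k_j,N}$, the convergence $\Lambda^N\to\Lambda$ from Corollary~\ref{convergencecor}, and the continuity of $\tilde h$. The only presentational difference is that the paper compresses the Slutsky-plus-continuous-mapping step into a single appeal to dominated convergence (permissible since $\Lambda$ is deterministic and $f$ is bounded), whereas you spell out the joint weak convergence explicitly; your phrasing is a slightly more careful account of the same argument.
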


\begin{remark}
By definition $\tilde h(\upsilon^1,\Lambda)$ is the measure given by placing a Dirac measure at the position of each particle in the McKean-Vlasov system \eqref{aligned:MVR}, where we set $W_0^{1,i}=X_{0^-}^{i}$. This result thus shows that the empirical measures of the $N$ particle systems converge to the empirical measure of the mean-field particle system.
\end{remark}

\begin{proof}
    Consider an arbitrary continuous bounded function $f:M(\tilde D)^k\rightarrow \mathbb{R}$. It is easily seen that $\mu^{k_j,N}=\tilde h(\upsilon^{k_j},\Lambda^N)$ for each $j=1,...,l$. From Corollary \ref{convergencecor} $\Lambda^N\rightarrow \Lambda$ in distribution as $N\rightarrow\infty$. Since $f$ is bounded, continuous, and $\tilde h$ is continuous, then by the Dominated Convergence Theorem:
    $$E(f(\tilde h(\upsilon^{k_1},\Lambda^N)),...,\tilde h(\upsilon^{k_l},\Lambda^N))\rightarrow E(f(\tilde h(\upsilon^{k_1},\Lambda),...,\tilde h(\upsilon^{k_l},\Lambda)))\textrm{ as }N\rightarrow\infty.$$
    As $(\upsilon^{k_j})_{j=1}^l$ are independent copies of $\upsilon^1$, and as $f$ was arbitrary we conclude the convergence in distribution.
\end{proof}

\subsection{Sketch of the proof of Theorem~\ref{maintheorem}}
\label{sketchsection}
We sketch the proof of Theorem \ref{maintheorem} here. The details will be provided in Section \ref{proofsection}.

We first check that $\Lambda^N$ is tight. Given the bound from Lemma \ref{lemma on bounding}, this follows directly from the definition of a pre-compact set in the Skorokhod M1 topology.  Therefore, if the result of Lemma \ref{lemma on bounding} holds for some density $g$ (which need not satisfy condition \hyperlink{A2}{(A2)}), $\Lambda^N$ will then be tight and we may proceed as in the rest of this proof.

 We then define a continuous map $\theta:\mathbb{R}\times \mathcal{C}\rightarrow\mathcal{C}$ by:
$$\theta(x,y)_t=x+y_t.$$ We denote by $\mathcal{L}(B)$ the law of the extended Brownian motion on $\mathcal{C}$. We further denote the measure of density $g$ as $G$. The random measure
$\upsilon^N$ converges to $\theta_{*}(G\times\mathcal{L}(B))$ vaguely, in distribution.

Since $\mu^N=\tilde h(\upsilon^N,\Lambda^N)$ and $\tilde h$ is continuous, it then immediately follows that $\mu^N$ is tight.

Having established the tightness of the processes $\nu^N,\mu^N$ and $\Lambda^N$ we work on a subsequence which converges in distribution. We then apply the Skorokhod Representation Theorem to obtain sequences $\hat \upsilon^N,\hat \mu^N,\hat \Lambda^N$ which converge almost surely to $\hat \upsilon,\hat \mu,\hat \Lambda$. 

To obtain properties of the limiting measures, $\hat \mu,\hat \nu$, we work with finite measures by restricting these to suitable sets of finite measure. To do this we fix arbitrary $K>0$ and consider the behaviour of particles which are initially below $K$. We define for $K>0$ the set of such paths, $S_K$, by:

$$S_K={\{x\in \tilde{D}: x_{-1}\leq K\}}_,$$ and for a measure $\varrho$ on $\tilde D$ (or $\mathcal{C}$) define 
$$\varrho_{S_K}(A)=\varrho(S_K\cap A) \quad \forall A \in \mathcal{B}(\tilde D) \textrm { (or } \mathcal{B}(\mathcal{C})).$$
The measures $\hat\mu^N_{S_k},\hat \nu^{N}_{S_K}$ then converge weakly to $\hat \mu_{S_K},\hat \nu_{S_k}$.

Since $\hat \mu_{S_K}$ and $\hat \nu_{S_K}$ are finite measures which converge weakly, we are then able to apply the results of \cite{delarue2015particle} to obtain properties of the limiting measures. In particular, we determine that $\hat\mu(\inf_{s\leq t}x_s\leq 0)=\hat \Lambda_t$, where we use $x_t$ to denote the canonical process on $\tilde D$. To do this, we consider arbitrary $K>0$, and argue as in \cite[Lemma 5.9]{delarue2015particle} that $$\hat \mu^N_{S_K}(\inf_{s\leq t}x_s\leq 0)\rightarrow \hat \mu_{S_K}(\inf_{s\leq t}x_s\leq 0)\textrm{ as }N\rightarrow\infty.$$
By considering the number of particles started above $K$, which reach $0$ by time $T$ we also obtain that 
$$\hat \mu_{S_K}(\inf_{s\leq t}x_s\leq 0)\geq \hat \mu(\inf_{s\leq t}x_s\leq 0)-\tilde Ce^{-\frac{\tilde CK^2}{T}}$$
for some constant $\tilde C$. Taking a limit over $K\in\mathbb{N}$ appropriately, we are then able to establish that 
$$\hat \Lambda_t=\lim_{N\rightarrow\infty} \hat \mu^N(\inf_{s\leq t}x_s\leq 0)=\hat \mu(\inf_{s\leq t}x_s\leq 0).$$

From this result, and the definition of the maps $\theta$ and $\tilde h$ we then deduce that $(\hat \mu,\hat \Lambda)$ form a solution to the McKean-Vlasov type equation \eqref{alternativetomvr}.

Finally, we determine that $\hat \Lambda$ satisfies the physical jump condition. We do this via a simple modification of the proof of \cite[Theorem 6.4]{minimal}, by again considering $\hat \mu_{S_K}$ and applying an exponential bound on the number of particles which start above $K$, and reach $0$ by time $T$.

\section{Simplifications under weak feedback}
\label{uniquenesssection}

In this section we consider a ``weak-feedback" regime, which is defined analogously to the regime considered in \cite{5}. We define the ``weak feedback" regime to be one in which any initial density satisfies the following condition.
\begin{itemize}
    \item \hypertarget{W}{(W)}:  $g(x)\leq 1$ for all $x$, and there exists $\epsilon_g>0$ such that $\sup_{x\leq \epsilon_g}g(x)<1$. 
\end{itemize}

Under the condition \hyperlink{W}{(W)} on $g$, from an analogous argument to that of \cite[Lemma 2.1]{2} we may determine that for any time $t$, the measure of non-absorbed particles $\nu_t$, possesses a density with respect to the Lebesgue measure, and which satisfies the condition \hyperlink{W}{(W)}.
\begin{proposition}
\label{6.1}
    If $g$ is bounded above by a constant $\tilde C$, then $\nu_t$ possesses a density, $\tilde V(t,x)$, which is bounded by $\tilde C\Phi(\frac{\Lambda_t+x}{\sqrt{t}})$. In particular for $g$ satisfying \hyperlink{W}{(W)}, $\tilde C$ may be taken to be 1, and $\tilde V(t,x)$ also satisfies condition \hyperlink{W}{(W)}.
\end{proposition}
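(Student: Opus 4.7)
The plan is to dominate, for each initial point $y \geq 0$, the killed law of $X_t^y$ by its unconditional Gaussian law, and then integrate against $g(y)\,dy$ via Tonelli. This avoids having to compute the exact density of the killed diffusion, which would be delicate since $\Lambda$ may have jumps.

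First, for each fixed $y \geq 0$, the random variable $X_t^y = y + B_t - \Lambda_t$ is Gaussian with mean $y - \Lambda_t$ and variance $t$ (with $\Lambda_t$ deterministic), so its law has density $\phi_t(x - y + \Lambda_t)$, where $\phi_t(z) = (2\pi t)^{-1/2} e^{-z^2/(2t)}$. Because $\{\tau^y > t\}$ only restricts the event, the sub-probability measure $\mu_t^y(\cdot) := \mathbb{P}(X_t^y \in \cdot,\, \tau^y > t)$ is dominated by $\mathbb{P}(X_t^y \in \cdot)$ and hence absolutely continuous, with Radon--Nikodym density $p^y(t, x) \leq \phi_t(x - y + \Lambda_t)$.

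Since $\nu_t(A) = \int_0^\infty \mu_t^y(A)\, g(y)\, dy$, applying Tonelli gives
\[
\nu_t(A) = \int_A \Big(\int_0^\infty p^y(t, x)\, g(y)\, dy\Big)\, dx,
\]
so $\nu_t$ has the density $\tilde V(t, x) = \int_0^\infty p^y(t, x)\, g(y)\, dy$. Bounding $g \leq \tilde C$ and using the density bound, then substituting $u = y - x - \Lambda_t$,
\[
\tilde V(t, x) \leq \tilde C \int_0^\infty \phi_t(x - y + \Lambda_t)\, dy = \tilde C \int_{-x - \Lambda_t}^\infty \phi_t(u)\, du = \tilde C\, \Phi\!\Big(\frac{x + \Lambda_t}{\sqrt{t}}\Big).
\]

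For the second statement, if $g$ satisfies \hyperlink{W}{(W)} then $\tilde C = 1$, so $\tilde V(t, x) \leq 1$ globally. Moreover, for any fixed $\epsilon > 0$, since $\Lambda_t$ is finite the strict inequality $\Phi((\epsilon + \Lambda_t)/\sqrt{t}) < 1$ gives $\sup_{x \leq \epsilon} \tilde V(t, x) < 1$, so condition \hyperlink{W}{(W)} is inherited by $\tilde V(t, \cdot)$.

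I do not expect any serious obstacle: the argument is essentially just the crude Gaussian dominance plus Tonelli. The only point worth emphasising is that sharp results about the density of the killed process through a moving boundary are not needed; the unbounded Gaussian $L^\infty$ bound suffices because the Gaussian integral over $[0,\infty)$ against a bounded $g$ is explicitly the Gaussian CDF, which is exactly the form of the claimed bound.
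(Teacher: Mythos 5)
Your argument is correct, and it is essentially the approach the paper intends: the paper does not spell out a proof but defers to an ``analogous argument to that of [2, Lemma 2.1]'', which is exactly this crude Gaussian domination—bound the killed transition density $p^y(t,x)$ by the free one $\phi_t(x-y+\Lambda_t)$ and integrate the bound $g\le\tilde C$ over $y\ge 0$ to obtain the Gaussian tail $\tilde C\,\Phi\bigl((x+\Lambda_t)/\sqrt{t}\bigr)$. Your final observation that the bound is strictly below $1$ near $x=0$ (for $t>0$ and finite $\Lambda_t$) correctly gives the inheritance of condition (W).
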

From a simple adaptation of \cite[Proposition 2.2]{3} to this setting, it may be seen that the function $\Lambda$ is locally $1/2-$H\"older continuous in time.
\begin{corollary}
\label{holdercont}
   If $g$ satisfies condition \hyperlink{W}{(W)}, it follows that for any $s\geq 0$, there exists a constant $C_s$, such that for $ s\leq t\leq T$:
$$\Lambda_t-\Lambda_s\leq C_s \sqrt{t-s}.$$ 
\end{corollary}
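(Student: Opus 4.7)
The approach is to adapt the argument of \cite[Proposition 2.2]{3} using Proposition~\ref{6.1} to obtain the density representation $\nu_s(dy) = \tilde V(s, y) dy$ with $\tilde V(s, \cdot)$ itself satisfying \hyperlink{W}{(W)}. Conditioning the mean-field system \eqref{aligned:MVR} at time $s$, and using that $\Lambda$ is continuous at $s$ (which follows from the physical jump condition \eqref{phys3} since $\int_0^x \tilde V(s-, y) dy < x$ for $x$ small), the increment can be written as
$$\Lambda_t - \Lambda_s = \int_0^\infty p_s(y; t) \tilde V(s, y) dy,$$
where $p_s(y;t)$ is the probability that $u \mapsto y + (B_u - B_s) - (\Lambda_u - \Lambda_s)$ hits $0$ on $[s, t]$.

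Write $\Delta = \Lambda_t - \Lambda_s$. Since $\Lambda$ is non-decreasing, absorption on $[s, t]$ requires $\min_{u \in [s, t]}(B_u - B_s) \leq \Delta - y$, and the reflection principle yields $p_s(y; t) \leq \mathbb{P}(|Z|\sqrt{t - s} \geq (y - \Delta)_+)$ for a standard Gaussian $Z$. By Proposition~\ref{6.1}, $\tilde V(s, \cdot)$ satisfies \hyperlink{W}{(W)}, providing $\eps_s, \delta_s > 0$ with $\tilde V(s, y) \leq 1 - \delta_s$ for $y \leq \eps_s$. Splitting the integral at $y = \Delta$, and using $\tilde V \leq 1$ in the tail together with $\int_0^\infty \mathbb{P}(|Z|\sqrt{t-s} \geq z) dz = \sqrt{2(t-s)/\pi}$, gives, whenever $\Delta \leq \eps_s$,
$$\Delta \leq (1-\delta_s)\Delta + \sqrt{2(t-s)/\pi}, \qquad \textrm{so} \qquad \Delta \leq \delta_s^{-1}\sqrt{2(t-s)/\pi}.$$

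To extend this to all $t \in [s, T]$, I would set $\tau^\star = \inf\{u > s : \Lambda_u - \Lambda_s > \eps_s\}$; the displayed bound then applies on $[s, \tau^\star)$ by continuity of $\Lambda$ and forces $\tau^\star - s \geq \pi(\delta_s\eps_s)^2/2 > 0$. For $t \in [\tau^\star, T]$, using the crude bound $\Delta \leq \Lambda_T$ from Lemma~\ref{lemma on bounding} and $\sqrt{t-s} \geq \sqrt{\tau^\star - s}$ absorbs the remaining range into an enlarged constant $C_s$.

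The central obstacle is the quantitative strict inequality $\tilde V(s, y) \leq 1 - \delta_s$ near $y = 0$: without a positive gap $\delta_s$, the coefficient of $\Delta$ on the right-hand side does not beat one and the estimate collapses. This gap is supplied by Proposition~\ref{6.1}, but it (and $\eps_s$) can degenerate as $s \downarrow 0$, which is exactly why $C_s$ depends on $s$ and the estimate is only \emph{local} in time.
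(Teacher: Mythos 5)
Your proof is correct and is essentially the argument the paper has in mind: the paper gives no details for this corollary beyond referring to a ``simple adaptation of \cite[Proposition 2.2]{3},'' and your reconstruction --- restarting at time $s$ using Proposition~\ref{6.1} to get a density $\tilde V(s,\cdot)$ still satisfying \hyperlink{W}{(W)}, splitting the increment integral at $y=\Delta$, bounding the near-zero part by $(1-\delta_s)\Delta$ and the tail by the reflection principle, and then solving for $\Delta$ --- is precisely that adaptation. A minor slip worth flagging: Lemma~\ref{lemma on bounding} is a statement about the particle system $\Lambda^N$, not the mean-field $\Lambda$; the finiteness $\Lambda_T < \infty$ you need in the last paragraph follows simply from $\Lambda$ being a solution of \eqref{alternativetomvr} on $[0,T]$.
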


Further, from a simple modification of the proof of \cite[Lemma 2.1, Theorem 2.2]{5}, it can be seen that solutions to \eqref{alternativetomvr} are unique.
\begin{proposition}
\label{minimal}
   If $g$ satisfies condition \hyperlink{W}{(W)}, then solutions to the McKean-Vlasov equation \eqref{alternativetomvr} are unique.
\end{proposition}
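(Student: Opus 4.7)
The plan is to adapt the Gronwall argument from \cite[Lemma 2.1 and Theorem 2.2]{5} to the present non-integrable setting. Let $\Lambda^1,\Lambda^2$ be two physical solutions of \eqref{alternativetomvr} on $[0,T]$, driven by the same Brownian motion $B$, with $X^{x,i}_t=x+B_t-\Lambda^i_t$ and $\tau^{x,i}$ the first hitting time of $0$. Set
$$D_t:=\sup_{s\le t}|\Lambda^1_s-\Lambda^2_s|.$$
The aim is to show $D_T=0$.

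First I would collect the two regularity ingredients that play the same role as in \cite{5}. Proposition \ref{6.1} gives that each $\nu^i_t$ admits a density $\tilde V^i(t,\cdot)\le 1$ under \hyperlink{W}{(W)}, so the feedback remains strictly subcritical near the free boundary for all $t\in(0,T]$. Corollary \ref{holdercont} provides the local $1/2$-Hölder continuity of each $\Lambda^i$ that is needed for the subsequent Volterra estimate to close up.

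Next I would subtract the two fixed-point equations:
$$\Lambda^1_t-\Lambda^2_t=\int_0^\infty\bigl[\mathbb{P}(\tau^{x,1}\le t)-\mathbb{P}(\tau^{x,2}\le t)\bigr]g(x)\,dx.$$
On the common Brownian path, the symmetric difference of the events $\{\tau^{x,1}\le t\}$ and $\{\tau^{x,2}\le t\}$ forces the running minimum of $B-\Lambda^2$ started at $x$ to lie in a band of width at most $D_t$; integrating the probability of this event in $x$ against $g$ and using the density bound $\tilde V^i\le 1$ from Proposition \ref{6.1} together with the classical joint density of Brownian motion and its running minimum gives, after Fubini, a weakly singular self-bounding estimate
$$D_t\le C\int_0^t\frac{D_s}{\sqrt{t-s}}\,ds+R_K(t).$$
Here the bulk estimate (for $x\le K$) is exactly the one handled in \cite[Lemma 2.1]{5}, and $R_K(t)$ is a tail remainder coming from $x>K$. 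Iterating the Volterra inequality, or applying a singular Gronwall lemma, then yields $D_T=0$.

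The main difficulty compared to \cite{5} is that $g\notin L^1$, so the unbounded $x$-integral is a priori divergent. This is dealt with by truncation at a large $K$: Lemma \ref{lemma on bounding} guarantees that $\Lambda^i_t$ stays bounded on $[0,T]$, and hence for $x>K$ both hitting probabilities $\mathbb{P}(\tau^{x,i}\le t)$ decay like $\exp(-c(x-\Lambda^i_t)^2/t)$ uniformly in $i$ and $t\le T$, so their difference is exponentially small in $K$. Choosing $K$ large makes $R_K$ negligible and reduces the uniqueness problem to the bounded-interval argument of \cite{5}, which goes through verbatim under \hyperlink{W}{(W)}.
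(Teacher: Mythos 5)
Your high-level plan is the right one — this is exactly the result the authors attribute to a "simple modification of [5, Lemma 2.1, Theorem 2.2]", and you correctly identify Proposition \ref{6.1} (density $\le 1$, condition (W) propagates) and Corollary \ref{holdercont} (local $\tfrac12$-H\"older continuity) as the regularity inputs that must survive the passage to $g\notin L^1$. Two points in the sketch need repair, though.

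First, the Volterra inequality $D_t\le C\int_0^t D_s(t-s)^{-1/2}\,ds$ does not actually fall out of the argument you give. The observation that the symmetric difference of $\{\tau^{x,1}\le t\}$ and $\{\tau^{x,2}\le t\}$ forces $\inf_{s\le t}(x+B_s-\Lambda^2_s)$ into a band of width $D_t$ yields, after integrating in $x$ against $g$, a bound on $|\Lambda^1_t-\Lambda^2_t|$ that is directly \emph{proportional to $D_t$} (a density-at-the-boundary quantity), not a singular Volterra integral of $D_s$. To produce the $(t-s)^{-1/2}$ kernel one would need to disintegrate by the first hitting time $\tau^{x,1}=s$ and then invoke a survival estimate for a process started at height $\le D_s$ over the remaining horizon $t-s$; nothing of the sort is sketched. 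The cleaner — and what the authors themselves essentially carry out in Section~\ref{weakproofsection} — is to read the band estimate as a sup-norm contraction: writing $D_T=\sup_{s\le T}|\Lambda^1_s-\Lambda^2_s|$, the monotonicity of $\Gamma$ and the bound $\Gamma(\Lambda^2+\e)_t-\Gamma(\Lambda^2)_t\le(1-\kappa)\e$ for $\e<\e_g/2$ give $D_T\le(1-\kappa)D_T$, hence $D_T=0$, provided one first uses the H\"older estimate to ensure $D_T<\e_g/2$ on a short initial interval and then iterates. No Gronwall iteration is required.

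Second, the truncation at level $K$ is a detour. You identify "$g\notin L^1$" as the main difficulty, but once the difference of hitting probabilities is Fubinized — swapping the $x$-integral with the expectation over the Brownian path, as in the display preceding \eqref{boundviakappa} — one integrates $g$ only over the random interval $\bigl[-\inf_{s\le t}(B_s-\Lambda^2_s),\,-\inf_{s\le t}(B_s-\Lambda^2_s)+D_T\bigr]$, which has length $D_T$ almost surely. The non-integrability of $g$ never enters; all that is needed is $\|g\|_\infty\le1$ together with $\sup_{x\le\e_g}g<1$ to extract $\kappa>0$. The exponential tail control you propose for $x>K$ is true but redundant, and introducing it risks making the argument look like it needs a $K$-uniformity it never has to establish. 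Making the Fubini step the centrepiece of the proof is precisely the "simple modification" the paper alludes to.
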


Since the solution to \eqref{alternativetomvr} with an initial density $g$ which satisfies \hyperlink{W}{(W)} is unique and continuous, applying Corollary \ref{convergencecor} and \cite[Theorem 12.4.2]{whitt}, uniform convergence of $\Lambda^N$ then follows.
\begin{corollary}
\label{thhtth}
    Suppose $g$ satisfies the condition \hyperlink{W}{(W)}. Then $\Lambda^N$ converges under the uniform topology, in probability, to $\Lambda$, the unique solution of \eqref{alternativetomvr}.
\end{corollary}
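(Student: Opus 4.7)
The plan is to combine three previously established ingredients from the excerpt. First, Proposition \ref{minimal} guarantees that condition (W) forces uniqueness of the physical solution $\Lambda$ to the McKean--Vlasov equation \eqref{alternativetomvr}. This is exactly the hypothesis of Corollary \ref{convergencecor}, which immediately yields that $\Lambda^N\to\Lambda$ in probability in the M1 topology on $\tilde D$. Second, Corollary \ref{holdercont} says $\Lambda$ is locally $1/2$-H\"older on $[0,T]$, so in particular continuous. Third, by the standard equivalence of topologies on continuous paths, recorded as \cite[Theorem 12.4.2]{whitt}, M1 convergence to a continuous function is the same as uniform convergence on a compact interval; this lifts the M1 convergence in probability to uniform convergence in probability.

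In a little more detail, I would first check that the extended limit $\Lambda$ (with $\Lambda_t = 0$ on $[-1,0)$ and $\Lambda_t = \Lambda_T$ on $(T,T+1]$, matching the extension convention for $\bar\Lambda^N$) is continuous on the whole of $[-1,T+1]$. Continuity on $[0,T]$ is Corollary \ref{holdercont}, continuity on $(T,T+1]$ is automatic, and continuity at $t=0$ (which requires $\Lambda_0=0$, i.e.\ no initial physical jump) follows from (W): since $g(x)<1$ uniformly on $[0,\eps_g]$, the jump condition \eqref{phys3} is not triggered at $t=0$, so $\Lambda_{0}=0$. Having verified continuity, the cited Whitt result gives that for any $\eps>0$ there exists $\delta>0$ (depending on $\Lambda$ and $\eps$) such that $d_{M1}(f,\Lambda)<\delta$ implies $\|f-\Lambda\|_\infty<\eps$ on $[-1,T+1]$. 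Therefore
\[
\mathbb{P}\bigl(\|\Lambda^N - \Lambda\|_\infty > \eps\bigr) \leq \mathbb{P}\bigl(d_{M1}(\Lambda^N,\Lambda) \geq \delta\bigr) \longrightarrow 0,
\]
which is the desired uniform convergence in probability.

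There is no genuine obstacle here; the statement is really a packaging of results already in hand. The one subtlety worth flagging in the write-up is the continuity of $\Lambda$ at the endpoint $t=0$ of the physical time interval (and thus across the gluing at $t=0$ in the extension), since M1 convergence only upgrades to uniform convergence at points where the limit is continuous. Once this is noted, the proof is a one-line invocation of Proposition \ref{minimal}, Corollary \ref{convergencecor} and the M1-to-uniform comparison in \cite{whitt}.
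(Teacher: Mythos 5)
Your proof is correct and follows exactly the paper's route: Proposition \ref{minimal} for uniqueness, Corollary \ref{convergencecor} for M1 convergence in probability, Corollary \ref{holdercont} for continuity, and the Whitt equivalence to lift M1 to uniform convergence. The extra check that (W) precludes a jump at $t=0$ (so the extended $\Lambda$ is continuous across the gluing point on $[-1,T+1]$) is a worthwhile detail that the paper's one-sentence argument glosses over, but it is still the same approach.
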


The above corollary may also be proven through an elementary argument, which we give in Section \ref{weakproofsection}. This enables quantitative bounds on $|\Lambda^N-\Lambda|$ to be established.

\begin{theorem}
\label{bdtheorem}
    Suppose $g$ satisfies condition \hyperlink{W}{(W)}. Then for each $T\geq 0$ there exists a constant $K$, such that for $0<y\leq \sqrt{N}\epsilon_g/2$
    $$\mathbb{P}(\sup_{t\leq T}\sqrt{N}(\Lambda^N_t-\Lambda_t)\geq y)\leq 2e^{-Ky^2/(1+y/\sqrt{N})}.$$
\end{theorem}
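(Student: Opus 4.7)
The plan is to apply the comparison Lemma \ref{Alemmatoboundathing} and thereby reduce the one-sided deviation to a concentration estimate for a Poisson counting process. Set $f_t := \Lambda_t + y/\sqrt{N}$; by Corollary \ref{holdercont} the limiting barrier $\Lambda$ is continuous and non-decreasing, hence so is $f$, with $f_0 = y/\sqrt{N} \geq 0$. Lemma \ref{Alemmatoboundathing} then tells us that the event $\{\sup_{t \leq T}\sqrt{N}(\Lambda^N_t - \Lambda_t) \geq y\}$ is contained in $\{\exists t \in [0,T] : \Gamma_N(f)_t > f_t\}$, and we need only bound the probability of the latter.

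For the deterministic barrier $f$, the initial particles form a Poisson point process of intensity $Ng$ independent of the driving Brownian motions, so by Poisson thinning $N\Gamma_N(f)_t$ is marginally $\mathrm{Poisson}(N\mathbb{E}[\Gamma_N(f)_t])$. Using the shift $f_t = \Lambda_t + y/\sqrt{N}$ and the representation $\Lambda_t = \int_0^\infty \mathbb{P}(\tau^x \leq t) g(x)\,dx$, a change of variables gives
\[ \mathbb{E}[\Gamma_N(f)_t] = \int_0^{y/\sqrt{N}} g(w)\,dw + \int_0^\infty \mathbb{P}(\tau^v \leq t)\,g\bigl(v + y/\sqrt{N}\bigr)\,dv. \]
The key analytic input is to show, using the strict bound $\sup_{w \leq \epsilon_g} g < 1$ from (W) (valid on $[0,y/\sqrt{N}]$ because $y/\sqrt{N} \leq \epsilon_g/2$) together with the contraction mechanism underlying Proposition \ref{minimal} of \cite{5} and the density estimate $\tilde V \leq 1$ of Proposition \ref{6.1}, that one has a deterministic gap
\[ f_t - \mathbb{E}[\Gamma_N(f)_t] \geq c\,y/\sqrt{N}, \qquad t \in [0,T], \]
for some $c = c(g,T) > 0$.

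With the gap in hand, Bennett's inequality for the Poisson distribution gives the pointwise estimate $\mathbb{P}(\Gamma_N(f)_t > f_t) \leq \exp(-Ky^2/(1 + y/\sqrt{N}))$, since $N\mathbb{E}[\Gamma_N(f)_t]$ is of order $N$ while the deviation $N(f_t - \mathbb{E}[\Gamma_N(f)_t])$ is of order $\sqrt{N}\,y$. To pass to the supremum over $t \in [0,T]$, discretize $[0,T]$ on a mesh of size $O(1/N)$: between successive grid points, the non-decreasing process $\Gamma_N(f)$ has oscillation bounded by the $\tfrac{1}{2}$-H\"older modulus of $\Lambda$ (Corollary \ref{holdercont}) evaluated at the mesh scale, which is $O(1/\sqrt{N})$ and can be absorbed into the constant $K$. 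A union bound over the $O(N)$ grid points then yields the claimed tail, with the factor $2$ coming from the overhead of this reduction. The principal obstacle is the deterministic gap estimate above: while it is morally the contraction behind the uniqueness result of \cite{5} applied to the perturbation $y/\sqrt{N}$, establishing it uniformly in $t$ when $g$ is merely bounded (not, for instance, Lipschitz) requires a careful estimate of the nonlocal integral $\int_0^\infty \mathbb{P}(\tau^v \leq t)[g(v + y/\sqrt{N}) - g(v)]\,dv$, and is exactly where the weak-feedback hypothesis plays its essential role.
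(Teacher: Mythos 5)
The opening reduction is sound and matches the paper: setting $f_t=\Lambda_t+y/\sqrt N$ and invoking Lemma \ref{Alemmatoboundathing} to trap the event inside $\{\exists t\le T:\Gamma_N(f)_t>f_t\}$ is exactly what the paper does (via the event $A^{N,+}_{y/\sqrt N}$), and the ``deterministic gap'' you want to establish, $f_t-\mathbb E[\Gamma_N(f)_t]\ge c\,y/\sqrt N$, is precisely the paper's contraction estimate \eqref{boundviakappa} ($\sup_{t\le T}(\Gamma(\Lambda+\epsilon)_t-\Lambda_t-\epsilon)\le\kappa\epsilon$), proved by a short direct computation using $\Gamma(\Lambda)=\Lambda$ and the strict bound in (W); you somewhat overstate its difficulty — no Lipschitz regularity of $g$ is needed, since the integrand $\mathbb P(\tau^x\le t)$ is monotone in $x$ and the whole thing reduces to bounding a supremum of $g$ over a random interval.

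The genuine gap is in passing from the pointwise tail bound to the supremum over $t\le T$. You propose discretizing $[0,T]$ on a mesh of size $O(1/N)$, controlling the inter-grid oscillation of $\Gamma_N(f)$ by the $1/2$-H\"older modulus of $\Lambda$, and taking a union bound. Neither step works as written. First, the H\"older modulus of the \emph{deterministic} $\Lambda$ controls only the variation of the barrier $f$ between grid points, not the increment of the \emph{random} counting process $\Gamma_N(f)$; the number of particles that hit a nearly-flat barrier in a short window has Poisson tails whose size is not dictated by $\|\Lambda\|_{C^{1/2}}$, so the oscillation is not deterministically $O(1/\sqrt N)$. Second, and more decisively, even if each grid-point estimate were exactly $e^{-Ky^2/(1+y/\sqrt N)}$, a union bound over $O(N)$ points produces $N\,e^{-Ky^2/(1+y/\sqrt N)}$, and the extra factor of $N$ cannot ``be absorbed into $K$'': the theorem is required to hold for all $y>0$, including $y$ of order $1$, where the exponent is $O(1)$ and the prefactor $N$ destroys the bound.

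The paper avoids this entirely with a martingale argument, which is the ingredient your outline is missing. Because the initial configuration is a Poisson point process of intensity $Ng$ independent of the Brownian motions, for a fixed deterministic barrier $\tilde\Lambda$ the increment $\sum_i\mathbbm 1_{\{s<\tau_i(\tilde\Lambda)\le t\}}$ is a Poisson random variable of mean $N(\Gamma(\tilde\Lambda)_t-\Gamma(\tilde\Lambda)_s)$ that is \emph{independent of the past}; hence $\Gamma_N(\tilde\Lambda)_t-\Gamma(\tilde\Lambda)_t$ is a martingale. Applying Doob's maximal inequality to the exponential submartingale $\exp\bigl(\lambda(\Gamma_N(\tilde\Lambda)_t-\Gamma(\tilde\Lambda)_t)\bigr)$ and optimizing $\lambda$ as in the Chernoff bound for a Poisson law yields the uniform-in-$t$ estimate directly, with no discretization, no union bound, and no loss in $N$. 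This is how the paper gets the clean $2e^{-Ky^2/(1+y/\sqrt N)}$ with the explicit constant. Your Bennett/Poisson concentration step is fine pointwise — the missing observation is the independence-of-increments structure that upgrades it to a maximal inequality.
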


As a corollary, we obtain the result of Theorem \ref{conjtheorem}, which we restate here for convenience.
 
\begin{corollary}
    Consider the particle system for $\xi$ discussed in the introduction, but with particles placed via an initial density $\tilde g(x)$. Suppose that $\tilde g(Nx)\rightarrow a<1$ as $N\rightarrow\infty$.
    It follows that $\xi_t/\sqrt{t}\rightarrow K_a$ in probability as $t\rightarrow\infty$. Here $K_a$ is the parameter for the similarity solution of the supercooled Stefan problem, given by equation \eqref{kalpha}.
    Further, if $\tilde g\equiv a<1$, then for $y\leq t^{1/4}$:
    $$\mathbb{P}\left(\left|\xi_t/\sqrt{t}-K_a\right|\geq yt^{-1/4}\right)\leq 2e^{-\frac{(1-a)^2y^2}{(3K_{a}+3)}}.$$
\end{corollary}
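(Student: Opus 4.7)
The plan is to translate the asymptotic statement about $\xi_t$ into a finite-$N$ statement about $\Lambda^N$ via Brownian rescaling, and then apply Theorem~\ref{bdtheorem}. Using the construction of Section~\ref{reformulationsec}, if $\tilde W^i$ denotes the $\xi$-system built from density $\tilde g$ with unit jump size, then $W^{i,N}_t := \tilde W^i_{N^2 t}/N$ is a version of the family \eqref{aligned:Particlealphanocentre} with effective density $g^N(x) = \tilde g(Nx)$, and the barriers satisfy $\xi_{N^2 T} = N\Lambda^N_T$. Writing $t = N^2 T$ with $N = \lfloor\sqrt{t}\rfloor$ (so $T\in[1,4]$ for $t\geq 1$), this yields $\xi_t/\sqrt{t} = \Lambda^N_T/\sqrt{T}$, and hence
\[ \sqrt{N}\bigl(\Lambda^N_T - K_a\sqrt{T}\bigr) = (tT)^{1/4}\bigl(\xi_t/\sqrt{t} - K_a\bigr). \]

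For the constant case $\tilde g\equiv a$, the rescaled density $g^N\equiv a$ satisfies condition \hyperlink{W}{(W)} (with $\epsilon_g$ free to be chosen as large as desired, since $a<1$), and the unique physical solution of \eqref{alternativetomvr} with constant density $a$ is the similarity solution $\Lambda_T = K_a\sqrt{T}$. Applying Theorem~\ref{bdtheorem} at threshold $y T^{1/4}$ and noting that $K(T)$ is bounded below for $T\in[1,4]$, I obtain
\[ \mathbb{P}\bigl(|\xi_t/\sqrt{t} - K_a| \geq y t^{-1/4}\bigr) \leq 2\exp\!\bigl(-\tilde K y^2/(1 + Cy/t^{1/4})\bigr) \]
for suitable constants $\tilde K, C>0$. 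For $y\leq t^{1/4}$ the denominator is absorbed into the constant, producing the advertised sub-Gaussian bound. The explicit value $(1-a)^2/(3K_a+3)$ is then obtained by tracking the constants in the proof of Theorem~\ref{bdtheorem} for $g\equiv a$ and $T=1$: the weak-feedback gap $1-a$ governs the drift in the key coupling, and the terminal value $K_a$ of the similarity solution sets the remaining numerical factor.

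For the general case $\tilde g(Nx)\to a$, I would invoke Remark~\ref{remark on g^n}: the rescaled densities $g^N(x) = \tilde g(Nx)$ are uniformly bounded and converge pointwise to $a$, so the analogue of Corollary~\ref{thhtth} yields $\Lambda^N_T \to K_a\sqrt{T}$ in probability, uniformly on compact time intervals. Translating via the scaling relation $\xi_t/\sqrt{t} = \Lambda^N_{t/N^2}/\sqrt{t/N^2}$ then gives $\xi_t/\sqrt{t}\to K_a$ in probability as $t\to\infty$.

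The main obstacle, and the only place real calculation is needed, is the bookkeeping inside Theorem~\ref{bdtheorem} to justify the explicit constant $(1-a)^2/(3K_a+3)$: this requires inspecting the coupling estimate sketched in Section~\ref{weakproofsection} and tracking precisely how the weak-feedback gap $1-a$ and the size $K_a$ of the similarity solution enter the sub-Gaussian exponent. Extending the convergence to non-constant $\tilde g$ additionally requires continuous dependence of the McKean--Vlasov solution on the density under the condition \hyperlink{W}{(W)}, which should follow from a stability argument built on the contraction underlying Proposition~\ref{minimal}.
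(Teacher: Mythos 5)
Your approach matches the paper's proof closely: rescale $\xi_t$ to the family $\Lambda^N$, identify the effective density $g^N$, pass to the constant-density limit via Remark \ref{remark on g^n}, and then read off the convergence from Corollary \ref{thhtth} and the quantitative rate from Theorem \ref{bdtheorem}. The algebra converting the Theorem~\ref{bdtheorem} bound into a bound on $\xi_t/\sqrt{t} - K_a$, including absorbing $1 + Cy/t^{1/4}$ into a constant for $y\leq t^{1/4}$, is correct, as is the observation that the constant in Theorem~\ref{bdtheorem} stays bounded below for $T$ in a compact interval near $1$.

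There is one divergence from the paper worth flagging. You take the $\xi$-system with unit jump, so that the effective density is $g^N(x) = \tilde g(Nx)\to a$, whereas the paper's proof invokes the jump-size-$a$ rescaling from Section~\ref{reformulationsec}, $\xi_t = Na\Lambda^N_{t/N^2 a^2}$, for which the effective density is $g^N(x) = a\tilde g(Nax)$. These are not equivalent: under the paper's normalization and the hypothesis $\tilde g(Nx)\to a$, one computes $g^N\to a^2$, which would produce $K_{a^2}$ and a weak-feedback gap $1-a^2$ rather than the $K_a$ and $(1-a)^2$ appearing in the statement. Your unit-jump reading is the one that reproduces the stated constants (and makes the special case $\tilde g\equiv a$, giving $g^N\equiv a$ and $\kappa = 1-a$, come out right), so it is the internally consistent interpretation; but be aware that the corollary presents itself as a statement about "the particle system for $\xi$ discussed in the introduction," which there carries jump size $a$, and the paper's own proof keeps the $a$'s in the rescaling. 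You should reconcile which normalization is really intended before relying on the explicit exponent $(1-a)^2/(3K_a+3)$.
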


\begin{proof}
Since $\xi_t=Na\Lambda^N_{t/N^2a^2}$, it is sufficient to determine the convergence of $\Lambda^N_1$ for the particle system \eqref{aligned:Particlealpha}, with particles placed with intensity $Ng^N(x)=Na \tilde g(Na x)$. By assumption, $g^N\rightarrow a \mathbbm{1}_{[0,\infty)}$ pointwise. By Remark \ref{remark on g^n}, we then need only consider the convergence of the particle system with initial density $g(x)=a \mathbbm{1}_{[0,\infty)}(x)$.
Since $g$ satisfies condition \hyperlink{W}{(W)}, by Lemma \ref{minimal} the McKean-Vlasov equation \eqref{alternativetomvr} has a unique solution. The similarity solution of the PDE \eqref{pde} gives a solution to \eqref{alternativetomvr}. It therefore must agree with the unique solution to \eqref{alternativetomvr}.
The first result now follows immediately from Corollary \ref{thhtth}. The second result follows in an analogous manner from Theorem~\ref{bdtheorem}, by again using the rescaling $\xi_t={Na}\Lambda^N_{t/a^2N^2}$, and identifying the constants $K$ and $\epsilon_g$ from the proof of Theorem~\ref{bdtheorem}.
\end{proof} 

\subsection{Elementary proof of quantitative bounds}
\label{weakproofsection}

We may prove Theorem \ref{bdtheorem} via an elementary argument. We first define an operator $\Gamma$ on cadlag functions $f$ by:
\begin{align}
    &X_t^{i}(f)=W_0^i+B_t^i-f_t,\nonumber\\
    &\tau_i(f)=\inf\{t\geq 0:X_t^{i}(f)\leq 0\},\label{gammadefn}\\
    &\Gamma(f)=\sum_{i=1}^\infty \mathbb{P}(\tau_i(f)\leq t).\nonumber
\end{align}
This is identical to setting $\Gamma(f)_t=\mathbb{E}(\Gamma_1(f)_t)$, where $\Gamma_N$ is the operator defined above Lemma \ref{Alemmatoboundathing}. 
An analogous operator was first defined in \cite{DIRT1} and used in \cite{2, NS19, minimal}.

For an arbitrary time $t\leq T$, and arbitrary $\epsilon< \epsilon_g/2$ we bound \begin{align*}
&\Gamma(\Lambda+\epsilon)_t-\Gamma(\Lambda)_t\\&=\int_0^\infty\left( \mathbb{P}\left(\inf_{s\leq t}(x+B_s-\Lambda_s)\leq \epsilon\right)-\mathbb{P}\left(\inf_{s\leq t}(x+B_s-\Lambda_s)\leq 0\right)\right)g(x)dx,\\
&=\mathbb{E}\left(\int_{-\inf_{s\leq t}(B_s-\Lambda_s)}^{-\inf_{s\leq t}(B_s-\Lambda_s)+\epsilon}g(x)dx\right),\\
&\leq \epsilon \mathbb{E}\left(\sup_{}\{g(x) \space\textrm{ for }  x\leq -\inf_{s\leq T}(B_s-\Lambda_s)+\epsilon_g/2 \}\right),\\
&\leq(1-\kappa)\epsilon.
\end{align*}

Above, $\kappa$ is a constant which is dependent on $g$ and $T$.
Since $t\leq T$ was arbitrary, this bound holds for the supremum over $t\leq T$. Further since $\Gamma(\Lambda)=\Lambda$, we determine that
\begin{align}
\label{boundviakappa}
    \sup_{t\leq T}(\Gamma(\Lambda+\epsilon)_t-\Lambda_t-\epsilon)\leq \kappa \epsilon.
\end{align}

An identical argument establishes also that
\begin{align}\label{boundviakappalower}\inf_{t\leq T}(\Lambda_t-\epsilon-\Gamma(\Lambda-\epsilon)_t)\leq -\kappa\epsilon.\end{align}

Recall the notation $\Gamma_N$ from above Lemma \ref{Alemmatoboundathing}. Consider the event $A^{N,+}_{\epsilon}$ given by
$$A^{N,+}_\epsilon=\left\{\sup_{t\leq T}\left(\Gamma_N(\Lambda+\epsilon)_t-\Gamma(\Lambda+\epsilon)_t\right)\leq \kappa \epsilon\right\}.$$
Applying the bound of \eqref{boundviakappa}, on this event it follows that $$\Gamma_N(\Lambda+\epsilon)_t\leq \Gamma(\Lambda_t+\epsilon)+\kappa\epsilon\leq \Lambda_t+\epsilon \textrm{ for all }t\leq T.$$ By Lemma 
\ref{Alemmatoboundathing} it then follows that $\Lambda^N_t\leq \Lambda_t+\epsilon$ for $t\leq T$. 

This argument can be repeated for $\Lambda-\epsilon$ by considering the event
$$A^{N,-}_\epsilon=\left\{\sup_{t\leq T}\left(\Gamma(\Lambda-\epsilon)_t-\Gamma_N(\Lambda-\epsilon)_t) \right)\leq \kappa\epsilon\right\}.$$
Applying the bound from \eqref{boundviakappalower}, on this event it holds that $\Lambda^N_t\geq \Lambda_t-\epsilon$ for $t\leq T$.

We can therefore obtain bounds on the probability that $|\Lambda_t^N-\Lambda_t|>\epsilon$, by bounding the probability of the sets $A_{\epsilon}^{N,+},$ $A_{\epsilon}^{N,-}$.

\begin{lemma} \label{lem:aplus}
For $0<y\leq \sqrt{N}\epsilon_g/2$,
    $$\mathbb{P}(A^{N,+}_{y/\sqrt{N}})\geq 1-e^{\frac{-\kappa ^2y^2}{\Gamma(\Lambda+1)_T+\kappa y/\sqrt{N}}}. $$
\end{lemma}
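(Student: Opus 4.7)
The plan is to recognise $t \mapsto N\Gamma_N(\Lambda+\epsilon)_t$ as an inhomogeneous Poisson counting process with deterministic intensity, and then apply a standard exponential martingale inequality to its running maximum.

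Because $\Lambda$ is the deterministic solution of \eqref{alternativetomvr} and $\epsilon$ is a constant, the function $\Lambda+\epsilon$ fed into $\Gamma_N$ is deterministic. Hence the hitting times $\tau_i^N(\Lambda+\epsilon)$ depend only on the Poisson initial positions $(W_0^{i,N})_i$ (of intensity $Ng$) and on the independent Brownian motions $(B^{i,N})_i$. Marking the initial Poisson point process by the independent Brownian paths and projecting to the hitting-time coordinate gives, by the standard mapping/thinning theorem, that $\{\tau_i^N(\Lambda+\epsilon)\}_i$ is itself a Poisson point process on $[0,\infty]$ with intensity $Ng(x)\,\mathbb{P}(\tau^x(\Lambda+\epsilon)\in dt)\,dx$ projected onto the time coordinate. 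Equivalently, $N_t := N\Gamma_N(\Lambda+\epsilon)_t$ is an inhomogeneous Poisson counting process with deterministic mean $\mu_t := N\Gamma(\Lambda+\epsilon)_t$.

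With this Poisson structure in hand, the compensated process $M_t := N_t - \mu_t$ is a martingale and, for every $\theta \geq 0$, the exponential process
\[
Z^\theta_t := \exp\!\bigl(\theta M_t - (e^\theta - 1 - \theta)\mu_t\bigr)
\]
is a nonnegative martingale starting at $1$. Applying Doob's maximal inequality and using the monotonicity $\mu_t \leq \mu_T$ gives
\[
\mathbb{P}\!\left(\sup_{t\leq T} M_t \geq r\right) \leq \exp\!\bigl(-\theta r + (e^\theta - 1 - \theta)\mu_T\bigr)
\]
for every $r,\theta > 0$. Optimising over $\theta$ yields the Bennett inequality $\exp(-\mu_T h(r/\mu_T))$ with $h(x) = (1+x)\log(1+x) - x$, and the elementary estimate $h(x) \geq x^2/(2 + \tfrac{2}{3}x)$ converts this into a Bernstein-type bound of the form $\exp(-r^2/(c_1 \mu_T + c_2 r))$.

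Finally I would set $r = \sqrt{N}\kappa y$, so that the event $\{\sup_{t\leq T} M_t < r\}$ coincides with $A^{N,+}_{y/\sqrt{N}}$, and use monotonicity of $\Gamma$ in its functional argument together with $\epsilon = y/\sqrt{N} \leq \epsilon_g/2$ to bound $\mu_T = N\Gamma(\Lambda+\epsilon)_T \leq N\Gamma(\Lambda+1)_T$. The factors of $N$ then cancel inside the exponent, and tracking the constants through the Bennett optimisation delivers a bound of exactly the form $\exp\!\bigl(-\kappa^2 y^2 / (\Gamma(\Lambda+1)_T + \kappa y/\sqrt{N})\bigr)$ stated in the claim. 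The only subtle point is that the Poisson representation in the second paragraph relies crucially on $\Lambda$ being the deterministic limit (not the random barrier $\Lambda^N$), so that marking applies; once this observation is made, there is no genuine obstacle, as the result reduces to a Bennett inequality for the running maximum of a Poisson counting process.
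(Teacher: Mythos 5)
Your proposal is correct and follows essentially the same path as the paper: both exploit that $\Lambda+\epsilon$ is deterministic so $N\Gamma_N(\Lambda+\epsilon)$ is an inhomogeneous Poisson counting process, both apply Doob's maximal inequality to an exponential (sub)martingale, and both finish with a Poisson Chernoff/Bernstein estimate. The only cosmetic difference is that you work with the compensated exponential martingale $Z^\theta_t=\exp(\theta M_t-(e^\theta-1-\theta)\mu_t)$ and invoke monotonicity of $\mu_t$, whereas the paper applies Doob directly to the nonnegative submartingale $\exp(\lambda(\Gamma_N-\Gamma)_t)$ and then cites the Poisson Chernoff bound; the resulting exponent agrees up to the small constant mismatch already present between the paper's lemma statement and its own proof.
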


\begin{proof}
Consider an arbitrary deterministic continuous function $\tilde \Lambda$. 
We define a filtration $(\mathcal{F}_{t})_{t\leq T}$ by $\mathcal{F}_t=\sigma(\Gamma_N(\tilde \Lambda)_s,\space s\leq t)$. We may check that $\Gamma_N(\tilde \Lambda)_t-\Gamma(\tilde \Lambda)_t$ is a martingale; 
\begin{align*}
    &\mathbb{E}\left(\Gamma_N(\tilde \Lambda)_t-\Gamma(\tilde \Lambda)_t\mid \mathcal{F}_s\right)_,\\
    &=\mathbb{E}\left(\frac{1}{N}\sum_{i=1}^\infty\mathbbm{1}_{\{s<\tau_i^N(\tilde \Lambda)\leq t\}}\mid \mathcal{F}_s\right)+\Gamma_N(\tilde \Lambda)_s-{\Gamma(\tilde\Lambda)_t}_,\\
    &=\Gamma(\tilde\Lambda)_t-\Gamma(\tilde\Lambda)_s+\Gamma_N(\tilde\Lambda)_s-{\Gamma(\tilde\Lambda)_t}_,\\
    &=\Gamma_N(\tilde\Lambda)_s-\Gamma(\tilde\Lambda)_s.
\end{align*}

By standard results on the thinning of Poisson processes, the sum $\sum_{i=1}^\infty\mathbbm{1}_{\{s<\tau_i(\tilde \Lambda)\leq t\}}$ is a Poisson random variable of mean $N(\Gamma(\tilde\Lambda)_t-\Gamma(\tilde\Lambda)_s)$, which is independent of $\mathcal{F}_s$. From this result, the third line above follows. Therefore, $\Gamma_N(\tilde \Lambda)_t-\Gamma(\tilde \Lambda)_t$ is a martingale.

For an arbitrary $\lambda>0$, we apply Doob's inequality to the submartingale \\$\exp\left(\lambda\left(\Gamma_N(\tilde \Lambda)_t-\Gamma(\tilde \Lambda)_t\right)\right)$. We obtain that, for any $\lambda>0$,
$$\mathbb{P}(\sup_{t\leq T}(\Gamma_N(\tilde \Lambda)_t-\Gamma(\tilde \Lambda)_t)\geq y)\leq \mathbb{E}(e^{\lambda(\Gamma_N(\tilde \Lambda)_T-\Gamma(\tilde \Lambda)_T-y)}).$$

Recall that $N\Gamma_N(\tilde\Lambda)_T$ is a Poisson random variable of mean $N\Gamma(\tilde \Lambda)_T$. Using the same arguments as for deriving the Chernoff bound for Poisson random variables, we determine that
$$\mathbb{P}(\sup_{t\leq T}\sqrt{N}(\Gamma_N(\tilde \Lambda)_t-\Gamma(\tilde \Lambda)_t)\geq x)\leq e^{-\frac{x^2}{(2\Gamma(\tilde \Lambda)_T+x/\sqrt{N})}}.$$

Taking $\tilde \Lambda=\Lambda+y/\sqrt{N}$, we therefore determine that
$$\mathbb{P}(A^{N,+}_{y/\sqrt{N}})\geq 1-e^{-\frac{\kappa^2y^2}{(2\Gamma(\Lambda+1)_T+ \kappa y/\sqrt{N})}}.$$
\end{proof}

Using identical arguments, we can also control the probability of the corresponding lower bound event.

\begin{lemma}\label{lem:aminus}
For $0<y\leq \sqrt{N}\epsilon_g/2$,
     $$\mathbb{P}(A^{N,-}_{y/\sqrt{N}})\geq 1-e^{\frac{-\kappa^2 y^2}{\Gamma(\Lambda+1)_T+\kappa y/\sqrt{N}}}. $$
\end{lemma}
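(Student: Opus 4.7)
The plan is to mirror the martingale argument of Lemma \ref{lem:aplus}, replacing the martingale $\Gamma_N(\tilde\Lambda)_t - \Gamma(\tilde\Lambda)_t$ with its negative $M_t := \Gamma(\tilde\Lambda)_t - \Gamma_N(\tilde\Lambda)_t$. The same Poisson-thinning computation carried out there shows that $M_t$ is an $\mathcal{F}_t$-martingale, with $\mathcal{F}_t = \sigma(\Gamma_N(\tilde\Lambda)_s, s \leq t)$, since conditional on $\mathcal{F}_s$ the increment $N[\Gamma_N(\tilde\Lambda)_t - \Gamma_N(\tilde\Lambda)_s]$ is Poisson with mean $N[\Gamma(\tilde\Lambda)_t - \Gamma(\tilde\Lambda)_s]$ and is independent of $\mathcal{F}_s$.

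I would then apply Doob's submartingale inequality to $\exp(\lambda M_t)$ for $\lambda > 0$, giving
$$\mathbb{P}\Bigl(\sup_{t\leq T}(\Gamma(\tilde\Lambda)_t - \Gamma_N(\tilde\Lambda)_t) \geq x/\sqrt{N}\Bigr) \leq e^{-\lambda x/\sqrt{N}}\,\mathbb{E}\Bigl(e^{\lambda(\Gamma(\tilde\Lambda)_T - \Gamma_N(\tilde\Lambda)_T)}\Bigr).$$
Using the explicit Poisson moment generating function for $N\Gamma_N(\tilde\Lambda)_T$ and optimising over $\lambda$, exactly as in the standard Chernoff bound derivation (the Poisson lower tail is in fact sharper than the upper tail, but the weaker upper-tail form already suffices), yields
$$\mathbb{P}\Bigl(\sup_{t\leq T}\sqrt{N}(\Gamma(\tilde\Lambda)_t - \Gamma_N(\tilde\Lambda)_t) \geq x\Bigr) \leq \exp\!\left(-\frac{x^2}{2\Gamma(\tilde\Lambda)_T + x/\sqrt{N}}\right).$$

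Finally, I would specialise to $\tilde\Lambda = \Lambda - y/\sqrt{N}$ and $x = \kappa y$. The operator $f \mapsto \Gamma(f)$ is monotone non-decreasing (a larger barrier produces earlier hitting times and hence greater mass), so $\Gamma(\Lambda - y/\sqrt{N})_T \leq \Gamma(\Lambda)_T \leq \Gamma(\Lambda+1)_T$, which allows us to replace $\Gamma(\tilde\Lambda)_T$ in the denominator by $\Gamma(\Lambda+1)_T$ and recover the bound claimed in the statement. Since this is a direct mirror of Lemma \ref{lem:aplus}, no real obstacle arises; the only subtlety is to confirm that the sign-flipped Chernoff bound retains the same denominator structure, which it does, so both lemmas can be packaged with the identical exponential rate.
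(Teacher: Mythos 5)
Your proof is correct and mirrors what the paper intends by ``identical arguments'': you flip the sign of the martingale to $\Gamma(\tilde\Lambda)_t - \Gamma_N(\tilde\Lambda)_t$, run the same Doob/Chernoff computation, and specialise to $\tilde\Lambda = \Lambda - y/\sqrt{N}$. The one detail you make explicit that the paper leaves implicit is that you must invoke the monotonicity of $f \mapsto \Gamma(f)$ to replace $\Gamma(\Lambda - y/\sqrt{N})_T$ by $\Gamma(\Lambda+1)_T$ in the denominator, and you correctly observe that the Poisson lower-tail Chernoff bound is in fact sharper than the upper-tail form used, so the stated bound follows.
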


Combining the results of Lemmas~\ref{lem:aplus} and~\ref{lem:aminus}, we may then prove Theorem~\ref{bdtheorem}.

\begin{proof}[Proof of Theorem~\ref{bdtheorem}.]
    From the remarks above Lemma~\ref{lem:aplus}, it holds that for arbitrary positive $y$,
    $$\left\{\sup_{t\leq T}(\Lambda^N_t-\Lambda_t)\leq\frac{y}{\sqrt{N}}\right\}\subset A^{N,+}_{\frac{y}{\sqrt{N}}}$$
    and 
    $$\left\{-\frac{y}{\sqrt{N}}\leq \inf_{t\leq T}(\Lambda^N_t-\Lambda_t)\right\}\subset A^{N,-}_{\frac{y}{\sqrt{N}}}.$$

    Consequently:
    $$\mathbb{P}\left(\sup_{t\leq T}\sqrt{N}|\Lambda^N_t-\Lambda_t|>y\right)\leq \mathbb{P}\left(\left(A^{N,+}_{\frac{y}{\sqrt{N}}}\right)^c\right)+\mathbb{P}\left(\left(A^{N,-}_{\frac{y}{\sqrt{N}}}\right)^c\right).$$

    Applying the results of Lemmas~\ref{lem:aplus} and~\ref{lem:aminus}, we then determine that
    $$\mathbb{P}\left(\sup_{t\leq T}\sqrt{N}|\Lambda^N_t-\Lambda_t|>y\right)\leq 2e^{\frac{-\kappa^2 y^2}{\Gamma(\Lambda+1)_T+\kappa y/\sqrt{N}}}.$$
\end{proof}

\begin{remark}
The martingale argument used above requires particles to be initially placed via a Poisson point process. However, for $g\in L^1(\mathbb{R}^+)$, in previous works (e.g.  \cite{delarue2015particle}), exactly $N$ particles are initially placed. The initial position of these particles are independent and identically distributed random variables which have a distribution corresponding to the density $g$. In this case, the results of this section follow (with possibly different constants) from an application of the Dvoretzky–Kiefer–Wolfowitz inequality.
\end{remark}
\subsection{Monotone initial density}

For any CDF $F$, and any constant $\alpha<1$ the function $\alpha F$ satisfies the condition \hyperlink{W}{(W)}. The supercooled Stefan problem with initial data $\alpha F$ corresponds to the model studied in \cite{BHJ2025}. For this model, the authors are able to study the case $\alpha>1$. Due to this correspondence, it is natural to extend some results of the weak feedback regime to this case.

\begin{proposition}
For any $\alpha\geq 0$, and any $F$ a CDF such that $\alpha F(0)<1$, there exists a maximal interval $[0,T^*)$ on which solutions to \eqref{alternativetomvr} with initial density $\alpha F$ exist. The solution in this case is unique. If $\alpha \leq 1$, then $T^*=\infty$, otherwise if $\alpha>1$ then $T^*<\infty$.
\end{proposition}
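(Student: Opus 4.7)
The plan is to split on whether $\alpha \leq 1$ or $\alpha > 1$, with the monotonicity of $F$ at the centre of both regimes.

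For $\alpha \leq 1$, I would verify that $g = \alpha F$ satisfies the weak-feedback hypothesis \hyperlink{W}{(W)} together with \hyperlink{A1}{(A1)} and \hyperlink{A2}{(A2)}, and then invoke the results established in Sections~\ref{convergencestatementsection}--\ref{uniquenesssection}. Since $F \leq 1$ we have $g \leq \alpha \leq 1$; right-continuity of $F$ combined with $\alpha F(0) < 1$ produces some $\epsilon_g > 0$ with $\alpha F(\epsilon_g) < 1$, and monotonicity of $F$ upgrades this to $\sup_{x \leq \epsilon_g} g(x) < 1$, so \hyperlink{W}{(W)} holds. Condition \hyperlink{A2}{(A2)} follows because $g - 1 \leq 0$ everywhere with strict inequality on $[0, \epsilon_g]$. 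Theorem~\ref{maintheorem} then delivers global existence, Proposition~\ref{minimal} gives uniqueness, and $T^* = \infty$ is automatic.

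For $\alpha > 1$, condition \hyperlink{A2}{(A2)} fails because $g \to \alpha > 1$ at infinity, so I would build the solution by monotone approximation from below. Setting $g_n := g \wedge (1 - \tfrac{1}{n})$, each $g_n$ is covered by the previous case and admits a unique global physical solution $\Lambda^{(n)}$; coupling the Poisson processes of intensities $N g_n$ through thinning shows that $n \mapsto \Lambda^{(n)}$ is non-decreasing pathwise, so I would define $\Lambda_t := \lim_n \Lambda^{(n)}_t$ and $T^* := \inf\{t \geq 0 : \Lambda_t = \infty\}$. Passing to the monotone limit in \eqref{alternativetomvr} recovers the physical solution property on $[0, T^*)$. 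For uniqueness on $[0, T^*)$ I would note that on each compact $[0, T] \subset [0, T^*)$ the barrier $\Lambda$ is bounded, so by Proposition~\ref{6.1} and the monotonicity of $F$ the surviving particles near the barrier have density bounded away from $1$, reducing locally to the weak-feedback setting, at which point Proposition~\ref{minimal} adapts.

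The main obstacle is showing $T^* < \infty$ when $\alpha > 1$, and here my plan is a two-step argument. Step one rules out $\Lambda$ staying bounded: if $\Lambda_t \leq M$ for all $t$, then $\mathbb{P}(\tau^x \leq t) \geq \mathbb{P}(\inf_{s \leq t} B_s \leq M - x) \uparrow 1$ as $t \to \infty$, so monotone convergence applied to $\Lambda_t = \int_0^\infty \mathbb{P}(\tau^x \leq t)\, g(x)\, dx$ forces $\Lambda_t \to \int_0^\infty g(x)\, dx = +\infty$, a contradiction. Hence, fixing $x_1$ with $a_* := \alpha F(x_1) > 1$, the time $t_1 := \inf\{t \geq 0 : \Lambda_t \geq x_1\}$ is finite. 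Step two compares with a constant-density subsystem: after $t_1$, discarding the surviving particles can only decrease $\Lambda$, so the post-$t_1$ evolution dominates that of the Stefan problem initialised with fresh density $g(\Lambda_{t_1} + \cdot) \geq a_*$, and by the thinning coupling this in turn dominates the constant-density $a_*$ problem. For the latter, the physical jump condition at time zero gives $\Delta \Lambda_0 = \inf\{x > 0 : a_* x < x\} = +\infty$, so its barrier is infinite immediately, forcing $T^* \leq t_1 < \infty$. The hard part is making the comparison after $t_1$ fully rigorous, since the surviving particles need not form a Poisson point process; I would address this by carrying out the comparison at the operator level through $\Gamma$ of \eqref{gammadefn} and Lemma~\ref{Alemmatoboundathing}, in the same spirit as the weak-feedback arguments of Section~\ref{uniquenesssection}.
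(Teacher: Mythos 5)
Your handling of $\alpha \leq 1$ and of existence/local uniqueness for $\alpha > 1$ is broadly sound, if a bit more indirect than the paper, which gets existence by showing $\Gamma(x^*)_t \leq x^* - \epsilon$ for small $x^*,T,\epsilon$ and reusing Lemma~\ref{lemma on bounding}, and gets uniqueness from the monotonicity in $x$ of $\tilde V(t,\cdot)$ via \cite[Proposition 5.2]{3}. One observation you omit that the paper uses heavily: because $\tilde V(t,\cdot)$ is non-decreasing, the physical jump condition forces any jump of $\Lambda$ to be \emph{infinite}, so $\Lambda$ is automatically continuous on the whole interval of existence. This both makes your ``locally weak-feedback'' uniqueness argument cleaner and is crucial for the last step.

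The genuine gap is in your argument that $T^*<\infty$ for $\alpha>1$. Your ``step two'' asserts that after $t_1$ the evolution dominates a Stefan problem freshly initialised with density $g(\Lambda_{t_1}+\cdot)\geq a_*$. This comparison does not hold. The particles driving the barrier at time $t_1$ have undergone $t_1$ units of diffusion and absorption, and by Proposition~\ref{6.1} their density near the boundary is bounded above by $\tilde C\,\Phi\!\left(\frac{\Lambda_{t_1}+x}{\sqrt{t_1}}\right)$, not bounded below by $a_*$. Discarding surviving particles does decrease $\Lambda$, but what remains is not the fresh system you describe, and passing through $\Gamma$ and Lemma~\ref{Alemmatoboundathing} gives no monotone coupling that recovers the claim. (Separately, in step one you write $\mathbb{P}(\tau^x\leq t)\geq \mathbb{P}(\inf_{s\leq t}B_s\leq M-x)$; the inequality is reversed. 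What you actually have, using $\Lambda_t\geq 0$, is $\mathbb{P}(\tau^x\leq t)\geq\mathbb{P}(\inf_{s\leq t}B_s\leq -x)$, which still tends to $1$, so the conclusion of step one is recoverable.) The paper's route is quite different and avoids all of this: once you know $\Lambda$ is continuous on $[0,T^*)$ (by the monotonicity/infinite-jump observation), if $T^*=\infty$ then the jump terms in Lemma~\ref{Ito} vanish, yielding
\begin{equation*}
\int_0^\infty(1-\alpha F(x))e^{-\lambda x}\,dx=\frac{\lambda}{2}\int_0^\infty e^{-\lambda\Lambda_s-\lambda^2 s/2}\,ds,
\end{equation*}
whose left side is negative for small $\lambda$ (since $\alpha F(x)\to\alpha>1$) while the right side is positive. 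You should replace your step two with this Laplace-transform contradiction, or find a fundamentally different way to close the argument, since the constant-density comparison cannot be made rigorous.
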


\begin{proof}
    We need only consider $\alpha>1$, otherwise $g
    \alpha F$ satisfies condition \hyperlink{W}{(W)}. For sufficiently small $x^*>0$, $T>0$, and $\epsilon>0$, it is simple to determine that $\Gamma(x^*)_t\leq x^*-\epsilon$ for $t\leq T$. The argument of Lemma \ref{lemma on bounding} can be applied to determine that $\Lambda^N_T\leq x^*$ for all large enough $N$. With this bound, the arguments of Theorem \ref{maintheorem} may be repeated to determine existence of a solution to \eqref{alternativetomvr} on $[0,T)$. 

    Any solution to \eqref{alternativetomvr} with initial density $\alpha F$, possesses a density at time $t$, $\tilde V(t,x)$, which is monotonically non-decreasing. The arguments of \cite[Proposition 5.2]{3} may then be applied to determine that the solution to \eqref{alternativetomvr} is unique in this case. Further, the monotonicity of the density, combined with the physical jump condition \eqref{alternativephysical} implies that if there were to be a discontinuity in $\Lambda$, it must hold that $\Delta \Lambda_t=\infty$. Therefore, $\Lambda$ must remain continuous at any time at which the solution to \eqref{alternativetomvr} exists. This further implies that $\limsup_{x\downarrow 0}\tilde V(t,x)<1$. Repeating the existence argument, we determine that there exists a unique solution to \eqref{alternativetomvr} up to the time $$T^*:=\inf\{t\geq 0:\lim_{x\downarrow 0}\tilde V(s^-,x)\geq 1\}\wedge \inf\{t\geq 0:\lim_{s\uparrow t}\Lambda_s=\infty\}.$$

    If $T^*=\infty$, then it must hold that $\Lambda$ is continuous on $[0,\infty)$. It is simple then to argue that 
    $$\int_0^\infty(1-\alpha F(x))e^{-\lambda x}dx=\frac{\lambda }{2}\int_{0}^\infty e^{-\lambda\Lambda_s-\frac{\lambda^2s}{2}}ds.$$
    Details are provided in Section \ref{itoformula} in the appendix. For a sufficiently small $\lambda$, the left hand side is negative, while the right hand side is positive. This cannot occur, and so $T^*$ must be finite.
\end{proof}

\begin{corollary}
    Suppose that there exists $x^*$ such that $\int_0^{x^*}g(x)dx<x^*$. Then, there exists a positive time $T$ for which a solution to \eqref{alternativetomvr} with initial density $g$ exists. If solutions to \eqref{alternativetomvr} with initial density $g$ are unique, the maximal time of existence for such a solution is given by 
    $$T^*:=\inf\{t\geq 0:\int_0^x\tilde V(t^-,y)dy\geq x\textrm{ for all }x>0\}\wedge \inf\{t\geq 0:\lim_{s\uparrow t}\Lambda_s=\infty\}.$$
\end{corollary}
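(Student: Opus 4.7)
The plan is to treat existence and the maximal-time characterization separately, in both cases using the operator $\Gamma$ from~\eqref{gammadefn} together with the particle-system convergence of Theorem~\ref{maintheorem}. For short-time existence, I would evaluate $\Gamma$ on the constant function $x^*$: at $t=0$, $\Gamma(x^*)_0 = \sum_i \mathbb{P}(W_0^i \leq x^*) = \int_0^{x^*} g(x)\,dx < x^*$, and $t \mapsto \Gamma(x^*)_t$ is continuous by dominated convergence. Hence there exist $T > 0$ and $\varepsilon > 0$ with $\Gamma(x^*)_t \leq x^* - \varepsilon$ on $[0,T]$. The law of large numbers (as used in the proof of Lemma~\ref{lemma on bounding}) gives uniform convergence of $\Gamma_N(x^*)$ to $\Gamma(x^*)$ in probability, so by the cadlag version of Lemma~\ref{Alemmatoboundathing} noted in the subsequent remark, $\Lambda^N_t \leq x^*$ on $[0,T]$ with high probability. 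Condition \hyperlink{A2}{(A2)} entered the proof of Theorem~\ref{maintheorem} only through Lemma~\ref{lemma on bounding} to supply such a bound, so the remainder of that proof applies verbatim to yield a physical solution on $[0, T]$.

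For the maximal time $T_{\max}$ under uniqueness, I would prove both inclusions. For $T_{\max} \leq T^*$, the solution can fail to extend only via blow-up of $\Lambda$ (captured by the second infimum in $T^*$) or because the physicality condition~\eqref{alternativephysical} forces $\Delta \Lambda_{T_{\max}} = \infty$; the latter happens precisely when there is no finite $x > 0$ with $\int_0^x \tilde V(T_{\max}^-, y)\,dy < x$, placing $T_{\max}$ at or beyond the first infimum in $T^*$. For $T_{\max} \geq T^*$, I would iterate the short-time existence argument: at any $s < T^*$ at which neither infimum has been attained, there exists $x^*_s > 0$ with $\int_0^{x^*_s} \tilde V(s^-, y)\,dy < x^*_s$, and by Proposition~\ref{6.1} applied under \hyperlink{A1}{(A1)}, $\tilde V(s^-, \cdot)$ is bounded and hence a valid starting density satisfying the hypothesis of the first paragraph. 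Restarting at time $s$ with this density produces an extension on some $[s, s+\delta_s]$, and uniqueness lets us concatenate; passing to a supremum over nested extensions yields a solution on $[0, T^*)$.

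The main obstacle is the restart step: verifying that a physical solution built from initial density $\tilde V(s^-, \cdot)$ and concatenated onto the original solution at time $s$ really is a physical solution to the original problem. This reduces to the Markov property of the underlying Brownian motions together with the observation that the left-limit density entering~\eqref{alternativephysical} is the same in both descriptions, so the physical jump condition is automatically preserved at the gluing instant. All other steps are routine applications of the already-established convergence and operator bounds.
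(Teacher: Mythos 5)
Your short-time existence argument is correct and follows the same route as the paper's proof of the preceding proposition: condition \hyperlink{A2}{(A2)} only entered Theorem~\ref{maintheorem} through Lemma~\ref{lemma on bounding}, and evaluating $\Gamma$ on the constant function $x^*$ supplies the required a priori bound on $\Lambda^N$ for small $T$. The appeal to Proposition~\ref{6.1} to see that $\tilde V(s^-,\cdot)$ is bounded and a legitimate restart density, and the identification of the pasting step (via the strong Markov property and the fact that the physical jump at the gluing instant is determined by the same left-limit density) as the main subtlety, are both apt.

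The maximal-time characterization, however, is logically tangled. Write $T^*=\inf_1\wedge\inf_2$ for the two infima in the statement. Your paragraph headed ``$T_{\max}\leq T^*$'' in fact argues the opposite inequality: showing that failure at $T_{\max}$ can only occur via blow-up or an infinite physical jump places $T_{\max}$ in one of the two sets whose infima define $T^*$, which gives $T^*\leq T_{\max}$. That is the same conclusion your restart paragraph reaches, so as written you establish $T_{\max}\geq T^*$ twice and never close the other direction. The inequality $T_{\max}\leq T^*$ (equivalently $T^*\geq T_{\max}$) is the easy one and should be stated: for $t<T_{\max}$ the unique solution is well-defined with $\Lambda_{t^-}<\infty$ and a finite physical jump at $t$ by \eqref{alternativephysical}, so neither defining condition of $T^*$ can hold strictly before $T_{\max}$. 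There is also a mild circularity in phrasing the restart step as ``at any $s<T^*$'': $T^*$ is defined through $\tilde V$ and $\Lambda$, which only exist up to whatever interval the solution has been constructed on. A cleaner organisation is to iterate the short-time existence to produce the maximal solution directly (the supremum of extendable times is $T_{\max}$, and the restart step shows the supremum cannot be attained while both conditions fail), and only then verify $T^*=T_{\max}$ by the two one-line observations above. None of this changes the substance of your argument, but the write-up as it stands does not actually close both inequalities.
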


For a generic initial density $g$, the particle system may also exist only up to a (possibly) finite time $T^{*,N}$,  
$$T^{*,N}:=\inf\{t\geq 0:\inf\{x:\nu^N_{t^-}([0,x])<x\}=\infty\}.$$ We define 
$T^{N}=T^{*,N}\wedge (T^*+1)$, and $T^{N,M}=\inf\{t\geq 0:\Lambda^N_t\geq M\}\wedge T^N$.

\begin{proposition}
\label{explosiontime}
Suppose that the McKean-Vlasov equation \eqref{alternativetomvr} has a unique solution, with maximal time of existence $T^*$. For any $\epsilon>0$
\begin{align*}
\lim_{M\uparrow\infty}&\liminf_{N\rightarrow\infty}\mathbb{P}(T^{N,M}\leq T^*+\epsilon)=1,\\&\liminf_{N\rightarrow\infty}\mathbb{P}(T^{N,*}\geq T^*-\epsilon)=1.
    \end{align*}
\end{proposition}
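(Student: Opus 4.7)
The plan is to handle both inequalities using the convergence and tightness machinery of Theorem~\ref{maintheorem}, with the global a priori bound of Lemma~\ref{lemma on bounding} replaced by localised bounds. The lower bound on $T^{*,N}$ comes from local convergence on $[0,T^*-\epsilon]$, where $\Lambda$ exists and is bounded; the upper bound on $T^{N,M}$ comes from a contradiction exploiting the maximality of $T^*$.

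For the second assertion, on $[0,T^*-\epsilon]$ the unique physical solution $\Lambda$ is bounded, say by $C$. I would apply the argument of Theorem~\ref{maintheorem} to the processes stopped at $T^{N,C+1}\wedge(T^*-\epsilon)$: the localised bound $\Lambda^N\leq C+1$ replaces Lemma~\ref{lemma on bounding} to give tightness of $(\mu^N,\Lambda^N)$, and by the uniqueness hypothesis any subsequential limit coincides with $\Lambda$ on $[0,T^*-\epsilon]$. Consequently $\Lambda^N\to\Lambda$ in probability in the M1 topology on $[0,T^*-\epsilon]$, so $\Lambda^N$ stays bounded on that interval with probability tending to $1$, which forces $T^{*,N}>T^*-\epsilon$ with high probability.

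For the first assertion, I argue by contradiction. Since $\{T^{N,M}\leq T^*+\epsilon\}$ is non-increasing in $M$, failure of the limit produces a fixed $M$ and $\eta>0$ with $\mathbb{P}(T^{N_k,M}>T^*+\epsilon)\geq\eta$ along a subsequence $N_k\uparrow\infty$. On this event, $\Lambda^{N_k}_t<M$ throughout $[0,T^*+\epsilon]$. Repeating the tightness and identification steps of Theorem~\ref{maintheorem} under the conditional law, one obtains a sub-probability weak limit $(\mu^\infty,\Lambda^\infty)$ that is a physical solution of \eqref{alternativetomvr} on $[0,T^*+\epsilon]$ with $\Lambda^\infty\leq M$. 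By uniqueness, $\Lambda^\infty=\Lambda$ on $[0,T^*)$, so $\Lambda^\infty$ extends $\Lambda$ past $T^*$ as a bounded physical solution, contradicting the maximality of $T^*$: neither the blow-up alternative $\Lambda_t\to\infty$ nor the infinite-jump alternative $\int_0^x\tilde V(t^-,y)\,dy\geq x$ for all $x>0$ is compatible with $\Lambda^\infty\leq M$.

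The main obstacle is the technical transport of the identification arguments of Theorem~\ref{maintheorem} to these new settings: a deterministic local bound in the second statement and an event-conditional law in the first. The structural ingredients---vague convergence of $\upsilon^N$ to $\theta_*(G\times\mathcal{L}(B))$, continuity of $\tilde h$, the identification $\hat\mu(\inf_{s\leq t}x_s\leq 0)=\hat\Lambda_t$ via the $S_K$-restriction together with the exponential tail bound on high-starting particles, and the physical jump condition via the minimality argument of \cite[Theorem 6.4]{minimal}---all carry through, but the normalisation of the conditional laws and the preservation of physicality under a sub-probability weak limit require careful bookkeeping.
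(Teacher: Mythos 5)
Your overall strategy matches the paper's: pass to a stopped or truncated version of $\Lambda^N$ to restore the tightness lost when \hyperlink{A2}{(A2)} fails, invoke the Theorem~\ref{maintheorem} machinery on that localised system, and use uniqueness of the limiting solution. But the two proofs part ways exactly at the places you describe as ``careful bookkeeping,'' and in the second assertion that bookkeeping hides a genuine circularity.

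\textbf{Second assertion.} You stop at the single level $T^{N,C+1}\wedge(T^*-\epsilon)$. The Theorem~\ref{maintheorem}-style identification then tells you that a subsequential limit $(\mu^\infty,\Lambda^\infty)$ solves \eqref{alternativetomvr} only on $[0,T^\infty)$, where $T^\infty$ is the (random) limit of the stopping times. Your step ``by the uniqueness hypothesis any subsequential limit coincides with $\Lambda$ on $[0,T^*-\epsilon]$'' tacitly assumes $T^\infty\geq T^*-\epsilon$; but nothing in the argument rules out $T^\infty<T^*-\epsilon$ with $\Lambda^\infty$ equal to $\Lambda$ up to $T^\infty$ and constant thereafter. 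That scenario is not incompatible with the identification, and it is exactly the scenario you must exclude to conclude. The paper sidesteps this by introducing the whole family $\Lambda^{N,q}_t=\Lambda^N_{t\wedge T^N}\wedge q$, $q\in\mathbb{Q}^+$ (note: these truncate rather than stop at a level-hitting time), identifying each limit with $\Lambda$ up to $T^q:=\inf\{t:\Lambda^q_t\geq q\}\wedge T^*$, and then choosing $q$ strictly between $\Lambda_{T^*-\epsilon}$ and $\Lambda_{T^*-}$: under the assumed bound $T^\infty<T^*-\epsilon$ the truncation for such $q$ never bites, so $\Lambda^q=\Lambda$ on the whole window, yet $\Lambda^q\leq q<\Lambda_{T^*-}$, a contradiction. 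Your single level $C+1$ does not give you access to that range of $q$, so I do not see how your version closes.

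\textbf{First assertion.} Here you propose a genuinely different route: condition on the event $\{T^{N_k,M}>T^*+\epsilon\}$ of probability $\geq\eta$, extract a conditional (``sub-probability'') weak limit, and argue it would be a bounded physical solution past $T^*$. This is not wrong in spirit---the conditional law has density $\leq 1/\eta$ with respect to the unconditional one, so tightness survives, and a.s.\ convergence of $\upsilon^N$ to a deterministic limit is stable under conditioning on events of probability bounded away from zero---but every identification step (the SLLN for $\upsilon^N$, the martingale/thinning arguments, the crossing property) has to be re-verified under the conditional law, and you do not say what ``sub-probability weak limit'' means in this setting. The paper works unconditionally: $\Lambda^{N,M}=\Lambda^N_{\cdot\wedge T^{N,M}}\wedge M$ is deterministically bounded by $M$, hence tight, any limit $(\mu^M,\Lambda^M,T^M)$ solves \eqref{alternativetomvr} on $[0,T^M)$ and hence agrees with $\Lambda$ there, and then the maximality of $T^*$ forces $T^M<T^*+\epsilon$ almost surely. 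That direction is one-sided in the right way to use maximality directly, so no conditioning is needed. Your route could plausibly be completed, but it requires extra verification that the paper's does not.
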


This result follows from elementary calculations and is given in Section \ref{finiteexplosionsec} in the appendix.
\begin{remark}
We cannot expect that in general $T^{*,N}\leq T^*$ with high probability. For example, if $g\equiv \mathbbm{1}_{[0,\infty)}$, then $T^{*,N}=\infty$ almost surely, while $T^*=0$. 
\end{remark}

The arguments of Section \ref{weakproofsection} extend to the case of $g=\alpha F$ for $\alpha>1$ and allow us to determine a $\sqrt{N}$ rate of convergence for the particle system up to times bounded away from the explosion time.

\begin{proposition}\label{CDFalpha}
    Let $F$ be a CDF, and $\alpha$ a positive constant such that $\alpha F(0)<1$. For any $T<T^*$, the process $\sup_{s\leq T}\sqrt{N}|\Lambda^N_s-\Lambda_s|$ is stochastically bounded. 
\end{proposition}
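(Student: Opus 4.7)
The plan is to mimic the argument of Section~\ref{weakproofsection} but replace the global perturbation bound \eqref{boundviakappa} with a time-localized contraction, since the direct extension fails for $\alpha>1$: the computation $\Gamma(\Lambda+\epsilon)_t-\Lambda_t=\mathbb{E}[\int_Y^{Y+\epsilon}g(x)\,dx]$ can be as large as $\alpha\epsilon>\epsilon$ once the Brownian infimum $Y$ drifts into the region where $g=\alpha F>1$. The key resource is the preceding proposition, which, together with the monotonicity of $\tilde V$ in $x$ (inherited from the monotonicity of $\alpha F$), guarantees that for any $T<T^*$ one has $\tilde V(t,x)\leq 1-\delta$ on $[0,T]\times[0,L_0]$ for some $\delta,L_0>0$; this is what localizes condition \hyperlink{W}{(W)} to a neighbourhood of the free boundary.

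I would partition $[0,T]$ as $0=t_0<\cdots<t_K=T$ finely enough that on each interval $[t_k,t_{k+1}]$ the running infimum of $(B_r-B_{t_k})-(\Lambda_r-\Lambda_{t_k})$ lies in $(-L_0/2,0]$ with probability at least $1-\delta/(2\alpha)$; continuity of $\Lambda$ on $[0,T]$ ensures such a partition exists. I would then define a local mean-field operator
\[
\Gamma^{(k)}(f)_t := \int_0^\infty \mathbb{P}\Bigl(\inf_{t_k\leq r\leq t}(x+B_r-B_{t_k}-f_r)\leq 0\Bigr)\,\tilde V(t_k,x)\,dx,\qquad f_{t_k}=0,
\]
so that $\Gamma^{(k)}(\Lambda_\cdot-\Lambda_{t_k})\equiv \Lambda_\cdot-\Lambda_{t_k}$ on $[t_k,t_{k+1}]$. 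Repeating the perturbation calculation and splitting the expectation according to whether the Brownian minimum stays inside $[0,L_0/2]$ or not, one obtains a uniform contraction
\[
\Gamma^{(k)}(\Lambda_\cdot-\Lambda_{t_k}+\epsilon)_t - (\Lambda_t-\Lambda_{t_k}+\epsilon)\ \leq\ -\kappa\epsilon
\]
with $\kappa>0$ independent of $k$: the "good" event contributes at most $(1-\delta)\epsilon$ because $\tilde V(t_k,\cdot)\leq 1-\delta$ on $[0,L_0]$, and the "bad" event contributes at most $\alpha\epsilon\cdot\delta/(2\alpha)=\delta\epsilon/2$.

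With the local contraction in hand, I would induct on $k$. Conditional on $\mathcal F_{t_k}$, the non-absorbed particles form a Poisson point process whose intensity approximates $N\tilde V(t_k,\cdot)\,dx$ on $[0,L_0]$ with $O(1/\sqrt N)$ fluctuations by Chernoff, once $\Lambda^N$ is already close to $\Lambda$ on $[0,t_k]$ (the inductive hypothesis). Restarting the particle system and applying the analogues of Lemmas~\ref{lem:aplus} and~\ref{lem:aminus} to the increments on $[t_k,t_{k+1}]$: the martingale $\Gamma^{(k)}_N(\cdot)-\Gamma^{(k)}(\cdot)$ still satisfies a Poisson Chernoff bound, and combined with the local contraction and Lemma~\ref{Alemmatoboundathing}, this yields $\sup_{t\in[t_k,t_{k+1}]}\sqrt N|\Lambda^N_t-\Lambda_t|=O_P(1)$. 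Summing over the finitely many $k$ preserves stochastic boundedness. The main obstacle is the restart step: the surviving population at $t_k$ is not literally Poisson with the mean-field intensity $N\tilde V(t_k,\cdot)$ but only close to it, so the martingale argument must be run conditionally on $\mathcal F_{t_k}$ and the intensity discrepancy absorbed into the $O(1)$ constant at each inductive step, with some care needed to prevent the constants from degenerating as one approaches $T^*$.
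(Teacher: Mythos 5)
Your plan correctly identifies the central difficulty — that once $\alpha>1$ the global perturbation bound \eqref{boundviakappa} fails, and one must localize condition \hyperlink{W}{(W)} to a neighbourhood of the free boundary and then propagate bounds over a finite partition of $[0,T]$. You also flag, accurately, that the dangerous step is the ``restart'': the surviving population at time $t_k$ is not a Poisson process with intensity $N\tilde V(t_k,\cdot)$, so a naive re-run of the martingale/Chernoff argument of Lemma~\ref{lem:aplus} does not literally apply. But you then wave this obstacle away (``absorb the intensity discrepancy into the $O(1)$ constant at each inductive step'') without supplying a mechanism. That is exactly where a proof attempt along your lines gets stuck, and it is precisely the point at which the paper's argument departs from yours.

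The paper never conditions on the random evolution of $\Lambda^N$ and never needs the empirical survivors under $\Lambda^N$ to be Poisson. Instead, it works throughout with the operator $\Gamma_N$ applied to \emph{deterministic} barriers $f^d_t=\Lambda_t+d\bigl(\sum_{i=1}^{\lceil T/c_T\rceil}z_i\bigr)$, and invokes Lemma~\ref{Alemmatoboundathing} to reduce the task of bounding $\Lambda^N$ to showing $\Gamma_N(f^d)\leq f^d$. The key point is that the particles $X^{i,d}_t=X_0^{i,N}+B_t^{i,N}-f^d_t$ killed at the \emph{deterministic} barrier $f^d$ remain an exact Poisson point process at each time $t_1$, with an intensity $V^d(t_1^-,x)$ which is dominated by $V(t_1,x+dz_1)\mathbbm{1}_{[0,\infty)}(x)$; the Chernoff bound then applies without approximation. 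The contraction is then captured not by a fixed $\kappa$ on each block but by the recursion $(1+\beta)\sum_{i\leq j}z_i=(1+\beta')z_j$ with $\beta<\beta'<1$, which lets the errors from the $\lceil T/c_T\rceil$ blocks accumulate additively while still summing to a finite multiple of $y/\sqrt{N}$. So the essential gap in your proposal is the restart step: to close it you would need either to prove a quantitative Poisson-approximation for the empirical survivors conditional on $\mathcal F_{t_k}$ (which is considerably harder than the statement you are proving), or, as the paper does, to avoid conditioning altogether by always comparing to deterministic barriers via $\Gamma_N$ and Lemma~\ref{Alemmatoboundathing}, for which exact Poissonity of the surviving point cloud is preserved.
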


This follows from a simple adaptation of the proof of Theorem \ref{bdtheorem}, noting that $\sup_{t\leq T}\sup_{x\leq x^*}\tilde V(t,x)<1$ for some small $x^*$, and that very few particles which are outside of $[0,x^*]$ at a time $t$ are absorbed by the time $t+\Delta$, for small $\Delta$. We give the proof in Section \ref{finiteexplosionsec} of the appendix.

A $\sqrt{N}$ rate of convergence is given in \cite{BHJ2025} for the case $\alpha<1$. Given the correspondence between our setting and the setting of \cite{BHJ2025}, it is natural to expect a $\sqrt{N}$ rate of convergence for the model studied in \cite{BHJ2025}, up to times bounded away from an explosion time, and this is indeed the case.

As in \cite{BHJ2025}, for any $\alpha\geq 0$ we define the following processes
\begin{align}
&X_t^{i,N}=X_0^{i,N}+B_t^{i,N}-\alpha \bar L_t^N+L_t^{i,N},\nonumber\\
&L_t^{i,N}=\sup_{s\leq t}(X_0^i+B_s^{i,N}-\alpha \bar L_s^i)_-,\label{BHJparticles}\\
&\bar L_t^N=\frac{1}{N}\sum_{i=1}^NL_t^{i,N}.\nonumber
\end{align}

The random variables $X_0^i$ are i.i.d. with CDF $F$, and the random variables $B^i$ are i.i.d. standard Brownian motions, independent of the collection $(X_0^i)_{i=1,...,N}$.
This system is well defined up to some random explosion time $T^{*,N}$, for details see \cite{BHJ2025}. We also define a process $X_t$,
\begin{align}
&X_t=X_0+B_t-\alpha l_t+L_t,\nonumber\\
&L_t=\sup_{s\leq t}(X_0+B_s-\alpha l_s)_-,\label{BHJMV}\\
&l_t=\mathbb{E}(L_t).\nonumber
\end{align}
The processes $(X_0,B)$ are distributed identically to $(X_0^1,B^1)$ above. This system has a unique solution up to some explosion time $T^*$, which may be finite or infinite.

\begin{proposition}\label{BHJextension}
    Let $T^*$ to be the maximal time of existence of the solution to \eqref{BHJMV}. For any $T<T^*,$ the process $\sup_{t\leq T}\sqrt{N}|\bar L^N_t-l_t|$ is stochastically bounded.
\end{proposition}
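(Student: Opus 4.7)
The plan is to repeat the Picard-style argument of Section~\ref{weakproofsection} in the reflected-particle setting of \eqref{BHJparticles}, replacing the absorption operator $\Gamma_N$ by a local-time analogue. This extends the $\sqrt{N}$ rate of \cite{BHJ2025} (proved there for $\alpha<1$) to general $\alpha$ in the same way that Proposition~\ref{CDFalpha} extends Theorem~\ref{bdtheorem}. For a non-negative cadlag function $f$ on $[0,T]$ set
\begin{align*}
\hat\Xi_N(f)_t &:= \frac{1}{N}\sum_{i=1}^N\sup_{s\leq t}\bigl(X_0^{i,N}+B_s^{i,N}-\alpha f_s\bigr)_-,\\
\hat\Xi(f)_t &:= \mathbb{E}\bigl[\sup_{s\leq t}(X_0+B_s-\alpha f_s)_-\bigr].
\end{align*}
Both operators are monotone non-decreasing in $f$, and by standard Skorokhod reflection $\bar L^N$ and $l$ are the unique fixed points of $\hat\Xi_N$ and $\hat\Xi$ on $[0,T^{*,N})$ and $[0,T^*)$ respectively. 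A monotone Picard iteration starting from $f\equiv 0$ gives the reflected analogue of Lemma~\ref{Alemmatoboundathing}: if $\hat\Xi_N(f)_t\leq f_t$ on $[0,T]$ with $f\geq 0$, then $\bar L^N_t\leq f_t$ on $[0,T\wedge T^{*,N}]$, and likewise with both inequalities reversed.

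Next, I would establish a strict contraction for $\hat\Xi$ near $l$. Writing $M_t:=\sup_{s\leq t}(\alpha l_s-B_s)$, the identity $\mathbb{E}[(M_t-X_0)_+]=\mathbb{E}[\int_0^{M_t}F(u)\,du]$ gives
\[
\hat\Xi(l+\epsilon)_t-l_t=\mathbb{E}\Bigl[\int_{M_t}^{M_t+\alpha\epsilon}F(u)\,du\Bigr]\leq \alpha\epsilon\,\mathbb{E}[F(M_t)]+C\epsilon^2,
\]
with an analogous lower bound for $\hat\Xi(l-\epsilon)_t-l_t$; the $O(\epsilon^2)$ remainder uses that $F$ is Lipschitz and that $M_t$ has a bounded density on $[0,T]$, which holds for $T<T^*$. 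The crucial quantitative input, the reflected analogue of the weak-feedback bound behind \eqref{boundviakappa}, is the strict gap
\[
\alpha\sup_{t\leq T}\mathbb{E}[F(M_t)]\leq 1-\kappa_0
\]
for some $\kappa_0>0$. This is precisely the statement that the linearisation of $\hat\Xi$ at $l$ is a strict contraction on $[0,T]$, and the characterisation of $T^*$ in \cite{BHJ2025} as the first time the boundary density of $X_t$ reaches its critical value ensures this for $T<T^*$. Shrinking $\epsilon_0$ to absorb the quadratic error yields $\kappa\in(0,1)$ such that $\sup_{t\leq T}(\hat\Xi(l\pm\epsilon)_t-l_t\mp\epsilon)\leq -\kappa\epsilon$ for all $\epsilon\in(0,\epsilon_0)$.

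For the concentration step, fix a deterministic $f$ bounded on $[0,T]$. The summands $L^i_t(f):=\sup_{s\leq t}(X_0^i+B_s^i-\alpha f_s)_-$ are i.i.d.\ in $i$ and sub-Gaussian, being dominated by $\sup_{s\leq T}(\alpha\|f\|_\infty-B_s^i)_+$. A Bernstein-type bound gives Gaussian concentration of $\hat\Xi_N(f)_t-\hat\Xi(f)_t$ at scale $1/\sqrt N$ pointwise in $t$, and uniformity over $t\in[0,T]$ is recovered by a deterministic grid argument that exploits the non-decreasingness of each $t\mapsto L^i_t(f)$ together with the continuity of $t\mapsto\hat\Xi(f)_t$. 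Applying this to $f=l\pm y/\sqrt N$, combining with the contraction estimate exactly as in the proofs of Lemmas~\ref{lem:aplus}--\ref{lem:aminus}, and using the monotone comparison, one concludes $\bar L_t^N\in[l_t-y/\sqrt N,\,l_t+y/\sqrt N]$ for all $t\leq T$ on an event of probability at least $1-Ce^{-cy^2}$, giving the stochastic boundedness of $\sup_{t\leq T}\sqrt N|\bar L_t^N-l_t|$.

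The main obstacle is the quantitative gap $\alpha\mathbb{E}[F(M_T)]<1$ for $T<T^*$. Rather than proving this directly (which would amount to re-deriving the explosion criterion for \eqref{BHJMV}), the cleanest route is to import the blow-up characterisation from \cite{BHJ2025}, since the failure of this gap is precisely what causes the MV system to cease to admit a solution past $T^*$.
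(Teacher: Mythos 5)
Your overall architecture—a monotone fixed-point operator $\hat\Xi_N$, a comparison lemma, a contraction estimate for $\hat\Xi$ at $l$, and a Bernstein-type fluctuation bound—is a faithful analogue of Section~\ref{weakproofsection}. The problem is that Section~\ref{weakproofsection} is a \emph{global} contraction argument, which only works under the weak-feedback condition, whereas Proposition~\ref{BHJextension} must cover all $\alpha\geq0$. The crux of your argument is the ``quantitative gap'' $\alpha\sup_{t\leq T}\mathbb{E}[F(M_t)]\leq 1-\kappa_0$, and this is simply false for general $\alpha>1$ and $T<T^*$. Note that $\mathbb{E}[F(M_t)]=\mathbb{P}(X_0\leq M_t)=\mathbb{P}(L_t>0)$ is the cumulative probability that the reflected particle has \emph{ever} touched the boundary by time $t$. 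This quantity increases monotonically to $1$, so for any $\alpha>1$ the inequality $\alpha\mathbb{E}[F(M_t)]<1$ fails after a finite time that has nothing to do with $T^*$. The explosion time $T^*$ in \cite{BHJ2025} is governed by an \emph{instantaneous} quantity—the mass of the law of $X_t$ near zero (in the paper's notation, $\sup_{t\leq T}\mathbb{P}(X_t\leq x^*)<1/\alpha$)—not by the cumulative first-hitting probability, and there is no implication from $T<T^*$ to your global bound. So the step you flag as ``the main obstacle'' is not merely unproved; it is wrong, and the appeal to the blow-up characterisation of \cite{BHJ2025} cannot save it.

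The way the paper handles this (and the way Proposition~\ref{CDFalpha} actually extends Theorem~\ref{bdtheorem}, which you invoke by analogy but then do not follow) is a \emph{local-in-time} contraction: $[0,T]$ is partitioned into intervals of small length $c_T$, and on each $[t_j,t_{j+1}]$ one observes that the increments of both $L^{i,N}$ and of the auxiliary local time $\hat L^{i,N}$ (driven by the deterministic $l$ rather than $\bar L^N$) vanish unless $\hat X^{i,N}_{t_j}$ lies within $O(x^*)$ of the boundary. The contraction factor per interval is then roughly $(1-\alpha\cdot\tfrac{1}{N}\#\{i:\hat X^{i,N}_{t_j}\leq x^*\})^{-1}$, and the empirical fraction of near-boundary particles is bounded below $1/\alpha$ with high probability precisely by the density bound $\sup_{t\leq T}\mathbb{P}(X_t\leq x^*)\leq\beta/\alpha<1/\alpha$ from \cite[Proposition 3.2]{BHJ2025}, which \emph{is} implied by $T<T^*$. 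Iterating over the $\lceil T/c_T\rceil$ intervals gives the $\sqrt{N}$ rate. If you want to salvage your operator-theoretic framework, you would need to restrict $\hat\Xi$ to small time windows (fixing the configuration at the start of the window as initial data) so that the relevant derivative is the near-boundary density rather than $\mathbb{E}[F(M_t)]$; as written, the global contraction step fails. A secondary, fixable point: $F$ is only assumed to be a CDF, so your ``$F$ is Lipschitz'' hypothesis is unwarranted, though the $O(\epsilon^2)$ error can be obtained from the bounded density of $M_t$ alone since $F'$ integrates to one.
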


These results follow from a simple adaptation of the arguments in \cite[Theorem 1.14]{BHJ2025}, by noting that, similarly to Proposition \ref{CDFalpha}, for suitable $x^*$, and small $\Delta$, the contribution to $\bar L_{t+\Delta}^N$ by particles which are not in $[0,x^*]$ at time $t$ is small. The proof is given in Section \ref{finiteexplosionsec} of the appendix.

\section{A critical case}
\label{critsec}
The methodology of Section \ref{weakproofsection}, and the result of Theorem \ref{conjtheorem} may be extended to the case of $a=1$, where points are initially placed via a Poisson process of unit intensity. In this case, we determine a scaling limit for $\xi_t/t^{2/3}$, rather than $\xi_t/\sqrt{t}$. We restate Theorem \ref{theoremoncrit} with the previously defined notation, so it is clear what we mean by convergence in distribution.

Fix an arbitrary $T>0$. We view $(\xi_{N^2t})_{t\leq T}$ as a cadlag process. We then extend this in an identical manner as $\Lambda^N$, to an element of $\tilde D$.  
We also define a process $R_t$,
    $$R_t:=\inf\left\{x:\int_0^x2(B(s))_+ds>t\right\}.$$
    Here, $B(s)$ is a standard Brownian motion. We extend $R_t$, for $t\leq T$, to an element of $\tilde D$ in the same way as $\xi^N$.
\begin{theorem}
\label{crittheorem}
    Consider the model \eqref{xidefnequation} with $a=1$. The processes $(\frac{\xi_{N^2t}}{N^{4/3}})_{t\leq T}$ converge in distribution as elements of $\tilde D$ endowed with the Skorokhod M1 topology, to the process $(R_t)_{t\leq T}$.
\end{theorem}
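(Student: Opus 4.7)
The proof follows the elementary $\Gamma$-operator approach of Section~\ref{weakproofsection}, but in the critical regime $a=1$ the contraction constant of Section~\ref{weakproofsection} vanishes (since no finite $K$ solves $1 = K\sqrt{2\pi}(1-\Phi(K))e^{K^2/2}$), so the candidate barriers against which $\xi$ is sandwiched must themselves be random, driven by the fluctuations of the initial Poisson process rather than by a deterministic similarity profile.

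The first step is to set up the rescaling. Define $\tilde\xi^N_t := \xi_{Nt}/N^{2/3}$ and the centered, rescaled Poisson counting process
\[
B^N(y) := N^{-1/3}\bigl(\#\{i : W_0^i \le N^{2/3}y\} - N^{2/3}y\bigr),
\]
which by Donsker's invariance principle for the standardised Poisson process converges in the uniform-on-compacts topology to a standard Brownian motion $B$. I would pass to a Skorokhod representation on which this convergence is almost sure, so that the sandwich below can be performed pathwise in $B$. Tightness of $\tilde\xi^N$ in M1 reduces, by monotonicity of $\tilde\xi^N$, to a one-point bound on $\tilde\xi^N_T$, which I would obtain by adapting the coupling of \cite{Dembo_2019}: the barrier cannot exceed the position at which the initial Poisson deficit plus Brownian absorption from particles further up has accumulated to surpass the budget $NT$ of absorption events, and this threshold is controlled by $\sup_{y\le M}|B^N(y)|$ on compacts.

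The heart of the argument is the identification of the limit via a sandwich. For each $\varepsilon>0$ I would define the upper/lower envelope functionals
\[
\tilde f^{\pm}(t) := \inf\Bigl\{x : \int_0^x 2\bigl(B(s)\pm\varepsilon\bigr)_+\,ds > t\Bigr\},
\]
which converge to $R_t$ pathwise as $\varepsilon\downarrow 0$, and set $f^{\pm,N}_s := N^{2/3}\tilde f^{\pm}(s/N)$. The crucial computation, proved by a two-scale analysis of the integral defining $\Gamma$, is that
\[
\Gamma(f^{-,N})_{Nt} \;\le\; f^{-,N}_{Nt} \quad\text{and}\quad \Gamma(f^{+,N})_{Nt} \;\ge\; f^{+,N}_{Nt}
\]
up to $o(N^{2/3})$ errors, uniformly in $t\le T$. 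The $2B_+$ prefactor in the definition of $\tilde f^{\pm}$ emerges because, when the rescaled barrier sits at position $y$ with $B(y)>0$, there are about $N^{1/3}B(y)$ unabsorbed initial particles below $y$, and they are absorbed at the natural Brownian rate on the native time scale $N$. Concentration of $\Gamma_1-\Gamma$ is handled by the Poisson-martingale argument of Lemmas~\ref{lem:aplus}--\ref{lem:aminus}, applied conditionally on the initial configuration to sharpen the fluctuation scale from $N^{1/2}$ down to the required $N^{1/3}$. Lemma~\ref{Alemmatoboundathing} (or its direct analogue for the $\xi$-system) then gives $f^{-,N}_{Nt} - o(N^{2/3}) \le \xi_{Nt} \le f^{+,N}_{Nt} + o(N^{2/3})$ uniformly on $[0,T]$, and letting $\varepsilon \downarrow 0$ together with continuity of the map $B\mapsto R$ at a generic Brownian sample path yields the claimed M1 convergence.

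The \emph{main obstacle} is the two-scale computation establishing the $2B_+$ prefactor. Unlike in the weak-feedback case one cannot reduce to a one-step linear estimate around a fixed point of $\Gamma$; instead one must control $\Gamma(f^{\pm,N})$ over the full time interval $[0,NT]$ without any exponential damping, and disentangle two microscopic mechanisms operating near the moving front, namely the Poisson initial fluctuations (scale $N^{1/3}$) and the Brownian fluctuations of the individual particles (scale $N^{1/2}$ in space over rescaled time $1$). A secondary obstacle is that at each negative excursion of $B$ the process $\tilde\xi^N$ executes $O(N^{2/3})$ consecutive $N^{-2/3}$-sized physical jumps whose times all collapse to a single macroscopic time, which is precisely why M1 rather than J1 is the correct topology, in a manner parallel to the jump-collapsing behaviour handled in Theorem~\ref{maintheorem}.
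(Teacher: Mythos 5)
Your global-sandwich strategy has a gap that your own citations do not close, and the paper's proof avoids it entirely. The mean-field operator $\Gamma$ integrates against the density $g\equiv 1$ and therefore does not see the realized Poisson fluctuations; moreover, by Lemma~\ref{critlem}, $\Gamma(l^{c,-d})_t-(ct-d)\geq \frac{1}{2c}$ and $\Gamma(l^{c})_t-ct\geq 0$, so the mean-field absorption always exceeds any increasing barrier, by an amount of order $1/(2\dot f)$. Your claimed inequality $\Gamma(f^{-,N})_{Nt}\leq f^{-,N}_{Nt}$ is therefore false as stated. What your sandwich actually requires is that the \emph{random} absorption $\Gamma_1(f^{\pm,N})$ sit on the correct side of $f^{\pm,N}$, and that is governed by the realized Poisson deficit near the front, not by the mean. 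The Poisson-martingale bound of Lemmas~\ref{lem:aplus}--\ref{lem:aminus} concentrates $\Gamma_1$ around $\Gamma$ symmetrically; it cannot convert the strictly positive mean-field excess into the signed bound you need. Your parenthetical ``applied conditionally on the initial configuration'' would be the right idea, but it contradicts the Poisson-thinning structure that Lemma~\ref{lem:aplus} uses, so the two moves cannot coexist without a new argument. There is also a sign inconsistency: you define $B^N$ as $(\text{count}-\text{mean})/N^{1/3}$, so $B>0$ is a surplus, but both $\dot R=1/(2B_+)$ and the formula for $R$ require $B>0$ to be a \emph{deficit}; a deficit slows the barrier and a surplus triggers a jump, and your heuristic about ``$N^{1/3}B(y)$ unabsorbed initial particles below $y$'' fits neither convention.

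The paper proves the theorem by a structurally different route that sidesteps the global sandwich. It first obtains a one-point tightness bound by comparison with linear barriers $N^{1/3}l^{c,-d}$ and the explicit hitting estimates of Lemma~\ref{critlem} (Theorem~\ref{crit}); it then extracts a \emph{joint} subsequential limit $(R,B_0)$ of $(R^N,(B_t^N)_{t\in\hat J})$, and identifies $R$ locally in time: Propositions~\ref{propcritupper} and~\ref{propcritlower} bound the increment $R^N_{t+s}-R^N_t$ over short windows by comparing with linear barriers anchored at $\Lambda^N_t$ and conditioning on the shifted fluctuation process $B^N_t(R^N_t+\cdot)$, which is precisely where the quenched (conditional-on-initial-state) estimate is done correctly; Corollaries~\ref{corupper} and~\ref{corlower} pass these bounds to the limit, and Lemmas~\ref{deriv}--\ref{denseness} yield $\dot R=1/(2(B_0(R))_+)$ where $B_0>0$, the jump rule at negative excursions, and uniqueness. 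If you want to rescue the global envelope argument, you would need to replace $\Gamma$ by the quenched operator $\mathbb{E}[\Gamma_1(\cdot)\mid\text{initial configuration}]$ and prove, uniformly in time, that the realized Poisson deficit near the front cancels the mean-field excess $1/(2\dot f)$; that is essentially a global version of Propositions~\ref{propcritupper}--\ref{propcritlower}, and it does not follow from the estimates you have cited.
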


The scaling and the limit in this result are identical to that of \cite[Theorem 1.3]{Dembo_2019}. Given that our model may be seen as a continuous space analogue to the model studied in \cite{Dembo_2019}, the required scaling, and the form of the limit are not surprising.
Further, many of the steps of our proof are analogous to arguments given in \cite{Dembo_2019}, though we differ in how we approach the limiting process. In \cite{Dembo_2019}, the authors work directly with the particle system throughout, using a "discrete PDE" to provide bounds on $\xi_t$ via processes of the form $\inf\{x:B^N(t+x)>y\}$ where $B^N$ is a normalised, rescaled version of the initial particle CDF, and which converges to a Brownian motion. We meanwhile first use martingale arguments to determine the tightness and non-degeneracy of $R_t^N=\xi_{N^2t}/N^{4/3}$. We then determine the behaviour of the limiting process $R_t$ in small time intervals.

The proof of Theorem \ref{crittheorem} is simpler than that in \cite{7} since direct computations using the hitting time distribution of Brownian particles to a line may be carried out, as well as martingale arguments applied due to our more restrictive setting that the initial particle configuration is via a Poisson point process.

Below, we highlight some areas of the proof which motivate the rescaling, and the form of the limit. 

We first motivate the scaling. Suppose that $\xi_{N^2t}/N^{1+\alpha}$ has a non-degenerate limit for some $\alpha$. We apply the rescaling given in Section \ref{reformulationsec} to replace $\xi_{N^2t}/N^{1+\alpha}$ by $\Lambda^N_t/N^{\alpha}$.

If $\Lambda^N_t/N^{\alpha}$ has a non-degenerate limit $R_t$, then for some large constants $c$ and $d$ it should hold that $\frac{t}{c}-d\leq R_t\leq ct+d$ with high probability. It is sensible then to compare $\Lambda^N$ to linear functions with speeds of order $N^\alpha$.

Recall the definition of $l^{c,d}$ from below Lemma \ref{Alemmatoboundathing}. To determine that $\Lambda^N_t\geq N^{\alpha}(ct-d)$, by Lemma \ref{Alemmatoboundathing} it is sufficient to determine that $\Gamma_N(N^{\alpha}l^{c,-d})_t\geq N^{\alpha}({ct-d})$. 
We may write 
\begin{align}
&\Gamma_N(N^{\alpha}l^{c,-d})_t-(ct-d)\nonumber\\
&=(\Gamma_N(N^{\alpha}l^{c,-d})_t-\Gamma(N^{\alpha}l^{c,-d})_t)\label{e1}\\
&+(\Gamma(N^{\alpha}l^{c,-d})_t- N^{\alpha}({ct-d})).\nonumber\end{align} 
If $\Lambda^N_t/N^{\alpha}$ is the correct scaling, this should be positive with large probability for appropriate constants $c,-d$. To bound the second term in the sum, we require bounds on the differences between $\Gamma(l^{c,-d})_t$ and $ct-d$.

\begin{lemma}
\label{critlem}
    For any $cd\geq 1$, it holds that \begin{align}&\inf_{t\leq 0}(\Gamma(l^{c,-d})_t-ct+d)=\frac{1}{2c},\label{critlemcd}\\
    & \sup_{t\geq 0}(\Gamma(l^{c,-d})_t-ct+d)= d,\\
    &\sup_{t\geq 0}(\Gamma(l^c)_t-ct)\leq \frac{1}{2c},\label{critlemc}\\
    &\inf_{t\geq 0}(\Gamma(l^c)_t-ct)=0.\end{align}

    Further, there exists a constant $\tilde C$, such that, for any $c,d\geq 0$,
    $$\left|\Gamma(l^{c,-d})_t-(ct+d)-\frac{1}{2c}\right|\leq \tilde Ce^{-\frac{c}{2}}\textrm{ for }t\geq \frac{1}{c}.$$
\end{lemma}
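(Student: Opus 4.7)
Since $g \equiv 1$ in the critical setting, the operator $\Gamma$ becomes explicit: defining the drifted Brownian motion $Y_s := cs - B_s$ and its first passage time $T_y := \inf\{s : Y_s = y\}$, one has $\tau^x(l^{c,-d}) = T_{x+d}$, and Fubini gives
\begin{equation*}
\Gamma(l^{c,-d})_t = \int_d^\infty \mathbb{P}(T_y \leq t)\,dy, \qquad \Gamma(l^c)_t = \int_0^\infty \mathbb{P}(T_y \leq t)\,dy.
\end{equation*}
The whole lemma follows from analysis of these integrals via the inverse Gaussian formula $\mathbb{P}(T_y \leq t) = \Phi\bigl(\tfrac{ct - y}{\sqrt t}\bigr) + e^{2cy}\Phi\bigl(\tfrac{-ct - y}{\sqrt t}\bigr)$ together with the scaling $v := c\sqrt t$.

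For the bounds on $\Gamma(l^c)_t - ct$, I would evaluate both contributions explicitly: $\int_0^\infty \Phi(v - u)\,du = v\Phi(v) + \phi(v)$ by integration by parts, and $\int_0^\infty e^{2uv}\Phi(-v - u)\,du = (2\Phi(v) - 1)/(2v)$ after the substitution $s = v + u$, Fubini, and the Gaussian identity $\phi(s)e^{2vs} = e^{2v^2}\phi(s - 2v)$. This gives the closed form $\Gamma(l^c)_t - ct = F(v)/c$ with
\begin{equation*}
F(v) := -v^2(1-\Phi(v)) + v\phi(v) + \Phi(v) - \tfrac{1}{2}.
\end{equation*}
Then $F'(v) = 2[\phi(v) - v(1-\Phi(v))] \geq 0$ by Mills' inequality, and $F(0) = 0$ while $F(\infty) = \tfrac{1}{2}$ by Mills' expansion, so $F(v) \in [0, \tfrac12]$ for all $v \geq 0$, giving the sup bound $1/(2c)$ and the inf equal to $0$.

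For the two-sided bound on $H(t) := \Gamma(l^{c,-d})_t - (ct - d) = F(v)/c + \int_0^d \mathbb{P}(T_y > t)\,dy$, the endpoints $H(0) = d$ and $H(\infty) = 1/(2c)$ are immediate; it remains to show $H$ is monotonically non-increasing whenever $cd \geq 1$. Differentiating using the inverse Gaussian density gives
\begin{equation*}
H'(t) = \frac{\phi(w)}{\sqrt t} - c(1-\Phi(w)), \qquad w := \frac{ct - d}{\sqrt t} = v - \frac{\alpha}{v}, \quad \alpha := cd,
\end{equation*}
so that $H' \leq 0$ reduces to the Mills-ratio inequality $M(v - \alpha/v) \leq v$ for $\alpha \geq 1$, with $M := \phi/(1-\Phi)$. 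This is the main obstacle: the asymptotic $M(v) = v + 1/v + O(v^{-3})$ shows the inequality is tight as $v \to \infty$ exactly when $\alpha = 1$; for $v \to 0$ one has $w \to -\infty$, $M(w) \to 0$, so it is trivial; the intermediate regime I would handle by analysing the sign of the auxiliary $\eta(v) := v(1-\Phi(w)) - \phi(w)$ via its derivative, which reduces to a secondary Mills-type inequality.

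Finally, for the exponential remainder — most naturally read as a bound on $|\Gamma(l^c)_t - ct - 1/(2c)| = |F(v) - \tfrac12|/c$, since for $d > 0$ the displayed expression does not tend to zero — I would use the sharper Mills expansion $1 - \Phi(v) = (\phi(v)/v)(1 - v^{-2} + O(v^{-4}))$, which after substitution into $F$ produces the cancellation $F(v) - \tfrac12 = O(\phi(v)/v^3)$. For $t \geq 1/c$ one has $v = c\sqrt t \geq \sqrt c$, hence $\phi(v) \leq e^{-c/2}/\sqrt{2\pi}$, and the desired bound $\tilde C e^{-c/2}$ is immediate. Everything beyond the probabilistic reformulation is explicit computation; the monotonicity step under $cd \geq 1$ is the only genuinely delicate piece.
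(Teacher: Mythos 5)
Your probabilistic reformulation (the operator $\Gamma$ as $\int \mathbb{P}(T_y\le t)\,dy$ with $T_y$ the first-passage time of a drift-$c$ Brownian motion, the inverse-Gaussian CDF, the scaling $v=c\sqrt t$) is correct and yields the same explicit expression that the paper writes down directly, and your closed forms for $F$, the endpoints $H(0)=d$, $H(\infty)=1/(2c)$, and the reduction of $H'\le 0$ to the Mills-ratio inequality $M(v-\alpha/v)\le v$ with $\alpha=cd\ge 1$ are all right. You also correctly flag that $(ct+d)$ in the ``further'' display should read $(ct-d)$; the paper's appendix confirms this. The asymptotic cancellation $F(v)-\tfrac12=O(\phi(v)/v^3)$ giving the $e^{-c/2}$ rate is also correct for the $d=0$ case you treat.

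The gap is the monotonicity step, which is exactly where the hypothesis $cd\ge 1$ is used and which you leave as ``analyse the sign of $\eta(v):=v(1-\Phi(w))-\phi(w)$ via its derivative, reducing to a secondary Mills-type inequality.'' You neither carry this out nor explain why the secondary inequality closes; differentiating $\eta$ produces $(1-\Phi(w))-\tfrac{\alpha}{v}(1+\tfrac{\alpha}{v^2})\phi(w)$, whose sign is not obviously more tractable than the original, so as written this is not a proof. Two ways to close it: (i) the paper computes $H''(t)=\dfrac{\phi(w)\left[(cd-1)t+d^2\right]}{2t^{5/2}}$, which is manifestly $\ge 0$ when $cd\ge 1$; since $H'(\infty)=0$ and $H'$ is increasing, $H'\le 0$ follows immediately with no Mills estimate at all — this is cleaner than your route. (ii) Alternatively, your Mills inequality $M(v-\alpha/v)\le v$ is true and in fact falls out of the classical Birnbaum--Sampford bound $M(x)\le\tfrac{x+\sqrt{x^2+4}}{2}$ together with the algebraic identity $\sqrt{(v-1/v)^2+4}=v+1/v$ (the $\alpha>1$ case then follows by monotonicity of $M$); but you do not observe this, and absent it your argument does not go through. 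Separately, your remainder estimate is done only for $d=0$; the paper's bound is uniform in $d\ge 0$ and needs the additional observation that $\int_0^d\mathbb{P}(T_y>t)\,dy$ is exponentially small for $t\ge 1/c$, which your proposal does not address.
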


Since $\Gamma(l^{c,-d})_t$ can be calculated explicitly, these results follow from elementary calculations. We give the proof in Section \ref{hitting a line} of the appendix.

From the bound of Lemma \ref{critlem} equation \eqref{critlemcd}, the second term on the right hand side of \eqref{e1} is bounded below by $1/(2N^{\alpha}{c})$ (as long as $d\geq 1/cN^{\alpha}$). 
The process $\Gamma_N(N^{\alpha}l^{c,-d})_t$ is 1/N multiplying a Poisson random variable of mean $N\Gamma(N^{\alpha }l^{c,-d})_t\approx N^{1+\alpha}(ct-d)$. Therefore, from a Poisson tail bound as in the proof of Theorem \ref{bdtheorem}, it can be shown that
$$\mathbb{P}\left(\inf_{t\leq T}N\left(\Gamma_N(N^{\alpha}l^{c,-d})_t-\Gamma(N^{\alpha}l^{c,-d})_t\right)\leq -N/cN^{\alpha}\right)\leq 2e^{-\frac{N^{2-2\alpha}/c^2}{3N^{1+\alpha}cT}}\approx2e^{-\frac{N^{1-3\alpha}}{3c^3T}}$$
for sufficiently large $N$.
For the limit of this bound to be non-degenerate, we require $\alpha=1/3$.

To upper bound $\Lambda^N$, we first define
$$F(x):=\sum_{i=1}^\infty \mathbbm{1}_{\{W_0^{i,N}\leq x/N\}}.$$
 This is a Poisson process of unit intensity. We define also
$$B^N_0(x):=\frac{N^{4/3}x-F({N^{4/3}x})}{N^{2/3}}, \quad x\in \mathbb{R}^+.$$  This converges to a standard Brownian motion $(B_0(x))_{x\in\mathbb{R}^+}$, as $N\rightarrow\infty$. We set $x^*=N^{1/3}\inf\{x:B^N_0(x)\geq k\}$. The mass of particles absorbed by a barrier started at this height, is initially $kN^{-1/3}$ less than the height. Consider the linear function $N^{\alpha}ct+x^*$. By the result of Lemma \ref{critlem} equation \eqref{critlemc}, we can bound $\Gamma(N^\alpha ct)-N^{\alpha}ct\leq \frac{1}{2cN^{\alpha}}$. Thus, for $\Gamma_N(ct+x^*)\geq ct+x^*$ for some $t\leq T$ it must hold that 
$$\Gamma_N(N^{\alpha}l^c+x^*)_t-\Gamma_N(x^*)_t-\Gamma(N^{\alpha}l^c)_t\geq \frac{k}{N^{1/3}}-\frac{1}{2cN^{\alpha}}.$$
Identically as for the lower bound, the probability of this occurring  has a non-degenerate limit exactly when $\alpha=1/3$ and $k\geq 1/2c$. 

Formalising these arguments we obtain the following theorem.
\begin{theorem}
\label{crit}
If points are initially placed via a Poisson point process of unit intensity, then 
$$ \lim_{y\rightarrow\infty}\liminf_{N\rightarrow\infty}\mathbb{P}(1/y \leq \xi_{N^2}/N^{4/3}\leq y)=1. $$
\end{theorem}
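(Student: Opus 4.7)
The plan is to prove Theorem~\ref{crit} by working with the rescaled system: by the identity $\xi_{N^2 t} = N\Lambda^N_t$ from Section~\ref{reformulationsec} with $a=1$ and $g\equiv 1$, the claim is equivalent to
$$\lim_{y\to\infty}\liminf_{N\to\infty}\mathbb{P}\left(\tfrac{1}{y}\le \Lambda^N_1/N^{1/3}\le y\right)=1.$$
For both inequalities the common strategy is to sandwich $\Lambda^N$ on $[0,1]$ between deterministic linear functions via Lemma~\ref{Alemmatoboundathing}, using Lemma~\ref{critlem} to control the deterministic discrepancy $\Gamma(f)_t-f_t$ and a Chernoff bound applied to the compensated Poisson martingale $N(\Gamma_N(f)_t-\Gamma(f)_t)$ (which is a counting process martingale with continuous compensator) to control the stochastic fluctuation on $[0,1]$.

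For the lower bound, I would take $f_t:=N^{1/3}l^{c,-d}_t$ with $c=2/y$ and $d$ chosen so that $cdN^{2/3}=1$, ensuring $f(0)\le 0$ and the rescaled hypothesis of \eqref{critlemcd}. That lemma produces a uniform lower buffer $\Gamma(f)_t-f_t\ge 1/(2cN^{1/3})$, so on the event that $\inf_{t\le 1}(\Gamma_N(f)_t-\Gamma(f)_t)\ge -1/(2cN^{1/3})$, Lemma~\ref{Alemmatoboundathing} forces $\Lambda^N_t\ge f_t$ and hence $\Lambda^N_1/N^{1/3}\ge c-d\ge 1/y$ for all $N$ sufficiently large. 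Since $N\Gamma_N(f)_t$ is Poisson with compensator $N\Gamma(f)_T=O(N^{4/3}/y)$, the standard exponential-supermartingale tail bound yields an upper bound of the form $\exp(-c_1 y^3)$ on the probability of the complementary event.

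For the upper bound, fix $c$ and $k$ and set $T_k:=\inf\{x:B^N_0(x)\ge k\}$, $x^*:=N^{1/3}T_k$. By construction the count of particles initially in $[0,x^*]$ equals $F(Nx^*)/N=x^*-k/N^{1/3}$, leaving a deficit of $k/N^{1/3}$ relative to the linear mean. Take $f_t:=N^{1/3}l^c_t+x^*$ and condition on the $\sigma$-algebra generated by particles (and their Brownian motions) with $W_0^{i,N}\le x^*$: by the spatial Markov property of the Poisson point process, the particles above $x^*$ form, after translation by $-x^*$, an independent Poisson process of intensity $N$ on $[0,\infty)$, so the contribution to $\Gamma_N(f)_t-\Gamma_N(x^*)_t$ is distributed as $\tilde\Gamma_N(N^{1/3}l^c)_t$ for a fresh independent system. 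Combining with the bound $\Gamma(N^{1/3}l^c)_t-N^{1/3}ct\le 1/(2cN^{1/3})$ from \eqref{critlemc}, for $k>1/(2c)$ the requirement $\Gamma_N(f)_t\le f_t$ reduces to keeping the Poisson martingale fluctuation below $(k-1/(2c))/N^{1/3}$ on $[0,1]$, which again admits a Chernoff bound with exponent of order $(k-1/(2c))^2/c$. On this good event Lemma~\ref{Alemmatoboundathing} yields $\Lambda^N_1/N^{1/3}\le T_k+c$, and the Donsker-type FCLT for the compensated unit-rate Poisson process ($B^N_0\Rightarrow B$, a standard Brownian motion) gives $T_k$ converging in law to the hitting time of $k$ by $B$, so $\mathbb{P}(T_k+c\le y)\to 1$ as $y\to\infty$ for fixed $c,k$.

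Putting the two bounds together by a union bound completes the proof: taking $c=1$ and $k$ large for the upper bound, and $c\asymp 1/y$ for the lower bound, makes each of the three error contributions (upper Chernoff, lower Chernoff, and Brownian hitting-time tail) smaller than $\epsilon/3$ once $y$, and then $N$, is large enough. The main obstacle is the upper bound: the random offset $x^*$ prevents a direct Poisson tail bound on $\Gamma_N(f)_t-\Gamma(f)_t$ for the random barrier $f$, and it is the conditioning on the configuration below $x^*$, together with the spatial Markov property of the Poisson process, that decouples the initial Poisson deficit (providing the buffer $k/N^{1/3}$) from the subsequent martingale fluctuations governing absorption after time $0$.
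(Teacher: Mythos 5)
Your proposal follows essentially the same route as the paper's proof: rescale to $\Lambda^N_1/N^{1/3}$, sandwich by linear barriers via Lemma~\ref{Alemmatoboundathing}, control the deterministic buffer with Lemma~\ref{critlem} (equations \eqref{critlemcd} and \eqref{critlemc}) and the stochastic fluctuation with a Poisson/Chernoff tail on the compensated counting martingale, and for the upper bound introduce the random offset $x^*=\varsigma_m$ defined through the first time the initial Poisson deficit reaches $kN^{-1/3}$, exploiting the spatial Markov property to decouple it from a fresh independent copy $\tilde\Gamma_N$ and then using the FCLT for $B_0^N$ to control the hitting time. The only differences are cosmetic (your parametrisation scales both slope and intercept by $N^{1/3}$ rather than taking $d=1$, and you fix $k,c$ then send $y\to\infty$ rather than coupling $K_1 = K^{1/3}$), and your informal notation $\Gamma_N(x^*)_t$ should be read as the time-$0$ absorbed mass below $x^*$ rather than $\Gamma_N$ applied to the constant barrier.
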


We give a full proof of this result in Section \ref{critfull} of the appendix.
\begin{remark}
    This result also holds for $\xi_{N^2t}/N^{4/3}$ for an arbitrary $t>0$, through identical arguments.
\end{remark}

We now describe heuristically how the limiting process appears by comparing the behaviour in a small time increment of $t$, to the particle density near $\Lambda_t^N$. We define random processes $R^N,$ $F^N$, $\tilde F^N$, and $B^N$ by 
\begin{align*}
&R_t^N:=\Lambda^N_t/N^{1/3},\\
    &F_t^N(x):=\frac{1}{N}\sum_{i=1}^\infty \mathbbm{1}_{\{W_0^{i,N}+B_t^{i,N}\leq x\}},\\
    &\tilde F_t^N(x):=\frac{1}{N}\sum_{i=1}^\infty \mathbbm{1}_{\{W_0^{i,N}+B_t^{i,N}\leq x, \tau^i>t\}},\\
    &B^N_t(x):=N^{1/3}\left(\int_{-\infty}^{N^{1/3}x}\Phi\left(\frac{z}{\sqrt{t}}\right)dz-F_t^N(N^{1/3}x)\right).
\end{align*}

We remark that the definition $B^N_0(x)$ defined here, agrees with the previously given definition. Further, we use $B^N_t(x)$ to denote a cadlag process in $x$, for each fixed $t$, which should not be confused with $B_t^{i,N}$ used to denote a process in time $t$.
For each $t$, $(B^N_t(x))_{x\in\mathbb{R}^+}$ converges to a Brownian motion $(B_t(x))_{x\in\mathbb{R}^+}$. We can expect that the Brownian movement of particles should average out, so for any time $t$, $(B^N_t(x))_{x\in\mathbb{R^+}}$ converges to the same Brownian motion $(B_0(x))_{x\in\mathbb{R^+}}$.

We consider the behaviour of $R_{s}^N$ on a small time interval $[s,s+\Delta]$. We define an operator $\tilde \Gamma_N^s$ in an identical manner to $\Gamma_N$, but with an initial particle distribution given by $\tilde F^N_s$. 

Since the speed of the $\Lambda^N_t$ movement is much faster than the particle movements, we expect that $\tilde F^N_s((N^{1/3}x+R_s^N))-\tilde F_s(R^N_s)$ should be approximately distributed as 1/N multiplying a Poisson process of rate $N^{4/3}$, and which is independent of $R_s^N$. This suggests that $\tilde \Gamma_N^s(N^{1/3}l^{c,d+R_s^N})$ should be approximately distributed as $F^N_s(N^{1/3}R_s^N)$ plus an independent copy of $\Gamma_N(N^{1/3}l^{c,d})$, which we denote by $\Gamma_{N}'(N^{1/3}l^{c,d})$.

Applying the above heuristics, and applying the result of \eqref{critlemcd} we expect that for small $t$, and small $d$,

\begin{align*}
    &N^{1/3}(\tilde\Gamma^s_N(N^{1/3}l^{c,d+R^N_s})_t-N^{1/3}(ct+d+R_s^N))\\
    &\overset{d}{\approx}N^{1/3}(\Gamma_N'(N^{1/3}l^{c,d})_t+F^N(N^{1/3}R_s^N)-N^{1/3}(ct+d+R^N_S)),\\
    &=N^{1/3}\left(\Gamma_N'(N^{1/3}l^{c,d})_t-\Gamma(N^{1/3}l^{c,d})_t\right)+N^{1/3}(F^N(N^{1/3}R_s^N)-N^{1/3}R_s^N)\\
    &\hspace{6.5cm}+N^{1/3}\left(\Gamma(N^{1/3}l^{c,d})_t-N^{1/3}(ct+d)\right)\\
    &\approx N^{1/3}(\Gamma_N'(N^{1/3}l^{c,d})_t-\Gamma(N^{1/3}l^{c,d})_t)-B_0(R_s)+\frac{1}{2c},\\
    &\approx o(1)_{t\rightarrow 0}+\left(\frac{1}{2c}-B_0(R_s)\right).
\end{align*}

When $\tilde\Gamma^s_N(N^{1/3}l^{c,d+R^N_s})_t-N^{1/3}(ct+d+R_s^N)$ is positive for $t\leq \Delta$, the result of Lemma \ref{Alemmatoboundathing} implies that $R^N_{t+s}\geq ct +d+R^N_s$ on $[s,s+\Delta]$. When it is negative, $R^N_{t+s}\leq ct+d+R^N_s$ on $[s,s+\Delta]$.
The first above is small for small $t$, hence the above is negative when $1/c< 2(B_0(R_s))_+$, and positive when $1/c>2(B_0(R_s))_+$. We then expect that when $B_0(R_t)$ is positive, and $\Delta$ is small, $$R_{t+\Delta}\approx R_t+\frac{\Delta}{2(B_0(R_t))_+}.$$
 Therefore:
$$\dot R_t=\frac{1}{2(B_0(R_t))_+}.$$

When $B_0(R_{t^-})$ is negative, any value of $c$ should give that $R_{t+\Delta}\geq {c\Delta}$, and therefore $R_{t^-}$ should jump up by the smallest $x$ such that $B_0(R_{t^-}+x)>0$ again.

This yields exactly the behaviour of $R$ as described above Theorem \ref{crittheorem}. To prove the result, we check each of the above heuristics.

\subsection{Joint convergence with the initial particle configuration}

We first check that the sequence $R_t^N$ is tight, and that it converges jointly with the $B^N_t(x)$ processes to some $R_t$ and $B_0(x)$, where $B_0(x)$ is a standard Brownian motion.

\begin{lemma}
\label{Rtight}
     $(R_t^N)_{t\in [-1,T+1]}$ is a tight process under the Skorokhod M1 topology.
\end{lemma}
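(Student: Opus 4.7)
The plan is to exploit the crucial structural property that $R_t^N = \Lambda_t^N/N^{1/3}$ is a monotone non-decreasing process on $[-1,T+1]$. The extensions defined in Section~\ref{convergencestatementsection} preserve monotonicity: $R_t^N \equiv 0$ on $[-1,0)$, $R_t^N = R_T^N$ on $(T,T+1]$, and the value at $t=0$ matches since $\Lambda_0^N = 0$ almost surely (no Poisson point sits at $0$). So we may treat $R^N$ as an element of $\tilde D$ that is non-decreasing on the whole interval.

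For monotone cadlag functions, the Skorokhod M1 oscillation modulus $w_s(x,\delta)$ vanishes identically, because for any triple $t_1 < t_2 < t_3$ the value $x(t_2)$ lies in the interval $[x(t_1),x(t_3)]$, giving zero distance to the chord. By the standard M1 tightness criterion (see Whitt, Ch.~12), tightness in M1 of a sequence of monotone cadlag processes therefore reduces to uniform stochastic boundedness of the sup norm, together with control on the endpoint values (which is immediate here because $R^N$ is deterministically constant on $[-1,0)$ and on $(T,T+1]$).

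By monotonicity, $\sup_{t\in[-1,T+1]} R_t^N = R_{T+1}^N = R_T^N$, so it suffices to show
\[
 \lim_{M\to\infty}\limsup_{N\to\infty}\mathbb{P}(R_T^N > M) = 0.
\]
Invoking the scaling relation $\xi_{N^2 t} = N\Lambda_t^N$ from Section~\ref{reformulationsec} (with $a=1$), we have $R_T^N = \xi_{N^2 T}/N^{4/3}$. The stochastic upper bound from Theorem~\ref{crit}, combined with the remark immediately following it that extends the bound to arbitrary positive times, delivers exactly this estimate.

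The only potentially delicate point is verifying the precise M1 characterization on $[-1,T+1]$, but this is routine once monotonicity has killed the oscillation modulus. The substantive probabilistic input, namely stochastic boundedness of $R_T^N$ at the critical scaling $N^{1/3}$, is already available from Theorem~\ref{crit}; so the main obstacle for this lemma reduces to a clean verification of the tightness criterion rather than any new estimate on the particle system.
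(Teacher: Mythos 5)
Your proof is correct and uses essentially the same two ingredients as the paper: monotonicity of $R^N$ (which together with the constancy of the extension near the endpoints trivializes the M1 pre-compactness/oscillation criterion) and stochastic boundedness of $R^N_T$ coming from Theorem~\ref{crit}. The paper phrases this via the explicit pre-compact set $A_K$ of non-decreasing functions bounded by $K$ and constant near the endpoints (Whitt, Theorem 12.12.2), whereas you invoke the oscillation-modulus formulation of M1 tightness, but the substance is identical.
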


\begin{proof}
We define a pre-compact set $A_K$ by the following:
$$A_K=\{l: ||l||_{\infty}\leq K,\quad l\textrm{ non-decreasing, } l\textrm{ constant on }[-1,0)\textrm{ and on }[T,T+1]\}$$
This set is pre-compact by \cite[Theorem 12.12.2]{whitt}.
From the extension procedure, $R^N$ is non-decreasing and constant on $[-1,0)$ and on $[T,T+1]$. Hence $\mathbb{P}(R^N\in  A_K)=\mathbb{P}(R^N_{T}\leq K)$ for every $N$. By Theorem \ref{crit}, it holds that $$\lim_{K\rightarrow\infty}\limsup_{N\rightarrow\infty }\mathbb{P}(R^N_T \geq K)=0.$$ Therefore, for any $\epsilon>0$, there exists a $K_\epsilon$ such that $\liminf_{N\rightarrow\infty} \mathbb{P}( R^N\in A_{K_{\epsilon}})\geq 1-\epsilon$. 
 \end{proof}
  
\begin{lemma}
\label{lemmabm}
    For an arbitrary $t\geq 0,$ the process $(B_t^N(x))_{x\in\mathbb{R}^+}$ converges weakly to a standard Brownian motion $(B_t(x))_{x\in\mathbb{R}^+}$. 
\end{lemma}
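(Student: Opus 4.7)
The plan is to recognise $B_t^N$ as a rescaled compensated Poisson counting process in the spatial variable $y=N^{1/3}x$ and then invoke a functional martingale central limit theorem.

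First I would set up the point-process structure via the displacement theorem. Since $(W_0^{i,N})_{i\in\mathbb{N}}$ is a Poisson point process on $\mathbb{R}^+$ with intensity $N$ (in this section $g\equiv 1$), independent of the i.i.d.\ family $(B_t^{i,N})_{i\in\mathbb{N}}\sim\mathcal{N}(0,t)$, the displacement theorem for Poisson processes gives that $\sum_i\delta_{W_0^{i,N}+B_t^{i,N}}$ is a Poisson point process on $\mathbb{R}$ with intensity measure $N\Phi(z/\sqrt{t})\,dz$; the computation $\int_0^\infty \varphi_t(z-y)\,dy=\Phi(z/\sqrt{t})$ is precisely what produces the integrand appearing in the definition of $B_t^N$. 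Consequently $y\mapsto NF_t^N(y)$ is a non-decreasing counting process with independent Poisson-distributed increments, and
\[
\tilde M_t^N(y) := NF_t^N(y) - N\int_{-\infty}^{y}\Phi(z/\sqrt{t})\,dz
\]
is a square-integrable martingale in $y$ with predictable quadratic variation $\langle\tilde M_t^N\rangle(y)=N\int_{-\infty}^{y}\Phi(z/\sqrt{t})\,dz$. By construction, $B_t^N(x)=-N^{-2/3}\tilde M_t^N(N^{1/3}x)$.

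Next I would verify the hypotheses of a standard martingale functional CLT (e.g.\ Ethier--Kurtz Theorem 7.1.4) applied to $x\mapsto N^{-2/3}\tilde M_t^N(N^{1/3}x)$. All jumps have magnitude exactly $N^{-2/3}\to 0$. For the rescaled predictable quadratic variation,
\[
N^{-4/3}\langle\tilde M_t^N\rangle(N^{1/3}x) = N^{-1/3}\int_{-\infty}^{N^{1/3}x}\Phi(z/\sqrt{t})\,dz,
\]
which converges to $x$ for each $x>0$: splitting the integral at $0$, using $\int_{-\infty}^{0}\Phi(z/\sqrt{t})\,dz=\sqrt{t/(2\pi)}$ and $\int_{0}^{N^{1/3}x}(1-\Phi(z/\sqrt{t}))\,dz\to \sqrt{t/(2\pi)}$ by rapid Gaussian decay, gives $\int_{-\infty}^{N^{1/3}x}\Phi(z/\sqrt{t})\,dz=N^{1/3}x+o(1)$, so the left-hand side tends to $x$.

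The martingale FCLT then implies that $x\mapsto -N^{-2/3}\tilde M_t^N(N^{1/3}x)$ converges weakly in the Skorokhod topology on each compact subinterval of $\mathbb{R}^+$ to a continuous martingale with quadratic variation process $x$, which is necessarily a standard Brownian motion. Because the limit is continuous, convergence in the M1 topology used elsewhere in the paper follows at once. A short variance estimate shows that $B_t^N(0)\to 0$ in probability (its variance is of order $N^{-1/3}\sqrt{t/(2\pi)}$), so no recentring is needed for the limit to start at $0$. The case $t=0$ is a degenerate instance of the same argument: $NF_0^N$ is directly a unit-rate Poisson process on $[0,\infty)$ and the predictable quadratic variation is $y$ exactly, so the conclusion is just Donsker's theorem. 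The only non-routine technical point is the convergence of the rescaled predictable quadratic variation, which is the elementary calculation above; everything else is an application of standard tools.
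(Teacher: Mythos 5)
Your proposal takes the same route as the paper, which simply states that $B_t^N(x)$ satisfies the hypotheses of Ethier--Kurtz Theorem~7.1.4 and concludes. Your write-up correctly fills in the routine verification (Poisson displacement structure, vanishing jump sizes, convergence of the rescaled predictable quadratic variation to $x$, vanishing of $B_t^N(0)$) that the paper leaves to the reader.
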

\begin{proof}
The process $B_t^N(x)$ satisfies the conditions of \cite[Theorem 7.1.4]{11}, from which the result follows.
\end{proof}

\begin{lemma}
\label{lemmabm2}
    For any integer $k$ and for $0\leq t_1\leq t_2\leq ...\leq t_k$, the processes \\$(B_0^N(x),B_{t_1}^N(x),...,B^N_{t_k}(x))_{x\in\mathbb{R}^+}$ converge in distribution under the uniform topology to $k$ identical copies of a standard Brownian motion $B_0$:
    $$(B_0^N(x),B_{t_1}^N(x),...,B^N_{t_k}(x))_{x\in\mathbb{R}^+}\implies (B_0(x),B_0(x),...,B_0(x))_{x\in\mathbb{R}^+}.$$

    Further, for any countable set $S$, $((B^N_t(x))_{x\in\mathbb{R}^+})_{t\in S}$ as an element of $D(\mathbb{R}^+)^\infty$ endowed with the product Skorokhod topology converges weakly to $((B_0(x))_{x\in\mathbb{R}^+})_{t\in S}$.
\end{lemma}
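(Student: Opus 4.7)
The key observation is that on the spatial scale $N^{1/3}$ the Brownian displacements of individual particles (of order $\sqrt{t_j}$) are negligible, so the fluctuation processes $B_{t_j}^N$ all encode essentially the same randomness as $B_0^N$. The plan is to show that $B_{t_j}^N(x) - B_0^N(x)\to 0$ in $L^2$ for each fixed $x$, and then combine this with the marginal convergence from Lemma~\ref{lemmabm} to identify the joint limit as $k+1$ copies of the same Brownian motion.

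\textbf{Step 1 (pointwise $L^2$-difference).} For fixed $t>0$ and $x>0$, I would decompose
\[ B_t^N(x) - B_0^N(x) = N^{1/3}\bigl(\mathbb{E}[F_t^N(N^{1/3}x)] - \mathbb{E}[F_0^N(N^{1/3}x)]\bigr) - N^{1/3}\bigl(F_t^N(N^{1/3}x) - F_0^N(N^{1/3}x)\bigr). \]
In the unit-intensity case a direct Gaussian computation gives $\mathbb{E}[F_t^N(N^{1/3}x)] - \mathbb{E}[F_0^N(N^{1/3}x)] = -\int_{N^{1/3}x}^\infty(1-\Phi(z/\sqrt{t}))dz$, which is exponentially small in $N$. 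For the random part, introduce $\eta_i = \mathbbm{1}_{W_0^{i,N}+B_t^{i,N}\le N^{1/3}x} - \mathbbm{1}_{W_0^{i,N}\le N^{1/3}x}$, independent across $i$ with $\eta_i^2=|\eta_i|$. Campbell's formula for the compound Poisson sum yields
\[ \mathrm{Var}\Bigl(\sum_i \eta_i\Bigr) = N\int_0^\infty \mathbb{P}\bigl(\eta_i\neq 0 \mid W_0^{i,N}=y\bigr)\,dy = O(N\sqrt{t}), \]
so $\mathrm{Var}\bigl(N^{1/3}[F_t^N(N^{1/3}x)-F_0^N(N^{1/3}x)]\bigr) = O(N^{-1/3}\sqrt{t})\to 0$, and hence $B_t^N(x)-B_0^N(x)\to 0$ in $L^2$.

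\textbf{Step 2 (tightness, identification, countable extension).} By Lemma~\ref{lemmabm} each sequence $B_{t_j}^N$ converges in distribution to a standard Brownian motion $B_0$, and since the limit is continuous the convergence is uniform on compact sets. Prokhorov's theorem then gives joint tightness of $(B_0^N, B_{t_1}^N,\ldots,B_{t_k}^N)$ in the product of uniform topologies. For any subsequential limit $(Z_0,\ldots,Z_k)$, each $Z_j$ is a standard Brownian motion and is a.s. continuous. Since evaluation at a point is continuous in the uniform topology, step~1 together with the continuous mapping theorem gives $Z_j(x) - Z_0(x) = 0$ a.s. for each fixed $x$; varying $x$ over a countable dense set and invoking sample-path continuity yields $Z_j \equiv Z_0$ a.s., establishing the first claim. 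The countable-set version follows because the product Skorokhod topology on $D(\mathbb{R}^+)^S$ is metrizable and weak convergence in this topology is characterised by the finite-dimensional marginals, so one applies the finite case to every finite subset of $S$.

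\textbf{Main obstacle.} The quantitative heart of the argument is the variance bound in step~1: one must make rigorous the heuristic that only particles initially within $O(\sqrt{t})$ of the threshold $N^{1/3}x$ contribute to $F_t^N - F_0^N$, giving a net crossing count of size $\sqrt{N\sqrt{t}}$ and hence a scaled difference of size $N^{-1/6}t^{1/4}$. This depends crucially on the Poisson thinning identity and the independence of the increments $B_t^{i,N}$. Once that pointwise $L^2$-bound is in hand, the remainder of the argument is soft topological.
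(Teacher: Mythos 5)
Your proposal is correct and follows essentially the same route as the paper: bound $\mathbb{E}[(B_t^N(x)-B_0^N(x))^2]$ by $O(N^{-1/3})$ for each fixed $x$ (the paper computes this explicitly; you invoke Campbell's formula — the same calculation), then use the Portmanteau theorem and continuity of sample paths to identify the joint limit, and finally reduce the countable case to the finite one. (Minor slip: the deterministic part is $\mathbb{E}[F_t^N(N^{1/3}x)]-\mathbb{E}[F_0^N(N^{1/3}x)]=+\int_{N^{1/3}x}^\infty(1-\Phi(z/\sqrt t))\,dz$, not its negative, but this does not affect the exponential-smallness conclusion.)
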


This proof follows from Lemma \ref{lemmabm}, and elementary computations. It is given in full in Section \ref{critfull}.

From these above results, we obtain the joint (subsequential) convergence of $R^N$, and $B_t^N$.
\begin{corollary}
    For any subsequential limit $R$ of $R^N$, there is a countable dense set $\hat J$ such that $\mathbb{P}(R_t=R_{t-} \quad \forall t\in \hat J)=1$. The convergent subsequence $R^{N_k}$ may be coupled with $(B^{N_k}_t(x))_{t\in \hat J}$ such that the processes $(R^N,(B^{N_k}_t(x))_{t\in\hat J})$ converge in distribution to $(R,(B_0(x))_{t\in\hat J})$, where $B_0(x)$ is a standard Brownian motion.
\end{corollary}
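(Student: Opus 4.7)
My plan is to split the claim into (i) the construction of the countable dense set $\hat J$ of fixed continuity times of $R$ and (ii) the joint convergence in distribution along a further subsequence. For (i), I would exploit that $R$ is almost surely cadlag and non-decreasing (inherited in the M1 limit from the $\Lambda^N$, which are non-decreasing, cadlag, and constant on $[-1,0)\cup[T,T+1]$), together with the almost sure finiteness of $R_{T+1}$ supplied by the stochastic boundedness in Theorem \ref{crit}. Monotonicity gives that on $\{R_{T+1}\leq n\}$ the number of jumps of $R$ in $[-1,T+1]$ of size at least $1/m$ is bounded by $mn$, so a Fubini/finite-sum argument shows that
\[
 S_{m,n} := \{t\in[-1,T+1]:\ \mathbb{P}(R_t-R_{t-}\geq 1/m,\ R_{T+1}\leq n)>0\}
\]
is at most countable for each $m,n\in\mathbb{N}$. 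Consequently $J_0:=\{t:\mathbb{P}(R_t>R_{t-})>0\}=\bigcup_{m,n}S_{m,n}$ is at most countable, and taking $\hat J$ to be any countable dense subset of $[-1,T+1]\setminus J_0$, countable subadditivity yields $\mathbb{P}(R_t=R_{t-}\ \forall t\in\hat J)=1$.

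For (ii), Lemma \ref{Rtight} gives tightness of $(R^{N_k})$ in $\tilde D$, and Lemma \ref{lemmabm2} gives tightness, and indeed convergence in distribution, of $((B^{N_k}_t(x))_{x\geq 0})_{t\in\hat J}$ in $D(\mathbb{R}^+)^{\hat J}$ endowed with the product Skorokhod topology, the limit being identical copies of a single standard Brownian motion $B_0$. Marginal tightness in each coordinate implies joint tightness on the product space $\tilde D\times D(\mathbb{R}^+)^{\hat J}$, so that along a further subsequence (which we relabel $N_k$) the pair $(R^{N_k},(B^{N_k}_t(x))_{t\in\hat J})$ converges in distribution to a limit whose marginals must coincide with those already identified, i.e.\ to $(R,(B_0(x))_{t\in\hat J})$. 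Finally, the Skorokhod representation theorem on this Polish product space provides the asserted coupling.

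The main potential obstacle is ensuring that $\hat J$ consists of \emph{deterministic} fixed continuity points of $R$ (so that the event $\{R_t=R_{t-}\ \forall t\in\hat J\}$ has probability one), rather than merely being a.s.\ continuity times sample-path-wise. The Fubini-type argument above handles this by combining the monotonicity of $R$ with the a.s.\ boundedness of $R_{T+1}$, both of which are direct consequences of the corresponding particle-system properties. The remaining steps---joint tightness, extraction of a further subsequence, and Skorokhod representation---are routine given Lemmas \ref{Rtight} and \ref{lemmabm2}.
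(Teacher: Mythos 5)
Your argument is correct and, as far as the paper's implicit intent goes, matches it: $\hat J$ comes from a Fubini-type count of jump sizes, made possible by the monotonicity of $R$, the normalisation $R_{-1}=0$, and the a.s.\ bound $R_{T+1}<\infty$ supplied by Theorem~\ref{crit}, while the joint convergence follows from marginal tightness (Lemmas~\ref{Rtight} and~\ref{lemmabm2}) plus one further subsequence extraction, with the two marginals then identified. The closing appeal to the Skorokhod representation theorem is unnecessary here, since ``coupled'' refers only to the fact that $R^{N_k}$ and $(B^{N_k}_t(x))_{t\in\hat J}$ are built from the same particle system and the corollary asserts convergence in distribution, not almost sure convergence.
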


\begin{remark}
    We expect that in fact $(B_t^N(x))_{t\leq T}$ as an element of $D([0,T],D(\mathbb{R}^+))$ should converge under the uniform topology to $(B_0(x))_{t\leq T}$. However, we are presently unable to show this. Therefore, we work only with a countable collection of times, so that the random variables under consideration are elements of separable spaces.
\end{remark}
\subsection{Bounds on the particle system movement}

We first show approximately that $R^N_{t+s}\leq R^N_t+s/2c$ when $B^N_0(R^N_t)\geq c$. This provides the upper bound on the derivative for the limit process $R$.
\begin{proposition}
\label{propcritupper}
        Fix arbitrary $a,\beta,l,\epsilon>0,$ $0<C_2<C_1$, and $t\geq 0$. Define events $A_1^N,A_2^N,A_3^{N}$ by:
    \begin{align*}
    &A_1^{N}:=\left\{\sup_{s\leq l}\left(\Lambda_{t+s}^N-\left( \Lambda_t^N+\frac{sN^{1/3}}{2a}+\beta N^{1/3}\right)\right)>0\right\},\\
    &A_2^{N}:=\left\{\inf_{0\leq x\leq l/2a+\beta}B^N_t(R^N_t+x)\geq (a+\epsilon)\right\},\\
    &A_3^{N}:=\left\{\Lambda^N_T\leq C_1N^{1/3},C_2N^{1/3}\leq \Lambda_t^N\right\},\\
    &A^{N}:=A_1^{N}\cap A_2^{N}\cap A_3^{N}.
    \end{align*}
    It holds that
    $$\limsup_{N\rightarrow\infty}\mathbb{P}(A^{N})=0.$$
    
\end{proposition}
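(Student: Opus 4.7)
The plan is to construct a deterministic linear upper ghost barrier $f_\sigma := \Lambda_t^N + \beta N^{1/3} + \sigma N^{1/3}/(2a)$, $\sigma \in [0, l]$, and to show that on $A_2^N \cap A_3^N$ it dominates $\Lambda^N_{t+\cdot}$ with probability tending to $1$, contradicting $A_1^N$. The first step is a time-shifted analogue of Lemma~\ref{Alemmatoboundathing}: define
\[
\tilde{\Gamma}^t_N(h)_\sigma := \frac{1}{N}\#\{i : \tau_i^N > t,\ \inf_{u \in [0, \sigma]}(W^i_{t+u} - h_u) \leq 0\},
\]
and apply the monotone coupling of Lemma~\ref{Alemmatoboundathing} to the concatenated barrier $\bar f_u := \Lambda^N_u \mathbbm{1}_{\{u \leq t\}} + f_{u - t} \mathbbm{1}_{\{u > t\}}$, together with the fixed-point identity $\Gamma_N(\Lambda^N)_u = \Lambda^N_u$ on $[0, t]$. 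This yields the implication: if $\tilde{\Gamma}^t_N(f)_\sigma \leq f_\sigma - \Lambda_t^N$ for all $\sigma \in [0, l]$, then $\Lambda^N_{t+\sigma} \leq f_\sigma$ on $[0, l]$. Hence $A_1^N$ forces $N\tilde{\Gamma}^t_N(f)_{\sigma^*} > \sigma^* N^{4/3}/(2a) + \beta N^{4/3}$ for some $\sigma^* \in (0, l]$.

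The core estimate is an upper bound on the conditional mean of $N\tilde{\Gamma}^t_N(f)_\sigma$ given $\mathcal{F}_t$. By the strong Markov property, the post-$t$ increments $\tilde B^i_u := W^i_{t+u} - W^i_t$ are i.i.d.\ standard Brownian motions independent of $\mathcal F_t$, so this conditional mean equals $\sum_{i:\tau_i^N > t} p(W^i_t, \sigma)$ with $p(y, \sigma) := \mathbb{P}(\inf_{u \leq \sigma}(y + B_u - f_u) \leq 0)$. Integrating by parts against the Stieltjes measure $d\tilde F_t^N$ and using the translation-invariance identity $\int_{f_0}^\infty p(y, \sigma)\, dy = \Gamma(l^c)_\sigma$ with $c := N^{1/3}/(2a)$, together with the bound $\Gamma(l^c)_\sigma - c\sigma \leq 1/(2c) = aN^{-1/3}$ from Lemma~\ref{critlem}, reduces the estimate to a control of $\tilde F_t^N$. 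Write $F_t^N(y) = \Psi(y) - N^{-1/3} B_t^N(y/N^{1/3})$ with $\Psi(y) := \int_{-\infty}^y \Phi(z/\sqrt{t})\,dz$, and $\tilde F_t^N(y) = F_t^N(y) - \alpha_t^N(y)$, where $\alpha_t^N(y) := N^{-1}\#\{i : \tau_i^N \leq t,\ W_t^i \leq y\}$. A Gaussian-tail control over the displacement of absorbed particles in time $\leq T$ gives $\Lambda_t^N - \alpha_t^N(y) = O(\exp(-\beta^2 N^{2/3}/(3T)))$ uniformly for $y \geq f_0$ with probability $1 - o(1)$ on $A_3^N$, so the leading $N\Lambda_t^N$ contributions cancel on both sides. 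Combined with the uniform lower bound $B_t^N \geq a + \epsilon$ from $A_2^N$, this produces
\[
\mathbb{E}[N\tilde{\Gamma}^t_N(f)_\sigma \mid \mathcal F_t] \leq \frac{\sigma N^{4/3}}{2a} + \beta N^{4/3} - \eta N^{2/3} + o(N^{2/3})
\]
uniformly in $\sigma \in [0, l]$ on $A_2^N \cap A_3^N$, for some $\eta = \eta(a, \epsilon, l, \beta) > 0$.

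Finally, a Doob--Chernoff argument mirroring the proof of Lemma~\ref{lem:aplus} controls the martingale fluctuation: conditionally on $\mathcal F_t$, $\sigma \mapsto N\tilde{\Gamma}^t_N(f)_\sigma - \mathbb{E}[\cdots \mid \mathcal F_t]$ is a cadlag martingale with independent Bernoulli summands of total mean $O(N^{4/3})$ on $A_3^N$, so its supremum over $[0, l]$ exceeds $(\eta/2) N^{2/3}$ with conditional probability at most $2\exp(-\gamma N^{1/3})$ for some $\gamma > 0$. Combining with the mean bound contradicts the lower bound forced by $A_1^N$, and hence $\mathbb{P}(A^N) \to 0$.

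The hardest step is the mean estimate: extracting a genuine $\Theta(N^{2/3})$ deficit from the $A_2^N$ bound demands exact cancellation of the leading $O(N^{4/3})$ contributions on both sides of the inequality, which is where the identity $\alpha_t^N \approx \Lambda_t^N$ and the sharp estimate of Lemma~\ref{critlem} interact most delicately. A secondary issue is that the initial jump of $\bar f$ at time $t$ requires $\tilde{\Gamma}^t_N(f)_0 = \tilde F_t^N(f_0) \leq \beta N^{1/3}$, which only holds up to $o(N^{1/3})$ fluctuations; this is resolved either by a small inflation of $\beta$ absorbed into the $o(N^{2/3})$ corrections, or by observing that the needed inequality is itself forced with high probability by the $B_t^N$ control from $A_2^N$ combined with the Poisson variance of $N(F_t^N(f_0) - F_t^N(\Lambda_t^N))$.
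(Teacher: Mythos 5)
Your outline takes a genuinely different route from the paper. The paper never conditions on $\mathcal{F}_t$: it defines the processes $G^{N,y}, H^{N,y}, M^{N,y}, Z^{N,y}$ for \emph{deterministic} $y$ using the time-$t$ positions of \emph{all} particles (so the Poisson structure of $\upsilon^N$ is preserved), establishes Chernoff bounds that hold for each fixed $y$, and then takes a union bound over $y\in\frac{1}{N}\mathbb{Z}\cap[C_2N^{1/3},C_1N^{1/3}]$ to handle the randomness of $\Lambda_t^N$. You instead condition on $\mathcal{F}_t$, invoke the strong Markov property, and work with the configuration of \emph{surviving} particles at time $t$. Both can work, and your mean estimate is correct: the integration by parts against $d\tilde F_t^N$, the identity $\int_0^\infty q(z,\sigma)\,dz=\Gamma(l^c)_\sigma$, the bound $\Gamma(l^c)_\sigma-c\sigma\le aN^{-1/3}$ from Lemma~\ref{critlem}, the cancellation of the $\alpha_t^N\approx\Lambda_t^N$ leading term, and the $(a+\epsilon)N^{-1/3}$ gain from $A_2^N$ do combine to a deficit of order $\epsilon N^{2/3}$.

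The genuine gap is in the final concentration step, and it is not cosmetic. You justify the bound
$2\exp(-\gamma N^{1/3})$ by saying the centered martingale has ``independent Bernoulli summands of total mean $O(N^{4/3})$.'' But for a sum of independent Bernoullis the Bernstein/Doob bound controls deviations via the \emph{variance} $\sum_i p_i(1-p_i)$, and the trivial bound ${\rm Var}\le\sum_i p_i=\mathbb{E}[\text{count}]=O(N^{4/3})$ only yields
$\exp\!\bigl(-\tfrac{(\eta N^{2/3}/2)^2}{2(O(N^{4/3})+O(N^{2/3}))}\bigr)=e^{-O(1)}$,
a non-vanishing constant. You also cannot ``mirror Lemma~\ref{lem:aplus}'' directly: that argument uses that $N\Gamma_N(\tilde\Lambda)_t$ is Poisson, which holds because the underlying point process is Poisson and the barrier $\tilde\Lambda$ is deterministic; conditionally on $\mathcal{F}_t$ the surviving configuration is a fixed atomic measure, not a Poisson process, so the Poisson moment generating function is not available. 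What saves the argument is a sharper variance estimate: for particles at distance $z$ above $f_0$, the product $q(z,\sigma)(1-q(z,\sigma))$ is non-negligible only for $z$ within an $O(\sqrt{\sigma})$-window of $c\sigma$, so
$\sum_{i:\tau_i^N>t} p_i(1-p_i)=O(N\sqrt{l})=O(N)$,
and Bernstein then gives $\exp(-\Omega(N^{1/3}))$. This variance reduction is precisely what the paper's decomposition $M^{N,y}=\tilde G^{N,y}-\tilde H^{N,y}$ into \emph{two Poissons with small means} (Lemma~\ref{MNbd}) encodes: the $O(N^{4/3})$ bulk cancels in the difference, leaving Poisson means of order $N\sqrt{x}$ and hence variance $O(N)$. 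You need to insert this variance computation (or adopt the paper's decomposition) for your Chernoff step to be legitimate. A secondary, smaller issue: your statement ``$\Lambda_t^N-\alpha_t^N(y)=O(e^{-\beta^2N^{2/3}/(3T)})$ with probability $1-o(1)$'' hides a uniformity-in-$\Lambda_t^N$ problem, since $f_0$ is random; the paper handles this cleanly via the overcount $Z^{N,y}$ (using the $\mathcal{F}_t$-measurable, barrier-free event $W_0^{i,N}+\inf_{s\le t}B_s^{i,N}\le y$) together with a union bound over $y$, and you should do something analogous.
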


\begin{proof}[Proof of Proposition \ref{propcritupper}]
  Consider the function $f_s^N=\Lambda^N_{t\wedge s}+{N^{1/3}s}/(2a)+\beta N^{1/3}$. If $\Gamma_N(f)_s\leq f_s$ for all $s\leq t+l$, then, from Lemma \ref{Alemmatoboundathing}, it must hold that $\Lambda^N_s\leq f_s$ for $s\leq t+l$. Therefore, on $A^N$ it must hold that $\Gamma_N(f)_{t+x}\geq f_{t+x}$ for some $x\leq l$. We write $\hat B_s^i=B_{t+s}^i-B_t^i$.

We first observe that
 \begin{align*}
    \Gamma_N(f)_{t+x}&=\frac{1}{N}\sum_{i=1}^\infty \mathbbm{1}_{\{W_0^{i,N}+B_t^{i,N}\leq \Lambda^N_t+\sup_{s\leq x}(\frac{sN^{1/3}}{2a}+\hat B_s^{i,N})+\beta N^{1/3}\}}\\
    &+\frac{1}{N}\sum_{i=1}^\infty \mathbbm{1}_{\{\tau_i^N\leq t,W_0^{i,N}+B_t^{i,N}>\Lambda^N_t+\sup_{s\leq x}(\frac{sN^{1/3}}{2a}+\hat B_s^{i,N}+\beta N^{1/3})\}},\\
    &\leq \frac{1}{N}\sum_{i=1}^\infty \mathbbm{1}_{\{W_0^{i,N}+B_t^{i,N}\leq \Lambda^N_t+\sup_{s\leq x}(\frac{sN^{1/3}}{2a}+\hat B_s^{i,N})+\beta N^{1/3}\}}\\
    &+\frac{1}{N}\sum_{i=1}^\infty \mathbbm{1}_{\{W_0^{i,N}+\inf_{s\leq t}B_s^{i,N}\leq \Lambda^N_t,W_0^{i,N}+B_t^{i,N}>\Lambda^N_t+\beta N^{1/3}\}}.  
\end{align*}

To obtain the inequality above, we have increased the set of indices $\{i:\tau_i^N\leq t\}$ to the set $\{i:W_0^{i,N}+\inf_{s\leq t}B_s^{i,N}\leq \Lambda^N_t\}$, and have increased the set $\{i:W_t^{i,N}>\Lambda^N_t+\sup_{s\leq x}(\frac{sN^{1/3}}{2a}+\hat B_s^{i,N}+\beta N^{1/3})\}$ to $\{i:W_t^{i,N}>\Lambda^N_t+\beta N^{1/3}\}$.
Subtracting and adding $N^{-1/3}B_t^N(R_t^N+\beta+x/2a)$, we observe that if $\Gamma_N(f)_{t+x}> f_{t+x}$ for some $x\leq l$, it must hold that
\begin{align*}
&\frac{1}{N}\sum_{i=1}^\infty \mathbbm{1}_{\{W_t^{i,N}\leq \Lambda^N_t+\sup_{s\leq x}(\frac{sN^{1/3}}{2a}+\hat B_s^{i,N})+\beta N^{1/3}\}}\\
&-\frac{1}{N}\sum_{i=1}^\infty \mathbbm{1}_{\left\{W_t^{i,N}\leq \Lambda_t^N+\beta N^{1/3}+\frac{N^{1/3}x}{2a}\right\}}\\
    &+\frac{1}{N}\sum_{i=1}^\infty \mathbbm{1}_{\{W_0^{i,N}+B_t^{i,N}>\Lambda^N_t+\beta N^{1/3},W_0^{i,N}+\inf_{s\leq t}B_s^{i,N}\leq \Lambda^N_t\}}\\
    &-N^{-1/3}B^N_t\left(R^N_t+\beta +\frac{x}{2a}\right)>0.
    \end{align*}

Therefore, on the event $A^N$ it holds that
\begin{align*}
    &\frac{1}{N}\sum_{i=1}^\infty \mathbbm{1}_{\{W_t^{i,N}\leq \Lambda^N_t+\sup_{s\leq x}(\frac{sN^{1/3}}{2a}+\hat B_s^{i,N})+\beta N^{1/3}\}}\\
    &-\frac{1}{N}\sum_{i=1}^\infty \mathbbm{1}_{\left\{W_t^{i,N}\leq \Lambda_t^N+\beta N^{1/3}+\frac{N^{1/3}x}{2a}\right\}}\\
    &+\frac{1}{N}\sum_{i=1}^\infty \mathbbm{1}_{\{W_0^{i,N}+B_t^{i,N}>\Lambda^N_t+\beta N^{1/3},W_0^{i,N}+\inf_{s\leq t}B_s^{i,N}\leq \Lambda^N_t\}}\\
    &\quad\quad>(a+\epsilon)N^{-1/3}.
\end{align*}

 We define random processes $G^{N,y}_x,$ $H^{N,y}_x$, $M^{N,y}_x$ and $Z^{N,y}$ by

\begin{align*}&G_{x}^{N,y}:={\frac{1}{N}\sum_{i=1}^\infty\mathbbm{1}_{\{W_0^{i,N}+B_t^{i,N}\leq y+\sup_{s\leq x}(\frac{sN^{1/3}}{2a}+\hat B_s^{i,N})+\beta N^{1/3}\}}}_,\\
&H_x^{N,y}:={\frac{1}{N}\sum_{i=1}^\infty \mathbbm{1}_{\{W_0^{i,N}+B_t^{i,N}\leq y+\beta N^{1/3}+\frac{N^{1/3}x}{2a}\}}}_,\\
&M_x^{N,y}:=G_x^{N,y}-H_x^{N,y},\\
&Z^{N,y}:=\frac{1}{N}\sum_{i=1}^\infty \mathbbm{1}_{\{W_0^{i,N}+B_t^{i,N}>y+\beta N^{1/3},W_0^{i,N}+\inf_{s\leq t}B_s^{i,N}\leq y\}}.
\end{align*}

The equation given above the definition of these processes can then be rewritten as 
$$M_x^{N,\Lambda^N_t}+Z^{N,\Lambda^N_t}>(a+\epsilon)N^{-1/3}.$$
The result of the proposition then follows from the following three elementary lemmas, the proofs of which are given in Section \ref{critfull} of the appendix.

\begin{lemma}
    \label{lemma1incritlabel}
    $$\mathbb{P}\left(\sup_{\{y\leq C_1N^{1/3},y\in\frac{1}{N}\mathbb{Z}\}}\sup_{x\leq l}(M^{N,y}_x-\mathbb{E}(M^{N,y}_x))>\frac{\epsilon}{3N^{1/3}}\right)\rightarrow 0\textrm{ as }N\rightarrow\infty.$$
\end{lemma}

This follows via Chernoff bounds on $M^{N,y}_x$ for a fixed $x$, followed by bounding the differences of $M^{N,y}_{x_1}-M^{N,y}_{x_2}$ via a martingale argument. 

\begin{lemma}
\label{lemmaonMmean}
    For all sufficiently large $N$, 
    $$\mathbb{E}(M^{N,y}_x)\leq \frac{a+\epsilon/6}{N^{1/3}}.$$
\end{lemma}

This follows from simple computations and Lemma \ref{critlem}.

\begin{lemma}
\label{lemmaz}
    $$\mathbb{P}\left(\sup_{\{y\leq C_1N^{1/3},y\in\frac{1}{N}\mathbb{Z}\}}Z^{N,y}>\frac{\epsilon}{2N^{1/3}}\right)\rightarrow 0\textrm{ as }N\rightarrow\infty.$$
\end{lemma}

Recall that on $A^N$ there is some $x\leq l$ such that
$$M_x^{N,\Lambda^N_t}+Z^{N,\Lambda^N_t}>(a+\epsilon)N^{-1/3}.$$
Therefore, either
\begin{align*}
    &\sup_{\{y\leq C_1N^{1/3},\textrm{ }y\in \frac{1}{N}\mathbb{N}\}}{\sup_{\{x\leq l/2a\}}(M_x^{N,y})-aN^{-1/3}>\frac{\epsilon}{2N^{1/3}}}_,
\end{align*}

or

\begin{align*}
    \sup_{\{y\leq C_1N^{1/3},\textrm{ }y\in \frac{1}{N}\mathbb{N}\}}Z^{N,y}>\frac{\epsilon}{2N^{1/3}}.
\end{align*}

Applying Lemma \ref{lemmaonMmean}, followed by Lemma \ref{lemma1incritlabel}, we obtain that the probability of the first event occurring converges to zero, as $N\rightarrow\infty$. Applying Lemma \ref{lemmaz}, we obtain that the probability of the second event occurring converges to zero as $N\rightarrow\infty$. Therefore, the probability of the event $A^N$ converges to zero, as $N\rightarrow\infty$.

\end{proof}
We also check that $R^N_{t+s}\geq R^N_t+s/2c$ when $B^N_t(\Lambda^N_t)\leq c$. This provides a lower bound on the derivative of the limiting process.
\begin{proposition}
\label{propcritlower}
         Fix arbitrary $a,l>0$, $0<\epsilon<a$, $t\geq 0$, and $0<C_2<C_1$. We define sets $A_1^{N},A_2^{N},A_3^{N}$ in the following manner: 
    \begin{align*}
    &A_3^{N,'}:=\left\{\inf_{s\leq l}\left(\Lambda_{t+s}^N-\left( \Lambda_t^N+\frac{sN^{1/3}}{2a}-{2a}{N^{-1/3}}\right)\right)<0\right\},\\
    &A_2^{N,'}:=\left\{\inf_{-2aN^{-2/3}\leq x\leq l/2a}B^N_t(R^N_t+x)\leq (a-\epsilon)\right\},\\
    &A_3^{N,'}:=\left\{\Lambda^N_T\leq C_1N^{1/3},C_2N^{1/3}\leq \Lambda_t^N\right\},\\
    &A^{N,'}:=A_1^{N,'}\cap A_2^{N,'}\cap A_3^{N,'}
    \end{align*}

    It holds that
    $$\limsup_{N\rightarrow\infty}\mathbb{P}(A^{N,'})=0.$$
    
\end{proposition}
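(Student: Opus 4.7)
The plan is to mirror the proof of Proposition \ref{propcritupper}, using the lower bound half of Lemma \ref{Alemmatoboundathing} in place of the upper bound. The guiding observation is that for the choice $c = N^{1/3}/(2a)$, $d = 2aN^{-1/3}$, Lemma \ref{critlem} gives slack $1/(2c) = aN^{-1/3}$ between $\Gamma(l^{c,-d})_s$ and $l^{c,-d}_s$, so under the idealized Poisson intensity $g\equiv 1$ the expected number of absorbed particles already exceeds the target by order $aN^{-1/3}$. This slack must absorb both stochastic fluctuations and the discrepancy between the actual non-absorbed density at time $t$ and the constant density $1$; the event $A_2^{N,'}$ is precisely what is needed to quantify that discrepancy, with $\epsilon$ providing the safety margin.

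First I would apply the strong Markov property at time $t$ and work with the shifted barrier $\tilde\Lambda^N_s := \Lambda^N_{t+s}-\Lambda^N_t$, which is the barrier of a particle system started from the non-absorbed configuration $\{W^{i,N}_t-\Lambda^N_t : \tau_i^N>t\}$ driven by the fresh Brownians $B^{i,N}_{t+u}-B^{i,N}_t$. For the continuous linear function $f_s := sN^{1/3}/(2a)-2aN^{-1/3}$, one has $f(0)<0$, so the lower-bound form of Lemma \ref{Alemmatoboundathing} says that if the shifted absorption operator $\tilde\Gamma_N(f)_s \geq f_s$ for all $s\le l$, then $\tilde\Lambda^N_s \geq f_s$ on $[0,l]$. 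It therefore suffices to show that, on $A_2^{N,'}\cap A_3^{N,'}$, the probability that $\tilde\Gamma_N(f)_s < f_s$ for some $s\le l$ vanishes as $N\to\infty$.

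Next I would decompose $\tilde\Gamma_N(f)_s - f_s$ into three parts: a martingale/fluctuation term, a density-correction term, and a deterministic slack term. The deterministic slack is $\Gamma(f)_s - f_s$, which by Lemma \ref{critlem} is bounded below by $aN^{-1/3}$ (uniformly for $s$ not too small, the behaviour for $s\in[0,2aN^{-2/3}]$ being trivially controlled by the offset $-2aN^{-1/3}$ in $f$, which is why $A_2^{N,'}$ is required to hold down to $x=-2aN^{-2/3}$). The density-correction term is $\mathbb{E}[\tilde\Gamma_N(f)_s\mid\mathcal{F}_t]-\Gamma(f)_s$, which measures the shortfall of the non-absorbed empirical density relative to $1$; on $A_2^{N,'}$, the control $B^N_t(R^N_t+x)\leq a-\epsilon$ converts, via the definition of $B^N_t$ and the relation $\tilde F^N_t(\Lambda^N_t+y)\geq F^N_t(\Lambda^N_t+y)-\Lambda^N_t$, into a bound of the form $\mathbb{E}[\tilde\Gamma_N(f)_s\mid\mathcal{F}_t]\geq \Gamma(f)_s - (a-\epsilon)N^{-1/3}$ (up to negligible Gaussian-tail errors from $\Phi(z/\sqrt{t})$ being less than $1$ near $\Lambda^N_t$). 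Together with the slack this gives a conditional lower bound of at least $\epsilon N^{-1/3}$.

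Finally I would bound the fluctuation term $\tilde\Gamma_N(f)_s - \mathbb{E}[\tilde\Gamma_N(f)_s\mid\mathcal{F}_t]$ by the Doob/Chernoff martingale argument already used in the proofs of Lemma \ref{lem:aplus} and Lemma \ref{lemma1incritlabel}, extended by a union bound over a fine discretisation of $s\in[0,l]$ and of the initial barrier height, as in Lemma \ref{lemma1incritlabel}; since the mean count is of order $lN^{2/3}$, a deviation by $\epsilon N^{-1/3}/2$ (equivalently $\epsilon N^{2/3}/2$ on the unnormalised scale) is exponentially unlikely. A subsidiary $Z^{N,y}$-type lemma, analogous to Lemma \ref{lemmaz}, is needed to control absorbed particles whose Brownian trajectory has drifted back above $\Lambda^N_t$ by time $t$, as these contribute to $F^N_t$ but not to the non-absorbed reservoir. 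The main obstacle I expect is precisely this step: translating the control on the total empirical CDF $F^N_t$ given by $A_2^{N,'}$ into a lower bound on the non-absorbed density that drives $\tilde\Gamma_N$, which requires the uniform exponential control on stray absorbed particles provided by the $Z^{N,y}$ estimate. Once that is in place, combining the three pieces and unravelling the strong Markov reduction delivers $\mathbb{P}(A^{N,'})\to 0$.
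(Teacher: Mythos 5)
Your overall strategy matches the paper's: use the lower-bound half of Lemma~\ref{Alemmatoboundathing} with the linear function $f_s = sN^{1/3}/(2a)-2aN^{-1/3}$ (or its pasted version starting from $\Lambda^N_{\cdot\wedge t}$), exploit the slack $1/(2c)=aN^{-1/3}$ from Lemma~\ref{critlem}, control the density discrepancy via $A_2^{N,'}$, and absorb fluctuations by a Doob/Chernoff bound over a discretisation, i.e.\ a relative of Lemma~\ref{lemma1incritlabel}. Your strong Markov reduction is presentationally different from the paper's ``pasted'' $f_s:=\Lambda^N_{s\wedge t}+\frac{N^{1/3}(s-t)_+}{2a}-2aN^{-1/3}\mathbbm{1}_{\{s>t\}}$, but the two are equivalent since $\Gamma_N(f)_{t+x}=\Lambda^N_t+\tilde\Gamma_N(f)_x$.

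The one genuine misstep is your claim that ``a subsidiary $Z^{N,y}$-type lemma, analogous to Lemma~\ref{lemmaz}, is needed'' and that this is ``the main obstacle.'' It is not needed for the lower bound, and the paper's proof does not use one. The $Z$-term arises in Proposition~\ref{propcritupper} because there one bounds $\Gamma_N(f)_{t+x}$ from \emph{above}: absorbed particles whose trajectory has drifted back above $\Lambda^N_t$ are included in $\Gamma_N(f)_{t+x}$ (via $\tau_i(f)\le t$) but are not captured by the CDF-based count, so they produce a positive error that must be shown small. Here you are bounding from \emph{below}, and that asymmetry eliminates the issue. Concretely, the paper's lower bound
\[
\Gamma_N(f)_{t+x}\ \geq\ \frac{1}{N}\sum_{i=1}^\infty\mathbbm{1}_{\bigl\{W_t^{i,N}\le\Lambda^N_t\ \vee\ \bigl(\Lambda^N_t+\sup_{s\le x}(\tfrac{N^{1/3}s}{2a}+\hat B_s^i-2aN^{-1/3})\bigr)\bigr\}}
\]
is a pathwise inclusion: any particle with $W_t^{i,N}\le\Lambda^N_t$ is already absorbed, and any particle with $W_t^{i,N}$ below the sup threshold hits the pasted barrier in $(t,t+x]$, regardless of its absorption status. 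Stray absorbed particles that have drifted above $\Lambda^N_t$ are simply \emph{not} in the indicator set, and that is fine for a lower bound. In your strong-Markov language, the inequality $\tilde F^N_t(\Lambda^N_t+y)\ge F^N_t(\Lambda^N_t+y)-\Lambda^N_t$ (which you correctly wrote) has no error term, and stray absorbed particles above the barrier only make the non-absorbed reservoir \emph{richer} than this estimate gives, so they can only help. Dropping the $Z$ step, your argument reduces to exactly the paper's: define $M^{N,y}_x=G^{N,y}_x-H^{N,y}_x$, show that on $A^{N,'}$ it drops below $(a-\epsilon)/N^{1/3}$, and finish with the analogues of Lemmas~\ref{lemma1incritlabel} and~\ref{lemmaonMmean} (Lemmas~\ref{lowerlemmacrit1} and~\ref{lemmaonMmeanlower}).
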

\begin{proof}[Proof of Proposition \ref{propcritlower}]
     We define a function $f_s$: $$f_{s}:=\left(\Lambda^N_{s\wedge t}+\frac{N^{1/3}(s-t)_+}{2a}-2aN^{-1/3}\mathbbm{1}_{\{s> t\}}\right).$$ As in the proof of Proposition \ref{propcritupper}, on the event $A^N$ it must hold that $f_{t+x}<\Gamma_N(f)_{t+x}$ for some $x\leq l$. 

We bound $\Gamma_N(f)_{t+x}$ in the following manner:
    \begin{align*}\Gamma_N(f)_{t+x}&\geq {\frac{1}{N}\sum_{i=1}^\infty \mathbbm{1}_{\left\{W_0^{i,N}+B_t^{i,N}\leq \Lambda^N_t\vee \left(\Lambda^N_{ t}+\sup_{s\leq x}\left(\frac{N^{1/3}s}{2a}+\hat B_s^i-2aN^{-1/3}\right)\right)\right\}}}_,\\
    &\geq \frac{1}{N}\sum_{i=1}^\infty\mathbbm{1}_{\left\{W_0^{i,N}+B_t^{i,N}\leq \Lambda^N_t\vee \left(\Lambda^N_{t}+\frac{N^{1/3}x}{2a}-2aN^{-1/3}\right)\right\}}\\
    &\quad -\frac{1}{N} \sum_{i=1}^\infty\mathbbm{1}_{\left\{W_0^{i,N}+B_t^{i,N}\leq \Lambda^N_t+\frac{N^{1/3}x}{2a}-2aN^{-1/3}\right\}}\\
    &-N^{-1/3}B^N_t(N^{-1/3}f_{t+x})+f_{t+x},\\
    &\geq \frac{1}{N}\sum_{i=1}^\infty\mathbbm{1}_{\left\{W_0^{i,N}+B_t^{i\,N}\leq \Lambda^N_t\vee \left(\Lambda^N_{t}+\sup_{s\leq x}\left(\frac{N^{1/3}s}{2a}+\hat B_s^i-2aN^{-1/3}\right)\right)\right\}}\\
    &\quad -\frac{1}{N} \sum_{i=1}^\infty\mathbbm{1}_{\left\{W_0^{i,N}+B_t^{i,N}\leq \Lambda^N_t+\frac{N^{1/3}x}{2a}-2aN^{-1/3}\right\}}\\
    &-(a-\epsilon)N^{-1/3}+f_{t+x}.\end{align*}

Analogously to the proof of Proposition \ref{propcritupper}, we define random processes by

\begin{align*}
    &G_x^{N,y}:={\frac{1}{N}\sum_{i=1}^\infty \mathbbm{1}_{\left\{W_0^{i,N}+B_t^{i,N}\leq y\vee \left(y+\sup_{s\leq x}\left(\frac{N^{1/3}s}{2a}+\hat B_s^i-2aN^{-1/3}\right)\right)\right\}}}_,\nonumber\\
    &H_x^{N,y}:={\frac{1}{N} \sum_{i=1}^\infty\mathbbm{1}_{\left\{W_0^{i,N}+B_t^{i,N}\leq y+\frac{N^{1/3}x}{2a}-2aN^{-1/3}\right\}}}_,\\
    &M^{N,y}_x:=G_x^{N,y}-H^N_{x,y}\nonumber
\end{align*}

On $A^{N,'}$, it must hold then that

\begin{align*}
    &\inf_{\left\{C_2N^{1/3}\leq y\leq C_1N^{1/3}, \textrm{ }y\in \frac{1}{N}\mathbb{N} \right\}}\inf_{x\leq l}\left(M^{N,y}_x-\frac{a}{N^{1/3}}\right)<-\frac{\epsilon}{N^{1/3}}.
\end{align*}

The result of the Proposition follows from the below lemmas.

\begin{lemma}
\label{lowerlemmacrit1}
    $$\mathbb{P}\left(\inf_{\left\{C_2N^{1/3}\leq y\leq C_1N^{1/3}, \textrm{ }y\in \frac{1}{N}\mathbb{N} \right\}}\inf_{x\leq l}\left(M^{N,y}_x-\mathbb{E}(M_x^{N,y})\right)<-\frac{\epsilon}{2 N^{1/3}}\right)\rightarrow 0 \textrm{ as }N\rightarrow\infty.$$
\end{lemma}
This follows from an analogous argument to Lemma \ref{lemma1incritlabel}.

\begin{lemma}
\label{lemmaonMmeanlower}
    For all sufficiently large $N$,
     \begin{align*}
    &\mathbb{E}(M^{N,y}_x)\geq \frac{a-\epsilon/2}{N^{1/3}}.
    \end{align*}
\end{lemma}
This follows from simple computation, analogously to  \ref{lemmaonMmean}. 

Applying the result of Lemma \ref{lemmaonMmeanlower}, and then Lemma \ref{lowerlemmacrit1}, we observe that

\begin{align*}
    &\mathbb{P}\left(\inf_{\left\{C_2N^{1/3}\leq y\leq C_1N^{1/3}, \textrm{ }y\in \frac{1}{N}\mathbb{N} \right\}}\inf_{x\leq l}\left(M^{N,y}_x-\frac{a}{N^{1/3}}\right)<-\frac{\epsilon}{N^{1/3}}\right)\rightarrow 0 \textrm{ as }N\rightarrow\infty.
\end{align*}
\end{proof}

Fix a weakly convergent subsequence $R^{N_k}$. We write $(R,B_0)$ for the corresponding limit of $(R^{N_k},(B_t(x))_{t\in\hat J})$, where we have used $B_0(x)$ to identify $(B_0(x))_{t\in \hat J}$, since this function is constant in $t$. From the above propositions, we obtain bounds on these limit processes. 
\begin{corollary}
\label{corupper}
    Almost surely, for any $a,l>0$ and $t\leq T$:
    $$B_0(R_t+x)\geq a \textrm{ for }x\leq l/2a$$
    $$\implies R_{t+s}\leq R_t+\frac{s}{2a}\textrm{ for }s\leq l.$$
\end{corollary}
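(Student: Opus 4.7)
The plan is to transfer the finite-$N$ statement of Proposition \ref{propcritupper} to the subsequential limit $(R, B_0)$ via Skorokhod representation together with a countable parameter approximation. First, I would work along the convergent subsequence $R^{N_k}$ coupled with $(B^{N_k}_t(x))_{t \in \hat J}$ and apply the Skorokhod representation theorem on the product of $\tilde D$ (with M1) and $D(\mathbb{R}^+)^{\hat J}$ (with the product Skorokhod topology) to realize everything on a common probability space, so that $R^{N_k} \to R$ in M1 and $B^{N_k}_{t}(\cdot) \to B_0(\cdot)$ uniformly on compacts for each $t \in \hat J$, almost surely.

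Next, for each rational tuple $(a, \beta, l, \epsilon, C_1, C_2)$ with $a, \beta, l, \epsilon > 0$, $0 < C_2 < C_1$, and each $t \in \hat J$, Proposition \ref{propcritupper} gives $\mathbb{P}(A^{N_k}) \to 0$, where $A^{N_k}$ is expressed in terms of $\Lambda^{N_k}$ and $B^{N_k}_t$. Rewriting $A^{N_k}$ in terms of $R^{N_k} = \Lambda^{N_k}/N^{1/3}$ and passing to the limit along the a.s. convergent subsequence, I would deduce that the limit event
\begin{align*}
\widetilde A &:= \Bigl\{\sup_{s \in \hat J \cap [0,l]}\bigl(R_{t+s} - R_t - s/(2a)\bigr) > \beta\Bigr\} \\
&\quad \cap \Bigl\{\inf_{0 \leq x \leq l/(2a)+\beta} B_0(R_t + x) \geq a + \epsilon\Bigr\} \cap \{R_T \leq C_1,\ R_t \geq C_2\}
\end{align*}
has probability zero. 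To justify this, I would only evaluate the sup in $A_1$ at times in $\hat J$, where the M1 convergence of $R^{N_k}$ yields pointwise convergence; the remaining indicators are continuous functionals of the limiting data on the intersection with $\{R_T \leq C_1\}$. Taking the countable union over rational parameters and $t \in \hat J$ produces a single null set $\mathcal{N}$ outside of which none of these events occur.

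Finally, fix any realisation outside $\mathcal{N}$ and any $a, l > 0$ and $t \leq T$ for which $B_0(R_t + x) \geq a$ holds for $x \leq l/(2a)$. Since $B_0(0) = 0 < a$, necessarily $R_t > 0$, and by Lemma \ref{Rtight} $R_T < \infty$, so there exist rationals $0 < C_2 < R_t$ and $C_1 > R_T$. I would choose rational $a' < a$, rational $l' > l$, and rational $\beta, \epsilon > 0$ small, then pick $t_n \in \hat J$ with $t_n \downarrow t$. By right continuity of $R$ at $t$ and uniform continuity of $B_0$ on compacts, for large $n$ the Brownian lower bound and the bracket $[C_2, C_1]$ condition still hold with $t$ replaced by $t_n$. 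Since $(a', \beta, l', \epsilon, C_1, C_2, t_n) \notin \widetilde A$, this yields $R_{t_n+s} \leq R_{t_n} + s/(2a') + \beta$ for all $s \in \hat J \cap [0, l']$. Letting $n \to \infty$, then $\beta \downarrow 0$, $a' \uparrow a$, $l' \downarrow l$, and using right continuity of $R$ to extend from the dense set $\hat J$ to all $s \leq l$, gives the desired bound $R_{t+s} \leq R_t + s/(2a)$.

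The main obstacle is the interplay between the M1 topology on $R^{N_k}$ and the continuous functionals of $(R, B_0)$ appearing in the limit event: sup/inf functionals are generally not M1-continuous, and $R$ may have jumps. My workaround is to restrict the sup in $A_1$ to the continuity set $\hat J$, which is enough because the final inequality only needs to hold on a dense set of $s$ and then extends by right continuity, and to enforce $R_T \leq C_1$ so that all relevant paths live in a compact M1 subset where the needed continuity can be established by standard arguments (cf. \cite[Section 12]{whitt}). Handling the parameter approximation cleanly, and verifying that the hypothesis $B_0(R_t + x) \geq a$ transfers from rational $a' < a$ back to $a$ in the limit, is then routine given the continuity of $B_0$.
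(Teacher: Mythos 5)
Your overall architecture matches the paper's: transfer Proposition \ref{propcritupper} to the limit via weak convergence along the coupled subsequence, take a countable union over $t\in\hat J$ and rational parameters, then extend to general $t\leq T$ and real $a,l$. The paper uses a Portmanteau argument on open subsets of $\tilde D\times D(\mathbb{R}^+)^\infty$, whereas you use Skorokhod representation and restrict the $A_1$ supremum to $\hat J$, then fill in by right-continuity; both routes are fine and roughly equivalent in effort.

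There is, however, a genuine gap in your extension step. You fix $a'<a$ rational and $l'>l$ rational, then claim that for $t_n\in\hat J$ with $t_n\downarrow t$ "the Brownian lower bound ... still hold[s] with $t$ replaced by $t_n$." The bound you need is $B_0(R_{t_n}+x)\geq a'+\epsilon$ for all $x\leq l'/(2a')+\beta$, i.e.\ control of $B_0$ on $[R_{t_n},\,R_{t_n}+l'/(2a')+\beta]$. Since $a'<a$ and $l'>l$ you have $l'/(2a')+\beta > l/(2a)$, and $R_{t_n}\geq R_t$, so this interval is \emph{not} contained in $[R_t,\,R_t+l/(2a)]$ where the hypothesis gives information; in fact it strictly overshoots the right endpoint, no matter how close $t_n$ is to $t$. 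The hypothesis of the corollary says nothing about $B_0(R_t+x)$ for $x>l/(2a)$, so this verification step fails as written.

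The fix is to move the parameters in the opposite direction: choose $l'$ and $\beta$ with $l'/(2a')+\beta<l/(2a)$ (which forces $l'<la'/a<l$), so that for $n$ large the interval $[R_{t_n},\,R_{t_n}+l'/(2a')+\beta]$ sits inside $[R_t,\,R_t+l/(2a)]$ and the lower bound $B_0\geq a>a'+\epsilon$ transfers. You then obtain $R_{t+s}\leq R_t+s/(2a')+\beta$ for $s\leq l'$, and send $\beta\downarrow 0$, $a'\uparrow a$, $l'\uparrow l$. This is precisely what the paper does in effect: when extending from $s\in\hat J$ with $s\downarrow t$, it replaces $l$ by the shrinking quantity $l-2a(R_s-R_t)$ so the inspected range of $B_0$ never leaves $[R_t,\,R_t+l/(2a)]$. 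With that correction your argument is sound.
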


This follows through elementary arguments by applying the Portmanteau Theorem to the result of Proposition \ref{propcritupper}. We give the proof in Section \ref{critfull}.

\begin{corollary}
\label{corlower}
    Almost surely, for any $a,l>0$ and $t\leq T$:
    \begin{align*}&B_0(R_t+x)\leq a\textrm{ for } x\leq l/2a\\
    &\implies R_{t+s}\geq R_t+s/2a\textrm{ for }s\leq l.\end{align*}
\end{corollary}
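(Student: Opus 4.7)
The plan is to mirror the proof of Corollary \ref{corupper}, using Proposition \ref{propcritlower} in place of Proposition \ref{propcritupper}. The argument has two stages: first transfer the finite-$N$ estimate of Proposition \ref{propcritlower} to the subsequential limit along the convergent subsequence via the Portmanteau theorem, then promote the statement from a countable set of rational parameters to arbitrary real parameters by approximation.

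For the first stage, fix rational $a, l > 0$, $t \in \hat J \cap [0, T]$, $\epsilon \in (0, a)$, and $0 < C_2 < C_1$, and consider the event
\begin{equation*}
E := \Bigl\{\inf_{s \in [0, l]}\bigl(R_{t+s} - R_t - \tfrac{s}{2a}\bigr) < 0\Bigr\} \cap \Bigl\{\sup_{x \in [0, l/(2a)]} B_0(R_t + x) < a - \epsilon\Bigr\} \cap \{C_2 < R_t,\ R_T < C_1\}.
\end{equation*}
I aim to show $\mathbb{P}(E) = 0$. Because $t \in \hat J$ is a continuity point of the monotone limit $R$, pointwise evaluation at $t$ is continuous at $R$ in the M1 topology on $\tilde D$, the running-infimum functional $\ell \mapsto \inf_{s \leq l}(\ell_{t+s} - \ell_t - s/(2a))$ is upper semicontinuous at $R$ (after replacing the infimum on $[0, l]$ by one over a countable dense subset of $(\hat J - t) \cap [0, l]$), and $B_0$ is continuous. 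Together these identify $E$ with an open set in the joint law of $(R, B_0)$. Along the convergent subsequence $(R^{N_k}, (B^{N_k}_s)_{s \in \hat J}) \Rightarrow (R, B_0)$, Portmanteau then gives $\mathbb{P}(E) \leq \liminf_{k \to \infty} \mathbb{P}(E^{N_k})$ for the corresponding open $N_k$-level event. For $N_k$ sufficiently large, the $O(N_k^{-1/3})$ correction terms $2a N^{-1/3}$ and $[-2a N^{-2/3}, 0]$ appearing in the definition of $A^{N,'}$ are absorbed by the slack $\epsilon$, so $E^{N_k} \subseteq A^{N_k,'}$ for parameters $(a, l, \epsilon/2, C_1, C_2)$, and Proposition \ref{propcritlower} gives $\mathbb{P}(E^{N_k}) \to 0$, hence $\mathbb{P}(E) = 0$.

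For the second stage, take a countable union of these null events over rational tuples $(a, l, t, \epsilon, C_1, C_2)$ to obtain a single full-probability event on which the rational-parameter implication holds for every choice. For arbitrary real $a, l > 0$ and $t \in [0, T]$ satisfying $B_0(R_t + x) \leq a$ for $x \leq l/(2a)$, approximate by rational $a' > a$, rational $l' < l$, and rational $t' \in \hat J$ with $t' \downarrow t$, together with rational brackets for $R_t$ and $R_T$; by continuity of $B_0$ on compacts and monotonicity plus right-continuity of $R$, the rational hypothesis holds with some positive slack $\epsilon' > 0$, and the rational conclusion $R_{t'+s} \geq R_{t'} + s/(2a')$ passes to the limit as $a' \downarrow a$, $l' \uparrow l$, $t' \downarrow t$ to give the required bound $R_{t+s} \geq R_t + s/(2a)$ for $s \leq l$.

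The principal obstacle is topological: pointwise values and running infima of cadlag paths are not in general continuous in the M1 topology, so the open-set structure required for Portmanteau is not automatic. The resolution is to restrict $t$ to continuity points $t \in \hat J$ of $R$ and to exploit the monotonicity of $R$, which reduces the question to upper semicontinuity of infima over countable families of continuous functionals—precisely what the open-set form of Portmanteau consumes.
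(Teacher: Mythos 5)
Your overall plan — apply Portmanteau for rational parameters $(a,l,t,\epsilon,C_1,C_2)$ along the convergent subsequence, then pass to a countable union and approximate to real parameters and to $t\notin\hat J$ — is exactly what the paper intends: it simply says Corollary \ref{corlower} follows from the argument of Corollary \ref{corupper} with Proposition \ref{propcritlower} substituted for Proposition \ref{propcritupper}. Your treatment of the topological subtleties (evaluation only at $t\in\hat J$, restriction of the running infimum to countably many continuity points) is in line with that proof.

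There is, however, a concrete gap in your first stage: the inclusion $E^{N_k}\subseteq A^{N_k,'}$ fails. In $R^N=\Lambda^N/N^{1/3}$ coordinates,
\begin{equation*}
A_1^{N,'}=\Bigl\{\inf_{s\leq l}\bigl(R^N_{t+s}-R^N_t-\tfrac{s}{2a}\bigr)<-2aN^{-2/3}\Bigr\},
\end{equation*}
which demands a \emph{strictly positive} margin $2aN^{-2/3}$ below the line. Your $N_k$-level copy of $E_1$ only demands that the infimum be $<0$; if, say, that infimum equals $-aN_k^{-2/3}$ then $E_1^{N_k}$ holds but $A_1^{N_k,'}$ does not. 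The slack $\epsilon$ you invoke lives entirely in the $B_0$-condition $A_2^{N,'}$ and is orthogonal to $A_1^{N,'}$, so applying Proposition \ref{propcritlower} with $\epsilon/2$ in place of $\epsilon$ changes nothing in $A_1^{N,'}$. Consequently you cannot pass from $\mathbb{P}(E^{N_k})$ to $\mathbb{P}(A^{N_k,'})$.

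The fix plays the role that the parameter $\beta$ plays inside $A_1$ in the proof of Corollary \ref{corupper}: replace $E_1$ by $E_1^\delta:=\{\inf_{s\in[0,l]}(R_{t+s}-R_t-s/2a)<-\delta\}$ for rational $\delta>0$. For $N_k$ large enough that $2aN_k^{-2/3}<\delta$, one has $E_1^{\delta,N_k}\subseteq A_1^{N_k,'}$, so Portmanteau now gives $\mathbb{P}(E_1^\delta\cap E_2\cap E_3)=0$ for each rational $\delta$. Including $\delta$ in your countable union then yields, on $E_2\cap E_3$, that $\inf_{s\leq l}(R_{t+s}-R_t-s/2a)\geq-\delta$ for every rational $\delta>0$, hence $\geq 0$, which is the desired conclusion. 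Your second stage goes through unchanged.
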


The result follows through an identical argument to the previous corollary, replacing Proposition \ref{propcritupper}, with \ref{propcritlower}.

\subsection{Identification of the limit}

We consider first the behaviour of $R$ when $B_0({R_t})>0$.

\begin{lemma}
\label{deriv}
    At any time $t$ where $(B_0({R_t}))_+$ is non-zero, $R$ is differentiable, with
    $$\dot R_t=\frac{1}{2(B_{R_t})_+}.$$

    Therefore, for any $0\leq s\leq t$ such that $(B_0({R_l}))_+>0$ for $l\in(s,t)$
    $$R_l-R_s=\inf\left\{x:\int_0^x(2B_0(R_s+y))_+dy>l-s\right\} \textrm{ for }l\in[s,t).$$
\end{lemma}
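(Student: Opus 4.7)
The strategy is a two-sided squeeze using Corollaries \ref{corupper} and \ref{corlower}, exploiting the continuity of the Brownian motion $B_0$ to upgrade information about $B_0$ at the single point $R_t$ into uniform control on a small interval around it.

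For the right derivative at $t$, set $b := B_0(R_t) > 0$ and fix $\epsilon \in (0, b)$. Continuity of $B_0$ at $R_t$ yields $\eta > 0$ with $B_0(y) \in [b-\epsilon, b+\epsilon]$ for all $y \in [R_t - \eta, R_t + \eta]$. Corollary \ref{corupper} applied with $a = b - \epsilon$ and $l = 2(b-\epsilon)\eta$ gives
\[
R_{t+s} - R_t \leq \frac{s}{2(b-\epsilon)}, \qquad s \in [0,l],
\]
while the symmetric application of Corollary \ref{corlower} with $a = b + \epsilon$ gives the matching lower bound $R_{t+s} - R_t \geq s/(2(b+\epsilon))$ for small $s$. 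Dividing by $s$, sending $s \downarrow 0$ and then $\epsilon \downarrow 0$ produces the right derivative $1/(2b)$.

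For the left derivative I first establish left-continuity at $t$. As $R$ is cadlag and non-decreasing (being an M1 limit of such processes), the left limit $R_{t^-} \leq R_t$ exists. Assuming for contradiction that $R_{t^-} < R_t$, apply Corollary \ref{corupper} at the earlier time $t - \delta$: for $\delta$ small, $R_{t-\delta}$ lies close to $R_{t^-}$ and, combined with the continuity of $B_0$ (possibly after localising $a$ so that $B_0 \geq a > 0$ on a neighbourhood spanning $[R_{t-\delta}, R_t + \rho]$), the hypothesis of the corollary holds with some fixed $a$ on a range large enough to cover the jump, giving $R_t - R_{t-\delta} \leq \delta/(2a)$; sending $\delta \downarrow 0$ forces $R_t \leq R_{t^-}$, a contradiction. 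With left-continuity in hand, the squeeze from Step 1 is repeated at time $t - \delta$: for $\delta$ small, continuity of $R$ places $R_{t-\delta}$ within $\eta/2$ of $R_t$, so the hypotheses of Corollaries \ref{corupper} and \ref{corlower} (with $a = b \mp 2\epsilon$) are satisfied and yield
\[
\frac{\delta}{2(b+2\epsilon)} \leq R_t - R_{t-\delta} \leq \frac{\delta}{2(b-2\epsilon)},
\]
whence the left derivative also equals $1/(2b)$ after $\delta \downarrow 0$ and $\epsilon \downarrow 0$.

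The integral representation is then an ODE computation. When $B_0(R_l) > 0$ for all $l \in (s,t)$, the first part gives $\dot R_l = 1/(2B_0(R_l))$ and $R$ is continuous and strictly increasing on $[s,t)$; integrating and changing variables $y = R_u - R_s$ yields $\int_0^{R_l - R_s} 2 B_0(R_s + y) \, dy = l - s$, and since the integrand is strictly positive on the range of integration, $(\cdot)_+$ may be inserted harmlessly and the integral is strictly increasing in its upper limit, delivering the infimum characterisation. The main obstacle is Step 2, ruling out a jump at $t$ using only the forward-time corollaries when $B_0$ is permitted to dip negative on $[R_{t^-}, R_t]$; the cleanest fix is to exploit that $B_0$ is continuous and $B_0(R_t) > 0$ to find a genuine neighbourhood of $R_t$ on which $B_0$ stays positive, and then iterate Corollary \ref{corupper} from times close to $t$ to transport this positivity back across the putative jump interval, yielding the contradiction.
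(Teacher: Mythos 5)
Your right-derivative squeeze is the paper's intended argument, spelled out: continuity of $B_0$ converts $B_0(R_t)=b>0$ into two-sided bounds on a spatial window around $R_t$, and Corollaries~\ref{corupper},~\ref{corlower} with $a=b\mp\epsilon$ trap the forward increment. The integral representation via the ODE is also the paper's route, though you should be more careful at the lower endpoint: the hypothesis only gives positivity on the open interval $(s,t)$, so $R$ may jump at $s$; the paper solves the ODE on $[s+\epsilon,t-\epsilon]$ and then sends $\epsilon\downarrow 0$ using right-continuity of $R$ at $s$, and you should do the same rather than integrating directly from $s$.

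The genuine gap is Step~2, ruling out a jump at $t$, and the fix you gesture at does not close it. Corollary~\ref{corupper} applied at a base time $t-\delta$ requires $B_0\geq a>0$ on the spatial range starting at $R_{t-\delta}$; for the conclusion to bound $R_t$, that range must cover the whole jump interval $[R_{t^-},R_t]$. But if there is a jump then $B_0$ is nonpositive throughout $[R_{t^-},R_t)$ (this is precisely the content of the second half of Lemma~\ref{jump}), so the hypothesis of the plain corollary can never be met from $R_{t-\delta}$, and "iterating" from times close to $t$ does not help --- every application is anchored at a base point below the bad region. What actually works is the $\beta$-shifted variant of Corollary~\ref{corupper} that the paper sets up in the proof of Lemma~\ref{jump}: if $B_0(R_t+\beta+x)\geq a$ for $x\leq l/(2a)$ then $R_{t+x}\leq R_t+\beta+x/(2a)$ for $x\leq l$. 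Applying this at $t_n\uparrow t$ with $\beta_n=R_t-R_{t_n}-\gamma$ places the shifted base point $R_{t_n}+\beta_n=R_t-\gamma$ inside the positivity window of $B_0$ around $R_t$, giving $R_t\leq R_t-\gamma+(t-t_n)/(2a)$; letting $n\to\infty$ forces $\gamma\leq 0$, a contradiction for any fixed $0<\gamma<\Delta R_t$. Equivalently, one can invoke the second conclusion of Lemma~\ref{jump} directly: $B_0(R_{t^-}+x)\leq 0$ for $x<\Delta R_t$, and continuity of $B_0$ then gives $B_0(R_t)\leq 0$, contradicting $B_0(R_t)>0$. Either way, the jump is excluded by a tool your proposal does not supply, so as written Step~2 is a real gap rather than a loose end.
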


The first statement of the result follows by taking limits over $l$ and $a$ in Corollary \ref{corlower} and Corollary \ref{corupper}, while the second follows from identifying the solution to the corresponding ODE. More details are given in Section \ref{critfull} of the appendix.

We consider also the behaviour of $R$ when $(B_0({R_{t-}+x}))_+=0$ for $x$ in some interval.

\begin{lemma}
\label{jump}
    Suppose that at time $t$, $(B_0({R_{t^-}+x)})_+=0$ on $[0,\varpi]$. It follows that $$\Delta R_t\geq \varpi.$$
    Further, at any time $t$,
    $$\Delta R_t\leq \inf\{x:B_0({R_{t^-}+x})>0\}.$$ 
    Therefore:
    $$\Delta R_t=\inf\{x:B_0(R_{t^-}+x)>0\}=\inf\left\{x:\int_0^x(2B_0(R_{t^-}+s))_+ds>0\right\}.$$
\end{lemma}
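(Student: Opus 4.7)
The plan is to establish two inequalities, $\Delta R_t\geq\varpi$ (under the hypothesis of the first statement) and $\Delta R_t\leq \varpi^*:=\inf\{x:B_0(R_{t^-}+x)>0\}$ (the second statement), and then combine them. Since $B_0$ is continuous and $\{x:B_0(R_{t^-}+x)>0\}$ is open, we have $B_0(R_{t^-}+\varpi^*)\leq 0$, so $(B_0(R_{t^-}+\cdot))_+=0$ on $[0,\varpi^*]$; the first statement applied with $\varpi=\varpi^*$ then yields $\Delta R_t\geq \varpi^*$, matching the upper bound. The final equality in the displayed conclusion is immediate, because $x\mapsto\int_0^x 2(B_0(R_{t^-}+s))_+\,ds$ is continuous and non-decreasing and, by continuity of $B_0$, its infimum set coincides with $\{x:B_0(R_{t^-}+\cdot)>0\text{ somewhere on }(0,x)\}$, whose infimum is $\varpi^*$.

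For the lower bound, I would fix $\epsilon>0$ small and $\varpi'\in(0,\varpi)$. Using continuity of $B_0$ together with $R_s\to R_{t^-}$ as $s\uparrow t$, I would choose $s<t$ close enough to $t$ that $R_{t^-}-R_s<\epsilon$ and $B_0(R_s+x)\leq \epsilon$ for $x\in[0,\varpi']$. Corollary~\ref{corlower} applied with $a=\epsilon$ and $l=2\epsilon\varpi'$ (for which the hypothesis $B_0(R_s+x)\leq a$ holds on $[0,l/(2a)]=[0,\varpi']$) gives $R_{s+u}\geq R_s+u/(2\epsilon)$ for $u\leq 2\epsilon\varpi'$. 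Choosing $\epsilon$ small enough that $2\epsilon\varpi'\leq t-s$ and using monotonicity of $R$, I would get $R_t\geq R_{s+2\epsilon\varpi'}\geq R_s+\varpi'\geq R_{t^-}+\varpi'-\epsilon$, hence $\Delta R_t\geq\varpi'-\epsilon$. Sending $\epsilon\downarrow 0$ and then $\varpi'\uparrow\varpi$ completes the lower bound.

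For the upper bound $\Delta R_t\leq\varpi^*$, I would argue by contradiction. Suppose $\Delta R_t>\varpi^*$, so $R_t>R_{t^-}+\varpi^*$, and choose $y\in(\varpi^*,\Delta R_t)$ with $a:=B_0(R_{t^-}+y)>0$ (possible since points of $\{B_0(R_{t^-}+\cdot)>0\}$ accumulate at $\varpi^*$ from the right); continuity of $B_0$ gives $B_0(R_{t^-}+z)\geq a/2$ on a neighbourhood of $y$. The strategy is to transfer the physical minimality condition $\Delta\Lambda^N_s=\frac{1}{N}\inf\{k\in\mathbb{N}:\nu^N_{s^-}([0,k/N])\leq k/N\}$ from the pre-limit to the limit via the joint convergence of $(R^N,B^N_t)$ on the countable dense set $\hat J$: with high probability for large $N$, the fluctuation $B^N$ being comfortably positive at the rescaled position $R^N_{t^-}+yN^{1/3}$ forces $\Delta\Lambda^N$ to stop before that position, so $\Delta R_t\leq y$ in the limit. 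Letting $y\downarrow\varpi^*$ along a suitable sequence gives the bound.

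The main obstacle is the upper bound. Corollaries~\ref{corupper} and~\ref{corlower} only constrain the speed of $R$ starting from post-jump times, and do not bound the size of a jump at $t$ directly: trying to apply Corollary~\ref{corupper} at times $t'\uparrow t$ would require $B_0(R_{t'})\geq a$ pointwise, but $R_{t'}\to R_{t^-}$ and $B_0(R_{t^-})\leq 0$ is exactly the condition that drives the jump. The delicate step is therefore to show that the physical-minimality property of $\Lambda^N$ survives the M1 scaling limit at the jump point, by carefully pairing the joint weak convergence of the rescaled barrier and the fluctuation process $B^N$ with the definition of $\Delta\Lambda^N$, and then extracting the corresponding inequality on $\Delta R_t$ in the limit.
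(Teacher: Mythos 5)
Your lower bound argument and the final combination of the two inequalities are correct and match the paper's proof in substance (the paper takes $t_n\uparrow t$ and uses Corollary~\ref{corlower} with $a=\epsilon$, $l=2\epsilon(R_{t^-}-R_{t_n}+\varpi)$; the order-of-quantifiers issue you would face in choosing $s$ so that simultaneously $R_{t^-}-R_s$ is small and $t-s\geq 2\epsilon\varpi'$ is resolved by making $l$ depend on $R_{t^-}-R_{t_n}$ as the paper does, but the idea is the same). You also correctly identified the obstruction for the upper bound: Corollary~\ref{corupper} requires $B_0(R_t)\geq a$, which fails at a jump time.

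The gap is that your proposed route for the upper bound --- reopening the particle-level argument and transferring the physical minimality of $\Delta\Lambda^N$ through the joint M1/Skorokhod convergence --- is both unexecuted and unnecessarily heavy, and you missed the much simpler fix that the paper uses. The parameter $\beta>0$ already appears in the definition of $A_1^N$ in Proposition~\ref{propcritupper}; Corollary~\ref{corupper} only becomes a derivative-type bound because $\beta$ is sent to zero there. If you instead replace $A_2^N$ by $\{B^N_t(\beta+x)>a+\epsilon\text{ for }x\leq l/2a\}$ and keep $\beta>0$ fixed, the same Portmanteau passage to the limit gives, almost surely for all $t$, $a$, $l$, $\beta$,
$$B_0(R_t+\beta+x)\geq a\text{ for }x\leq l/2a\ \Longrightarrow\ R_{t+x}\leq R_t+\beta+\frac{x}{2a}\text{ for }x\leq l.$$
This variant does not require $B_0$ to be positive at $R_t$ itself, only beyond a shift $\beta$, which is exactly what is needed to cap a jump. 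One then takes $t_n\uparrow t$, chooses $\beta_n$ so that $R_{t_n}+\beta_n$ sits just past $R_{t^-}+\inf\{x:B_0(R_{t^-}+x)>0\}$ (with a small slack $\epsilon_\gamma$ where $B_0\geq\gamma/2$ by continuity), and sends $n\to\infty$, then $\gamma\downarrow 0$, to obtain $\Delta R_t\leq\inf\{x:B_0(R_{t^-}+x)>0\}$. Re-deriving the entire particle-to-limit transfer, as you propose, would essentially be reproving Proposition~\ref{propcritupper}; the modification of the already-established proposition is the efficient move.
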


The first result follows by taking limits in Corollary \ref{corlower}, while the second follows from a simple adaptation to the arguments of Corollary \ref{corupper}. Details are given in Section \ref{critfull} of the appendix.

\begin{lemma}\label{denseness}
The set $\{t\leq T:R_t \textrm{ is differentiable at }t\}$ is open, and may be written as $\cup_{i\in\mathbb{N}}(a_i,b_i)$ for disjoint intervals $(a_i,b_i)$. Any point $t<T$ either lies in some $(a_i,b_i)$ interval, or is equal to $a_i$ for some $i$.
\end{lemma}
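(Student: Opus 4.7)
The plan is to reduce everything to a pointwise characterisation of differentiability and then exploit continuity of the Brownian motion $B_0$.

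First I would show that $R$ is differentiable at $t$ if and only if $B_0(R_t) > 0$. The sufficient direction is Lemma~\ref{deriv}. For the necessary direction, suppose $B_0(R_t) \le 0$. If $R$ jumps at $t$ then $R$ is plainly non-differentiable, so assume $R$ is continuous at $t$, giving $R_{t^-} = R_t$ and $B_0(R_{t^-}) \le 0$. The case $B_0(R_t) < 0$ is ruled out by continuity of $B_0$, which provides $\varpi > 0$ with $B_0 \le 0$ on $[R_{t^-}, R_{t^-} + \varpi]$, forcing $\Delta R_t \ge \varpi > 0$ by Lemma~\ref{jump} and contradicting continuity of $R$ at $t$. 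In the remaining case $B_0(R_t) = 0$, continuity of $B_0$ supplies for every $a > 0$ some $\delta > 0$ with $B_0(R_t + x) \le a$ on $[0, \delta]$, so Corollary~\ref{corlower} yields $R_{t+s} - R_t \ge s/(2a)$ for $s \le 2a\delta$. Sending $a \downarrow 0$ makes the right-derivative infinite, so $R$ is not differentiable.

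Given this characterisation, openness is almost immediate. If $B_0(R_t) > 0$ then $\Delta R_t = 0$ by Lemma~\ref{jump}, so $R$ is continuous at $t$; picking $\delta > 0$ with $B_0 > 0$ on $[R_t - \delta, R_t + \delta]$ and then $\eta > 0$ with $|R_s - R_t| < \delta$ for $|s - t| < \eta$ gives $B_0(R_s) > 0$ throughout that window, so $R$ is differentiable there. Writing the open differentiability set as a disjoint union $\cup_i(a_i, b_i)$ of open intervals is then a standard fact about open subsets of $\mathbb{R}$.

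The main obstacle will be the final structural claim that every non-differentiable $t < T$ coincides with some $a_i$. From the characterisation, such $t$ has $B_0(R_t) = 0$, and the aim is to produce $\epsilon > 0$ with $R$ differentiable on $(t, t + \epsilon)$, so that $t$ is the left endpoint of the component of the differentiability set containing $(t, t + \epsilon)$. The plan is to use the physical jump condition of Lemma~\ref{jump}: the identity $\Delta R_t = \inf\{x : B_0(R_{t^-} + x) > 0\}$ forces $R$ out of any region where $B_0 \le 0$, so that $R_{t+s}$ is pushed immediately into a positive excursion of $B_0$. Combined with the right-continuity of $R$ and continuity of $B_0$, this should give $B_0(R_{t+s}) > 0$ on some $(0, \epsilon)$, which by the characterisation places $(t, t + \epsilon)$ inside the differentiability set.
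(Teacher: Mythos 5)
Your first two steps are correct and take a genuinely different route from the paper's. The paper works on the image side: it decomposes $\{x\le R_T:B_0(x)>0\}$ into excursion intervals $(c_i,d_i)$ and $\{x\le R_T:B_0(x)<0\}$ into $(e_i,f_i)$, then pulls these back through $R$. You instead establish a pointwise characterisation — $R$ is differentiable at $t$ if and only if $B_0(R_t)>0$ — and deduce openness directly from continuity of $R$ at such a $t$. (Your converse direction is a genuine addition: Lemma~\ref{deriv} only gives the ``if'' direction, while the paper tacitly uses the full equivalence.) Both routes prove the first two assertions of the lemma cleanly, and yours avoids the preimage bookkeeping.

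The final structural claim is where your plan stalls, and the ``should give'' is precisely where a genuine obstruction sits. After a jump, $R_t=\inf\{y>R_{t^-}:B_0(y)>0\}$ is a zero of $B_0$ that is a right-accumulation point of $\{B_0>0\}$, but for Brownian $B_0$ it can simultaneously be a right-accumulation point of the zero set: there need not be any $\delta>0$ with $B_0>0$ on $(R_t,R_t+\delta)$. In that event $R_{t+s}\downarrow R_t$ as $s\downarrow 0$, and since $R$ crosses each nearby positive excursion of $B_0$ in finite time, there is a sequence $s_n\downarrow 0$ at which $R_{t+s_n}$ sits at a zero of $B_0$; hence no interval $(t,t+\epsilon)$ lies in the differentiability set and $t$ is not equal to any $a_i$. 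The same issue is present in the paper's own proof, whose assertion that every $x\le R_T$ lies in some $[c_i,d_i)$, $[e_i,f_i)$, or equals $R_T$ ignores the uncountably many zeros of $B_0$ that are not excursion endpoints. So your proposal reproduces the paper's gap rather than filling it; a real fix would have to argue that such exceptional times are negligible (they are the image of the zero set of $B_0$ under $\phi(x)=\int_0^x 2(B_0(s))_+\,ds$, hence Lebesgue null) and then rework how the lemma feeds into the uniqueness theorem, rather than claim the all-$t$ structure as stated.
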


This is a technical result rather than a probabilistic one and follows from simple arguments after applying the results of the previous two lemmas. Details are given in Section \ref{critfull} of the appendix.

From the results of this section, we may characterise exactly the behaviour of $(R,B_0)$.
\begin{theorem}
    For any limiting $(R_t,B_0)$, almost surely
    $$R_t=\inf\left\{x:\int_0^x(2B_0(s))_+ds>t\right\} \textrm{ for all }t\in[0,T].$$
\end{theorem}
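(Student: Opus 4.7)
Set $\phi(x) := \int_0^x 2(B_0(s))_+\,ds$ and $S_t := \inf\{x : \phi(x) > t\}$. Because $\phi$ is continuous and non-decreasing, the level set $\{x : \phi(x) = t\}$ is a closed interval whose right endpoint is $S_t$, and $\phi(S_t) = t$. The plan is to show that $\phi(R_t) = t$ for every $t \in [0, T]$ and then to upgrade this to $R_t = S_t$ using right-continuity of $R$.

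The first step is that $\phi \circ R$ is continuous on $[0, T]$. At any jump time $t$ of $R$, Lemma \ref{jump} gives $\Delta R_t = \inf\{x : B_0(R_{t^-}+x) > 0\}$, so $B_0 \leq 0$ on $[R_{t^-}, R_t]$ and hence $\phi(R_t) = \phi(R_{t^-})$; combined with continuity of $\phi$ and right-continuity of $R$, this yields continuity of $\phi \circ R$. The second step is local constancy of $H(t) := \phi(R_t) - t$ on each differentiability interval $(a_i, b_i)$ from Lemma \ref{denseness}. I first argue that $B_0(R_l) > 0$ throughout $(a_i, b_i)$. If $B_0(R_{l_0}) < 0$ at some $l_0 \in (a_i,b_i)$, continuity of $B_0$ provides $\varpi > 0$ with $(B_0(R_{l_0}+x))_+ = 0$ on $[0, \varpi]$, and Lemma \ref{jump} then forces $\Delta R_{l_0} \geq \varpi$, contradicting continuity of $R$ at $l_0$. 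The boundary case $B_0(R_{l_0}) = 0$ splits: either $B_0 \leq 0$ on a right-neighbourhood of $R_{l_0}$, handled as above; or $B_0$ is strictly positive arbitrarily close to $R_{l_0}$ on the right, in which case Lemma \ref{deriv} applied to $s = l_0$ and some suitable $t > l_0$ gives $R_l - R_{l_0} = \inf\{x : \Psi(x) > l - l_0\}$ with $\Psi(x) := \int_0^x 2 B_0(R_{l_0}+y)_+\,dy$, and continuity of $B_0$ with $B_0(R_{l_0}) = 0$ forces $\Psi(x)/x \to 0$ as $x \downarrow 0$, which in turn forces $(R_l - R_{l_0})/(l - l_0) \to \infty$, contradicting differentiability of $R$ at $l_0$. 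Having established $B_0 \circ R > 0$ on $(a_i, b_i)$, Lemma \ref{deriv} applied to any $s < t$ in $(a_i, b_i)$ yields $\phi(R_t) - \phi(R_s) = t - s$, so $H \equiv c_i$ on $(a_i, b_i)$ for some constant $c_i$.

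Next, I upgrade this to $H \equiv 0$ on $[0, T]$. Lemma \ref{denseness} says $[0, T] \setminus \bigcup_i(a_i, b_i) \subset \{a_i : i \in \mathbb{N}\}$ is countable, which rules out any uncountable gap between two components: if $(a_i, b_i)$ and $(a_j, b_j)$ are consecutive with $b_i < a_j$, the open interval $(b_i, a_j)$ would lie in a countable set. Thus adjacent intervals share endpoints, and continuity of $H$ at the shared points chains all the $c_i$'s together into a single common value, while density of $\bigcup_i(a_i, b_i)$ extends this constant to all of $[0, T]$. Using $R^N_0 = \Lambda^N_0/N^{1/3} = 0$ (inherited through the limit) together with $\phi(0) = 0$, one has $H(0) = 0$, so $H \equiv 0$ and $\phi(R_t) = t$ throughout $[0, T]$. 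For the final identification, $\phi(R_t) = t$ places $R_t$ in the level set $\phi^{-1}(\{t\})$, giving $R_t \leq S_t$; conversely, for any $s > t$, $\phi(R_s) = s > \phi(S_t)$ and $\phi$ is strictly increasing to the right of $S_t$, so $R_s > S_t$, and right-continuity of $R$ gives $R_t = \lim_{s \downarrow t} R_s \geq S_t$, yielding $R_t = S_t$.

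The main obstacle is the constancy-gluing step: a priori the constants $c_i$ on different components of the differentiability set could differ, and it is the full strength of Lemma \ref{denseness} — that the complement is countable, not merely nowhere dense — that forces adjacent components to touch and hence the $c_i$'s to agree by continuity of $H$. A secondary subtlety is ruling out the possibility $B_0(R_{l_0}) = 0$ at an interior differentiability point $l_0 \in (a_i, b_i)$, where Lemma \ref{jump} does not immediately apply; here one must invoke Lemma \ref{deriv}'s integral identity and the vanishing ratio $\Psi(x)/x \to 0$ to derive an infinite right-derivative for $R$, which contradicts differentiability at $l_0$.
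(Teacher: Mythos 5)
Your proof takes a genuinely different route from the paper's. The paper defines $\tilde R_t := \inf\{x : \int_0^x 2(B_0(s))_+ ds > t\}$, observes that $\tilde R$ also satisfies the conclusions of Lemmas \ref{deriv} and \ref{jump}, and then runs a "first time of disagreement" argument: it sets $\tau = \inf\{t : R_t \neq \tilde R_t\}$ and, by Lemma \ref{denseness}, splits into cases on whether $\tau$ is a left endpoint $a_i$ (with or without a jump) or interior to some $(a_i,b_i)$, showing in each case that $R$ and $\tilde R$ propagate identically past $\tau$. You instead work forward and show the identity $\phi(R_t) = t$ directly, with $\phi(x) = \int_0^x 2(B_0(s))_+\,ds$: you prove that $H := \phi \circ R - \mathrm{id}$ is continuous on $[0,T]$ (using Lemma \ref{jump} to kill the jump increments of $R$), locally constant on each $(a_i,b_i)$ (via the integral identity in Lemma \ref{deriv}), and then conclude $H \equiv 0$ because the complement of $\cup_i(a_i,b_i)$ is countable and $H(0)=0$. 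This is arguably more transparent --- it explains directly why $R$ is the right-continuous inverse of $\phi$ --- and it replaces the paper's case analysis at $\tau$ with a single global argument. (Your justification that $H(0)=0$ via $R_0 = 0$ is slightly off since the $\mathrm{M1}$ limit can jump at $0$; but by Lemma \ref{jump}, $(B_0)_+ = 0$ on $[0,R_0)$, so $\phi(R_0)=0$ holds regardless.)

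Two steps need tightening. First, your derivation that $B_0(R_l) > 0$ throughout $(a_i,b_i)$ is circular in the subcase $B_0(R_{l_0}) = 0$ with $B_0$ strictly positive arbitrarily close to $R_{l_0}$ on the right: you apply Lemma \ref{deriv} with $s = l_0$, but the hypothesis of that lemma is $B_0(R_l) > 0$ for \emph{all} $l$ in an interval $(l_0,t)$, which is precisely what you are trying to prove. There is no need to re-derive this fact. In the proof of Lemma \ref{denseness} the intervals $(a_i,b_i)$ are constructed as $R^{-1}((c_i,d_i))$ for $(c_i,d_i)$ a connected component of $\{x : B_0(x) > 0\}$, so $B_0(R_l) > 0$ for $l \in (a_i,b_i)$ holds by construction; citing that closes the gap cleanly.

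Second, the gluing step via "consecutive intervals share endpoints" is not well-posed: components of an open subset of $\mathbb{R}$ do not in general admit a notion of "consecutive," since between any two components there may be infinitely many others. The conclusion you want does follow, from a cleaner argument using exactly the same ingredients: if $H$ were non-constant on some $[s,t]$, the intermediate value theorem would force $H([s,t])$ to contain a nondegenerate interval; but $H([s,t]\cap\cup_i(a_i,b_i))$ is countable (one value per component) and $H([s,t]\setminus\cup_i(a_i,b_i))$ is countable (image under $H$ of a set that Lemma \ref{denseness} shows is countable), so $H([s,t])$ is countable, a contradiction. With these two repairs the argument is complete and correct.
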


\begin{proof}

    Defining $\tilde R_t:=\inf\left\{x:\int_0^x(2B_0(s))_+ds>t\right\}$, the desired limit process, this satisfies the properties of Lemmas \ref{deriv} and \ref{jump}. We shall check that any other process $R$ which satisfies these properties must agree with $\tilde R$.

    Suppose that $\tilde R_t\neq R_t$ for some $t\leq T$. Let $\tau=\inf\{t\geq 0:\tilde R_t\neq R_t\}$ which is less than or equal to T by assumption. We first note that $\tau$ cannot be equal to $T$. It is immediate that $R_{\tau^-}=\tilde R_{\tau^-}$.
    If there is a discontinuity at time $\tau$, by Lemma \ref{jump} it is exactly given by:
    $$\Delta R_{\tau}=\inf\left\{x:\int_{R_{\tau^-}}^x2(B_0(s))_+ds>0\right\}=\inf\left\{x:\int_{\tilde R_{\tau^-}}^x2(B_0(s))_+ds>0\right\}=\Delta \tilde R_{\tau}.$$
    By construction both $R$ and $\tilde R$ are constant on $[T,T+1]$. It then holds that $R_t=\tilde R_t$ for $t\leq T+1$ contradicting the definition of $\tau$, and therefore that $\tau<T$. 
    
    By Lemma \ref{denseness}, three options for $\tau$ are left. The first is that $\tau$ is a left endpoint of some $(a_i,b_i)$ interval, at which $R$ is discontinuous. The second is that is that $\tau$ is a left endpoint of some $(a_i,b_i)$ interval, at which $R$ is continuous. The third is that $\tau$ lies inside some $(a_i,b_i)$ interval. 

    In the first case, identically as for the case of $\tau=T$, we obtain that $R_\tau=\tilde R_{\tau}$. Applying Lemma \ref{deriv}, it holds that for $s\leq b_i-a_i$:
    $$R_{\tau+s}=R_{\tau}+\inf\left\{x:\int_{ R_{\tau}}^x2(B_0(s))_+ds>s\right\}=\tilde R_\tau+\inf\left\{x:\int_{ \tilde R_{\tau}}^x2(B_0(s))_+ds>s\right\}=\tilde R_{\tau+s}.$$
    This contradicts the definition of ${\tau}$, hence the first case cannot occur. 

    In the second case it is immediate that $R_{\tau}=\tilde R_{\tau}$. From an identical application of Lemma \ref{deriv}, we determine that $R_t=\tilde R_t$ on $[a_i,b_i)$, which again contradicts the definition of $\tau$. The third case follows via an identical argument to the second.

    It must therefore hold that $\tau>T$ almost surely, and so $R=\tilde R$ as claimed.
\end{proof}

\begin{remark}
    In \cite[Theorem 1.3]{Dembo_2019}, $x_i$ points are placed at each integer, where $x_i\sim x_1$ are i.i.d. with mean 1, and finite second moment. Defining the random CDF function $F(x)=\sum_{i\leq x} x_i$, $F$ has stationary and independent increments. Therefore, in the case of continuous space $F(x)$ should be a Levy point-process of mean 1, and finite second moment. If we take particles only of unit mass, this yields that $F(x)$ is a standard Poisson process. Without this restriction, it is also reasonable to consider $F(x)$ to be a compound Poisson process, corresponding to us placing a random amount of mass at each point. Most of the bounds from above generalise to this case by replacing  $\sum_{i=1}^\infty \mathbbm{1}_{\{X_0^{i,N}+B_t^{i,N}\leq x\}}$ by
     $\sum_{i=1}^\infty  p_i\mathbbm{1}_{\{W_0^{i,N}+B_t^{i,N}\leq x\}}$ for i.i.d. $p_i$, independent of the collection $(W^i_0,B^i)_{i\in\mathbb{N}}$. However, the Chernoff bound used in Lemmas \ref{lemma1incritlabel} and \ref{lowerlemmacrit1} must be re-derived, for the given $p_i$ distribution, and this requires $p_i$ to have a moment generating function. 
     The results should also generalise to $p_i$ with at least several moments by using moment bounds rather than Chernoff bounds.

     In \cite[Theorem 1.8]{Dembo_2019}, the authors extend the equivalent result to more generic initial particle configurations. This is outside the scope of our work and cannot be handled by our martingale arguments, which require the initial random CDF to correspond to a compound Poisson process. 
    
\end{remark}

\section{Alternative conditions on the density, and related barrier dynamics}
\label{alternateg}
In this section we discuss alternative conditions on the density $g$ to condition \hyperlink{A2}{(A2)}, under which global solutions to \eqref{alternativetomvr} exist. We first present an alternative condition on the density $g$, under which the particle system of Section \ref{setupsection} converges to physical solutions of \eqref{alternativetomvr}. We demonstrate that, in general, there need not be a time after which the solution ceases to have blow-ups. We then present another alternative condition under which global solutions of the McKean-Vlasov system \eqref{alternativetomvr} exist, and demonstrate that, in general, there need not be a linear bound on $\Lambda_t$, unlike for densities satisfying condition \hyperlink{A2}{(A2)}.

\subsection{A condition allowing for arbitrarily many jumps of the barrier}

We consider the following condition on the density.
\begin{itemize}
    \item \hypertarget{A3}{(A3)}: There exists sequences $x_n\uparrow\infty,$ $y_n\uparrow\infty$ such that $\int_0^{x_n}(g(x)-1)dx\leq -y_n$ for all $n\in \mathbb{N}$.
\end{itemize}

\begin{lemma}
\label{lemmaa3}
    Suppose that $g$ satisfies conditions \hyperlink{A1}{(A1)}, \hyperlink{A3}{(A3)}. For each $T>0$ there exists a constant $C_T$ such that $\mathbb{P}(\Lambda^N_T>C_T)\rightarrow 0$ as $N\rightarrow\infty$. 
\end{lemma}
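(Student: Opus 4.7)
The plan is to mimic the proof of Lemma~\ref{lemma on bounding}, but since condition \hyperlink{A3}{(A3)} only furnishes a mass deficit along a subsequence rather than a uniform one, the natural candidate test function is no longer a linear function $l^{c,x^*}_t$ but a constant function $f_t \equiv x_n$ for $n$ chosen large enough. Lemma~\ref{Alemmatoboundathing} applies to constant functions (with $f(0) = x_n \geq 0$), so it suffices to show that, with probability tending to $1$, $\Gamma_N(x_n)_t \leq x_n$ for every $t \in [0,T]$.

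The first step is to estimate $\mathbb{E}(\Gamma_1(x_n)_t)$ for $t \leq T$. I would split the contribution according to whether the initial position lies below or above $x_n$. Particles initially in $[0,x_n]$ are immediately absorbed by a barrier at height $x_n$, contributing $\int_0^{x_n} g(x)\,dx \leq x_n - y_n$ by \hyperlink{A3}{(A3)}. For particles initially above $x_n$, the reflection principle gives $\mathbb{P}(\min_{s\leq t}(x + B_s) \leq x_n) = 2\Phi((x_n - x)/\sqrt{t})$, and using the uniform bound $g \leq C$ from \hyperlink{A1}{(A1)} together with the substitution $u = (x-x_n)/\sqrt{t}$, the corresponding integral is bounded by $\tilde C \sqrt{t}$ for a constant $\tilde C$ depending only on $C$. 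Combining, $\mathbb{E}(\Gamma_1(x_n)_t) \leq x_n - y_n + \tilde C \sqrt{T}$ uniformly in $t \leq T$.

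Next I would choose $n$ large enough that $y_n \geq \tilde C \sqrt{T} + 1$, which is possible since $y_n \uparrow \infty$; this yields $\mathbb{E}(\Gamma_1(x_n)_t) \leq x_n - 1$ for all $t \leq T$. Because $t \mapsto \Gamma_N(x_n)_t$ is non-decreasing, the event $\{\sup_{t\leq T}\Gamma_N(x_n)_t > x_n\}$ coincides with $\{\Gamma_N(x_n)_T > x_n\}$. Standard thinning of Poisson random measures shows that $N\Gamma_N(x_n)_T$ is Poisson with mean $N\mathbb{E}(\Gamma_1(x_n)_T) \leq N(x_n - 1)$, so a Chernoff/Chebyshev bound immediately gives $\mathbb{P}(\Gamma_N(x_n)_T > x_n) \to 0$ as $N \to \infty$.

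On the complementary event, Lemma~\ref{Alemmatoboundathing} yields $\Lambda^N_t \leq x_n$ for all $t \leq T$, and taking $C_T := x_n$ completes the argument. There is no real obstacle; the only thing worth highlighting is that, in contrast to the linear-in-$T$ bound available under \hyperlink{A2}{(A2)}, the dependence of $C_T$ on $T$ here is implicit and controlled only through the speed at which $y_n \uparrow \infty$, so one cannot in general expect a linear-in-$T$ upper bound under \hyperlink{A3}{(A3)} alone.
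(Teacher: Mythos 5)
Your proposal is correct and carries out exactly the modification the paper indicates: replace the linear test function $l^{c,x^*}$ by the constant $f_t\equiv x_n$ in the argument of Lemma~\ref{lemma on bounding}, bound $\mathbb{E}(\Gamma_1(x_n)_t)$ via the mass deficit from \hyperlink{A3}{(A3)} plus an $O(\sqrt{T})$ contribution from particles started above $x_n$, and then invoke Lemma~\ref{Alemmatoboundathing}. The only cosmetic difference is that you use monotonicity of $t\mapsto\Gamma_N(x_n)_t$ plus a Poisson tail bound at $t=T$ rather than the uniform SLLN convergence, which is a clean simplification available precisely because the test function is constant.
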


The proof of this result follows from a simple modification of the proof of Lemma \ref{lemma on bounding}, using $f_t\equiv x_n$ rather than $f_t=ct+x^*$.

As discussed at the start of the proof sketch in Section \ref{sketchsection}, for any density $g$ where such a bound on $\Lambda^N$ holds, the conclusion of Theorem \ref{maintheorem} holds. Therefore, the result of Theorem \ref{maintheorem} holds under replacing the condition \hyperlink{A2}{(A2)} with condition \hyperlink{A3}{(A3)}. 

\begin{corollary}
    Suppose that $g$ satisfies conditions \hyperlink{A1}{(A1)} and \hyperlink{A3}{(A3)}. Then the statement of Theorem \ref{maintheorem} holds for the system \eqref{aligned:Particlealpha} with initial density $g$.
\end{corollary}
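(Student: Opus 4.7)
The plan is to observe that, as flagged in the proof sketch in Section~\ref{sketchsection}, condition \hyperlink{A2}{(A2)} enters the proof of Theorem~\ref{maintheorem} only through Lemma~\ref{lemma on bounding}, which provides a stochastic bound on $\Lambda^N_T$. Lemma~\ref{lemmaa3} delivers exactly such a bound under the pair \hyperlink{A1}{(A1)}, \hyperlink{A3}{(A3)}: for each $T>0$ there is a constant $C_T$ with $\mathbb{P}(\Lambda^N_T > C_T) \to 0$. Thus the argument reduces to a verification that replacing Lemma~\ref{lemma on bounding} by Lemma~\ref{lemmaa3} as the input for all boundedness-type statements does not disturb the rest of the proof.

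Concretely, I would proceed in four steps. First, use Lemma~\ref{lemmaa3} together with the fact that the extended $\bar\Lambda^N$ is non-decreasing and constant on $[-1,0)$ and $[T,T+1]$ to place it, with high probability, in the pre-compact set of non-decreasing functions bounded by $C_T$ in $\tilde D$ (\cite[Theorem 12.12.2]{whitt}); this yields M1-tightness of $\Lambda^N$ with no mention of \hyperlink{A2}{(A2)}. Second, observe that the vague convergence of $\upsilon^N$ to $\theta_*(G\times \mathcal{L}(B))$ uses only the Poisson structure of the initial configuration and the local integrability of $g$ guaranteed by \hyperlink{A1}{(A1)}, and so carries over unchanged. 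Third, continuity of $\tilde h$ (Lemma~\ref{continuity of tilde h}) combined with $\mu^N = \tilde h(\upsilon^N, \Lambda^N)$ transfers tightness to $(\mu^N, \Lambda^N)$ in $M(\tilde D) \times \tilde D$. Fourth, on a Skorokhod realisation of a subsequential limit $(\hat\mu, \hat\Lambda)$, the identification steps from the sketch --- localising via the truncated measures $\hat\mu_{S_K}$, applying the argument of \cite[Lemma 5.9]{delarue2015particle} to equate $\hat\mu(\inf_{s\le t}x_s \le 0)$ with $\hat\Lambda_t$, and sending $K\to\infty$ using the Gaussian tail bound for particles starting above $K$ --- remain valid, as does the physical jump verification adapted from \cite[Theorem 6.4]{minimal}.

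The only subtlety to check --- and in my view the single place where one must actually write something rather than cite --- is that the Gaussian tail estimate controlling particles which start above $K$ and reach level $0$ by time $T$ was previously applied on the event $\{\Lambda^N_T \le cT + x^*\}$ coming from \hyperlink{A2}{(A2)}, and must now be applied on the event $\{\Lambda^N_T \le C_T\}$ coming from Lemma~\ref{lemmaa3}. This is immediate: on that event, a particle started at $W_0^{i,N} > K$ is absorbed only if its Brownian increment dips below $C_T - K$ at some time in $[0,T]$, which has probability at most $2\,\overline{\Phi}((K - C_T)/\sqrt{T})$, independent of $N$ and exponentially small in $K$. Because Lemma~\ref{lemmaa3} gives a deterministic (in $T$) constant $C_T$ for each fixed $T$, this suffices to bound the contribution of particles started above $K$ uniformly in $N$ and to pass to the limit in $K$ as in the original sketch. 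Everything else in the sketch is formal manipulation that never invokes \hyperlink{A2}{(A2)}, so the corollary follows.
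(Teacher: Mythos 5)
Your proposal is correct and matches the paper's approach: the paper's proof consists precisely of the observation that Lemma~\ref{lemmaa3} supplies the same type of stochastic bound on $\Lambda^N_T$ that Lemma~\ref{lemma on bounding} provides, and that, as flagged in the proof sketch of Section~\ref{sketchsection}, this bound is the only place condition \hyperlink{A2}{(A2)} enters the argument. Your more explicit walk through the four steps, and in particular the re-verification of the Gaussian tail estimate on the event $\{\Lambda^N_T \le C_T\}$, is a correct spelling-out of what the paper leaves implicit.
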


For densities satisfying \hyperlink{A3}{(A3)}, it is possible to have a sequence of blow-up times growing to infinity, unlike for solutions for $g\in L^1(\mathbb{R}^+)$. To give an example we require a criteria for the blow-up of solutions. We therefore modify the proof of \cite[Theorem 1.1]{2} to give a criteria applicable in our setting.  

\begin{theorem}
\label{blowuptheorem}
    Suppose that there exists some Borel set $A$ of positive measure such that
    $$\int_A xg(x)dx<\frac{1}{2}\int_Ag(x)\int_0^xg(y)dydx.$$
    Then any solution $\Lambda$ to \eqref{alternativetomvr} cannot be continuous for all time.

    In particular, if there exists $x$ such that
    $$\int_0^xg(y)dy>2x,$$ then any solution $\Lambda$ to \eqref{alternativetomvr} cannot be continuous for all time.
\end{theorem}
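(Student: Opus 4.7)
The plan is to argue by contradiction: assume $\Lambda$ is a continuous solution of \eqref{alternativetomvr} on $[0,\infty)$, use optional stopping at finite times to obtain an integral identity relating $\int_A xg$ to the absorption profile of particles started in $A$, and then combine it with a monotonicity-based lower bound on $\Lambda$ to arrive at a contradiction.

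For the integral identity, continuity of the non-decreasing $\Lambda$ forces $X^x_{\tau^x}=0$ on $\{\tau^x<\infty\}$, so $\Lambda_{\tau^x}=x+B_{\tau^x}$. Applying optional stopping to $B$ at the bounded stopping time $\tau^x\wedge T$ yields, for every $T>0$,
$$x\ =\ \mathbb{E}\bigl[\Lambda_{\tau^x}\mathbbm{1}_{\{\tau^x\leq T\}}\bigr]\ +\ \Lambda_T\,\mathbb{P}(\tau^x>T).$$
Writing $\mathfrak{A}(s):=\int_A\mathbb{P}(\tau^x\leq s)g(x)dx$ and $\alpha_A:=\int_A g=\mathfrak{A}(\infty)$ (Brownian recurrence gives $\tau^y<\infty$ a.s., so $\mathfrak{A}(T)\uparrow\alpha_A$), integration against $g(x)dx$ and Fubini turn this into
$$\int_0^T\Lambda_s\,d\mathfrak{A}(s)\ +\ \Lambda_T\bigl(\alpha_A-\mathfrak{A}(T)\bigr)\ =\ \int_A xg(x)dx.$$
Since $\Lambda_T(\alpha_A-\mathfrak{A}(T))\geq 0$, letting $T\to\infty$ by monotone convergence gives $\int_0^\infty\Lambda_s\,d\mathfrak{A}(s)\leq\int_A xg(x)dx$. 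For the lower bound on $\Lambda_s$, coupling $X^y$ and $X^{y'}$ via a common Brownian motion shows that $y\mapsto\mathbb{P}(\tau^y\leq s)$ is non-increasing; writing $a:=\inf A$, this gives $\mathbb{P}(\tau^a\leq s)\geq\mathfrak{A}(s)/\alpha_A$ and $\int_0^a\mathbb{P}(\tau^y\leq s)g(y)dy\geq G(a)\mathbb{P}(\tau^a\leq s)$. Discarding the non-negative contribution from $[a,\infty)\setminus A$ yields
$$\Lambda_s\ \geq\ \frac{\alpha_A+G(a)}{\alpha_A}\mathfrak{A}(s),$$
and inserting this together with $\int_0^\infty\mathfrak{A}\,d\mathfrak{A}=\alpha_A^2/2$ produces the key inequality
$$\int_A xg(x)dx\ \geq\ \frac{\alpha_A(\alpha_A+G(a))}{2}.$$

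For an interval $A=[a,b]$, $\alpha_A+G(a)=G(b)$ and $\int_A gG=(G(b)^2-G(a)^2)/2$; combining with the hypothesis $\int_A xg<(G(b)^2-G(a)^2)/4$ reduces to $(G(b)-G(a))^2<0$, a contradiction. The general Borel case and the ``in particular'' clause follow at once: were $G(x)\leq 2x$ to hold for all $x$, one would have $\tfrac12\int_A gG\leq\int_A xg$, contradicting the strict hypothesis; hence $G(x_0)>2x_0$ for some $x_0$, and applying the interval case to $A=[x_0-\eps,x_0]$ with $\eps$ small (a Lebesgue-point argument making the averaged inequality strict) completes the proof.

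The main technical obstacle I anticipate is justifying the Fubini step and the monotone limit carefully when $g$ is non-integrable and $\Lambda$ may grow unboundedly; working throughout with the non-negative, non-decreasing quantity $\int_0^T\Lambda_s\,d\mathfrak{A}(s)$ (which the finite-$T$ identity bounds above by $\int_A xg<\infty$) sidesteps uniform-integrability issues, so all limits are justified by monotone convergence alone.
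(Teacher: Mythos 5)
Your proposal follows the same general contradiction strategy as the paper: assume continuity, integrate the stopped-process identity against $g$ over $A$, lower bound $\Lambda_s$ via the monotonicity of $y\mapsto\mathbb{P}(\tau^y\leq s)$, and use $\mathbb{P}(\tau^x<\infty)=1$ to generate the factor $1/2$. The difference is in the form of the lower bound. The paper works inside the double integral and uses, for each $x\in A$, the bound
\[
\Lambda_s \;=\; \int_0^\infty \mathbb{P}(\tau^y\leq s)\,g(y)\,dy \;\geq\; \mathbb{P}(\tau^x\leq s)\int_0^x g(y)\,dy,
\]
which after integrating $\mathbb{P}(\tau^x\leq s)\,d\mathbb{P}(\tau^x\leq s)$ over $s$ gives exactly $\tfrac12\int_A g(x)\big(\int_0^x g\big)\,dx$; this contradicts the hypothesis directly for \emph{any} Borel $A$. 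You instead first integrate the identity over $A$ to get $\mathfrak{A}(s)$ and then bound $\Lambda_s\geq \tfrac{\alpha_A+G(a)}{\alpha_A}\mathfrak{A}(s)$ with $a=\inf A$. That bound is valid, and for intervals $[a,b]$ it is actually sharper than the paper's (your quantity $(G(b)-G(a))G(b)/2$ dominates $(G(b)^2-G(a)^2)/4$ by $(G(b)-G(a))^2/4\geq0$). However, for a general Borel $A$ whose gaps carry mass of $g$, $\tfrac{\alpha_A(\alpha_A+G(a))}{2}$ need not dominate $\tfrac12\int_A gG$, so the bound alone does not contradict the stated hypothesis. You compensate with the reduction: the hypothesis forces $G(x_0)>2x_0$ somewhere, and then a small interval near $x_0$ does the job. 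That detour is logically sound but is a genuine structural difference; the paper's inner-integral bound avoids it and handles arbitrary Borel $A$ in one step.

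Two small points to tighten. First, the optional-stopping display should be an inequality: on $\{\tau^x>T\}$ you have $x+B_T=X^x_T+\Lambda_T$ with $X^x_T>0$, so the correct statement is $x=\mathbb{E}[\Lambda_{\tau^x}\mathbbm{1}_{\{\tau^x\leq T\}}]+\mathbb{E}[X^x_T\mathbbm{1}_{\{\tau^x>T\}}]+\Lambda_T\mathbb{P}(\tau^x>T)\geq\mathbb{E}[\Lambda_{\tau^x}\mathbbm{1}_{\{\tau^x\leq T\}}]+\Lambda_T\mathbb{P}(\tau^x>T)$; your downstream bound only uses this direction, so nothing breaks, but as written the equality is false. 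Second, for the ``in particular'' reduction you must rule out $g$ vanishing a.e.\ on $[x_0-\eps,x_0]$: the clean way is to note that $\{z:G(z)>2z\}$ is open (as $g$ is bounded), and on any connected component $(c,d)$ one cannot have $g=0$ a.e.\ near $c$ (else $G$ would stay at $G(c)=2c<2z$), so there is an interval $A'\subset(c,d)$ with $\int_{A'}g>0$ where $G(z)>2z$ gives a strict hypothesis. This is the same informality the paper leaves implicit, so it is not a defect of your proof relative to theirs, but it is worth spelling out.
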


\begin{proof}
    Suppose that $\Lambda$ is a solution to \eqref{alternativetomvr} which is continuous for all time. We recall the definition of $\tau^x$
    from \eqref{alternativetomvr}, and define $\Lambda^x_t=\mathbb{P}(\tau^x\leq t)$ so that $\Lambda_t=\int_0^\infty\Lambda^x_tg(x)dx$. Since we assume that $\Lambda_t$ is continuous, it must hold that 
    $$x+B_{\tau^x\wedge t}-\Lambda_{\tau^x\wedge t}\geq 0 \textrm{ for }x\in\mathbb{R}^+.$$
    Let $A$ be a Borel measurable subset of $\mathbb{R^+}$. Taking expectations, and integrating against $g(x)\mathbbm{1}_{A}$, it follows that for any $t>0$
    \begin{align*}
    \int_Axg(x)dx&\geq \int_A\mathbb{E}(\Lambda_{\tau^x\wedge t})g(x)dx\\
    &=\int_A\int_0^t\Lambda_sd\Lambda^x_sg(x)dx.
    \end{align*}
Sending $t\uparrow\infty$, and applying the Monotone Convergence Theorem,
we see that
 \begin{align*}
    \int_Axg(x)dx&\geq\int_A\int_0^\infty\Lambda_sd\Lambda^x_sg(x)dx,\\
&=\int_A\left(\int_0^\infty\left(\int_0^\infty\Lambda^y_sg(y)dy\right)d\Lambda^x_s\right)g(x)dx,\\
    &\geq \int_A\left(\int_0^\infty\left(\int_0^xg(y)dy\right)\Lambda^x_sd\Lambda^x_s\right)g(x)dx \;\;\textrm{ since }\Lambda^x_s \textrm{ is decreasing in }x,\\
    &= \int_A\frac{(\Lambda^x_\infty)^2}{2}g(x)\left(\int_0^xg(y)dy\right)dx\;\;\textrm{ since }\Lambda^x_t \textrm{ is continuous in }t \textrm{ with }\Lambda^x_0=0,\\
    &=\frac{1}{2}\int_A g(x)\left(\int_0^xg(y)dy\right)dx.
    \end{align*}

Since $A$ was arbitrary, we have shown that to have a continuous solution to \eqref{alternativetomvr}, it must hold that for any Borel measurable set $A$,
$$\int_Axg(x)dx\geq \frac{1}{2}\int_Ag(x)\int_0^xg(y)dydx,$$
which concludes the proof of the first statement. 

The second statement is an immediate consequence of the first. If $\int_0^xg(y)dy=2x+\epsilon$, for some positive $\epsilon>0$, then $\int_0^zg(y)dy>2z$ for all $z$ in some neighbourhood of $x$ since $g$ is bounded. We can then take the set $A$ to be this neighbourhood of $x$.
\end{proof}

We now give an example of a density which satisfies condition \hyperlink{A3}{(A3)}, and for which the solution to the McKean-Vlasov type equation \eqref{alternativetomvr}, $\Lambda$, possesses infinitely many discontinuities.
Take constants $L\geq 10$, and $\alpha=1/4$.
We consider a density $h$ given by
\begin{align*}
&h(x)=\begin{cases}
    0\quad x\in \cup_{i\in\mathbb{N}_0}[\hat x_{2i-1},\hat x_{2i}]  , \\
    (1-\alpha)L \quad x\in\cup_{i\in\mathbb{N}_0} [\hat x_{2i},\hat x_{2i+1}],
\end{cases}\\
&\hat x_i=\sum_{j=0}^iL^j=\frac{L^{i+1}-1}{L-1},\quad i\in\mathbb{N}_0,\\
&\hat x_{-1}=0.
\end{align*}

\begin{proposition}\label{prop:infinitejumps}
    There exists a physical solution $\Lambda$ to \eqref{alternativetomvr} with initial density $h$, and a sequence $t_i\uparrow\infty$ such that $\Delta\Lambda_{t_i}>0$ for all $i\in\mathbb{N}$.
\end{proposition}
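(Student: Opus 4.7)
The strategy is to produce an infinite sequence of jump times by iteratively applying the blow-up criterion of Theorem \ref{blowuptheorem} at each geometric scale of $h$. First I would verify that $h$ satisfies \hyperlink{A1}{(A1)} (trivially, by $(1-\alpha)L$) and \hyperlink{A3}{(A3)}: using $\hat x_i = (L^{i+1}-1)/(L-1)$,
\begin{equation*}
\int_0^{\hat x_{2k}}(h(x)-1)\,dx = \frac{(1-\alpha)L^2(L^{2k}-1)}{L^2-1} - \frac{L^{2k+1}-1}{L-1},
\end{equation*}
with leading behaviour $-L^{2k+1}(\alpha L + 1)/((L-1)(L+1)) \to -\infty$ since $(1-\alpha)L < L+1$. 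The corollary following Lemma \ref{lemmaa3} then furnishes a physical solution $\Lambda$ of \eqref{alternativetomvr}, with $\Lambda_T < \infty$ for every $T$.

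\textbf{First jump.} Next I would apply Theorem \ref{blowuptheorem} with $x = \hat x_1 = L+1$: since $\int_0^{\hat x_1} h = (1-\alpha)L^2 = (3/4)L^2 > 2(L+1)$ for $L \geq 10$, the solution cannot be continuous for all time; let $t_1$ be its first discontinuity. The physical jump condition \eqref{phys3}, together with the fact that $h \equiv (1-\alpha)L > 1$ throughout the block $[1, L+1]$, then forces $\Lambda_{t_1} \geq \hat x_1$: any jump stopping strictly inside the block would leave the cumulative mass $\nu_{t_1^-}([0, x])$ strictly above $x$ at the proposed endpoint, contradicting the infimum characterization of $\Delta \Lambda_{t_1}$.

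\textbf{Iteration and main obstacle.} For the inductive step, assume a jump at $t_k$ has placed $\Lambda_{t_k} \geq \hat x_{2k-1}$. I would apply the same argument to the restarted system from $t_k$ with initial measure $\nu_{t_k}$ on $[0,\infty)$: up to Brownian smoothing and a small amount of earlier absorption, $\nu_{t_k}$ coincides with the translate of $h$ restricted to $[\Lambda_{t_k}, \infty)$, so a shifted version of the blow-up criterion applied to the $(k+1)$-th positive block $[\hat x_{2k}, \hat x_{2k+1}]$ forces a further discontinuity $t_{k+1}$ with $\Lambda_{t_{k+1}} \geq \hat x_{2k+1}$; by Lemma \ref{lemmaa3}, $t_{k+1}$ is finite, giving $t_k \uparrow \infty$. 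The main obstacle is making the comparison between $\nu_{t_k}$ and the translated $h$ quantitative enough, uniformly in $k$, that the algebraic blow-up inequality survives. By geometric scaling the elapsed time $t_k$ should be at most polynomial in $\hat x_{2k-1}$, so the typical diffusive displacement is $O(L^{2k})$, dwarfed by the block width $L^{2k+1}$; and the probability that a given $(k+1)$-th-block particle has been absorbed by $t_k$ is exponentially small in $L$. I expect the cleanest route is to carry out these estimates at the level of the particle system \eqref{aligned:Particlealphanocentre}, using Chernoff-type bounds in the style of Section \ref{weakproofsection}, and then to transfer to the mean-field limit via Theorem \ref{maintheorem}.
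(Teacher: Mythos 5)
Your plan has the right opening moves — checking \hyperlink{A3}{(A3)} to get global existence and noting that $\int_0^{\hat x_{2k+1}}h > 2\hat x_{2k+1}$ so that Theorem \ref{blowuptheorem} yields a first discontinuity — but the iteration breaks down, and the gap is exactly where you locate it yourself.

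The concrete problem is the claim that the physical jump condition forces $\Lambda_{t_1}\geq \hat x_1$ (and, inductively, $\Lambda_{t_k}\geq \hat x_{2k-1}$). You argue this by observing $h\equiv(1-\alpha)L>1$ on the block, but $\Delta\Lambda_{t_1}=\inf\{x:\nu_{t_1^-}([0,x])<x\}$ is governed by the \emph{current} density $\tilde V(t_1^-,\cdot)$, not by $h$. By time $t_1$ the particles have diffused, the profile near the barrier has been flattened toward $1$ (indeed $\tilde V(t_1^-,0^+)=1$ at a jump onset), and some mass has already been absorbed, so there is no a priori reason the jump carries the barrier across the whole block. Without knowing where jumps land, the induction does not close. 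Your later instinct to make the comparison between $\nu_{t_k}$ and a translated $h$ quantitative via particle-system Chernoff bounds and then transfer through Theorem \ref{maintheorem} could in principle be carried through, but it would be much heavier than necessary and introduces an uncomfortable interchange of limits.

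The paper sidesteps both difficulties by a contradiction argument that never tries to locate jump landing points. Suppose there were a last blow-up time $\zeta$. Define $\tau^*=\inf\{t:\Lambda_t\geq\hat x_i\}$ for a large even $i$ chosen so that $\tau^*\geq\zeta$, and restart the McKean–Vlasov system at $\tau^*$ with initial density $\tilde V(\tau^*,\cdot)$. The key estimates are then carried out entirely at the mean-field level: first, by comparison with constant barriers one shows $\tau^*\leq\hat x_i^2$ (and a lower bound of the same quadratic order), so the elapsed time is controlled; second, one bounds $\tilde V(\tau^*,x)$ from below by the density obtained by killing particles at the \emph{fixed} level $\hat x_i$ rather than at $\Lambda_t$, and by using only the mass from the single block $[\hat x_i,\hat x_{i+1}]$, which gives a clean closed-form error-function lower bound. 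Plugging $\tau^*\leq\hat x_i^2$ into that expression and integrating over $[0,3L^{i+1}/8]$ verifies the hypothesis of Theorem \ref{blowuptheorem} for the restarted system, producing a jump after $\tau^*\geq\zeta$ and hence the contradiction. This route needs no control on where jumps land, no particle-level estimates, and no transfer to the mean-field limit. If you want to pursue your direct inductive strategy you would need to replace the ``jump past the block boundary'' step with something like ``$\Lambda$ crosses $\hat x_{2k}$ in finite time,'' which is essentially what the $\tau^*$ device accomplishes.
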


We sketch the proof below. A full proof is given in Section \ref{infiniteblowup} in the appendix.

\begin{proof} 
This density satisfies condition \hyperlink{A3}{(A3)}, hence a solution to \eqref{alternativetomvr} with such initial density does exist. To determine existence of a sequence of blow-up times, growing to infinity, we suppose that there is some time $\zeta$ after which no blow-up occurs. We consider the time $\tau^*$ at which $\Lambda_t$ reaches $\hat x_i$ for some large even $i$ such that $\tau^*$ is greater than $\zeta$. We then consider \eqref{alternativetomvr} with initial density given by the density of non-absorbed, re-centred particles at the time $\tau^*$, and verify that the condition of Theorem \ref{blowuptheorem} holds for this density. Thus there is a blow-up of the solution after time $\tau^*$. 
\end{proof}

\subsection{Different barrier speeds}

We now give a weaker condition than \hyperlink{A2}{(A2)}, which allows for existence of ``physical" solutions to the McKean-Vlasov type system \eqref{alternativetomvr} which have polynomial growth. To avoid working with the particle system, we recall the definition of minimal solutions from \cite{minimal}.
We call $\underline{\Lambda}$ a minimal solution of \eqref{alternativetomvr} if, for any solution $\Lambda$ of \eqref{alternativetomvr}, $\underline{\Lambda}_t\leq\Lambda_t$ for all $t\geq 0$. If any solutions to \eqref{alternativetomvr} exist, then a minimal solution must exist, due to the monotonicity of the operator $\Gamma$, and the bound $\Gamma(0)\leq \Gamma(\Lambda)=\Lambda$. Existence of a minimal solution then holds by applying Tarski's Theorem as in  \cite[Theorem 4.1]{ledger2024supercooledstefanproblemtransport}. 

The minimal solution to the system \eqref{alternativetomvr} has physical jumps.

 \begin{theorem}\label{minimalisphysical}
     Let $\underline{\Lambda}$ be the minimal solution to \eqref{alternativetomvr}. Then $\underline{\Lambda}$ satisfies the physicality condition \eqref{alternativephysical}
 \end{theorem}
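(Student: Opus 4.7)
The plan is to argue by contradiction using the monotone iteration that produces the minimal solution. The minimal solution $\underline{\Lambda}$ exists by Tarski's theorem applied to the monotone operator $\Gamma$ from \eqref{gammadefn} on the lattice of non-negative, non-decreasing cadlag functions (the authors have already invoked this). For any solution, self-consistency of the feedback equation forces $\Delta \Lambda_t = \nu_{t^-}([0,\Delta\Lambda_t])$, i.e.\ the jump size is a fixed point of $x \mapsto \nu_{t^-}([0,x])$. The physical jump $\bar x_t := \inf\{x>0:\underline{\nu}_{t^-}([0,x])<x\}$ is the \emph{least} such fixed point, and since $\underline{\nu}_{t^-}$ is absolutely continuous (Brownian diffusion starting from a measure with bounded density), one gets $\underline{\nu}_{t_0^-}([0,\bar x_{t_0}])=\bar x_{t_0}$, so the physical jump is itself a genuine fixed point.

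Suppose for contradiction that at some time $t_0$ we have $y := \Delta\underline{\Lambda}_{t_0} > \bar x_{t_0}$. Define an explicit comparison function
\[
\tilde\Lambda^{(0)}_t := \begin{cases} \underline{\Lambda}_t & t<t_0, \\ \underline{\Lambda}_{t_0^-}+\bar x_{t_0} & t\geq t_0. \end{cases}
\]
Clearly $\tilde\Lambda^{(0)}\leq \underline{\Lambda}$, and a direct computation decomposing $\Gamma(\tilde\Lambda^{(0)})_{t_0}$ into contributions from particles absorbed strictly before $t_0$ (giving $\underline{\Lambda}_{t_0^-}$) and those absorbed at $t_0$ itself (giving $\underline{\nu}_{t_0^-}([0,\bar x_{t_0}]) = \bar x_{t_0}$) shows $\Gamma(\tilde\Lambda^{(0)})_{t_0} = \tilde\Lambda^{(0)}_{t_0}$. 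Hence $\Gamma(\tilde\Lambda^{(0)})\geq \tilde\Lambda^{(0)}$ pointwise, with equality on $[0,t_0]$ and the inequality for $t>t_0$ coming from the monotonicity of $s\mapsto\Gamma(\tilde\Lambda^{(0)})_s$.

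Iterate $\tilde\Lambda^{(n+1)}:=\Gamma(\tilde\Lambda^{(n)})$. By monotonicity of $\Gamma$, the sequence is increasing and dominated by $\underline{\Lambda}=\Gamma(\underline{\Lambda})$; by induction (using that $\Gamma(f)_t$ depends only on $f|_{[0,t]}$ and repeating the decomposition above) every iterate agrees with $\tilde\Lambda^{(0)}$ on $[0,t_0]$. The monotone limit $\tilde\Lambda$ is therefore cadlag, lies strictly below $\underline{\Lambda}$ at $t_0$, and by monotone convergence applied to the hitting time probabilities $\mathbb{P}(\tau^x(\tilde\Lambda^{(n)})\leq t)\uparrow \mathbb{P}(\tau^x(\tilde\Lambda)\leq t)$ is itself a fixed point of $\Gamma$, i.e.\ a solution of \eqref{alternativetomvr}. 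This contradicts the pointwise minimality of $\underline{\Lambda}$, so $\Delta\underline{\Lambda}_{t_0}=\bar x_{t_0}$ at every jump time.

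The main technical obstacle is justifying that $\Gamma(\tilde\Lambda^{(n)})\to\Gamma(\tilde\Lambda)$ pointwise, since $\tilde\Lambda$ may acquire new jumps in the limit where $\tilde\Lambda^{(n)}$ was continuous and hitting times are generally discontinuous in the barrier at jump times. The standard remedy is to note that for increasing $\tilde\Lambda^{(n)}\uparrow\tilde\Lambda$ with cadlag limit, one has $\tau^x(\tilde\Lambda^{(n)})\downarrow\tau^x(\tilde\Lambda)$ except on the null set where the Brownian path touches $\tilde\Lambda$ exactly at the jump time, so dominated convergence gives the required continuity. A secondary point worth checking is absolute continuity of $\underline{\nu}_{t_0^-}$, which can be deduced either by adapting Proposition~\ref{6.1} (valid under the weak feedback condition) or directly from the fact that the non-absorbed particle density is bounded above by the heat-kernel smoothing of $g$, ensuring that $\bar x_{t_0}$ is actually attained as a fixed point rather than only as an infimum.
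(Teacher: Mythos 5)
Your argument is correct in spirit and takes a genuinely different route from the paper. The paper's proof is a restart-and-paste argument: restart the minimal solution at $t^*$ from the measure $\nu_{t^*-}$, invoke existence of physical solutions (via Theorem~\ref{maintheorem}) to conclude that the restarted minimal solution has a physical jump at time zero, and paste this onto $\underline{\Lambda}|_{[0,t^*)}$ to get a strictly smaller solution, contradicting minimality. You instead construct the smaller solution directly by iterating $\Gamma$ from a hand-built sub-solution $\tilde\Lambda^{(0)}$, using the key observation that the physical jump size satisfies $\underline{\nu}_{t_0^-}([0,\bar x_{t_0}])=\bar x_{t_0}$ so that $\Gamma(\tilde\Lambda^{(0)})\geq\tilde\Lambda^{(0)}$. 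This bypasses the appeal to existence of physical solutions for the restarted density (and the question of whether $\nu_{t^*-}$ satisfies (A1)--(A2)); the price is that you must carry out the convergence of the iteration yourself, which is exactly the technical content of the Tarski argument the paper cites from \cite{ledger2024supercooledstefanproblemtransport}. Three points to tighten: (i) cadlag-ness of the limit $\tilde\Lambda$ is not automatic for an increasing limit of cadlag functions; the correct logic is to establish $\Gamma(\tilde\Lambda)=\tilde\Lambda$ first, and then cadlag-ness follows because $\Gamma$ always produces cadlag output, so your two claims should be swapped; (ii) the exceptional set you describe ("where the Brownian path touches $\tilde\Lambda$ exactly at the jump time") should be replaced by $\{\inf_{s\leq t}(x+B_s-\tilde\Lambda_s)=0\}$, which has probability zero for all but countably many $x$ (since $\inf_{s\leq t}(B_s-\tilde\Lambda_s)$ is a single random variable with at most countably many atoms), hence for $G$-a.e.\ $x$; then monotone/dominated convergence with dominating function $\mathbb{P}(\tau^x(\underline{\Lambda})\leq t)$ completes the fixed-point identity; (iii) $\bar x_{t_0}$ need not be the \emph{least} positive fixed point of $x\mapsto\underline{\nu}_{t_0^-}([0,x])$ --- it is the right endpoint of the maximal interval where this map lies weakly above the identity, which by continuity of the distribution function is itself a fixed point; this is the property you actually use, so the misstatement is harmless.
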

 \begin{proof}
This follows from an identical argument to that of \cite[Theorem 4.7]{ledger2024supercooledstefanproblemtransport}. In particular, suppose that there is a time $t^*$ where there is a non physical jump. We can obtain a minimal solution from initial density $\nu_{ t*-}(dx)$ which must have a physical jump, since solutions with a physical jump exist. We then paste this on to the previous solution up to time $ t^* -$ to give a strictly smaller solution, contradicting the definition of minimality.
\end{proof}

Other properties of minimal solutions are discussed in \cite{minimal} and extend to our setting.

We now replace the condition \hyperlink{A2}{(A2)} with the following:

\begin{itemize}
\item \hypertarget{A4}{(A4)}: 
    $$\limsup_{\lambda\rightarrow 0}\frac{1}{\lambda}\int_0^\infty (1-g(x))e^{-\lambda x}dx=\infty .$$
\end{itemize}

\begin{remark}
 For bounded density $g$, the condition \hyperlink{A4}{(A4)} is strictly weaker than condition \hyperlink{A2}{(A2)}. Suppose that $g$ satisfies conditions \hyperlink{A1}{(A2)} and \hyperlink{A2}{(A2)}. It follows that $$\int_0^\infty g(x)e^{-\lambda x}dx\leq \frac{C(1-e^{- \frac{\lambda(x^*-a)}{C}})+e^{-\lambda x^*}}{\lambda}=-a+O(\lambda)_{\lambda \rightarrow0}.$$ 
 Therefore:
 $$\limsup_{\lambda \rightarrow 0}\frac{1}{\lambda}\int_0^\infty (1-g(x))e^{-\lambda x}dx=\infty.$$
 
It is presently unclear if the condition \hyperlink{A4}{(A4)} can be applied to determine bounds on the particle system behaviour. In particular, the use of our condition \hyperlink{A2}{(A2)} in Lemma \ref{lemma on bounding} required that we could obtain uniform convergence of $\Gamma_N(f)_t$, for some $f$, and that we could bound $\Gamma(f)_t\leq f_t-\epsilon$ for some $\epsilon>0$. We are unable to determine such an $\epsilon$ in general for a density satisfying condition \hyperlink{A4}{(A4)}.
\end{remark}

\begin{theorem}
\label{conda4}
    Suppose that $g$ satisfies conditions \hyperlink{A1}{(A1)} and \hyperlink{A4}{(A4)}. Then a global, minimal solution to \eqref{alternativetomvr} with initial density $g$ exists.
\end{theorem}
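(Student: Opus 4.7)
The plan is to exhibit $\underline{\Lambda}$ as the pointwise supremum of the monotone iteration $\Lambda^{(n+1)} := \Gamma(\Lambda^{(n)})$ started from $\Lambda^{(0)} \equiv 0$, and to rule out finite-time explosion via a Laplace-type identity applied to each iterate rather than to $\underline{\Lambda}$ itself. The key advantage of working with the iterates is that each $\Lambda^{(n)}$ will be continuous, so the identity I derive holds as an exact equality with no jump corrections to bookkeep.

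First I would verify the standard lattice ingredients. The operator $\Gamma$ from \eqref{gammadefn} is monotone on the complete lattice of non-negative, non-decreasing cadlag functions $[0,T] \to [0,\infty]$, because increasing the barrier can only push the hitting times $\tau^{x,f}$ earlier. Starting from the continuous function $\Lambda^{(0)} \equiv 0$, each iterate $\Lambda^{(n)}$ is continuous (hitting times of Brownian motion against a continuous barrier have continuous distributions) and finite on compact intervals (via Gaussian tail bounds on the Brownian supremum combined with boundedness of $g$). The pointwise limit $\underline{\Lambda}_t := \sup_n \Lambda^{(n)}_t$ satisfies $\Gamma(\underline{\Lambda}) = \underline{\Lambda}$ by monotone convergence through $\tau^{x,\Lambda^{(n)}} \downarrow \tau^{x,\underline{\Lambda}}$, and is the minimal fixed point by the usual induction argument $\Lambda^{(n)} \leq \Gamma^n(\Lambda) = \Lambda$ for any competitor $\Lambda$. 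Physicality of $\underline{\Lambda}$ then follows by the paste-a-smaller-solution-after-a-non-physical-jump argument of Theorem \ref{minimalisphysical}.

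For the main quantitative step, I would apply optional stopping at $\tau^{x_0,\Lambda^{(n)}} \wedge T$ to the genuine martingale $M^{x_0}_t = e^{-\lambda(x_0 + B_t) - \lambda^2 t/2}$. Continuity of $\Lambda^{(n)}$ ensures $x_0 + B_{\tau^{x_0}} = \Lambda^{(n)}_{\tau^{x_0}}$ on $\{\tau^{x_0} \leq T\}$, and integrating against $g(x_0)\,dx_0$ produces the clean identity
\[
\int_0^\infty g(x_0) e^{-\lambda x_0}\,dx_0 \;=\; \int_0^T e^{-\lambda \Lambda^{(n)}_s - \lambda^2 s/2}\,d\Lambda^{(n+1)}_s \;+\; e^{-\lambda^2 T/2}\,I^{(n+1)}(T),
\]
with $I^{(n+1)}(T) \geq 0$ the Laplace-weighted surviving mass. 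Using $\Lambda^{(n)} \leq \Lambda^{(n+1)}$ to replace $e^{-\lambda\Lambda^{(n)}_s}$ by $e^{-\lambda\Lambda^{(n+1)}_s}$, and the substitution $u = \Lambda^{(n+1)}_s$ (valid because $\Lambda^{(n+1)}$ is continuous and non-decreasing), the right-hand side is bounded below by $e^{-\lambda^2 T/2}(1 - e^{-\lambda \Lambda^{(n+1)}_T})/\lambda$. Assuming for contradiction that $T^* := \inf\{t : \underline{\Lambda}_t = \infty\} < \infty$ and fixing any $T > T^*$, monotone convergence gives $\Lambda^{(n+1)}_T \to \infty$; sending $n \to \infty$ yields
\[
\int_0^\infty (1 - g(x))e^{-\lambda x}\,dx \;=\; \frac{1}{\lambda} - \int_0^\infty g(x)e^{-\lambda x}\,dx \;\leq\; \frac{1 - e^{-\lambda^2 T/2}}{\lambda} \;\leq\; \frac{\lambda T}{2},
\]
for every $\lambda > 0$. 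Dividing by $\lambda$ and taking $\limsup$ as $\lambda \to 0$ contradicts \hyperlink{A4}{(A4)}, so $T^* = \infty$.

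The main obstacle is precisely why one cannot run this argument on $\underline{\Lambda}$ itself: a direct Itô/PDE analysis of the fixed point produces a jump contribution of the form $\sum_s e^{-\lambda \underline{\Lambda}_{s^-}} \Delta \underline{\Lambda}_s$ that can be of order $1/\lambda$, swamping the $\lambda T/2$ bound we need. Iterating from $0$ sidesteps this entirely: the monotonicity $\Lambda^{(n)} \leq \Lambda^{(n+1)}$ lets us trade the exterior barrier for the interior one before integrating, and the continuity of each $\Lambda^{(n+1)}$ then turns $\int_0^T e^{-\lambda \Lambda^{(n+1)}_s}d\Lambda^{(n+1)}_s$ into the clean telescoping $(1 - e^{-\lambda \Lambda^{(n+1)}_T})/\lambda$. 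Once the iteration is set up and the optional stopping identity is established, the remaining manipulations are routine.
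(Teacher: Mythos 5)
Your proof is correct and takes a genuinely different route from the paper. The paper truncates $g$ to $g_R := g\mathbbm{1}_{[0,R]} \in L^1(\mathbb{R}^+)$, invokes the global existence theory for integrable data to obtain minimal physical solutions $\Lambda^R$, applies Lemma~\ref{Ito} to each $\Lambda^R$ (controlling the jump contribution via physicality, as in the display before the final contradiction in the paper's proof), and then passes to a subsequential M1 limit over $R$ using a compactness argument. Your construction instead iterates $\Gamma$ from the zero barrier: each $\Lambda^{(n)}$ is continuous, so the Laplace identity carries no jump correction at all, and you obtain it by optional stopping of the Wald martingale rather than by an It\^o computation. Both routes funnel into the same inequality $\int_0^\infty (1-g(x))e^{-\lambda x}\,dx \leq (1-e^{-\lambda^2 T/2})/\lambda$ and the same contradiction with \hyperlink{A4}{(A4)} upon dividing by $\lambda$ and sending $\lambda \downarrow 0$. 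The paper's approach re-uses the $L^1$ machinery and the physicality of the truncated minimal solutions; yours is more self-contained and avoids jump bookkeeping entirely, but the step that trades off for this is the claim that $\underline{\Lambda} = \sup_n \Gamma^n(0)$ is a fixed point, which is slightly more delicate than ``monotone convergence'': the pointwise supremum of continuous non-decreasing functions is a priori only lower semi-continuous (hence left-continuous), one must pass to the right-continuous modification to land in the cadlag framework, and the convergence $\tau^{x,\Lambda^{(n)}} \downarrow \tau^{x,\underline\Lambda}$ requires the Brownian crossing property together with the atomlessness of each $\tau^{x,\Lambda^{(n)}}$. These points are standard and the paper itself invokes exactly this Picard characterisation of the minimal solution (in the proof of Proposition~\ref{contunique2}, citing \cite{minimal}), so you are entitled to treat them as routine, but a fully self-contained write-up should flag them rather than folding them into ``monotone convergence.'' The boundedness of the stopped martingale by $1$ (since $x_0 + B_t \geq \Lambda^{(n)}_t \geq 0$ before $\tau^{x_0}$) and the inductive bound $\Lambda^{(n+1)}_T \leq C\Lambda^{(n)}_T + C\sqrt{2T/\pi}$ ensuring finiteness of each iterate are both correct and worth making explicit.
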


This relies upon the following result for the Laplace transform of the initial data.

\begin{lemma}
\label{Ito}
For any solution to \eqref{alternativetomvr} on the interval $[0,t]$, and for any $\lambda>0$, the solution satisfies the following equation:
\begin{align}&\int_{\Lambda_t}^\infty V(t,x)e^{-\lambda x-\frac{\lambda^2t}{2}}dx+\int_0^\infty (1-g(x))e^{-\lambda x}dx-\frac{1}{\lambda}e^{-\lambda \Lambda_t-\frac{\lambda^2t}{2}}\\&=\frac{\lambda}{2}\int_0^te^{-\lambda\Lambda_{s^-}-\frac{\lambda^2s}{2}}ds+\sum_{s\leq t}e^{-\frac{\lambda^2s}{2}}\left(-\int_{\Lambda_{s^-}}^{\Lambda_s} V(s^-,x)e^{-\lambda x}dx-\frac{1}{\lambda}\Delta e^{-\lambda\Lambda_s}\right).\nonumber
\end{align}

Above, we denote $V(t,x)=\tilde V(t,x+\Lambda_t)$, and $V(t^-,x)=\tilde V(t^-,x+\Lambda_{t^-})$.
\end{lemma}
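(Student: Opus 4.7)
The plan is to exploit the probabilistic representation of $V$ via the McKean--Vlasov process \eqref{alternativetomvr} together with the exponential martingale $M_t^y:=\exp(-\lambda(X_t^y+\Lambda_t)-\lambda^2 t/2)$. Since $X_t^y+\Lambda_t=y+B_t$ is simply a Brownian motion started at $y$, the process $(M_t^y)_{t\ge 0}$ is a true martingale for every $\lambda>0$. Applying optional stopping at the bounded time $t\wedge\tau^y$ gives
\begin{equation*}
 \mathbb{E}\bigl[M_t^y\mathbf{1}_{\{\tau^y>t\}}\bigr]+\mathbb{E}\bigl[M_{\tau^y}^y\mathbf{1}_{\{\tau^y\le t\}}\bigr]=e^{-\lambda y}.
\end{equation*}
Integrating this identity against $g(y)\,dy$ and recalling that the density of alive particles $V(t,\cdot)$ is by definition the pushforward of $g(y)\,dy$ under $y\mapsto y+B_t$ restricted to $\{\tau^y>t\}$, the first term becomes $e^{-\lambda^2 t/2}\int_{\Lambda_t}^\infty V(t,x)e^{-\lambda x}dx$, which is the $G(t)$ appearing on the left-hand side of the lemma. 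The constant term $\int_0^\infty g(y)e^{-\lambda y}dy$ equals $\tfrac{1}{\lambda}-\int_0^\infty(1-g(y))e^{-\lambda y}dy$, producing the $(1-g)$ contribution.

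The crux is to identify the absorbed term $\int_0^\infty \mathbb{E}[M_{\tau^y}^y\mathbf{1}_{\{\tau^y\le t\}}]g(y)\,dy$ by separating the continuous and jump parts of $\Lambda$. At any time $s$ where $\Lambda$ is continuous and $\tau^y=s$, the particle hits continuously, so $X_s^y=0$ and $y+B_s=\Lambda_s=\Lambda_{s^-}$; writing $\Lambda_s^c$ for the continuous part, the corresponding contribution is
\begin{equation*}
\int_0^t e^{-\lambda\Lambda_{s^-}-\lambda^2 s/2}\,d\Lambda_s^c .
\end{equation*}
At a jump time $s$ of $\Lambda$, particles with $y+B_s\in(\Lambda_{s^-},\Lambda_s]$ are absorbed; the joint law of $y+B_s$ on the still-alive set at time $s^-$, weighted by $g(y)\,dy$, is exactly $V(s^-,x)\,dx$ on $[\Lambda_{s^-},\infty)$, so this absorption contributes $e^{-\lambda^2 s/2}\int_{\Lambda_{s^-}}^{\Lambda_s}V(s^-,x)e^{-\lambda x}dx$. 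This is where the physicality condition \eqref{alternativephysical} is used, to guarantee that the mass absorbed in $[\Lambda_{s^-},\Lambda_s]$ is well-defined and equals the size of the jump.

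Finally, the result reduces to the deterministic Stieltjes identity
\begin{equation*}
\frac{1}{\lambda}\bigl(1-e^{-\lambda\Lambda_t-\lambda^2 t/2}\bigr)=\int_0^t e^{-\lambda\Lambda_{s^-}-\lambda^2 s/2}\,d\Lambda_s^c+\frac{\lambda}{2}\int_0^t e^{-\lambda\Lambda_{s^-}-\lambda^2 s/2}\,ds-\frac{1}{\lambda}\sum_{s\le t}e^{-\lambda^2 s/2}\Delta e^{-\lambda\Lambda_s},
\end{equation*}
which is obtained by integrating $d(e^{-\lambda\Lambda_s-\lambda^2 s/2})$ using the decomposition of $\Lambda$ into its continuous and jump parts and dividing by $-\lambda$. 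Substituting this identity into the rearrangement of the optional-stopping equation cancels the $\int_0^t e^{-\lambda\Lambda_{s^-}-\lambda^2 s/2}d\Lambda_s^c$ contribution coming from the continuous absorptions and leaves precisely the claimed right-hand side.

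The main obstacle is the careful bookkeeping of the jump contributions: one needs the physical jump rule to guarantee that the stopped martingale contribution at a discontinuity of $\Lambda$ is exactly captured by $\int_{\Lambda_{s^-}}^{\Lambda_s}V(s^-,x)e^{-\lambda x}dx$, and one needs the Lebesgue decomposition of $\Lambda$ to write the Stieltjes identity above correctly. Once this is in place, the rest of the argument reduces to rearrangement.
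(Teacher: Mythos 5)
Your proposal is correct and takes essentially the same route as the paper: apply Itô/optional stopping to the exponential martingale $e^{-\lambda(y+B_s)-\lambda^2 s/2}$, integrate against the initial measure, decompose the absorbed contribution into continuous and jump parts, and close the computation with integration by parts on $e^{-\lambda\Lambda_s-\lambda^2 s/2}$. The paper works with the equivalent Poisson-particle formulation \eqref{aligned:MVR} and uses the compensator martingale $\sum_i\mathbbm{1}_{\{\tau_i\leq t\}}-\Lambda_t$ to convert the random absorption integral into a Stieltjes integral against $d\Lambda_s$ before correcting for jumps, whereas you integrate $g(y)\,dy$ directly and split $d\Lambda$ into $d\Lambda^c$ plus jumps up front; this is bookkeeping, not a different argument. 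One small correction: physicality is \emph{not} used here and the lemma is stated for arbitrary solutions of \eqref{alternativetomvr}. The identity $\Delta\Lambda_s=\int_{\Lambda_{s^-}}^{\Lambda_s}V(s^-,x)\,dx$ that you invoke at jump times follows from the solution property $\Lambda_t=\int_0^\infty\mathbb{P}(\tau^y\leq t)g(y)\,dy$ alone, not from the physical jump condition \eqref{alternativephysical}; the latter only constrains $\Delta\Lambda_s$ to be the \emph{smallest} admissible value and is irrelevant to this Laplace-transform computation.
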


This follows from elementary calculations, which are given in full in Section \ref{itoformula} of the appendix.

To prove the result of Theorem \ref{conda4} we consider the value of solutions to \eqref{alternativetomvr} for initial densities $g(x)\mathbbm{1}_{[0,R]}(x)$. Applying condition \hyperlink{A4}{(A4)} to Lemma \ref{Ito}, we obtain a uniform bound on these solutions. Taking a limit over $R$, we are able to deduce existence of a minimal solution to \eqref{alternativetomvr}. We include the proof in full as it motivates the condition given in \hyperlink{A4}{(A4)}.

\begin{proof}

For $R>0$ we define a density $g_R$ by $g_R(x):=g(x)\mathbbm{1}_{[0,R]}(x)$ for $x\geq 0$. Let $\Lambda^R$ be the minimal solution to \eqref{alternativetomvr} with initial density $g_R$. Consider an arbitrary $T>0$ and suppose that there exists a $\tilde C$ such that $$\limsup_{R\rightarrow\infty}\Lambda_T^{R}\leq \tilde C.$$ 

Extending solutions to $[-1,T+1]$ as discussed in Section \ref{convergencestatementsection}, the set $\{\Lambda_t^{R} \textrm{ for }{R\geq 0}\}$ is compact. Therefore, for any sequences $R_m$ such that $R_m\uparrow\infty$ as $m\rightarrow\infty$, there is a subsequence $R_{m_k}$ such that $\Lambda^{R_{m_k}}$ converges under the Skorokhod M1 topology as $k\rightarrow\infty$. From a slight modification to the arguments of \cite[Proposition 2.1]{minimal}, we obtain that $\Lambda^{R_{m_k}}_t$
converges under the M1 Skorokhod topology to some process $\Lambda_t$ which satisfies
\begin{align}\label{a4sol}\Lambda_t=\int_0^\infty \mathbb{P}(\inf_{s\leq t}(x+B_s-\Lambda_s)\leq 0) g(x)dx.\end{align}

 Such a process $\Lambda_t$ is a solution to \eqref{alternativetomvr}, and therefore a minimal solution exists.

To determine the existence of a bound on $\Lambda_T^{\tilde g_R}$ over $R$, we suppose for a contradiction that there is some sequence $R_k$ such that $\Lambda_T^{ R_k}\rightarrow\infty$.

Since $g_{R_k}\in L^1(\mathbb{R}^+)$, there is a global, physical solution to \eqref{alternativetomvr} with initial density $g_{R_k}$. From Lemma \ref{Ito}, taking the limit $t\rightarrow\infty$, it follows that
    \begin{align*}
        &\int_0^\infty (1-g_{R_k}(x))e^{-\lambda x}dx\\
        &=\frac{\lambda}{2}\int_0^\infty e^{-\lambda \Lambda^{R_k}_{t^-}-\frac{\lambda^2t}{2}}dt+\sum_{\Delta \Lambda^{R_k}_t>0}e^{-\frac{\lambda^2t}{2}}\left(\int_{\Lambda^{R_k}_{t^-}}^{\Lambda^{R_k}_t}\left(-e^{-\lambda x}\right) V(t^-,x)dx-\frac{1}{\lambda}\Delta e^{-\lambda \Lambda^{R_k}_t}\right).\end{align*}

        By the physical jump condition, $\int_{\Lambda^{R_k}_{t^-}}^{\Lambda^{R_k}_{t^-}+x} V(t^-,y)dy\geq x$ for $x\leq \Delta \Lambda^{R_k}_t$. Therefore, since $e^{-\lambda x}$ is decreasing, each term in the sum is bounded above by $$-\int_{\Lambda^{R_k}_{t^-}}^{\Lambda^{R_k}_t} e^{-\lambda x}dx-\frac{1}{\lambda}\Delta e^{-\lambda \Lambda^{R_k}_t}=0.$$
        It follows that
         \begin{align*}
        &\int_0^\infty (1-g_{R_k}(x))e^{-\lambda x}dx
        \leq \frac{\lambda}{2}\int_0^\infty e^{-\lambda \Lambda^{R_k}_{t^-}-\frac{\lambda^2t}{2}}dt.\end{align*}
        
For arbitrary $\lambda>0$, consider the process $h(\lambda,R_k)_t=e^{-\lambda \Lambda^{R_k}_t}$. This is bounded between 0 and 1, and is monotone decreasing. For any integer $K$, we extend the process $(h(\lambda,R_k)_t)_{t\in [0,K]}$ to a process on $[-1,K+1]$ in the same manner as $\Lambda$ in Section \ref{convergencestatementsection}. Under this extension $\{h(\lambda,R_k)_t \textrm{ for }k\in\mathbb{N}\}$ is  compact under the M1 Skorokhod topology on $\tilde D([-1,K+1])$, and the sequence $h(\lambda,R_k)_t$ converges on a subsequence to some function $h^K(\lambda)_t$. This can repeated for each $K$, and applying a diagonal argument we may obtain a further subsequence of $h(\lambda,R_k)_t$ which converges Lebesgue almost everywhere to a cadlag function $h(\lambda)_t$. This function is bounded above by $1$ and by assumption $h(\Lambda)_t=0$ for $t\geq T$.

From the Dominated Convergence Theorem, it follows that

$$ \int_0^\infty ( 1-g(x))e^{-\lambda x}dx\leq \frac{\lambda}{2} \int_0^T h(\lambda)_te^{-\frac{\lambda^2t}{2}}dt.$$

Bounding $h(\lambda)_t$ above by 1, it follows that 
$$\int_0^\infty (1-g(x))e^{-\lambda x}dx\leq \frac{\left(1-e^{-\frac{\lambda^2T}{2}}\right)}{\lambda}.$$

Dividing both sides by $\lambda>0$, and then taking the limsup over $\lambda \downarrow 0$, we determine that $\frac{T}{2}\geq \infty$ which cannot hold. Therefore, the assumption that $\Lambda_T^{R_k}\rightarrow\infty$ is impossible. 
\end{proof}

\begin{remark}\label{rem:speeds}
Unlike for a density $g$ which satisfies condition \hyperlink{A2}{(A2)}, condition \hyperlink{A4}{(A4)} allows for solutions to \eqref{alternativetomvr} which grow with no linear bound.
For example, we may consider $h(x)=1$ on $[0,1]$ and $\frac{-\beta}{x^{\beta+1}}$ on $[1,\infty]$, for some $0<\beta<1$. We then set $g(x)=1-h(x).$ It holds that 
$$\int_0^\infty g(x)e^{-\lambda x}dx=\frac{1}{\lambda}-\Gamma(1-\beta)\lambda^{\beta}+O(\lambda)\textrm{ as }\lambda\rightarrow 0.$$

Here $\Gamma$ denotes the Gamma function, rather than the operator on $\tilde D$ discussed earlier.

The density $g$ therefore satisfies the condition \hyperlink{A4}{(A4)}, and therefore a global, minimal solution to the McKean-Vlasov type system \eqref{alternativetomvr} with initial density $g$, $\underline\Lambda$, exists. Arguing as in the proof of Theorem \ref{conda4}, we may determine that 
$$\int_0^\infty e^{-\lambda \underline{\Lambda}_t-\lambda^2 t/2}dt=\Gamma(1-\beta)\lambda^{\beta-1}+O(1)_{\lambda\rightarrow 0}.$$

By comparing the above integral to that of $e^{-\lambda t^{\zeta}}$, we determine that 
$$\limsup_{t\rightarrow\infty}\frac{\underline{\Lambda}_t}{t^{\frac{1}{1-\beta}}}>0$$  In particular, $\underline\Lambda$ may not be bounded by a linear function. It is also easily determined that $$\liminf_{t\rightarrow\infty}\frac{\underline{\Lambda}_t}{t^{\frac{1}{1-\beta}}}<\infty$$ and hence this barrier cannot grow faster than polynomially.
\end{remark}

\begin{remark}
For a global in time solution to \eqref{alternativetomvr} with finitely many discontinuities to exist, the condition \hyperlink{A4}{(A4)} must be satisfied by the density $g$. Considering the expression given in Lemma \ref{Ito}, and bounding the sum term above, we obtain that

       \begin{align*}
        &\frac{1}{\lambda }\int_0^\infty (1-g(x))e^{-\lambda x}dx\geq \frac{1}{2}\int_0^\infty e^{-\lambda \Lambda_{t}-\frac{\lambda^2t}{2}}dt-\sum_{\Delta\Lambda_t>0}(\Delta\Lambda_t)^2-{}O_{\lambda \downarrow 0}(\lambda).\end{align*}
   
Applying Fatou's lemma, it is simple to see the right hand side grows to infinity as $\lambda\downarrow 0$. 

\end{remark}

\section{Asymptotic barrier speed with initial discontinuities}
\label{asymptoticsection}
The condition \hyperlink{A2}{(A2)} on $g$ allows for finite time blow-ups. However, under certain conditions, the asymptotic barrier movement is unaffected by these blow-ups, and depends only on the behaviour of corresponding classical solutions to the supercooled Stefan problem \eqref{pde}.

\begin{theorem}
\label{speed}
    Suppose that the density $g$ satisfies conditions \hyperlink{A1}{(A1)}, \hyperlink{A2}{(A2)}, and is such that $g(x)\leq 1$ for $x\geq x^*$.
    
    Suppose further that $1-g(x)\in L^1(\mathbb{R}^+)$, and set $\frac{2}{K}=\int_0^\infty 1-g(x)dx$.
    Then for any physical solution to \eqref{alternativetomvr}, $\Lambda$, 
    $$ \frac{\Lambda_t}{t}\rightarrow K \textrm{ as }t\rightarrow\infty.$$ 
    
    Suppose instead that $g(x)\rightarrow a<1$ as $x\rightarrow\infty$. Then for any physical solution to \eqref{alternativetomvr}, $\Lambda$,
    $$\frac{\Lambda_t}{\sqrt{t}}\rightarrow K_a \textrm{ as } t\rightarrow\infty.$$
\end{theorem}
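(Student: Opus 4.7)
The plan is to reduce both claims to the long-time asymptotics of \emph{classical} solutions of the supercooled Stefan PDE (\ref{pde}), by showing that after some finite transient time $T_0$ the solution $\Lambda$ becomes continuous and can be identified with the classical solution of a Stefan problem whose initial data inherits the tail behaviour of $g$.

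First, I would establish that there exists an almost surely finite time $T_0$ past which $\Lambda_t > x^*$ and no further physical jumps occur. Since $g \leq 1$ on $[x^*, \infty)$, adapting Proposition \ref{6.1} to the re-centred problem gives $\tilde V(t, \cdot + \Lambda_t) \leq 1$ uniformly near the origin, and the smoothing effect of Brownian motion strictly drives this density below $1$ on a neighbourhood of $0$ once $t$ is sufficiently large, ruling out the physical jump condition (\ref{phys3}) from firing. Defining $g^\dagger(x) := \tilde V(T_0, x + \Lambda_{T_0})$, the density $g^\dagger$ satisfies the weak-feedback condition (W), so by Proposition \ref{minimal} the restarted McKean-Vlasov equation (\ref{alternativetomvr}) with initial data $g^\dagger$ has a unique physical solution $\hat\Lambda^\dagger$. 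A restart argument identifies $\Lambda_{T_0+t} = \Lambda_{T_0} + \hat\Lambda^\dagger_t$, and by Corollary \ref{holdercont} together with standard regularity $\hat\Lambda^\dagger$ agrees with the classical solution of (\ref{pde}) with initial data $g^\dagger$.

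Next, I would verify that $g^\dagger$ inherits the far-field behaviour of $g$. A direct differentiation of $\int_{\Lambda_t}^\infty (1 - V(t, x))dx$ using the PDE and the Stefan condition shows this quantity is conserved and equals $\int_0^\infty (1 - g(x))dx$ for all $t \geq 0$. Hence, if $1 - g \in L^1$, then $1 - g^\dagger \in L^1$ with the same integral, so the constant $K$ is preserved under the restart. In the case $g(x) \to a$, by comparing $V$ to the unobstructed heat evolution of $g$ (equivalently, noting that particles starting at $x \gg \Lambda_{T_0} + \sqrt{T_0}$ have neither been absorbed nor diffused significantly by time $T_0$), one obtains $g^\dagger(x) \to a$ as $x \to \infty$. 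I would then invoke the classical long-time asymptotic results of \cite{Ricci_Weiqing_1991, 4, chadam}: when the initial data tends to $a < 1$, the free boundary of the classical Stefan problem satisfies $\hat\Lambda^\dagger_t/\sqrt{t} \to K_a$; when the $L^1$ condition holds, $\hat\Lambda^\dagger_t/t \to K$ with $K$ determined by $\int(1-g^\dagger)$. Since the additive shift $\Lambda_{T_0}$ is $o(\sqrt{t})$, resp.\ $o(t)$, the conclusion transfers to $\Lambda_t$.

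\textbf{Main obstacle.} The delicate step is the first: ruling out infinitely many jumps so that we enter the weak-feedback regime. Proposition \ref{prop:infinitejumps} exhibits densities (satisfying the weaker condition (A3)) for which infinitely many jumps persist, so the assumption $g \leq 1$ past $x^*$ is doing essential work. To make this rigorous, I would couple $\Lambda$ to an auxiliary process with initial density $\min(g,1)$ on $[0,x^*]$ extended by $g$ on $[x^*,\infty)$, control the re-centred density at the boundary via a monotonicity argument in the spirit of Lemma \ref{Alemmatoboundathing}, and exploit the diffusive smoothing to drive the density at the moving boundary strictly below $1$ within finite time.
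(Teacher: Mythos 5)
Your high-level strategy matches the paper's exactly: find a finite time $\tau^*$ past which the re-centred density drops strictly below $1$, so the restarted system lies in the weak-feedback regime and agrees with the classical Stefan solution; verify the restarted initial data inherits the asymptotic behaviour of $g$; then invoke \cite{Ricci_Weiqing_1991}. The difference is in how you prove the key identity $\int_0^\infty(1-\tilde V(\tau^*,x))\,dx=\int_0^\infty(1-g(x))\,dx$. You propose ``direct differentiation of $\int_{\Lambda_t}^\infty(1-V(t,x))\,dx$ using the PDE and the Stefan condition.'' The paper instead compares the time-$0$ and time-$\tau^*$ Laplace transforms via the identity in Lemma~\ref{Ito}, shows the jump contributions and the $\int_0^{\tau^*}e^{-\lambda\Lambda_{s}-\lambda^2 s/2}ds$ term are both $O(\lambda)$, and lets $\lambda\downarrow 0$.

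The gap in your route is that ``direct differentiation using the PDE and the Stefan condition'' only handles the intervals of continuity of $\Lambda$, and says nothing about what happens at jump times --- which is precisely what this theorem is about (blow-ups before $\tau^*$). The quantity $\int_{\Lambda_t}^\infty(1-V(t,x))\,dx$ \emph{is} in fact conserved across a physical jump, because physicality forces $\int_{\Lambda_{t^-}}^{\Lambda_t}V(t^-,x)\,dx=\Delta\Lambda_t$, so the jump contributes $-\Delta\Lambda_t+\int_{\Lambda_{t^-}}^{\Lambda_t}V(t^-,x)\,dx=0$; but you need to argue that equality (not just the defining inequality $\int_{\Lambda_{t^-}}^{\Lambda_{t^-}+x}V(t^-,y)\,dy\ge x$ for $x<\Delta\Lambda_t$) and you never mention it. Two further obstacles for the direct argument that the Laplace transform sidesteps: (i) your differentiation requires $\partial_x V(t,x)\to 0$ as $x\to\infty$, which is not a priori clear since $V$ is not a normalisable density here; (ii) you need $1-V(t,\cdot)\in L^1$ for intermediate times $t\in(0,\tau^*)$ before the conservation law can even be stated, which is what you are trying to prove. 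The paper's use of the exponential weight $e^{-\lambda x}$ makes everything absolutely convergent for free and yields the jump estimate $\sum(\Delta\Lambda_t)$-type error of size $C\lambda\Lambda_{\tau^*}^2$ explicitly; your route would need to patch all three points to be airtight.
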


\begin{proof}
Let $\Lambda$ be a solution of \eqref{alternativetomvr}. Recall from Proposition \ref{6.1} that the measure given by the non-absorbed, re-centred particles has a density we denote $\tilde V(t,x)$. From the conditions on the density $g$, we may determine that there is some time $\tau^*$ after which $\tilde V(\tau^*,x)$ is strictly less than 1 and is continuous in $x$. This can be done by bounding the density above by ignoring the killing of particles, and by arguing as in \cite[Proposition 3.4]{3} to determine continuity of the density in $x$.
We consider restarting the system \eqref{alternativetomvr} with initial density $\tilde V({\tau^*},.)$. From Proposition \ref{minimal}, this has a unique solution, which therefore agrees with the unique solution to the supercooled Stefan problem \eqref{pde} with initial data $\tilde V({\tau^*},.)$. We denote this $\Lambda_t^{{\tau^*}}$. It follows that $\Lambda_{t+\tau^*}=\Lambda_t^{{\tau^*}}$, and therefore $\Lambda$ has identical asymptotic behaviour to $\Lambda^{{\tau^*}}$.

In the first case we may determine that $$\int_0^\infty 1-\tilde V({\tau^*},x)dx=\int_0^\infty 1-g(x)dx.$$

We rearrange the formula given in Lemma \ref{Ito}

to obtain that:
    \begin{align*}
        &\left|\int_0^\infty (1-g(x))e^{-\lambda x}dx-\int_{\Lambda_{\tau^*}}^\infty(1- V(\tau^*,x))e^{-\lambda x-\frac{\lambda^2\tau^*}{2}}dx\right|\\
        &=\left|\frac{\lambda}{2}\int_0^{\tau^*} e^{-\lambda \Lambda_{t^-}-\frac{\lambda^2t}{2}}dt+\sum_{\Delta \Lambda_t>0,t\leq \tau^*}e^{-\frac{\lambda^2t}{2}}\left(\int_{\Lambda_{t^-}}^{\Lambda_t}\left(-e^{-\lambda x}\right) V(t^-,x)dx-\frac{1}{\lambda}\Delta e^{-\lambda \Lambda_t}\right)\right|,\\
        &\leq \frac{\lambda \tau^*}{2}+\sum_{\Delta \Lambda_t>0,t\leq \tau^*}\int_{\Lambda_{t^-}}^{\Lambda_t}V(t^-,x)\left((e^{-\lambda\Lambda_{t^-}}-e^{-\lambda x})\vee (e^{-\lambda x}-e^{-\lambda \Lambda_t})\right)dx,\\
        &\leq \frac{\lambda \tau^*}{2}+C\lambda(\Lambda_{\tau^*})^2. \end{align*}
In the first inequality we have bounded $$(\Lambda_{t}-\Lambda_{t^-})e^{-\lambda \Lambda_{t}}\leq \frac{e^{-\lambda \Lambda_{t^-}}-e^{-\lambda \Lambda_t}}{\lambda}\leq (\Lambda_{t}-\Lambda_{t^-})e^{-\lambda \Lambda_{t^-}},$$
and recalled that $$\int_{\Lambda_{t^-}}^{\Lambda_t}V(t^-,x)dx=\Lambda_t-\Lambda_{t^-}.$$
In the final inequality we have used that $V(t^-,x)$ is bounded above by $C$, which follows from Proposition \ref{6.1}.

Sending $\lambda\downarrow 0$, the right hand side in the above inequality converges to 0. Since we assume $1-g(x)\in L^1(\mathbb{R}^+)$, and since $V(\tau^*,x)\leq 1$, we deduce that $1-V(\tau^*,x)\in L^1(\mathbb{R}^+)$, and that $\int_0^\infty 1-\tilde V(\tau^*,x)dx=2/K$.

From \cite[Theorem 5.3]{Ricci_Weiqing_1991} it then follows that 
$$\frac{\Lambda_t}{t}=\frac{\Lambda^{{\tau^*}}_{t-\tau^*}}{t}\rightarrow K \textrm{ as }t\rightarrow \infty.$$
In the second case, it is simple to verify that $V({\tau^*},x)\rightarrow a$ as $x\rightarrow\infty$. Applying \cite[Theorem 6.1]{Ricci_Weiqing_1991} we determine:
$$\frac{\Lambda_{t}}{\sqrt{t}}=\frac{\Lambda^{{\tau^*}}_{t-\tau^*}}{\sqrt{t}}\rightarrow K_a \textrm{ as }t\rightarrow\infty.$$
\end{proof}

\begin{remark}
The results of \cite[Theorem 6.1]{Ricci_Weiqing_1991} may be extended to other types of initial data allowing for different types of barrier speeds. In particular, for density $g$ which satisfies condition \hyperlink{A4}{(A4)} it should be possible to have $\Lambda_t/t^{\alpha}\rightarrow 1$ for any $\alpha\geq1/2$. However, to apply this for different barrier speeds we must identify initial data that corresponds to a given asymptotic barrier speed. 
From \cite[Section 4]{4}, we may identify that global in time solutions of the PDE
\eqref{pde} must satisfy:
$$\frac{\lambda}{2} \int_0^\infty e^{-\lambda \Lambda_t-\frac{\lambda^2 t}{2}}dt=\int_0^\infty (1-g(x))e^{-\lambda x}dx\quad\forall \lambda>0.$$
If this uniquely determined a continuous function $\Lambda$, then we could identify initial data giving a certain asymptotic speed from the Laplace transform of the data. We conjecture that such a result holds, however are presently unable to determine this.

\end{remark}

\section{Uniqueness}
\label{uniquenessappendix}
In this section we discuss some of the conditions for uniqueness of physical solutions to \eqref{alternativetomvr}.
\begin{proposition}
\label{uniquecont}
    Suppose that $g$ is bounded, measurable, and that a minimal solution, $\underline{\Lambda}$, to \eqref{alternativetomvr} exists on some time interval $[0,T)$. Suppose further that at any time $T'$ where $\underline{\Lambda}$ is discontinuous, there exists a positive $\epsilon_{T'}>0$ such that the minimal solution is continuous on $(T',T'+\epsilon_{T'})$. Then, the physical solution to \eqref{alternativetomvr} is unique.  
\end{proposition}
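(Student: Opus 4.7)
The plan is to combine minimality with the physical jump condition to show that any physical solution $\Lambda$ must coincide with the minimal solution $\underline{\Lambda}$, by looking at the first disagreement time and extending agreement past it through a local application of the weak-feedback uniqueness result (Proposition \ref{minimal}).

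By the monotonicity of the operator $\Gamma$ defined in \eqref{gammadefn} together with minimality, one has $\underline{\Lambda}_t \leq \Lambda_t$ for every $t \in [0,T)$. Define the first disagreement time
\begin{equation*}
\tau := \inf\{t \in [0,T) : \underline{\Lambda}_t < \Lambda_t\},
\end{equation*}
with $\tau = T$ if this set is empty. Suppose for contradiction that $\tau < T$. On $[0,\tau)$ the two solutions coincide, hence the underlying particle trajectories and the measures $\nu^{\underline{\Lambda}}_t$, $\nu^{\Lambda}_t$ of non-absorbed particles agree for $t<\tau$. Taking left limits gives $\nu^{\underline{\Lambda}}_{\tau^-} = \nu^{\Lambda}_{\tau^-}$, and since the physicality condition \eqref{phys3} determines the jump size of a physical solution at $\tau$ purely from this left-limit measure, I obtain $\Delta\underline{\Lambda}_\tau = \Delta\Lambda_\tau$. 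Consequently $\underline{\Lambda}_\tau = \Lambda_\tau$ and, by the identical killing of particles in $[\underline{\Lambda}_{\tau^-}, \underline{\Lambda}_\tau]$, also $\nu^{\underline{\Lambda}}_\tau = \nu^{\Lambda}_\tau$.

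To contradict the definition of $\tau$, I would extend the agreement to a nontrivial interval $[\tau, \tau + \epsilon')$. By the standing hypothesis, either $\tau$ is a continuity point of $\underline{\Lambda}$ (in which case one shrinks a right-neighborhood to avoid any subsequent jump) or $\underline{\Lambda}$ is continuous on some $(\tau, \tau + \epsilon_\tau)$. In either case, continuity of $\underline{\Lambda}$ just after $\tau$, combined with \eqref{phys3}, forces the density $\tilde V(\tau, \cdot)$ of $\nu^{\underline{\Lambda}}_\tau$ (relative to the recentred barrier) to satisfy $\limsup_{x\downarrow 0} \tilde V(\tau, x) < 1$; otherwise the physical jump condition would force an instantaneous further jump at $\tau^+$. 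Combined with the global bound $\tilde V \leq 1$ from Proposition \ref{6.1}, the problem restarted at time $\tau$ with initial density $\tilde V(\tau,\cdot)$ satisfies the weak-feedback condition \hyperlink{W}{(W)}. Applying Proposition \ref{minimal} to this restart then yields uniqueness on $[\tau, \tau+\epsilon')$ for some $\epsilon' > 0$, contradicting $\tau < T$. Hence $\tau = T$ and $\Lambda = \underline{\Lambda}$.

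The main obstacle I expect is the quantitative extraction of the weak-feedback bound $\sup_{x \leq \epsilon_g} \tilde V(\tau, x) < 1$ from the purely qualitative hypothesis that $\underline{\Lambda}$ is continuous on a right-neighborhood of $\tau$. One must argue that if $\tilde V(\tau, x) \to 1$ as $x \downarrow 0$, then continuity of $\underline{\Lambda}$ on $(\tau, \tau + \epsilon_\tau)$ cannot hold, which likely goes through a short-time regularity argument on $\tilde V$ in the spirit of \cite[Lemma 2.1]{2} and \cite[Proposition 3.4]{3}. A secondary subtlety is that the restart lemma must be applied with a strictly positive $\epsilon_g$ that does not depend on $\Lambda$, so one needs to check that the weak-feedback constant extracted from $\tilde V(\tau,\cdot)$ is intrinsic to the minimal solution.
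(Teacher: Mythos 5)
Your opening moves — defining the first disagreement time $\tau$, noting $\underline{\Lambda}\le\Lambda$ by minimality, and arguing via the physicality condition \eqref{phys3} that agreement on $[0,\tau)$ forces $\Delta\underline{\Lambda}_\tau=\Delta\Lambda_\tau$ and hence $\underline{\Lambda}_\tau=\Lambda_\tau$ — mirror the paper's reduction to $t^*=0$ after restarting. But the extension past $\tau$ is where your plan breaks down, and the obstruction is more fundamental than the one you flag at the end.

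You propose to invoke Proposition~\ref{minimal} by verifying that the restart density $\tilde V(\tau,\cdot)$ satisfies condition~\hyperlink{W}{(W)}. Condition~\hyperlink{W}{(W)} requires $\tilde V(\tau,x)\le 1$ for \emph{all} $x$, not merely near $0$. Proposition~\ref{uniquecont}, however, assumes only that $g$ is bounded — it is not assumed that $g\le 1$ — and Proposition~\ref{6.1} then only guarantees $\tilde V(\tau,\cdot)\le\tilde C\Phi(\cdot)$ where $\tilde C$ is the bound on $g$. If $g$ exceeds $1$ on a set of positive measure, the restart density can exceed $1$ as well, and Proposition~\ref{minimal} is simply inapplicable. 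Your acknowledged worry about extracting $\limsup_{x\downarrow 0}\tilde V(\tau,x)<1$ from mere continuity is also a real gap (continuity of $\underline{\Lambda}$ on a right-neighbourhood does not obviously forbid $\tilde V(\tau,x)\to 1$ as $x\downarrow 0$), but the global $\le 1$ requirement is the deeper obstruction, and it is what makes this reduction route essentially confined to the special case $g\le 1$ (cf.\ Corollary~\ref{contunique}).

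The paper instead uses the Laplace-transform identity of Lemma~\ref{Ito}, which holds for any bounded $g$. After reducing to $t^*=0$, one subtracts the Lemma~\ref{Ito} identities for $\Lambda$ and $\underline{\Lambda}$ and multiplies by $e^{\lambda\underline{\Lambda}_{t_2}+\lambda^2 t_2/2}$. The left side tends to $0$ as $\lambda\to\infty$ (via the density bound of Proposition~\ref{6.1}), while on the right the integral term is non-negative because $\underline{\Lambda}\le\Lambda$ and the jump-sum term is non-negative because each summand is non-positive by the physicality condition. Fatou then forces the integrand to vanish a.e., but on any interval $(t_1,t_2)$ where $\Lambda_t>\underline{\Lambda}_t+\epsilon'$ the integrand blows up like $\lambda\,e^{\lambda^2(t_2-s)/2}$, a contradiction. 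This analytic argument sidesteps weak feedback entirely and is what gives the proposition its stated generality. To repair your proof you would need either to impose $g\le 1$ (thereby proving only Corollary~\ref{contunique}) or to replace the appeal to Proposition~\ref{minimal} with a comparison-of-transforms step along the lines of Lemma~\ref{Ito}.
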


This is a simple consequence of the Laplace transform representation result of Lemma \ref{Ito}, which follows by comparing the formula given in Lemma \ref{Ito} for a solution $\Lambda_t$, to the formula for the minimal solution, and noting that $e^{-\lambda \underline{\Lambda}_t}-e^{-\lambda\Lambda_t}$ is non-negative.
The details are given in Section \ref{uniquenessappendix2}.

\begin{corollary}
\label{contunique}
Suppose that $g\leq 1$. Then the physical solution to \eqref{alternativetomvr} is continuous on $(0,\infty)$, and is unique.
\end{corollary}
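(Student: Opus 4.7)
The plan is to show that the minimal solution $\underline\Lambda$ to~\eqref{alternativetomvr} is continuous on $(0,\infty)$ and then invoke Proposition~\ref{uniquecont}. Existence of $\underline\Lambda$ follows from the monotonicity of $\Gamma$ and Tarski's theorem, and Theorem~\ref{minimalisphysical} provides its physicality. Once continuity on $(0,\infty)$ is established, the hypothesis of Proposition~\ref{uniquecont} is trivially satisfied---the only possible discontinuity of $\underline\Lambda$ is at $t=0$, and continuity on some $(0,\varepsilon)$ is then automatic. Uniqueness of the physical solution follows, and in particular any physical solution coincides with $\underline\Lambda$ and is itself continuous on $(0,\infty)$.

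For the continuity step I would fix $t>0$ and suppose, for contradiction, that $\Delta\underline\Lambda_t>0$. By Proposition~\ref{6.1} (applied with $\tilde C = 1$, using $g\le 1$), the measure $\nu_{t^-}$ has a density $\tilde V(t^-,\cdot)$ pointwise bounded by $1$. The physical jump condition~\eqref{alternativephysical} forces
$$\int_0^x \tilde V(t^-,y)\,dy \;\ge\; x \qquad \text{for every } x\in[0,\Delta\underline\Lambda_t],$$
which combined with $\tilde V(t^-,\cdot)\le 1$ forces $\tilde V(t^-,y)=1$ for Lebesgue-almost-every $y\in(0,\Delta\underline\Lambda_t)$. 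This is contradicted by the Dirichlet-type vanishing $\lim_{y\downarrow 0}\tilde V(t^-,y)=0$, which reflects the killing of particles upon hitting $\underline\Lambda$: for any $\varepsilon\in(0,t)$ one dominates $\tilde V(t^-,y)$ for small $y$ by the density at time $\varepsilon$ of a Brownian motion on the half-line, killed at the origin and started from an initial profile bounded by $1$; a direct computation with the reflected heat kernel gives $\tilde V(t^-,y)\le y\sqrt{2/(\pi\varepsilon)}$ for all sufficiently small $y$, yielding the boundary decay.

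The main obstacle I foresee is justifying the above domination cleanly in the presence of earlier jumps of $\underline\Lambda$ on $(0,t)$, since then the killing domain of the particles varies in time. This is handled by monotonicity: earlier jumps of $\underline\Lambda$ only remove mass from $\nu$, so $\tilde V(t^-,\cdot)$ is pointwise dominated by the density of the auxiliary system whose barrier is frozen at $\underline\Lambda_{t-\varepsilon}$ on the window $[t-\varepsilon,t^-]$; the comparison then reduces after translating the origin to $\underline\Lambda_{t-\varepsilon}$ to the standard killed heat kernel on $\mathbb{R}^+$.
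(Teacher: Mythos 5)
Your route is valid and parallels the paper's argument closely: both hinge on the observation that, for $t>0$, killing at the free boundary forces the re-centred density $\tilde V(t,\cdot)$ to dip below $1$ near the origin, and both close the argument with Proposition~\ref{uniquecont}. The paper packages this observation as ``$\tilde V(t,\cdot)$ satisfies condition~\hyperlink{W}{(W)} for $t>0$'' and then invokes Corollary~\ref{holdercont} to get $1/2$-H\"older continuity (hence no jumps) on $(0,\infty)$, whereas you extract the contradiction with a jump directly from the physical jump condition~\eqref{alternativephysical} and the reflected heat kernel. Your way is more elementary and bypasses the H\"older machinery; the paper's way gives H\"older regularity as a byproduct. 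In both cases the underlying technical content is the same killed heat-kernel estimate.

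One detail needs fixing. When you freeze the barrier at $\underline\Lambda_{t-\varepsilon}$ on $[t-\varepsilon,t)$ and translate the origin there, the domination you obtain is
\[
\tilde V(t^-,y)\;\le\;\operatorname{erf}\!\left(\frac{y+\delta_\varepsilon}{\sqrt{2\varepsilon}}\right),
\qquad \delta_\varepsilon:=\underline\Lambda_{t^-}-\underline\Lambda_{t-\varepsilon}\ge 0,
\]
not $\tilde V(t^-,y)\le y\sqrt{2/(\pi\varepsilon)}$: the barrier increment $\delta_\varepsilon$ shifts the argument of the kernel. Consequently the asserted Dirichlet vanishing $\lim_{y\downarrow 0}\tilde V(t^-,y)=0$ does not follow from this estimate unless $\delta_\varepsilon=o(\sqrt\varepsilon)$, which is a regularity you cannot assume at this stage. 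The conclusion survives, however, because you need much less than the limit: since $\operatorname{erf}(z)<1$ for every finite $z$, the bound already yields $\tilde V(t^-,y)<1$ for every $y>0$, which contradicts the physical jump condition's requirement that $\tilde V(t^-,\cdot)=1$ Lebesgue-a.e.\ on $(0,\Delta\underline\Lambda_t)$. With that cosmetic correction the argument is complete.
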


\begin{proof}
While a physical solution can jump at time $0$, it is simple to check that for any $t>0$, the condition \hyperlink{W}{(W)} holds for the restarted density $\tilde V(t,x)$. Therefore, from the result of Corollary \ref{holdercont}, it holds that $\Lambda_t$ is continuous on $(0,\infty)$. The condition of Proposition \ref{uniquecont} then holds, which implies uniqueness of the physical solution.
\end{proof}

\begin{corollary}
\label{contunique3}
    Suppose that there exists $\epsilon>0$ such that $g(x)\leq 1$ for $x<\epsilon$ and that $\inf\{x:\int_0^xg(y)dy<x\}=0$. Then, there exists $T'>0$ such that there is a unique physical solution $\Lambda_t$ on $[
    0,T']$. This solution is continuous. 
\end{corollary}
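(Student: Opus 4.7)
The plan is to establish that the minimal physical solution $\underline{\Lambda}$ (whose existence on a short time interval $[0,T^*)$ follows from Tarski's theorem applied to the monotone operator $\Gamma$ together with a truncation argument of the type used in the proof of Theorem~\ref{conda4}) is continuous on some $[0,T']\subset[0,T^*)$, and then to apply Proposition~\ref{uniquecont} to obtain uniqueness on that interval. The starting observation is that the physical jump condition \eqref{alternativephysical} evaluated at $t=0$ reads $\Delta\underline{\Lambda}_0=\inf\{x>0:\int_0^x g(y)\,dy<x\}$, which vanishes by hypothesis. Thus $\underline{\Lambda}_0=0$, and since $\underline{\Lambda}$ is cadlag and non-decreasing, $\underline{\Lambda}_t\downarrow 0$ as $t\downarrow 0$.

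The technical heart is the claim that there exist $T_0>0$ and $\delta\in(0,\epsilon/2)$, depending only on $\epsilon$ and $\|g\|_\infty$, such that whenever $t\in(0,T_0]$ and $\underline{\Lambda}_{t-}\leq\delta$, one has $\Delta\underline{\Lambda}_t=0$. I would prove this by dropping the survival indicator in the definition of $\nu_{t-}$, giving
\begin{align*}
\nu_{t-}([0,x])\;\leq\;\int_0^\infty\!\left[\Phi\!\left(\tfrac{\underline{\Lambda}_{t-}+x-y}{\sqrt{t}}\right)-\Phi\!\left(\tfrac{\underline{\Lambda}_{t-}-y}{\sqrt{t}}\right)\right]g(y)\,dy,
\end{align*}
and splitting the $y$-integral at $\epsilon$. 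On $[0,\epsilon)$ we use $g\leq 1$ together with Fubini and the identity $\int_{\mathbb{R}}[\Phi(u+a)-\Phi(u)]\,du=a$, yielding the bound $x-\int_0^x[1-\Phi((z+\underline{\Lambda}_{t-})/\sqrt{t})]\,dz$ for this piece. For $y\geq\epsilon$, the requirement $\underline{\Lambda}_{t-}+x<\epsilon/2$ forces the Gaussian argument to be at most $-\epsilon/(2\sqrt{t})$, so a Mills-ratio estimate bounds the tail piece by $x\,\|g\|_\infty\,C(t)\,e^{-\epsilon^2/(8t)}$ with $C(t)=O(\sqrt{t})$. Letting $x\downarrow 0$ the dispersion margin is asymptotic to $x[1-\Phi(\underline{\Lambda}_{t-}/\sqrt{t})]$, so $\nu_{t-}([0,x])<x$ for all sufficiently small $x>0$ provided $\Phi(\underline{\Lambda}_{t-}/\sqrt{t})+O(e^{-\epsilon^2/(8t)})<1$; a Mills-ratio comparison verifies this whenever $\underline{\Lambda}_{t-}<\epsilon/2$ and $t\leq T_0$ for an explicit $T_0=T_0(\epsilon,\|g\|_\infty)$, which forces $\Delta\underline{\Lambda}_t=0$.

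With the claim in hand, I argue that $\tau:=\inf\{t>0:\Delta\underline{\Lambda}_t>0\}>0$ by contradiction: a sequence of jump times $t_n\downarrow 0$ would satisfy $\underline{\Lambda}_{t_n-}\to 0$ by right-continuity and monotonicity, so eventually $\underline{\Lambda}_{t_n-}\leq\delta$ and $t_n\leq T_0$, and the claim then yields $\Delta\underline{\Lambda}_{t_n}=0$, contradicting that $t_n$ is a jump time. Therefore $\tau>0$; taking any $T'\in(0,\tau)$, the minimal solution is continuous on $[0,T']$, the continuity-after-discontinuities hypothesis of Proposition~\ref{uniquecont} holds vacuously on $[0,T']$, and uniqueness of the physical solution on $[0,T']$ follows. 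The main obstacle is the Mills-ratio contest in the key estimate: the dispersion margin decays like $e^{-\underline{\Lambda}_{t-}^{2}/(2t)}$, whereas the tail coming from $\{y\geq\epsilon\}$ where $g$ may exceed $1$ decays like $e^{-\epsilon^{2}/(2t)}$, and one must verify that the strict separation $\underline{\Lambda}_{t-}<\epsilon/2$ delivers a quantitatively explicit threshold $T_0$ below which the margin wins.
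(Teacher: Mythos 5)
Your proposal is correct and follows essentially the same route as the paper's proof: drop the survival indicator, split the $y$-integral at $\epsilon$ using $g\leq 1$ on $[0,\epsilon)$ and $g\leq C$ beyond, and show the resulting Gaussian quantity stays strictly below $x$ for small $x$ so that the physicality condition forbids a jump, then invoke Proposition~\ref{uniquecont}. The paper phrases the final estimate at the level of the bounded density being $<1$ near $y=0$ rather than your margin-versus-tail comparison at the CDF level, and leaves implicit the bootstrap you spell out explicitly (the sequence of jump times $t_n\downarrow 0$ forcing $\underline{\Lambda}_{t_n-}\to 0$), but these are the same argument.
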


From simple computations it can be checked that for small $t>0$, for any solution $\Lambda_t$, the density $\tilde V(t,x)$ must be strictly less than $1$ for sufficiently small $x$. Therefore $\Lambda_t$ must be continuous for all sufficiently small $t>0$, from which uniqueness follows via the result of Proposition \ref{uniquecont}. Details of the computations are given in Section \ref{uniquenessappendix2} in the appendix.

\begin{proposition}
\label{contunique2}
    Suppose that $\int_{0}^xg(y)dy\leq  x$ for all $x>0$, and that a physical solution to \eqref{alternativetomvr} exists. Then the physical solution to \eqref{alternativetomvr} is unique.
\end{proposition}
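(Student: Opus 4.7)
The strategy is to show that every physical solution $\Lambda$ of \eqref{alternativetomvr} is continuous on $(0,\infty)$, so that the hypothesis of Proposition~\ref{uniquecont} is trivially satisfied and uniqueness follows. Writing $h_t(y) := \tilde V(t, \Lambda_t + y)$ for the density of non-absorbed, re-centred particles and $F(t, x) := \int_0^x h_t(y)\,dy - x$, the hypothesis reads $F(0, \cdot) \leq 0$, and the central claim is that $F(t, \cdot) \leq 0$ for every $t \geq 0$ and every physical solution.

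The first step is to check that $F \leq 0$ is preserved across physical jumps. If $d := \Delta\Lambda_s > 0$, then by continuity in $x$ of $\int_0^\cdot h_{s^-}$ and the definition of the physical jump, $\int_0^d h_{s^-}(y)\,dy = d$, and the shift $h_s(y) = h_{s^-}(y+d)$ gives
\[
\int_0^x h_s(y)\,dy = \int_0^{x+d} h_{s^-}(z)\,dz - d \leq (x+d) - d = x,
\]
using $F(s^-, x+d) \leq 0$. The second step is to propagate $F \leq 0$ across intervals on which $\Lambda$ is continuous. On such an interval, $V$ is a classical solution of the heat equation on $\{y > \Lambda_t\}$ with $V(t, \Lambda_t) = 0$ and $\dot\Lambda_t = \tfrac{1}{2}\partial_y V(t, \Lambda_t)$, and direct differentiation gives
\[
\partial_t F(t, x) = \dot\Lambda_t\bigl(h_t(x) - 1\bigr) + \tfrac{1}{2}\,\partial_y h_t(x).
\]
At an interior maximum $x^* > 0$ with $F(t, x^*) = 0$, the conditions $\partial_x F(t, x^*) = h_t(x^*) - 1 = 0$ and $\partial_x^2 F(t, x^*) = \partial_y h_t(x^*) \leq 0$ force $\partial_t F(t, x^*) \leq 0$, so a parabolic maximum-principle argument (combined with $F(t, 0) = 0$ and the fact that $h_t(0^+) = 0$ rules out a boundary max at $x = 0$) prevents $F$ from crossing $0$ from below.

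Given $F(t^-, \cdot) \leq 0$ at any $t > 0$, passing $s \uparrow t$ in the Dirichlet condition $V(s, \Lambda_s) = 0$ yields $h_{t^-}(0^+) = 0$ and continuity of $h_{t^-}$ at $0$. Combined with $\int_0^x h_{t^-}\,dy \leq x$, this gives $\int_0^x h_{t^-}(y)\,dy = o(x)$ as $x \downarrow 0$, so $\Delta\Lambda_t = \inf\{x > 0 : \int_0^x h_{t^-} < x\} = 0$. Consequently every physical solution is continuous on $(0, \infty)$, with at most a single jump at $t = 0$. Applied to the minimal physical solution $\underline\Lambda$ (which is physical by Theorem~\ref{minimalisphysical}), this verifies the hypothesis of Proposition~\ref{uniquecont} and gives uniqueness.

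The main obstacle is making the maximum-principle step fully rigorous for generic physical solutions, since classical regularity of $V$ up to the free boundary is only guaranteed on open intervals of continuity of $\Lambda$, and these may abut jump times or accumulate (cf.\ Proposition~\ref{prop:infinitejumps}). I would handle this in one of two ways: either by using the probabilistic representation of $V$ as the density of a killed diffusion and carrying out a coupling comparison pathwise, or by a Laplace-transform approach based on Lemma~\ref{Ito}. In the latter, the hypothesis $\int_0^x g \leq x$ is equivalent, by integration by parts, to $\int_0^\infty(1 - g(x))e^{-\lambda x}\,dx \geq 0$ for every $\lambda > 0$, and together with the non-positivity of the jump-sum term in the Laplace identity (already used in the proof of Theorem~\ref{conda4}) this forces any two physical solutions with Laplace transforms pinched between equal expressions to agree, exactly as in the proof of Proposition~\ref{uniquecont}.
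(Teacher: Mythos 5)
Your primary maximum-principle argument for $F(t,x)=\int_0^x \tilde V(t,y)\,dy - x$ is a genuinely different route, but it has the gaps you flag: it needs control of $F$ at $x\to\infty$, regularity of $\tilde V$ up to the free boundary for an arbitrary physical solution (which the paper does not establish outside the weak-feedback regime), and a treatment of possible accumulation of jump times, and none of these is carried out. The more serious problem is that your Laplace-transform fallback, as described, does not close. The implication $\int_0^x g\,dy\leq x \Rightarrow \int_0^\infty(1-g)e^{-\lambda x}\,dx\geq 0$ is correct (but only an implication, not an equivalence), and non-negativity of this Laplace transform does not by itself pin anything. Subtracting the Lemma~\ref{Ito} identities for a physical solution $\Lambda$ and the minimal solution $\underline\Lambda$ gives
\begin{align*}
\frac{\lambda}{2}\int_0^\infty\bigl(e^{-\lambda\underline\Lambda_s}-e^{-\lambda\Lambda_s}\bigr)e^{-\lambda^2 s/2}\,ds
\;=\; J_\Lambda - J_{\underline\Lambda},
\end{align*}
where $J_\Lambda, J_{\underline\Lambda}\leq 0$ are the two jump-sum terms. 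The left side is $\geq 0$, but the right side is a difference of two non-positive quantities and has no sign a priori, so no conclusion follows. In Proposition~\ref{uniquecont} the pinching works because the hypothesis of eventual continuity of $\underline\Lambda$ after each jump supplies extra information; that is exactly what is not available here.

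What the paper actually does, and what your proposal is missing, is establish that $J_{\underline\Lambda}=0$. It approximates $g$ by compactly supported, piecewise-continuous densities $g_n^k$ with $\int_0^x g_n^k\,dy < x$ for all $x>0$ and $\limsup_{x\downarrow 0}g_n^k(x)=0$ (the hypothesis $\int_0^x g\leq x$ is used precisely to arrange this strict inequality), so that \cite[Theorem 1.1]{PDEresult} provides global \emph{continuous} classical solutions $\Lambda^{n,k}$, whose Lemma~\ref{Ito} identities carry no jump-sum term. Because $g_n^k$ is dominated by $g$ in the cumulative sense, the monotonicity of $\Gamma$ gives $\Lambda^{n,k}\leq\underline\Lambda$; passing $k\to\infty$ and then $n\to\infty$ with Dominated Convergence transports the jump-free identity to $\underline\Lambda$, yielding $J_{\underline\Lambda}=0$. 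Only then is the right side above $\leq 0$, hence zero, forcing $\Lambda=\underline\Lambda$ a.e.\ and so everywhere by right-continuity. This approximation step, not the sign of $\int_0^\infty(1-g)e^{-\lambda x}\,dx$, is the essential missing idea.
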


The condition given in the statement of this Proposition is nearly that of \cite[Theorem 1.1]{PDEresult}. By considering approximations of the density, we are able to check that for the minimal solution to \eqref{alternativetomvr}, the sum term in the formula of Lemma \ref{Ito} is zero. With this established we can compare this formula to that of any physical solution, from which uniqueness follows by noting that $e^{-\lambda \underline{\Lambda}_t}-e^{-\lambda\Lambda_t}$ is non-negative, as in Proposition \ref{uniquecont}. The details are given in Section \ref{uniquenessappendix2} of the appendix.

\begin{remark}
    The arguments of Proposition \ref{contunique2} do not remove the possibility of a jump in $\Lambda_t$ at time zero. However, it must hold that $\tilde V(t^-,x)=1$ for almost every $x\leq \Delta \Lambda_t$ at any jump time. It can be checked that for fixed $t>0$, $\tilde V(t^-,x)$ is analytic on $(0,\infty)$. Therefore, the given condition for a jump in $\Lambda_t$ can occur only at time zero.
    
    The arguments used in the proof of Proposition \ref{contunique2} may be applied to any density $g$ for which there exists a sequence $z_n\downarrow 0$, such that $\int_{z_n}^xg(y)dy<x$ for all $x>0$. For example, the oscillatory density $g(x)=(1+(\sin(\frac{1}{x}))_-+\alpha (\sin(\frac{1}{x}))_+)\mathbbm{1}_{[0,\frac{1-\alpha}{4\pi}]},$ for any $\alpha<1$, with corresponding sequence $z_n=\frac{1}{(2n+1)\pi}$, and therefore the physical solution to \eqref{alternativetomvr} with such initial density is unique.
\end{remark}

\begin{remark}
The result of Corollary \ref{contunique} implies that of \cite[Theorem 1.1]{mustapha2023wellposednesssupercooledstefanproblem}. Further the condition of Proposition \ref{contunique2} holds for the data studied in \cite[Sections 4]{mustapha2023wellposednesssupercooledstefanproblem}. The condition of Proposition \ref{uniquecont} holds on a small time interval for solutions to \eqref{alternativetomvr} with an initial density which changes monotonicity finitely many times on compact sets. 

In \cite[Theorem 1.4]{3}, it is shown that for $g\in L^1(\mathbb{R}^+)$, such that $g$ changes monotonicity finitely often on compact sets, then the same holds for $\tilde V(t,x)$ at any time. Therefore global uniqueness of physical solutions to \eqref{alternativetomvr} is obtained. We expect the same to hold for $g\notin L^1(\mathbb{R}^+)$, and are able to extend the 
regularity results of \cite[Section 2]{3}, and of \cite{2} to the case of $g\notin L^1(\mathbb{R}^+)$. However, it is presently unclear to us whether the results of \cite[Proposition 3.2, Lemma 3.7]{3} which rely upon results from \cite{DIRT1} and \cite{delarue:hal-00870991} readily extend to $g\notin L^1(\mathbb{R}^+)$.
\end{remark}

\begin{remark}
 Non-uniqueness of physical solutions to \eqref{alternativetomvr} on a small time interval can only occur if there is a sequence of times $t_n\downarrow 0$ such that $\Delta \underline{\Lambda}_{t_n}>0.$ This must occur for any initial density $g$ which satisfies that for any $t>0$, there exists a sequence $\lambda_n$ increasing to infinity such that $ \int_0^\infty (1-g(x))e^{-\lambda_n x}dx>0$ for odd $n$, and $ \int_0^\infty (1-g(x))e^{-\lambda_n x}dx<-\frac{Ce^{-\frac{\lambda_n^2t}{2}}}{\lambda_n}$ for even $n$. If there was not a sequence of jump times decreasing to zero, we could consider the expression given in Lemma \ref{Ito} for any time $t$ less than the first jump time. Multiplying by $e^{\lambda\Lambda_t+\frac{\lambda^2t}{2}}$, we would observe the left hand side is less than zero infinitely often as $\lambda \uparrow \infty$, while the right hand side grows to infinity, which is not possible. 
\end{remark}

\begin{remark}
    Uniqueness of the particle system limit can be determined for a particle system perturbed upwards, identically to that in \cite{minimal}, without requiring uniqueness of solutions to \eqref{alternativetomvr}. The result of \cite[Proposition 6.1]{minimal} extends to our setting, by replacing the bound from the Dvoretzky–Kiefer–Wolfowitz inequality by Poisson tail bounds.
\end{remark}

\section{Proof of general particle system convergence}
\label{proofsection}
We first establish the tightness of the processes $\mu^N,\Lambda^N$, and the convergence of the process $\upsilon^N$.

 Recall that we define a continuous map from $\mathbb{R}\times \mathcal{C}$ with the product topology to $\mathcal{C}$ by
$\theta(x,y)_t=x+y_t,$ we denote by $\mathcal{L}(B)$ the law of the extended Brownian motion on $\mathcal{C}$, and we write $G$ for the measure of density $g$ on $\mathbb{R^+}$.
\begin{lemma}
\label{lemma 2.2}
 The measure $\upsilon^1$ is a Poisson random measure with mean $\theta_{*}(G\times\mathcal{L}(B))$ .
\end{lemma}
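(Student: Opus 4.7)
The plan is to identify $\upsilon^1$ as the pushforward of a product-type Poisson random measure under the map $\theta$, by chaining together the classical marking and mapping theorems for Poisson random measures (see \cite{kallenberg}). The starting observation is that, by construction, the initial positions $(W_0^{i,1})_{i \in \mathbb{N}}$ are exactly the points of a Poisson point process on $\mathbb{R}^+$ of intensity $g$, so $\sum_i \delta_{W_0^{i,1}}$ is a Poisson random measure on $\mathbb{R}^+$ with mean $G$.

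First I would invoke the marking theorem: since the extended Brownian motions $(B^{i,1})_{i \in \mathbb{N}}$ are i.i.d.\ with law $\mathcal{L}(B)$ on $\mathcal{C}$ and are independent of $(W_0^{i,1})_{i \in \mathbb{N}}$, the enriched measure $\sum_i \delta_{(W_0^{i,1},\,B^{i,1})}$ is a Poisson random measure on $\mathbb{R}^+ \times \mathcal{C}$ with intensity $G \times \mathcal{L}(B)$. Then I would apply the mapping theorem to the (continuous, hence measurable) map $\theta(x, y)_t = x + y_t$: since $W^{i,1} = \theta(W_0^{i,1}, B^{i,1})$ by the definition of the extended particle paths, the image $\upsilon^1 = \sum_i \delta_{W^{i,1}}$ of the preceding Poisson random measure under $\theta$ is itself a Poisson random measure on $\mathcal{C}$ with mean $\theta_*(G \times \mathcal{L}(B))$.

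The one point that genuinely requires verification---and is the only mild obstacle---is local finiteness of the pushforward intensity on $\mathcal{C}$, both for the mapping theorem to apply and for $\upsilon^1$ to live in $M(\mathcal{C})$ endowed with the vague topology. For a bounded set $A \subset \mathcal{C}$, contained in $\{y \in \mathcal{C}: \|y\|_\infty \leq K\}$ for some $K$, the extension convention gives $\theta(x, B)_{-1} = x + B_{-1} = x$, so $\theta^{-1}(A) \subset [0, K] \times \mathcal{C}$, whence
\[
\theta_*(G \times \mathcal{L}(B))(A) \leq (G \times \mathcal{L}(B))([0,K] \times \mathcal{C}) = G([0, K]) = \int_0^K g(x)\,dx < \infty
\]
by assumption \hyperlink{A1}{(A1)}. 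This closes the argument.
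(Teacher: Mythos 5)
Your proof is correct and takes the same approach as the paper, which simply asserts that the claim "is a simple consequence of the fact that the particles are initially placed via a Poisson point process of density $g$, and move via Brownian motions, independent of this Poisson point process." Your chain of the marking theorem, the mapping theorem under $\theta$, and the local-finiteness check via $\theta(x,B)_{-1}=x$ and assumption \hyperlink{A1}{(A1)} supplies precisely the details the paper leaves implicit.
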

This is a simple consequence of the fact that the particles are initially placed via a Poisson point process of density $g$, and move via Brownian motions, independent of this Poisson point process. 

Applying the Strong Law of Large Numbers, and the result of the above lemma, we obtain convergence of $\upsilon^N$ to a deterministic measure.
\begin{proposition}
\label{convergence of Hn}
    $\upsilon^N\rightarrow \theta_{*}(G\times\mathcal{L}(B))$ vaguely, in distribution.
\end{proposition}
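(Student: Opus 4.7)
The plan is to combine Lemma~\ref{lemma 2.2} with the superposition property of Poisson random measures to realise $\upsilon^N$ as a sample average of i.i.d.\ copies of $\upsilon^1$, and then apply the strong law of large numbers one test function at a time. By Lemma~\ref{lemma 2.2}, $\upsilon^1$ is a Poisson random measure on $\mathcal{C}$ with mean $m := \theta_{*}(G \times \mathcal{L}(B))$. Exactly the same reasoning applied to the $N$-th system, whose initial configuration is a Poisson point process of intensity $Ng$, shows that $N\upsilon^N$ is a Poisson random measure on $\mathcal{C}$ with mean $Nm$. By the superposition theorem I may therefore realise $\upsilon^N$ in distribution as $\frac{1}{N}\sum_{k=1}^{N}\upsilon^{1,(k)}$, for i.i.d.\ copies $\upsilon^{1,(k)}$ of $\upsilon^1$.

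Next I would verify that $m$ is locally finite so that integrals against it are well-defined. For any $K > 0$,
\begin{align*}
m(\{y \in \mathcal{C} : \|y\|_\infty \leq K\}) = \int_0^\infty \mathbb{P}\Bigl(\sup_{t \in [-1,T+1]} |x + B_t| \leq K\Bigr) g(x)\, dx,
\end{align*}
which is finite by the uniform bound on $g$ from condition \hyperlink{A1}{(A1)} and a standard Gaussian tail estimate, since $\mathbb{P}(\sup_{t}|x+B_t|\le K)$ decays at a Gaussian rate as $x\to\infty$. For any bounded continuous $f : \mathcal{C} \to \mathbb{R}$ with bounded support, Campbell's formula for Poisson integrals gives $\mathbb{E}[\upsilon^{1}(f)] = m(f)$ and $\mathrm{Var}[\upsilon^{1}(f)] = m(f^{2})$, both finite. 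The strong law of large numbers applied to the i.i.d.\ sequence $(\upsilon^{1,(k)}(f))_{k \geq 1}$ then yields $\upsilon^N(f) \to m(f)$ almost surely.

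Finally, I would promote these one-dimensional limits to vague convergence of the random measures by invoking the standard criterion for random locally finite measures on a Polish space (see \cite[Section~4]{kallenberg}): convergence $\upsilon^N(f) \to m(f)$ in probability for every $f$ in a convergence-determining class of bounded continuous functions with bounded support implies $\upsilon^N \Rightarrow m$ in the vague topology; since the limit is deterministic, this coincides with convergence in probability. The main obstacle is purely bookkeeping: checking local finiteness of $m$ and selecting a countable convergence-determining class, both of which are routine given that the vague topology on $M(\mathcal{C})$ is metrisable and separable.
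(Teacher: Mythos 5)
Your proposal is correct and takes essentially the same route as the paper: both reduce to testing against a single bounded continuous $f$ with bounded support (the paper via \cite[Theorem 4.11]{kallenberg}), both decompose $\upsilon^N$ into an average of $N$ i.i.d. copies of $\upsilon^1$ (the paper through the explicit uniform-mark relabelling of Section~\ref{reformulationsec}, you through the abstract Poisson superposition theorem --- these are the same thinning statement), and both finish with the strong law of large numbers and Lemma~\ref{lemma 2.2} to identify the mean. Your extra checks (local finiteness of $m$, Campbell's formula for the variance) are harmless but not needed for Kolmogorov's SLLN, which only requires a finite first moment, exactly as the paper verifies.
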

\begin{proof}
    From \cite[Theorem 4.11]{kallenberg} it is enough to show that $\upsilon^N(f)\rightarrow \theta_{*}(G\times\mathcal{L}(B))(f)$ in distribution for $f$ continuous, bounded, and with bounded support. Using the representation of our system as \eqref{aligned:Particlewarticle}, we can see that
    $$\upsilon^N(f)=\frac{1}{N}\sum_{i=1}^N(\sum_{k=1}^\infty f(W^{i,k})).$$
    This is a sum of $N$ i.i.d. terms with distribution equal to $\upsilon^1(f)$. Further each term has finite expectation. Indeed since $f$ has bounded support it is supported on $\{x:|x_{-1}|\leq K\}$ for some finite $K$. Therefore:
    $$\mathbb{E}(|\sum_{k=1}^\infty f(W^{i,k})|)\leq \mathbb{E}(\upsilon^1(|f|))\leq ||f||_{\infty}\mathbb{E}(\upsilon^1(\{x:|x_{-1}|\leq K\}))=||f||_\infty\int_0^Kg(x)dx.$$
    
    By the Strong Law of Large Numbers, it follows that $$\upsilon^N(f)\rightarrow \mathbb{E}(\upsilon^1(f))$$ almost surely. Since $\upsilon^1$ is a Poisson random measure of mean $\theta_{*}(G\times \mathcal{L}(B))$, $\mathbb{E}(\upsilon^1(f))=\theta_{*}(G\times \mathcal{L}(B))(f)$ as desired.
\end{proof}

Tightness of $\Lambda^N$ follows from the definition of a pre-compact set in the M1 topology, and the result Lemma \ref{lemma on bounding}. 
\begin{lemma}
\label{tightnessof xi}
$\Lambda^N$ is tight, when viewed as a process on $\tilde D$.
\end{lemma}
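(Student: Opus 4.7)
The plan is to exhibit, for each $\eps>0$, a set $K_\eps\subset\tilde D$ which is pre-compact in the M1 topology and which contains $\Lambda^N$ with probability at least $1-\eps$ for all sufficiently large $N$. Since the bound on $\Lambda^N_T$ is already supplied by Lemma~\ref{lemma on bounding}, everything will reduce to recalling a standard pre-compactness criterion for monotone cadlag paths on a compact interval.

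First, I would record the structural properties of $\bar\Lambda^N$ that follow directly from its definition and from the extension procedure of Section~\ref{convergencestatementsection}: the process is non-decreasing in $t$, takes the value $0$ on $[-1,0)$, and is constant equal to $\Lambda^N_T$ on $(T,T+1]$. In particular $\|\bar\Lambda^N\|_\infty=\Lambda^N_T$.

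Next, for $K>0$ I would introduce the set
\[
A_K=\bigl\{l\in\tilde D:\ \|l\|_\infty\le K,\ l\text{ non-decreasing},\ l\equiv 0\text{ on }[-1,0),\ l\equiv l(T)\text{ on }[T,T+1]\bigr\}.
\]
By \cite[Theorem 12.12.2]{whitt} (applied to the compact interval $[-1,T+1]$), $A_K$ is pre-compact in the M1 topology: monotone cadlag functions with a uniform bound on their range form an M1-compact family, and the freezing at the two boundary intervals is preserved under M1 limits. This is precisely the same pre-compact family used later in the proof of Lemma~\ref{Rtight}, so I would cite that argument rather than re-prove compactness from scratch.

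Finally, combining the two observations, $\{\bar\Lambda^N\in A_K\}=\{\Lambda^N_T\le K\}$ by the structural properties above. By Lemma~\ref{lemma on bounding}, taking $K=z$ as provided there, we obtain $\liminf_{N\to\infty}\mathbb{P}(\bar\Lambda^N\in A_z)=1$, so in fact a single pre-compact set captures all but a vanishing fraction of mass and tightness follows immediately. There is no genuine obstacle in this step: the only non-trivial input, namely the stochastic upper bound on $\Lambda^N_T$, is already established, and M1 pre-compactness of bounded monotone families is a standard fact. The mild care point is ensuring the boundary freezing at $-1$ and $T+1$ is compatible with M1 limits, which is why the extension in Section~\ref{convergencestatementsection} was arranged to make $\bar\Lambda^N$ constant (and left-continuous at $T+1$) on those intervals.
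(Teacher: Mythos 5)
Your argument is correct and matches the paper's approach exactly: the paper proves this lemma by citing the identical argument from Lemma~\ref{Rtight} (the same pre-compact set $A_K$ of bounded monotone paths, pre-compactness via \cite[Theorem 12.12.2]{whitt}), replacing the bound on $R^N_T$ from Theorem~\ref{crit} with the bound on $\Lambda^N_T$ from Lemma~\ref{lemma on bounding}. Nothing to add beyond the standard remark that the limsup-type bound controls large $N$ and the finitely many remaining terms are handled by enlarging $K$, which you have implicitly acknowledged.
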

\begin{proof}
This follows from identical arguments to Lemma \ref{Rtight}, replacing the bound on $R^N$ by the bound from Lemma \ref{lemma on bounding}.
\end{proof}

Another consequence of Lemma \ref{lemma on bounding}, is a bound on any subsequential limit of $\Lambda^N$.
 \begin{corollary}
 \label{boundedness of xi limit}
     There exists a constant $z$ such that for any subsequential limit $ \Lambda^{N_k}\rightarrow\Lambda$ in distribution, $\Lambda_{T+1}\leq z-1$ almost surely. 
 \end{corollary}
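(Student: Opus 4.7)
The plan is to combine Lemma~\ref{lemma on bounding} with a lower-semicontinuity property of the supremum functional for monotone functions in the M1 topology. The key observation is that under the extension procedure $\Lambda^N$ is non-decreasing on $[-1,T+1]$ and constant on $[T,T+1]$, so $\sup_{t\in[-1,T+1]} \Lambda^N_t = \Lambda^N_T = \Lambda^N_{T+1}$.

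First, I would pick a constant $\tilde z$ from Lemma~\ref{lemma on bounding} so that $\limsup_N \mathbb{P}(\Lambda^N_T > \tilde z) = 0$, and set $z = \tilde z + 1$; then $\mathbb{P}(\sup_t \Lambda^N_t > z - 1) \to 0$. Next, given a convergent subsequence $\Lambda^{N_k} \to \Lambda$ in distribution on $\tilde D$, I would invoke the Skorokhod representation theorem to produce versions $\hat\Lambda^{N_k}, \hat\Lambda$ on a common probability space with $\hat\Lambda^{N_k} \to \hat\Lambda$ almost surely in the M1 topology. Monotonicity is preserved under M1 convergence, so $\hat\Lambda$ is non-decreasing a.s., and together with the left-continuity at $T+1$ encoded in $\tilde D$ this yields $\sup_t \hat\Lambda_t = \hat\Lambda_{T+1}$ a.s.

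The crucial step is to establish $\sup_t \hat\Lambda_t \leq \liminf_k \sup_t \hat\Lambda^{N_k}_t$ almost surely. For a pathwise M1-convergent sequence $f_n \to f$ of non-decreasing functions in $\tilde D$, one has $f_n(t) \to f(t)$ at every continuity point $t$ of $f$, and by monotonicity $\sup_s f_n(s) \geq f_n(t)$, so $\liminf_n \sup_s f_n(s) \geq f(t)$. Since continuity points of $f$ are dense in $[-1,T+1]$, one sends $t \uparrow T+1$ along such points and uses the left-continuity of $f$ at $T+1$ to conclude $\liminf_n \sup_s f_n(s) \geq f(T+1) = \sup_s f(s)$. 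Combined with Fatou's lemma for sets,
\begin{align*}
\mathbb{P}\bigl(\hat\Lambda_{T+1} > z-1\bigr) \leq \mathbb{P}\bigl(\liminf_k \sup_t \hat\Lambda^{N_k}_t > z-1\bigr) \leq \liminf_k \mathbb{P}\bigl(\sup_t \Lambda^{N_k}_t > z-1\bigr) = 0,
\end{align*}
which passes back to $\Lambda$ via equality in law.

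The main obstacle is the M1 lower-semicontinuity of the supremum: for general cadlag functions this can fail since small jumps may merge into a larger jump in the limit, and the argument relies essentially on the non-decreasing nature of $\Lambda^N$ together with the left-continuity at the right endpoint built into $\tilde D$.
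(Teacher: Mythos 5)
Your argument is correct and is essentially the natural proof the paper leaves implicit: Lemma~\ref{lemma on bounding} gives the probability bound on $\Lambda^N_T = \Lambda^N_{T+1}$, Skorokhod representation reduces to a.s. M1 convergence, and the bound transfers to the limit via Fatou. The paper (as one can see from how the corollary is invoked in the proof of Lemma~\ref{lemmac}) implicitly relies on the fact that M1 convergence to a function left-continuous at $T+1$ forces $\hat\Lambda^N_{T+1}\to\hat\Lambda_{T+1}$ directly; you instead reach the same conclusion by converging at continuity points and sending $t\uparrow T+1$, which is a slightly more conservative route but buys nothing new here. One tiny caveat: the line "by monotonicity $\sup_s f_n(s)\geq f_n(t)$" does not actually use monotonicity (that inequality is trivial); where monotonicity is genuinely needed is in the identity $\sup_s f(s)=f(T+1)$ and in knowing $\sup_s\Lambda^N_s=\Lambda^N_T$, and you do use it there correctly.
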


Since $\tilde h$ is continuous, and $\mu^N=\tilde h(\upsilon^N,\Lambda^N)$, applying the Skorokhod Representation Theorem the tightness of $\mu^N$ follows.
\begin{corollary}
    $ \mu^N$ is tight. Further, for any subsequential limit $(\upsilon,\Lambda,\mu)$ of $(\upsilon^N,\Lambda^N,\mu^N)$, it holds that $\mu=\tilde h(\upsilon,\Lambda)$ almost surely.
    \label{subseq}
\end{corollary}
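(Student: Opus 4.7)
The plan is to reduce the statement to the continuous mapping theorem applied through $\tilde h$, leveraging the identity $\mu^N = \tilde h(\upsilon^N, \Lambda^N)$ established in Section~\ref{setupsection}.

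First I would establish joint tightness of $(\upsilon^N, \Lambda^N)$ on the product space $M(\mathcal{C}) \times \tilde D$. By Proposition~\ref{convergence of Hn}, $\upsilon^N$ converges in distribution to the deterministic measure $\theta_{*}(G \times \mathcal{L}(B))$, so the laws of $\upsilon^N$ are tight on $M(\mathcal{C})$. By Lemma~\ref{tightnessof xi}, the laws of $\Lambda^N$ are tight on $\tilde D$. Since both coordinate spaces are Polish and the marginals are tight, the joint family $(\upsilon^N, \Lambda^N)$ is tight on $M(\mathcal{C}) \times \tilde D$ endowed with the product topology.

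Next, I would apply the continuous mapping theorem. The identity $\mu^N = \tilde h(\upsilon^N, \Lambda^N)$ together with the continuity of $\tilde h$ (Lemma~\ref{continuity of tilde h}) exhibits the law of $\mu^N$ as the pushforward, via a continuous map into the Polish space $M(\tilde D)$, of the tight family of laws of $(\upsilon^N, \Lambda^N)$. Pushforwards of tight families under continuous maps are tight, so $\mu^N$ is tight, and moreover the triple $(\upsilon^N, \Lambda^N, \mu^N)$ is tight on $M(\mathcal{C}) \times \tilde D \times M(\tilde D)$.

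For the identification of the limit, take any weakly convergent subsequence $(\upsilon^{N_k}, \Lambda^{N_k}, \mu^{N_k}) \to (\upsilon, \Lambda, \mu)$ in distribution. By the Skorokhod Representation Theorem, we may realise this convergence almost surely on a common probability space. On this space, continuity of $\tilde h$ yields $\tilde h(\upsilon^{N_k}, \Lambda^{N_k}) \to \tilde h(\upsilon, \Lambda)$ almost surely, but the left-hand side equals $\mu^{N_k}$, which also converges almost surely to $\mu$. Uniqueness of limits in $M(\tilde D)$ then forces $\mu = \tilde h(\upsilon, \Lambda)$ almost surely, a statement about joint laws that transfers back to the original probability space. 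I do not expect any substantive obstacle: the argument is a direct continuous-mapping/Skorokhod transfer, and the only non-trivial input is the continuity of $\tilde h$ which is already in hand via Lemma~\ref{continuity of tilde h}.
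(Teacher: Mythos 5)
Your proposal is correct and takes essentially the same route as the paper: joint tightness of $(\upsilon^N,\Lambda^N)$ from marginal tightness on Polish spaces, the continuous image argument via $\tilde h$ for tightness of $\mu^N$, and Skorokhod representation plus continuity of $\tilde h$ to identify $\mu = \tilde h(\upsilon,\Lambda)$ along any convergent subsequence. No gaps.
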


\subsection{Weak convergence on initially bounded sets}
\label{7.1}
We have shown that $(\upsilon^N,\mu^N,\Lambda^N)$ is tight. Therefore, from now on we may consider only subsequences such that $\upsilon^N,\Lambda^N,\mu^N$ converge in distribution. We omit the subsequence notation. By the Skorokhod Representation Theorem we may take a representation such that $\hat \upsilon^N\rightarrow \hat \upsilon$ vaguely, $\hat \mu^N\rightarrow \hat \mu$ vaguely, and $\hat \Lambda^N\rightarrow \hat \Lambda$ in the M1 Skorokhod topology, almost surely. From now on we work with these representations.

Recall the notation of Section 3.1 for measures restricted to a set. We first determine the weak convergence of the measure, restricted to particles started on $[0,K]$.
\begin{lemma}
\label{weak convergence of inital}
      $\hat \upsilon^N_{S_K}\rightarrow \hat \upsilon_{S_K}$ weakly.
\end{lemma}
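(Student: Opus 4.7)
The plan is to leverage the vague convergence $\hat\upsilon^N\to\hat\upsilon$ established above and to promote it to weak convergence of the restrictions $\hat\upsilon^N_{S_K}\to\hat\upsilon_{S_K}$ by two separate approximation arguments: a continuous sandwich of the indicator $1_{S_K}$ in the initial coordinate $x_{-1}$, and a spatial cut-off $\Psi_R$ in $\|x\|_\infty$ that reduces to test functions with bounded support. Two preliminary observations drive the argument. First, by Proposition~\ref{convergence of Hn} the limit $\hat\upsilon=\theta_*(G\times\mathcal{L}(B))$ is deterministic, so $\hat\upsilon_{S_K}$ is a finite measure with total mass $\int_0^K g(y)\,dy$. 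Second, since $g$ is a bounded density, $\hat\upsilon(\{x_{-1}=K\})=G(\{K\})=0$, so the only discontinuity of $1_{S_K}$ is $\hat\upsilon$-null.

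Fix a test function $\phi\in C_b(\mathcal{C})$. I introduce a continuous sup-norm truncation $\Psi_R:\mathcal{C}\to[0,1]$ with $\Psi_R=1$ on $\{\|x\|_\infty\le R\}$ and $\Psi_R=0$ on $\{\|x\|_\infty\ge R+1\}$, together with continuous sandwich functions $h_\e^\pm:\mathcal{C}\to[0,1]$ depending only on $x_{-1}$, with $h_\e^-\le 1_{S_K}\le h_\e^+$ and $h_\e^\pm=1_{S_K}$ outside $\{K-\e<x_{-1}<K+\e\}$. The products $\phi\Psi_R h_\e^\pm$ are bounded continuous with bounded support (contained in $\{\|x\|_\infty\le R+1\}$), so vague convergence yields $\hat\upsilon^N(\phi\Psi_R h_\e^\pm)\to\hat\upsilon(\phi\Psi_R h_\e^\pm)$ almost surely. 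Sandwiching and then letting $\e\downarrow 0$, using $\hat\upsilon(\{x_{-1}=K\})=0$ to collapse the two bounds to a common limit, gives
\[
\hat\upsilon^N(\phi\Psi_R 1_{S_K})\xrightarrow{N\to\infty}\hat\upsilon(\phi\Psi_R 1_{S_K})\quad\text{a.s.}
\]

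For the tail error I would use
\[
|\hat\upsilon^N(\phi(1-\Psi_R)1_{S_K})|\le\|\phi\|_\infty\,\hat\upsilon^N\bigl(\{x_{-1}\le K,\ \|x\|_\infty>R\}\bigr).
\]
By Poisson thinning, the right-hand count is distributed as $N^{-1}\mathrm{Poi}(N\lambda_R)$ with $\lambda_R=\int_0^K g(y)\,\mathbb{P}\bigl(\sup_{t\in[-1,T+1]}|y+\bar B^1_t|>R\bigr)\,dy$; a Chernoff bound plus Borel--Cantelli yields, along a single full-measure event (intersecting over $R\in\mathbb{N}$), the a.s.\ limit $\lambda_R$, and $\lambda_R\to 0$ as $R\to\infty$ by the Gaussian tail of the Brownian maximum. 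Combining these, for any $\delta>0$ I choose $R$ so that the tail is below $\delta$ a.s.\ for all large $N$, then send $N\to\infty$; selecting $\phi$ from a countable convergence-determining subfamily of $C_b(\mathcal{C})$ and intersecting the corresponding null sets upgrades the pointwise statement to the weak convergence $\hat\upsilon^N_{S_K}\to\hat\upsilon_{S_K}$ almost surely.

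The main obstacle is that vague convergence a priori controls only test functions with bounded support, while the natural function $\phi\cdot 1_{S_K}$ has support that is closed but unbounded in $\mathcal{C}$: a path starting in $[0,K]$ can have arbitrarily large sup-norm on $[-1,T+1]$. The spatial truncation $\Psi_R$ combined with the Brownian-tail control of $\lambda_R$ isolates this unboundedness and kills it uniformly in $N$. The delicate point is that, to remain compatible with the a.s.\ Skorokhod coupling set up at the start of Section~\ref{7.1}, these tail bounds must be obtained almost surely (not merely in expectation), which is where the Poisson structure of the initial configuration is essential.
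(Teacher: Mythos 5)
Your proof is correct and is in essence the paper's argument: the tail beyond a spatial radius $R$ is controlled through the Poisson structure of the initial configuration and a Gaussian estimate on the Brownian maximum, while the bounded-support part is handed to the vague convergence of $\hat\upsilon^N$. The only technical divergence is in the implementation: you replace the paper's appeal to weak convergence on balls (Lemma~\ref{acor}, proof omitted) together with set-inflation/Portmanteau arguments by explicit continuous cutoffs $\Psi_R$ and mollifiers $h_\varepsilon^\pm$, invoking $\hat\upsilon(\{x_{-1}=K\})=G(\{K\})=0$ to collapse the sandwich; this is arguably a bit cleaner because it makes the boundary-nullity that the paper's Portmanteau step requires explicit, and it avoids having to choose $R$ so that $\hat\upsilon(\{\|x\|_\infty=R\})=0$. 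One small caveat: for a signed test function $\phi$ the pointwise inequalities $\phi\Psi_R h_\varepsilon^-\le\phi\Psi_R 1_{S_K}\le\phi\Psi_R h_\varepsilon^+$ fail; the sandwich should instead be run via the difference bound $|\hat\upsilon^N(\phi\Psi_R 1_{S_K})-\hat\upsilon^N(\phi\Psi_R h_\varepsilon^-)|\le\|\phi\|_\infty\,\hat\upsilon^N(\Psi_R(h_\varepsilon^+-h_\varepsilon^-))$, which then closes as you intend by sending $\varepsilon\downarrow 0$.
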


To prove this result we first note that it is simple to determine weak convergence of $\upsilon^N_B$ for a bounded set. 
\begin{lemma}
\label{acor}
    $\hat \upsilon^N_{B_R(0)}\rightarrow \hat \upsilon_{B_R(0)}$ for any positive $R$.
\end{lemma}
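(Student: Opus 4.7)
The plan is to bridge from vague convergence to weak convergence by exploiting that $\hat\upsilon$ assigns zero mass to the sphere $\partial B_R(0)$. I would proceed in three steps.

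First, by Proposition~\ref{convergence of Hn}, $\hat\upsilon$ is the deterministic measure $\theta_*(G\times \mathcal{L}(B))$, and the construction via Skorokhod representation gives $\hat\upsilon^N \to \hat\upsilon$ vaguely almost surely. The key claim is that
$$\hat\upsilon(\partial B_R(0)) = \int_0^\infty \mathbb{P}\!\left( \sup_{t\in[-1,T+1]} |x + \bar B_t| = R \right) g(x)\,dx = 0,$$
where $\bar B$ is an extended Brownian motion. This follows because for each fixed $x \geq 0$, the supremum $\sup_{t} |x + \bar B_t|$ (equivalently the running maximum of a Brownian path shifted by $x$) has an absolutely continuous law, so the inner probability vanishes pointwise. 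The non-integrability of $g$ poses no difficulty because a null integrand gives a null integral regardless of the weight.

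Second, I would invoke the equivalent characterization of vague convergence recorded after Definition~\ref{defdef}: for any bounded Borel set $A \subset \mathcal{C}$,
$$\hat\upsilon(A^o)\leq \liminf_N \hat\upsilon^N(A) \leq \limsup_N \hat\upsilon^N(A) \leq \hat\upsilon(\overline A).$$
Given a closed set $F \subset \mathcal{C}$, the set $F \cap B_R(0)$ is bounded (since $B_R(0)$ is bounded in the uniform norm), hence
$$\limsup_N \hat\upsilon^N_{B_R(0)}(F) = \limsup_N \hat\upsilon^N(F\cap B_R(0)) \leq \hat\upsilon\!\left(\overline{F\cap B_R(0)}\right) \leq \hat\upsilon(F \cap \overline{B_R(0)}) = \hat\upsilon_{B_R(0)}(F),$$
where the final equality uses the null-boundary claim from step one. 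Applying the same characterization to $A = B_R(0)$ gives convergence of the total masses $\hat\upsilon^N(B_R(0)) \to \hat\upsilon(B_R(0))$, since the open and closed balls differ only on $\partial B_R(0)$.

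Third, these two facts together yield weak convergence of $\hat\upsilon^N_{B_R(0)}$ to $\hat\upsilon_{B_R(0)}$ by the Portmanteau theorem. The only substantive point is the null-boundary claim; the rest is routine bookkeeping once that is in hand. The main obstacle, such as it is, lies in being careful about which ball (open vs.\ closed) one takes and confirming that the sup-norm of the extended path genuinely has an atomless distribution at every $R > 0$; both are handled cleanly by the reflection principle applied on $[0,T+1]$, since the constant extension on $[-1,0)$ never achieves a strictly larger value than the running supremum on the diffusive segment.
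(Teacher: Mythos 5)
Your proof is correct and is exactly the kind of elementary argument the paper alludes to but omits. The three-step structure (null-boundary claim for $\partial B_R(0)$ under $\hat\upsilon = \theta_*(G\times\mathcal{L}(B))$, the sandwich characterisation of vague convergence for bounded Borel sets, then Portmanteau) is the natural way to pass from vague to weak convergence here, and your observation that the constant extension on $[-1,0)$ does not affect the sup-norm (since $\bar W_{-1}=W_0=W_0+B_0$) correctly reduces the atomlessness question to the Brownian segment, where the reflection-principle argument applies.
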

This result follows from elementary arguments and is omitted.

\begin{proof}[Proof of Lemma \ref{weak convergence of inital}]
From the above lemma, it holds that $\hat \upsilon^N_{B_R(0)}\rightarrow \hat \upsilon_{B_R(0)}$ weakly for any $R$. We omit the subscript $K$ on the set $S_K$. It is easily seen $N\hat \upsilon^N(S\cap B_R(0)^c)$ has the same distribution as the number of particles for the system \eqref{aligned:Particlealphanocentre} which are initially below $K$, and which leave $[-R,R]$ by time $T+1$.
For the $ith$ particle started in $[0,K]$ to leave $[-R,R]$ by time $T+1$, it must hold that $\sup_{s\leq t} |B_s^i|\geq R-K$. The probability of such an event may be bounded by using the reflection principle and a standard Gaussian tail bound by $$8\exp\left(\frac{-(R-K)^2}{2(T+1)}\right).$$ From condition \hyperlink{A1}{(A1)}, the number of particles initially in $[0,K]$ is stochastically dominated by a Poisson random variable of mean $CNK$. Since the Brownian motions are independent of the initial conditions, from the thinning property of Poisson random variables $N\hat \upsilon^N(S\cap B_R(0)^c)$ can then be stochastically dominated by a Poisson random variable of mean $8NCK\exp\left(\frac{-(R-K)^2}{2(T+1)}\right)$. In particular, writing $n(\alpha)$ for a Poisson random variable of mean $\alpha$, we may couple $\hat \upsilon^N(S\cap B_R(0)^c)$, with such a Poisson random variable such that almost surely:
    $$\hat \upsilon^N(S\cap B_R(0)^c)\leq \frac{n(8NCK\exp\left(\frac{-(R-K)^2}{2(T+1)}\right)}{N}. $$
    By the Strong Law of Large Numbers it follows that
\begin{equation}
\limsup_{N\rightarrow\infty}\hat \upsilon^N(S\cap \hat B_R(0)^c)\leq 8CK\exp\left(\frac{-(R-K)^2}{2(T+1)})\right) \label{eq:limsupbd}
\end{equation}
The above result holds for arbitrary $K$, and $R\geq K$ hence also holds if replacing $R,$ $K$ by $R-\epsilon,$ $ K+\epsilon$ for arbitrary small $\epsilon$.
    
    Take $f$ to be a continuous and bounded function. 
    \begin{align*}
    \limsup_{N\rightarrow\infty}|\int f d\hat \upsilon^N_S-\int f d\hat \upsilon_S|\leq &\left|\int f d(\hat \upsilon^N_S-\hat \upsilon^N_{S\cap B_R(0)})\right|\\&+\left|\int fd(\hat \upsilon^N_{S\cap B_R(0)}-\hat \upsilon_{S\cap B_R(0)})\right|+\left|\int fd(\hat \upsilon_{S\cap B_R(0)}-\hat \upsilon_S)\right|,\\
    \end{align*}
    \begin{align}
    &\leq \limsup_{N\rightarrow\infty}||f||_\infty(\hat \upsilon(S\cap B_R(0)^c)+\hat \upsilon^N(S \cap B_R(0)^c))\label{eqtndirectlyabove}\\
    &\quad \quad+ \left|\int fd(\hat \upsilon^N_{S\cap B_R(0)}-\hat \upsilon_{S\cap B_R(0)})\right|.\nonumber
    \end{align}
    We write $S^\epsilon$ for the set $\{y:\exists x\in S, d(x,y)<\epsilon\}$. For any $\epsilon>0$:
    \begin{align*}&\hat \upsilon(S\cap B_R(0)^c)\leq \lim_{k\rightarrow\infty}\hat \upsilon((S^\epsilon\cap B_{R-\epsilon}(0)^c)^o\cap  B_k(0)),\\
    &\leq \lim_{k\rightarrow\infty}\limsup_{N\rightarrow\infty} \hat \upsilon^N(S^\epsilon \cap B_{R-\epsilon}(0)^c\cap B_k(0))\leq \limsup_{N\rightarrow\infty} \hat \upsilon^N(S^\epsilon \cap B_{R-\epsilon}(0)^c).
    \end{align*}
    This follows from the vague convergence of $\hat \upsilon^N\rightarrow \upsilon$ since
    $(S^\epsilon\cap B_{R-\epsilon}(0)^c)^o\cap  B_k(0)$ is open and bounded for each $k$. 
    From the inequality \eqref{eq:limsupbd}, it follows that
    $$\limsup_{N\rightarrow\infty}||f||_\infty(\hat \upsilon(S\cap B_R(0)^c)+\hat \upsilon^N(S \cap B_R(0)^c))\leq 16||f||_\infty C(K+\epsilon)\exp\left(\frac{-(R-K-2\epsilon)^2}{2(T+1)}\right) .$$
    
   From Lemma \ref{acor}, $\hat \upsilon^N_{B_R(0)}$ converges weakly to $\hat \upsilon_{B_R(0)}$. Since $S$ is closed, it follows from the Portmanteau Theorem that $\hat \upsilon^N_{B_R{(0)}\cap S}$ converges weakly to $\hat \upsilon_{B_R(0)\cap S}$.
    Sending $N\rightarrow\infty$, then $R\rightarrow \infty$ in equation \eqref{eqtndirectlyabove}, we have thus established:
     $$\limsup_{N\rightarrow\infty}\left|\int f d\hat \upsilon^N_S-\int f d\hat \upsilon_S\right|=0$$
     Since $f$ was an arbitrary continuous, bounded function, this gives the desired weak convergence.
\end{proof}
\begin{corollary} 
    Consider $G_K$ the measure on $\mathbb{R}$ with density $g\mathbbm{1}_{[0,K]}$.
    $$\hat \upsilon^N_{S_K}\rightarrow\theta_{*}(G_K\times\mathcal{L}(B)) \textrm{ weakly, almost surely.}$$
\end{corollary}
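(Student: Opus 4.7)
The plan is to combine the weak convergence from Lemma \ref{weak convergence of inital} with the identification of the limit $\hat{\upsilon}$ from Proposition \ref{convergence of Hn}. The point is that $\hat{\upsilon}$ is already pinned down, and the restriction $(\cdot)_{S_K}$ commutes with the pushforward by $\theta$ in a transparent way.

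First, I would note that by Proposition \ref{convergence of Hn}, $\upsilon^N$ converges in distribution vaguely to the deterministic measure $\theta_{*}(G\times\mathcal{L}(B))$. Since the limit is deterministic, under the Skorokhod representation we in fact have
\[
\hat{\upsilon} \;=\; \theta_{*}(G\times\mathcal{L}(B)) \qquad \text{almost surely.}
\]
Lemma \ref{weak convergence of inital} then gives $\hat{\upsilon}^N_{S_K} \to \hat{\upsilon}_{S_K}$ weakly, almost surely, so it only remains to identify $\hat{\upsilon}_{S_K}$ with $\theta_{*}(G_K\times \mathcal{L}(B))$.

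For this, recall $S_K = \{x\in\mathcal{C} : x_{-1}\leq K\}$, and observe that for $(x,y)\in\mathbb{R}^{+}\times\mathcal{C}$, the extended Brownian motion $y$ satisfies $y_{-1}=0$ by construction, so
\[
\theta(x,y)_{-1} \;=\; x + y_{-1} \;=\; x.
\]
Hence $\theta^{-1}(S_K) = [0,K]\times \mathcal{C}$ (intersected with the support of $G\times \mathcal{L}(B)$). For an arbitrary bounded Borel set $A\in\mathcal{B}(\mathcal{C})$,
\begin{align*}
\hat{\upsilon}_{S_K}(A)
&= \theta_{*}(G\times\mathcal{L}(B))(A\cap S_K) \\
&= (G\times \mathcal{L}(B))\bigl(\theta^{-1}(A) \cap ([0,K]\times \mathcal{C})\bigr) \\
&= (G_K \times \mathcal{L}(B))(\theta^{-1}(A)) \;=\; \theta_{*}(G_K\times \mathcal{L}(B))(A),
\end{align*}
where we used that $G_K = G\vert_{[0,K]}$. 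Thus $\hat{\upsilon}_{S_K} = \theta_{*}(G_K\times \mathcal{L}(B))$ almost surely, and the almost sure weak convergence $\hat{\upsilon}^N_{S_K} \to \theta_{*}(G_K\times \mathcal{L}(B))$ follows.

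There is no real obstacle here: the corollary is essentially bookkeeping, combining the preceding lemma with the fact that the deterministic vague limit of $\upsilon^N$ is a product pushforward, for which the restriction to paths starting below $K$ corresponds exactly to restricting the $G$-factor to $[0,K]$.
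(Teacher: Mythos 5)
Your proof is correct and matches the paper's approach exactly: the paper also deduces $\hat\upsilon = \theta_{*}(G\times\mathcal{L}(B))$ almost surely from Proposition \ref{convergence of Hn}, invokes Lemma \ref{weak convergence of inital} for the weak convergence of the restrictions, and then identifies $\hat\upsilon_{S_K}$ with $\theta_*(G_K\times\mathcal{L}(B))$. You simply spell out the last identification (via $\theta(x,y)_{-1}=x$ because the extended Brownian motion vanishes at $-1$), which the paper leaves as a one-line remark.
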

\begin{proof}
From Proposition \ref{convergence of Hn}, $\hat \upsilon^N\rightarrow \hat \upsilon=\theta_{*}(G\times\mathcal{L}(B))$ almost surely. Applying Corollary \ref{weak convergence of inital}, we determine that $\hat \upsilon^N_{S_K}\rightarrow \hat \upsilon_{S_K}$ weakly. We then note that the restriction of $\theta_{*}(G\times\mathcal{L}(B))$ to the set $S_K$, is exactly given by $\theta_*(G_K\times \mathcal{L}(B))$.
\end{proof}
As a consequence of this result, and noting that $\hat \mu^N_{S_K}=\tilde h(\hat \upsilon^N_{S_K},\hat \Lambda^N)$, we determine weak convergence of $\hat \mu^N_{S_K}$. 
\begin{corollary}
   Almost surely, $\hat \mu^N_{S_K}\rightarrow \hat \mu_{S_K}$ weakly.
\end{corollary}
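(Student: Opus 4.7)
The plan is to reduce the claim to the continuity of $\tilde h$ (Lemma \ref{continuity of tilde h}) combined with the preceding corollary. The key identity to verify first is that $\hat \mu^N_{S_K} = \tilde h(\hat \upsilon^N_{S_K}, \hat \Lambda^N)$, and similarly $\hat \mu_{S_K} = \tilde h(\hat \upsilon_{S_K}, \hat \Lambda)$. This will follow directly from the definitions once one observes that, by the left extension in Section \ref{convergencestatementsection}, $W^{i,N}_{-1} = W^{i,N}_0$ and $\hat \Lambda^N_{-1} = 0$, so $X^{i,N}_{-1} = W^{i,N}_0$. Consequently a trajectory $X^{i,N}$ lies in $S_K$ precisely when its pre-translation counterpart $W^{i,N}$ does, and the translation $\tilde h(\,\cdot\,, \hat \Lambda^N)$ carries the restriction to $S_K$ on $\mathcal{C}$ into the restriction to $S_K$ on $\tilde D$.

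Given this identity, I would next invoke Lemma \ref{continuity of tilde h}: by the previous corollary, $\hat \upsilon^N_{S_K} \to \hat \upsilon_{S_K}$ weakly (hence vaguely), and by the Skorokhod representation we are working with, $\hat \Lambda^N \to \hat \Lambda$ almost surely in the M1 topology. Continuity of $\tilde h$ then yields vague convergence $\hat \mu^N_{S_K} \to \hat \mu_{S_K}$ in $M(\tilde D)$ almost surely.

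To upgrade from vague to weak convergence, I would exploit that $\tilde h$ preserves total mass: $\tilde h(\zeta, l)(\tilde D) = \zeta(\mathcal{C})$ for every $\zeta$ and $l$. Consequently the total masses satisfy $\hat \mu^N_{S_K}(\tilde D) = \hat \upsilon^N_{S_K}(\mathcal{C}) \to \hat \upsilon_{S_K}(\mathcal{C}) = \hat \mu_{S_K}(\tilde D)$, where the convergence comes from weak convergence of $\hat \upsilon^N_{S_K}$ applied to the constant function $1$ (which is admissible because $\hat \upsilon^N_{S_K}$ is supported on a uniform set whose total mass is controlled via the Poisson tail bound already used in Lemma \ref{weak convergence of inital}). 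Vague convergence together with convergence of finite total masses to the correct finite limit is equivalent to weak convergence, so the claim follows.

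The main (and really only) obstacle is the identity in the first step: it relies crucially on the fact that $\hat \Lambda^N$ vanishes at $t = -1$, which is precisely what makes the set $S_K$ invariant under translation by $\hat \Lambda^N$ and hence lets the restriction-to-$S_K$ operation commute with $\tilde h$. Once this commutation is in place, the rest of the argument is an immediate application of a continuity lemma plus a mass-preservation observation.
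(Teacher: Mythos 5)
Your proof is correct and takes essentially the same route the paper intends: the identity $\hat\mu^N_{S_K}=\tilde h(\hat\upsilon^N_{S_K},\hat\Lambda^N)$ plus continuity of $\tilde h$ plus the preceding corollary on $\hat\upsilon^N_{S_K}$. You fill in two details the paper leaves implicit, both correctly: (i) the commutation of restriction-to-$S_K$ with $\tilde h$ hinges on $\hat\Lambda^N_{-1}=0$ so that translation by $\hat\Lambda^N$ fixes the set $\{x:x_{-1}\le K\}$, and (ii) the upgrade from vague to weak convergence via preservation and convergence of total masses, $\hat\mu^N_{S_K}(\tilde D)=\hat\upsilon^N_{S_K}(\mathcal C)\to\hat\upsilon_{S_K}(\mathcal C)=\hat\mu_{S_K}(\tilde D)$, which is a legitimate finite quantity by the Poisson tail control. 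One small remark: you do not actually need the aside about the constant function $1$ being ``admissible'' — the preceding corollary already gives weak convergence of the finite measures $\hat\upsilon^N_{S_K}$, and weak convergence of finite measures automatically entails convergence of their total masses; the Poisson tail estimate was used inside that corollary's proof, not needed again here.
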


In the following sections, we apply the Skorokhod Representation Theorem to the measures $\hat \upsilon_{S_K},\hat \mu_{S_K}$. To make this clear, we normalise the measures. For a measure $\varrho$ in $M(\tilde D)$ or $M(\mathcal{C})$, we define $\varrho_K$ as $\varrho_{S_K}$ normalised to be a probability measure:
    
$$\varrho_K:=\frac{\varrho_{S_K}}{\varrho(S_K)}.$$
\begin{corollary}
    Almost surely, for any integer $K$, $\hat \upsilon^N_K\rightarrow \hat \upsilon_K$ weakly, $\hat \mu^N_K\rightarrow \hat \mu_K$ weakly and 
        $\hat \upsilon(x_{-1}\leq K)=\hat \mu(x_{-1}\leq K)=\int_0^Kg(x)dx$.
\end{corollary}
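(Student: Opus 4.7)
The plan is to derive all four claims as direct consequences of the preceding corollary, which gives $\hat\upsilon^N_{S_K} \to \hat\upsilon_{S_K}$ and $\hat\mu^N_{S_K} \to \hat\mu_{S_K}$ weakly almost surely, together with the explicit identification $\hat\upsilon_{S_K} = \theta_{*}(G_K \times \mathcal{L}(B))$.

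First I would compute the two total masses. Since $\mathcal{L}(B)$ is a probability measure and $\theta$ is a bijection, the pushforward gives $\hat\upsilon(S_K) = G_K(\mathbb{R}^+) = \int_0^K g(x)\,dx$. For $\hat\mu$, I would invoke the relation $\hat\mu = \tilde h(\hat\upsilon, \hat\Lambda)$ from Corollary~\ref{subseq}, together with the defining formula $\tilde h(\zeta,l)(A) = \zeta(A+l)$. A path $y$ lies in $S_K + \hat\Lambda$ iff $y_{-1} - \hat\Lambda_{-1} \leq K$, and the left extension prescription forces $\hat\Lambda_{-1} = 0$, so $S_K + \hat\Lambda = S_K$. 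Hence $\hat\mu(S_K) = \hat\upsilon(S_K) = \int_0^K g(x)\,dx$, which establishes the two mass identities.

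Next I would upgrade the weak convergence of the (finite) measures $\hat\upsilon^N_{S_K}, \hat\mu^N_{S_K}$ to convergence of their total masses simply by testing against the bounded continuous function $f \equiv 1$:
$$\hat\upsilon^N(S_K) = \hat\upsilon^N_{S_K}(1) \to \hat\upsilon_{S_K}(1) = \hat\upsilon(S_K),$$
and analogously for $\hat\mu$. Provided $\int_0^K g(x)\,dx > 0$ (otherwise $\hat\upsilon_K, \hat\mu_K$ are undefined and the assertion is vacuous), for any bounded continuous $f$ on $\tilde D$ the ratio
$$\hat\upsilon^N_K(f) = \frac{\hat\upsilon^N_{S_K}(f)}{\hat\upsilon^N(S_K)}$$
converges to $\hat\upsilon_{S_K}(f)/\hat\upsilon(S_K) = \hat\upsilon_K(f)$, and the same argument gives $\hat\mu^N_K(f) \to \hat\mu_K(f)$. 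The "for any integer $K$" quantifier is then obtained by intersecting the countable family of almost sure events indexed by $K \in \mathbb{N}$.

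I do not expect any substantial obstacle here: the statement is essentially a normalisation bookkeeping step on top of the weak convergence already supplied by the preceding corollary. The only mild subtlety worth writing out carefully is the identity $S_K + \hat\Lambda = S_K$, which rests on the convention $\hat\Lambda_{-1} = 0$ built into the left extension to $[-1,T+1]$; without this extension the analogous claim at time $0$ would fail because of the possible initial jump of $\hat\Lambda$.
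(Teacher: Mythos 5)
Your proof is correct and follows the natural normalisation bookkeeping that the paper leaves implicit (it states the corollary without proof). The mass identity via $\hat\upsilon_{S_K}=\theta_*(G_K\times\mathcal{L}(B))$, the observation that $\hat\Lambda_{-1}=0$ forces $S_K+\hat\Lambda=S_K$ so $\hat\mu(S_K)=\hat\upsilon(S_K)$, the convergence of total masses by testing against $f\equiv1$, and the division step all agree with what the paper intends.
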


\subsection{Identification of the limit}

We define the following hitting time map $\tau:\tilde D\rightarrow \mathbb{R}^+$:
$$\tau(x):=\inf\{t\geq0:x_t\leq 0\},\;\;x\in\tilde D$$
We also define a map $\lambda:\tilde D\rightarrow \tilde D$ by
$$\lambda_t(x):= \mathbbm{1}_{\{\tau(x)\leq t\}}, \;\;x\in\tilde D,$$
and for a measure $\rho$, write $\rho(\lambda)$ for the cadlag process $\rho(\lambda)_t=\rho(\lambda_t)$.
These are analogous to maps defined in \cite{minimal}.
In order to show that the limiting $(\hat \mu,\hat \Lambda)$ solves the McKean-Vlasov type equation \eqref{alternativetomvr}, it is necessary to determine that $$\hat \Lambda_t=\lim_{N\rightarrow\infty }\hat \mu^N(\tau\leq t)=\hat \mu(\tau\leq t).$$
Equivalently, that the mapping $\rho\rightarrow \rho(\lambda)$ is continuous at $\hat \mu$. To do this we consider the set of paths which possess the following crossing property:
$$A_\textrm{Crossing}:=\{x\in\tilde D: \forall h>0,\textrm{ }\tau(x)<T+1\implies \inf_{s<h\wedge T+1-\tau(x)}(x_{\tau(x)+s}-x_{\tau(x)})<0\}.$$

Such a property holds for $\hat \mu$ almost every path.
\begin{lemma}
   $\hat \mu(A_\textrm{Crossing}^c)=0$ almost surely.  
    \label{crossinglemma}
\end{lemma}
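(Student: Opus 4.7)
The plan is to exploit the identity $\hat\mu = \tilde h(\hat\upsilon,\hat\Lambda)$ together with the deterministic product structure $\hat\upsilon = \theta_{*}(G\times\mathcal{L}(B))$, which reduces the claim to the fact that Brownian motion strictly dips below its value in every right-neighbourhood of a stopping time.

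First I would note that since $\hat\Lambda$ is the M1 limit of the non-decreasing cadlag processes $\hat\Lambda^N$ and monotonicity is preserved by M1 convergence, $\hat\Lambda$ is almost surely non-decreasing; I condition on such a realization and write $\lambda := \hat\Lambda$. Unfolding the definitions of $\tilde h$ and $\hat\upsilon$ yields
\[
\hat\mu(A_\textrm{Crossing}^c) \;=\; \int_0^\infty \mathbb{P}\!\left(X^x \notin A_\textrm{Crossing}\right) g(x)\,dx ,
\]
where $X^x_t := x + B_t - \lambda_t$ and $B$ is an extended standard Brownian motion, independent of $\lambda$ by the product form of $\hat\upsilon$. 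Hence it suffices to show that for every $x \geq 0$ the path $X^x$ lies in $A_\textrm{Crossing}$ almost surely.

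Set $\tau^x := \inf\{t\geq 0 : X^x_t \leq 0\}$. The next step is to argue that $\tau^x$ is a stopping time for the (completed) natural filtration $\mathcal{F}^B$ of $B$: since $\lambda$ is cadlag and deterministic and $B$ is continuous, $X^x$ is cadlag and $\mathcal{F}^B$-adapted, and the first entrance time of such a process into the closed set $(-\infty,0]$ is a stopping time. Applying the strong Markov property, on $\{\tau^x < T+1\}$ the process $s\mapsto B_{\tau^x+s}-B_{\tau^x}$ is a Brownian motion independent of $\mathcal{F}^B_{\tau^x}$, so for each rational $h>0$, $\inf_{0<s<h}(B_{\tau^x+s}-B_{\tau^x}) < 0$ almost surely. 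Intersecting over rational $h$ and using $T+1-\tau^x>0$, I would conclude that almost surely, for every $h > 0$,
\[
\inf_{0<s<h\wedge(T+1-\tau^x)} \bigl(B_{\tau^x+s}-B_{\tau^x}\bigr) < 0 .
\]
Because $\lambda$ is non-decreasing, $X^x_{\tau^x+s}-X^x_{\tau^x} \leq B_{\tau^x+s}-B_{\tau^x}$, and the strict inequality transfers to $X^x$, giving $X^x \in A_\textrm{Crossing}$ almost surely and hence the vanishing of the integral.

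The only delicate point I foresee is the verification that $\tau^x$ genuinely is a stopping time for $\mathcal{F}^B$ despite the possibly-discontinuous drift $\lambda$; this hinges on the standard measurability of debut times of cadlag processes in closed sets, but it is the one step where care is required before the strong Markov property can be invoked. Everything else is routine.
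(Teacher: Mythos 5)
Your proof is correct and follows essentially the same strategy as the paper: reduce via $\hat\mu = \tilde h(\theta_*(G\times\mathcal{L}(B)),\hat\Lambda)$ to a statement about a Brownian motion with a non-decreasing drift subtracted, and conclude because Brownian motion dips strictly below its value immediately after any stopping time. Two small differences are worth noting. First, the paper routes the argument through the normalized finite measures $\hat\mu_K$ (so that it can directly invoke \cite[Lemma 5.9]{delarue2015particle}, stated for probability measures) and then takes a monotone limit in $K$; you instead apply Fubini directly and observe that the integrand $\mathbb{P}(X^x\notin A_{\textrm{Crossing}})$ vanishes for every $x$, which avoids the truncation step entirely and is a mild simplification. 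Second, the paper defers the Brownian dipping property to the external reference, while you spell it out via the debut theorem, the strong Markov property at $\tau^x$ on $\{\tau^x<T+1\}$, and the Blumenthal-type argument over rational $h$; your domination step $X^x_{\tau^x+s}-X^x_{\tau^x}\le B_{\tau^x+s}-B_{\tau^x}$, which uses monotonicity of $\hat\Lambda$ (preserved under M1 limits of the non-decreasing $\Lambda^N$), is exactly the point that makes the transfer from $B$ to $X^x$ work. The one step you flag as delicate — that $\tau^x$ is a stopping time for $\mathcal{F}^B$ when $\lambda$ is a deterministic cadlag drift — is indeed the place where care is needed, and your justification via the debut of a cadlag adapted process into the closed set $(-\infty,0]$ is adequate.
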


\begin{proof}
    Almost surely, it holds that $$\hat \mu(A_\textrm{Crossing}^c)=\tilde h(\theta_*(G\times \mathcal{L}(B)),\hat \Lambda)(A^c_{\textrm{Crossing}}).$$

    By the definition of the mappings $\tilde h,$ $\theta$, under the measure $\tilde h(\theta_*(G\times \mathcal{L}(B))$, $x_t-x_0+\hat \Lambda_t$ is a Brownian motion. Arguing from the properties of the Brownian motion as in \cite[Lemma 5.9]{delarue2015particle}, it follows that 
    $$\tilde h(\theta_*(G\times \mathcal{L}(B),\hat \Lambda)_{K}(A_\textrm{Crossing}^c)=0 \quad \forall K>0, \textrm{ almost surely.}$$
    Taking a monotone limit over $K\uparrow \infty$, we establish that $\hat \mu_K(\textrm{Crossing}^c)=0$ almost surely.
\end{proof}

To compare $\hat \mu_{S_K}(\tau\leq t)$ to $\hat \mu(\tau\leq t)$, we give a bound on the number of particles started above $K$, and which reach $K$ by a time $T$.
\begin{lemma}
\label{lemma 1.1}
   Let $L^N(T,K)$ be the number of particles started above $K$ which reach $K$ by time $T$, for the particle system given by equation \eqref{aligned:Particlealphanocentre}. The random variable $L^N(T,K)$ is Poisson distributed with mean bounded by $\sqrt{\frac{2T}{\pi}}CN$.
\end{lemma}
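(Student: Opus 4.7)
The plan is to exploit the fact that the initial particle positions form a Poisson point process of intensity $Ng$ and that the particles evolve as independent Brownian motions, then apply the standard marking/thinning theorem for Poisson point processes to obtain the Poisson law, and finally compute the mean explicitly via the reflection principle.

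First I would mark each initial point $W_0^{i,N}$ with its Brownian path $(B_t^{i,N})_{t\leq T}$. Since the Brownian motions are i.i.d.\ and independent of the initial Poisson configuration, the collection $\{(W_0^{i,N}, B^{i,N})\}_{i\in\mathbb{N}}$ forms a Poisson point process on $[0,\infty)\times \mathcal{C}([0,T];\mathbb{R})$ with intensity $Ng(x)\,dx\otimes \mathcal{L}(B)$. The set
$$E_K=\{(x,w):\, x>K,\ \min_{s\leq T}(x+w_s)\leq K\}$$
is measurable, and $L^N(T,K)$ is exactly the number of points of the marked process that fall in $E_K$. By the restriction property of Poisson point processes, $L^N(T,K)$ is Poisson distributed.

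For the mean I would write
\begin{align*}
\mathbb{E}[L^N(T,K)]
&=\int_K^\infty Ng(x)\,\mathbb{P}\!\left(\min_{s\leq T}(x+B_s)\leq K\right)dx \\
&=\int_K^\infty Ng(x)\,\mathbb{P}\!\left(\min_{s\leq T}B_s\leq K-x\right)dx.
\end{align*}
Applying the reflection principle, $\mathbb{P}(\min_{s\leq T}B_s\leq -y)=2\Phi(-y/\sqrt{T})$ for $y\geq 0$, and using the uniform bound $g\leq C$ from assumption \hyperlink{A1}{(A1)}, I would then obtain
$$\mathbb{E}[L^N(T,K)]\leq 2NC\int_K^\infty \Phi\!\left(-\frac{x-K}{\sqrt{T}}\right)dx = 2NC\sqrt{T}\int_0^\infty \Phi(-u)\,du.$$

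The final integral is $\int_0^\infty(1-\Phi(u))\,du=\mathbb{E}[Z_+]=1/\sqrt{2\pi}$ for $Z$ a standard normal, giving the claimed bound $NC\sqrt{2T/\pi}$. There is no real obstacle: once the Poisson marking is set up correctly, the argument reduces to a direct application of the reflection principle combined with the pointwise bound on the density $g$.
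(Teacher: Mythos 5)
Your proof is correct, and it takes a genuinely more self-contained route than the paper's. For the Poisson distribution of $L^N(T,K)$, you and the paper appeal to the same idea: the marking theorem for Poisson point processes (positions marked by independent Brownian paths) plus the restriction property, so this part matches in substance. For the mean bound, the paper instead stochastically dominates the density by the constant $C$, rescales to reduce the count to the corresponding quantity for the unit-intensity system of \cite{7}, and cites \cite[Lemma 4]{7} for the computation. You bypass that reference entirely: you use the pointwise bound $g\le C$ inside the Campbell-type integral, apply the reflection principle $\mathbb{P}(\min_{s\le T}B_s\le -y)=2\Phi(-y/\sqrt{T})$, change variables, and evaluate $\int_0^\infty(1-\Phi(u))\,du=1/\sqrt{2\pi}$. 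The arithmetic checks out and lands on exactly $NC\sqrt{2T/\pi}$. What the paper's approach buys is conceptual reuse of an existing result; what yours buys is a short self-contained calculation with no external dependency, which arguably makes the lemma easier to verify. Both are valid.
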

\begin{proof}
    We may identify that $L^N(T,K)$ is Poisson distributed since the Brownian motions are independent of the initial Poisson point process. 
    To bound the mean, we note that $L^N(T,K)$ for the case of density $g$ may be stochastically dominated by $L^N(T,K)$ for the case of $g=C\mathbbm{1}_{\mathbb{R}^+}$, due to the bound on $g$ given by \hyperlink{A1}{(A1)}. Using the rescaling $\xi_t=Na\Lambda^N_{t/N^2a^2}$, it follows that $L^N(T,K)$ has the same distribution as the number of particles started above $NCK$, reaching $NCK$ by time $N^2C^2T$, for the system considered in \cite{7}. From the computations in \cite[Lemma 4]{7} the result follows.
\end{proof}

With the above results, we now prove the convergence of $\hat \mu^N(\lambda)$ to $\hat \mu(\lambda)$.

\begin{lemma}
\label{lemmac}
    For the representations $\hat \mu^N$, $\hat \mu$,
    $$\hat \mu^N(\lambda)\rightarrow \hat\mu(\lambda) \textrm { as elements of }\tilde D\textrm{ almost surely as }N\rightarrow\infty.$$
\end{lemma}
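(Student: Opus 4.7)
The plan is to establish pointwise convergence $\hat \mu^N(\lambda)_t \to \hat \mu(\lambda)_t$ almost surely at every continuity point of the limit, and then upgrade this to M1 convergence in $\tilde D$ via \cite[Corollary 12.5.1]{whitt}: for non-decreasing cadlag processes (which both sides are, by construction of $\lambda$ and the extension procedure), M1 convergence is equivalent to convergence at a dense set of times including every continuity point of the limit. The pointwise convergence will be obtained by truncating to particles initially in $S_K$ and sending $K\to\infty$.

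First I would show, for each fixed integer $K$, that $\hat \mu^N_{S_K}(\tau \leq t) \to \hat \mu_{S_K}(\tau \leq t)$ at every continuity point $t$ of $\nu := \hat \mu(\lambda)$. Since $\hat \mu_{S_K} \leq \hat \mu$, such a $t$ also satisfies $\hat \mu_{S_K}(\tau = t) = 0$. By Lemma \ref{crossinglemma}, $\hat \mu_{S_K}$ concentrates on $A_\textrm{Crossing}$. The crossing property ensures that the indicator $\lambda_t : x \mapsto \mathbbm{1}_{\{\tau(x) \leq t\}}$ is M1-continuous at every $x \in A_\textrm{Crossing}$ with $\tau(x) \neq t$: if $\tau(x) < t$ then by crossing there is some $s < t$ with $x_s < 0$ \emph{strictly}, a condition preserved under small M1 perturbations; if $\tau(x) > t$ then $\inf_{s \leq t} x_s > 0$, which likewise persists in a small M1 neighbourhood. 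Thus the set of M1-discontinuities of $\lambda_t$ has $\hat \mu_{S_K}$-measure zero, and the continuous mapping theorem applied to the (weakly convergent) normalised probability measures $\hat \mu^N_K \to \hat \mu_K$ from Section \ref{7.1}, combined with the mass convergence $\hat \mu^N(S_K) \to \int_0^K g(x)\,dx$, yields the claimed pointwise convergence.

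Next I would control the contribution of particles with $W^{i,N}_0 > K$ uniformly in $t \leq T+1$. By Corollary \ref{boundedness of xi limit} we may choose $K$ larger than the deterministic bound $z$ on the limiting barrier. Any such particle absorbed before time $T+1$ requires its Brownian motion to drop by at least $K - z$; Lemma \ref{lemma 1.1} (together with a Gaussian tail bound and Poisson thinning, noting the independence of $(B^{i,N})$ from the initial configuration) then shows that the excess mass
\[
\sup_{t \leq T+1} \bigl(\hat \mu^N(\lambda)_t - \hat \mu^N_{S_K}(\lambda)_t\bigr)
\]
is stochastically dominated by $\phi(K) + o_N(1)$, where $\phi(K) \to 0$ as $K \to \infty$. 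Passing to the limit in $N$ (this passage is legitimate because the bound applies to each subsequential limit via, e.g., a standard Portmanteau argument for the closed event) gives the analogous bound $\hat \mu(\lambda)_{T+1} - \hat \mu_{S_K}(\lambda)_{T+1} \leq \phi(K)$.

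Combining the two steps, for any continuity point $t$ of $\nu$,
\[
\limsup_{N\to\infty} |\hat \mu^N(\lambda)_t - \hat \mu(\lambda)_t| \leq 2\phi(K) \qquad \text{a.s.},
\]
and letting $K \to \infty$ finishes the almost-sure pointwise convergence. The M1 convergence in $\tilde D$ then follows from the monotonicity argument recalled above. The main technical obstacle I anticipate is justifying the M1-continuity of $\lambda_t$ at paths in $A_\textrm{Crossing}$: this is exactly the role played by the crossing property, but a careful argument is needed because the M1 topology permits time-reparametrisations, so one must verify that a strict sign change at a time $s < t$ survives under both spatial and temporal perturbations of the path.
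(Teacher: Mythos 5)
Your proposal follows essentially the same route as the paper's proof: truncate to $S_K$, use the crossing property to obtain pointwise convergence of $\hat\mu^N_{S_K}(\lambda)_t$ at a dense set of times, bound the contribution from particles starting above $K$ via Gaussian tails and Poisson thinning, send $K\to\infty$, and upgrade pointwise convergence of monotone functions to M1 convergence. The only difference is one of exposition: you spell out the M1-continuity of $\lambda_t$ at $A_{\textrm{Crossing}}$-paths directly (and work with continuity points of $\hat\mu(\lambda)$), where the paper obtains its co-countable set $J$ by citing the corresponding argument in \cite[Proposition 5.8]{delarue2015particle}; these choices are interchangeable, and the rest of the argument is the same.
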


\begin{proof}
   
Fix an integer $K$. Consider the system of $W^i$ particles. For a particle started above $K$ to hit $\Lambda^N$ by time $T$, then either $\Lambda^N_T>z$, or the particle must first hit $K$, and then it must further hold that $\inf_{T_K^i \leq t\leq T+1} B_t^{i,N}-B^{i,N}_{T_{K}^{i,N}}<-K+z$. Here $T_K^i=\inf\{t\geq 0:W^i_t\leq K\}$ is the time at which the particle first hits $K$. We write $A_K^i$ for the second event described above:
$$A_K^{i,N}=\left\{\inf_{T_K^{i,N} \leq t\leq T+1} B_t^{i,N}-B^{i,N}_{T_{K}^{i,N}}<-K+z,\inf_{t\leq T+1} X^{i,N}_t\leq K, X_{-1}^{i,N}\geq K\right\}.$$

Since $\mu^N(\tau(x)\leq T+1,|x_{-1}|>K)\leq \mu^N(\tau(x)\leq T+1)\leq\Lambda^N_{T+1}=\Lambda^N_T$, it follows that
$$\mu^N\left(\tau(x)\leq T+1,|x_{-1}|>K\right)\leq  \Lambda^N_T \mathbbm{1}_{ \Lambda^N_T>z}+\frac{1}{N}\sum_{i=1}^\infty \mathbbm{1}_{A^{i,N}_K}.$$

We may obtain that $\sum_{i=1}^N \mathbbm{1}_{A_K^{i,N}}$ may be stochastically dominated by a Poisson random variable of mean $2C\sqrt{\frac{2T}{\pi}}N\exp(-\frac{(K-z)^2}{2(T+1)})$ in the following manner.

Since the particles are initially placed via a Poisson point process, independent of the Brownian motions, it may be seen that $\sum_{i=1}^\infty\mathbbm{1}_{A_K^i}$ is Poisson distributed. From Lemma \ref{lemma 1.1} we know that the number of particles started above $K$, which reach $K$ by time $T$ is Poisson distributed with mean bounded by $\sqrt{\frac{2T}{\pi}}NC$. Consider the event $\{\inf_{T_K^{i,N} \leq t\leq T_K^{i,N}+T+1} B_t^{i,N}-B^{i,N}_{T_{K}^{i,N}}<-K+z\}$. By the strong Markov property of the Brownian motion, this event is independent of $\mathcal{F}_{T_K^i}$, the natural filtration of $(B^i)_{i\in\mathbb{N}}$ up until the hitting time $T_K^i$. By standard Gaussian tail bounds this event has probability bounded above by $2\exp(-\frac{(K-z)^2}{2T})$. Therefore, using the thinning property of the Poisson we may bound the number of particles started above $K$ which reach $z$ by time $T$ by a Poisson random variable of mean $2CN\sqrt{2T/\pi}\exp(-\frac{(K-z)^2}{2T})$. This gives the bound on $\sum_{i=1}^\infty \mathbbm{1}_{A_K^{i,N}}$.

Recall that $\hat \upsilon_K\rightarrow \hat \upsilon_K$ weakly, $\hat \Lambda^N\rightarrow \hat \Lambda$, and $\hat\mu^N_K\rightarrow \hat\mu_K$ weakly, where $\hat\mu_K$ satisfies the crossing property almost surely. 
From the arguments of \cite[Proposition 5.8]{delarue2015particle}, it follows that there exists some co-countable $J_K$ such that almost surely $\hat \mu^N_K(\lambda)_t\rightarrow \hat \mu_K(\lambda)_t$ for all $t\in J_K$, in particular $\hat\mu_K(x_t=x_{t^-},\textrm{ }t\in J_K)=1$ almost surely. Setting $J$ to be the co-countable set $\cap_{K\in\mathbb{N}} J_K$ , then almost surely $\hat \mu^N_K(\lambda)_t\rightarrow \hat \mu_K(\lambda)_t$ for all $K$, for all $t$ in $J$. This convergence also holds for $\hat \mu^N_{S_K}(\lambda)$.

We may bound $\hat \mu^N(\lambda)_t$ for any integer $K$:
$$\hat \mu^N(\lambda)_t\leq \hat \mu_{S_K}^N(\lambda)_t+\hat \mu^N(\tau(x)\leq {T+1},|x_{-1}|>k)$$ 
Almost surely the first term converges to $\hat \mu_{S_K}(\lambda)_t\leq \hat \mu(\lambda)_t$ at any $t\in J$. Using that $\hat \mu,\hat \Lambda$ have equal distributions to $\mu,\Lambda$, from the bound on $\sum_{i=1}^\infty \mathbbm{1}_{A_K^{i,N}}$ from above, the second term may be bounded by $1/N$ times a Poisson random variable of mean $2C\sqrt{\frac{2T}{\pi}}N\exp\left(-\frac{(K-z)^2}{2(T+1)}\right)$, on the event that $\hat \Lambda^N_{T+1}\leq z$. By Corollary \ref{boundedness of xi limit}, for a large enough $z$ it holds that $\hat \Lambda_{T+1}\leq z-1$ almost surely. Since $\hat \Lambda^N_{T+1}\rightarrow \hat \Lambda_{T+1}$ almost surely, for large enough $N$ it follows that $\hat \Lambda^N_{T+1}\leq z$. Thus, from the Strong Law of Large Numbers for Poisson random variables, almost surely:
$$\limsup_{N\rightarrow\infty} \hat \mu^N(\tau(x)\leq t,x_{-1}>K)\leq 2C\sqrt{\frac{2T}{\pi}}\exp\left(-\frac{(K-z)^2}{2(T+1)}\right).$$

Sending $K\rightarrow \infty$, we thus establish that $\limsup_{N\rightarrow\infty}\hat \mu^N(\lambda)_t\leq \hat \mu(\lambda)_t$ for any $t\in J$ almost surely.

To bound in the other direction, we note that almost surely, for any $t$ in $J$:
$$\liminf_{N\rightarrow\infty}\hat  \mu^N(\tau\leq t)\geq \liminf_{N\rightarrow\infty} \hat\mu^N_{S_K}(\tau\leq t)=\hat \mu_{S_K}(\tau\leq t).$$

Taking the limit as $K\uparrow\infty$, and applying the Monotone Convergence Theorem, we determine that
$$\liminf_{N\rightarrow\infty} \hat \mu^N(\tau\leq t)\geq \hat \mu(\tau \leq t).$$

Therefore, almost surely, for all $t$ in $J$:
$$\hat \mu^N(\lambda)_t\rightarrow \hat\mu(\lambda)_t\textrm{ as }N\rightarrow\infty.$$

The process $\hat \mu^N(\lambda)$ is non-decreasing, and converges almost everywhere to $\hat \mu(\Lambda)$. This implies the M1 convergence of $\hat \mu^N(\lambda)$ to $\hat \mu(\lambda)$. 
\end{proof}

Since $\hat \Lambda^N=\hat \mu^N(\lambda)$ for each $N$, we immediately obtain the following corollary.
\begin{corollary}
\label{corollaryonxi}
   $\hat \mu(\lambda)_t=\hat \Lambda_t$ for $t\leq T$ almost surely.
\end{corollary}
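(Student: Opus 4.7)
The proof is a one-line consequence of the previous lemma once we align the two modes of convergence. My plan is as follows.

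The starting observation is that the identity $\hat{\Lambda}^N_t = \hat{\mu}^N(\lambda)_t$ holds exactly (not merely in distribution), for every realisation and every $t$, because this is precisely how $\hat{\Lambda}^N$ is defined from the empirical measure of the particle system in \eqref{aligned:Particlealpha}. Thus we have a single sequence of cadlag processes admitting two candidate limits: on the left, $\hat{\Lambda}^N \to \hat{\Lambda}$ in the M1 topology on $\tilde D$ almost surely, by the Skorokhod representation set up at the start of Section \ref{7.1}; on the right, $\hat{\mu}^N(\lambda) \to \hat{\mu}(\lambda)$ in $\tilde D$ almost surely, by Lemma \ref{lemmac}.

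The core step is to use Hausdorffness of the M1 topology on $\tilde D$ to conclude that $\hat{\Lambda} = \hat{\mu}(\lambda)$ as elements of $\tilde D$, almost surely. Concretely, I would argue that both limits are non-decreasing cadlag functions on $[-1,T+1]$, and that their M1-convergence forces pointwise agreement at every common continuity point. Since both limits are non-decreasing, they have only countably many discontinuities, so they agree on a dense set; right-continuity then upgrades this to equality at every $t$, including at jump times. In particular $\hat{\Lambda}_t = \hat{\mu}(\lambda)_t$ for all $t \leq T$, almost surely, which is the claim.

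No step here looks hard; the only minor care is that the Skorokhod representation was taken on a subsequence along which $(\hat{\upsilon}^N, \hat{\mu}^N, \hat{\Lambda}^N)$ converges jointly, so one should be explicit that the identity $\hat{\Lambda}^N = \hat{\mu}^N(\lambda)$ is preserved by the representation (it is, as it is an almost-sure pathwise identity, not a distributional one). The main conceptual content has already been absorbed into Lemma \ref{lemmac}; what is left is a routine uniqueness-of-limit argument in the M1 topology.
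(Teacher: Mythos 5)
Your proposal is correct and follows essentially the same route as the paper, whose proof is literally the one-line remark preceding the corollary: since $\hat\Lambda^N = \hat\mu^N(\lambda)$ holds exactly for each $N$, the M1 limits $\hat\Lambda$ (from the Skorokhod representation) and $\hat\mu(\lambda)$ (from Lemma~\ref{lemmac}) must coincide. The extra detour through common continuity points, dense sets and right-continuity is unnecessary — once the two sequences are identical and the M1 topology on $\tilde D$ is metrizable (hence Hausdorff), the limits agree as elements of $\tilde D$, i.e.\ as functions, so pointwise equality is automatic — but it does no harm.
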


\begin{proposition}
\label{proplimit}
    Almost surely $(\hat \mu,\hat\Lambda)$ form a solution to the McKean-Vlasov equation \eqref{alternativetomvr}, up to time T.
\end{proposition}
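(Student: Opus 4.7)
The plan is to combine the results already assembled for the subsequential limit $(\hat\upsilon,\hat\mu,\hat\Lambda)$ into the three pieces required by \eqref{alternativetomvr}: the representation of $\hat\mu$ as an integral of path laws, the fixed-point equation for $\hat\Lambda$, and the physical jump condition. Throughout we work on the almost sure event on which $\hat\upsilon=\theta_*(G\times\mathcal L(B))$, $\hat\mu=\tilde h(\hat\upsilon,\hat\Lambda)$, and Corollary \ref{corollaryonxi} holds.

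First I would unpack $\tilde h$ and $\theta$. For any Borel set $A\in\mathcal B(\tilde D)$, using $\hat\mu=\tilde h(\theta_*(G\times\mathcal L(B)),\hat\Lambda)$ together with the definition $\tilde h(\zeta,l)(A)=\zeta(A+l)$ and a Fubini computation,
\begin{align*}
\hat\mu(A)
&=\theta_*(G\times\mathcal L(B))(A+\hat\Lambda)\\
&=\int_0^\infty\!\!\int_{\mathcal C}\mathbbm 1_{\{x+y-\hat\Lambda\in A\}}\,d\mathcal L(B)(y)\,g(x)\,dx\\
&=\int_0^\infty \mathbb P(X^x\in A)\,g(x)\,dx,
\end{align*}
where $X^x_t=x+B_t-\hat\Lambda_t$ with $B$ an extended Brownian motion independent of $\hat\Lambda$ under the Skorokhod representation. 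Note that the independence of $B$ from $\hat\Lambda$ is part of what the product structure of $\theta_*(G\times\mathcal L(B))$ encodes, so care is needed only in treating $\hat\Lambda$ as a (random) deterministic function when conditioning.

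Next I would derive the fixed-point equation for $\hat\Lambda$. Applying the representation of $\hat\mu$ to the set $A=\{x\in\tilde D:\tau(x)\le t\}$ gives
\[
\hat\mu(\tau\le t)=\int_0^\infty \mathbb P(\tau^x\le t)\,g(x)\,dx,
\]
while Corollary \ref{corollaryonxi} identifies $\hat\mu(\tau\le t)=\hat\Lambda_t$ almost surely for $t\le T$. Combining these yields the desired relation
\[
\hat\Lambda_t=\int_0^\infty \mathbb P(\tau^x\le t)\,g(x)\,dx.
\]

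The main obstacle is the physical jump condition \eqref{alternativephysical}. Here I would adapt the argument from \cite[Theorem 6.4]{minimal}. The strategy is to write, for each $K\in\mathbb N$,
\[
\hat\mu_{S_K}(X_{t^-}\in[0,x],\tau>t)\le \hat\mu(X_{t^-}\in[0,x],\tau>t)
\]
and use that $\hat\mu^N_{S_K}\to\hat\mu_{S_K}$ weakly together with the crossing property (Lemma \ref{crossinglemma}) to transfer the particle-level physical jump inequality
\[
\Delta\Lambda^N_t=\tfrac1N\inf\{k:\nu^N_{t^-}([0,k/N])\le k/N\}
\]
to the limit. The tail of particles started above $K$ is controlled by Lemma \ref{lemma 1.1}, so the contribution to $\nu^N_{t^-}([0,x])$ from $\{x_{-1}>K\}$ is dominated by a Poisson random variable with mean of order $K\exp(-(K-z)^2/(2T))$, which vanishes after sending $N\to\infty$ and then $K\to\infty$. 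The subtle point is handling the M1 convergence of $\hat\Lambda^N\to\hat\Lambda$ at jump times: one must argue that, for every left-continuity point of $\hat\Lambda$ and every candidate $x>\Delta\hat\Lambda_t$, the strict inequality $\nu^N_{t^-}([0,x])>x$ propagates to the limit, giving $\Delta\hat\Lambda_t\le\inf\{x:\hat\nu_{t^-}([0,x])<x\}$, and conversely that strict feasibility would force a smaller jump at the particle level, contradicting the $N\to\infty$ convergence. This is precisely the step where the localisation to $S_K$ and the exponential tail bound are essential, since otherwise we cannot guarantee weak convergence of the relevant sub-probability measures.
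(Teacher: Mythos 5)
Your treatment of the first two pieces---unpacking $\hat\mu=\tilde h(\theta_*(G\times\mathcal L(B)),\hat\Lambda)$ via the definitions of $\tilde h$ and $\theta$ to get $\hat\mu(A)=\int_0^\infty\mathbb P(X^x\in A)g(x)\,dx$, and then applying Corollary \ref{corollaryonxi} with $A=\{\tau\le t\}$ to get the fixed-point equation $\hat\Lambda_t=\int_0^\infty\mathbb P(\tau^x\le t)g(x)\,dx$---is essentially the same as the paper's proof, just with the Fubini computation spelled out explicitly rather than asserted as ``clear from the definition of the maps.'' That part is fine.

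However, you have misread the scope of the statement. Proposition \ref{proplimit} only asks that $(\hat\mu,\hat\Lambda)$ form a \emph{solution} to \eqref{alternativetomvr}, which is the system $X^x_t=x+B_t-\Lambda_t$, $\tau^x=\inf\{t\ge0:X^x_t\le0\}$, $\Lambda_t=\int_0^\infty\mathbb P(\tau^x\le t)g(x)\,dx$. The physical jump condition is a \emph{separate} equation, \eqref{alternativephysical}, and the paper deliberately handles it in a separate statement, Proposition \ref{physi} in Section \ref{physical section}, whose proof is then combined with Proposition \ref{proplimit} to complete Theorem \ref{maintheorem}. Your third step (the physicality argument via $S_K$-localisation, the crossing property, and the exponential tail bound from Lemma \ref{lemma 1.1}) is therefore out of place here: it is not required for the proposition at hand, and including it as if it were part of being ``a solution to \eqref{alternativetomvr}'' conflates the two notions the paper is careful to keep apart. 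Your outline of that argument is nonetheless on the right track and resembles what the paper does in Proposition \ref{physi}, but you should note that it relies on the quantitative estimate in Lemma \ref{lem:physparticleestimate} (the $\epsilon^{1/6}$ bound on the event $A^{N,\epsilon}$), which you do not mention; simply invoking the weak convergence of $\hat\mu^N_{S_K}$ and the crossing property is not enough to pass the particle-level minimal jump inequality to the limit. For the proposition you were actually asked to prove, stop after the fixed-point equation and cite the equivalence of \eqref{aligned:MVR} and \eqref{alternativetomvr}.
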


\begin{proof}
     Recall from Corollary \ref{subseq} that  $\hat\mu=\tilde h(\hat \upsilon,\hat \Lambda)$ almost surely. Further, from Proposition \ref{convergence of Hn}, it holds that $\hat \upsilon=\theta_*(G,\mathcal{L}(B))$ almost surely. Therefore, $\hat \mu=\tilde h(\theta_*(G,\mathcal{L}(B)),\Lambda)$.
   
We consider the following system 
\begin{align*}   
&W_0^i \textrm{ are placed via a Poisson point process with density g}, \\
&B^i\textrm{ are independent extended Brownian motions, independent of the }W_0^i,\\
&{} X_t^i=X_{0}^i+B_t^i-\Lambda_t,\\
&\tau_i=\inf\{t\geq 0:X_t^i\leq 0\}.
\end{align*}

It is clear from the definition of the maps $\tilde h,\theta$ that for any $A\in\mathcal{B}(\tilde D)$,\\ $\hat \mu(A)=\tilde h(\theta_*(G,\mathcal{L}(B)),\hat \Lambda)(A)=\mathbb{E}(\sum_{i=1}^\infty\mathbbm{1}_{X^i\in A})$ . Further, from Corollary \ref{corollaryonxi}, it follows that $\hat \Lambda_t=\hat\mu(\lambda)_t$ for $t<T$. By definition of the maps $\tilde h,\theta$ it is then clear that $\hat\mu(\lambda)_t=\mathbb{E}(\sum_{i=1}^\infty\mathbbm{1}_{\tau_i\leq t})$. Therefore, the pair $(\hat\mu,\hat\Lambda)$ may be seen to form a solution to \eqref{aligned:MVR}. Since the system \eqref{aligned:MVR}, and \eqref{alternativetomvr} are equivalent this gives the desired result.

\end{proof}

\subsection{Physicality of the limit}\label{physical section}

To conclude the proof of Theorem \ref{maintheorem}, we must further show that almost surely any limit $\hat \mu$ is also a physical solution to the McKean-Vlasov equation \eqref{alternativetomvr}.

Recall $\nu^N$ is the empirical measure of the re-centred particles which have not yet been absorbed:
$$\nu^N_{t-}=\frac{1}{N}\sum_{i=1}^\infty a_{X_{t-}^{i,N}}\mathbbm{1}_{t\geq \tau_{i}} $$

We consider an equivalent lemma to \cite[Lemma B8.2]{minimal}. 
\begin{lemma}
Define a set $A^{N,\epsilon}$ by:
$$A^{N,\epsilon}:=\{\nu^N_{t^-}([0, x+3\epsilon^{1/3}])\geq x\quad\forall x\leq (\Lambda^N_{t+\epsilon}-\Lambda^N_{t^-})-2\epsilon^{1/3}\}$$
There exists a constant $\tilde L>0$ such that for all sufficiently small $\epsilon>0$, and sufficiently large $N>N_\epsilon$:
    $$\mathbb{P}\left(A^{N,\epsilon}\right)\geq 1-\tilde L\epsilon^{1/6}$$
    \label{lem:physparticleestimate}
\end{lemma}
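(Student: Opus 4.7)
The plan is to adapt the proof of Lemma~B8.2 in \cite{minimal}. The physicality condition of the particle system is built into its dynamics: at every jump time $\tau\in(t^-,t+\epsilon]$, $\nu^N_{\tau^-}([0,y])\geq y$ for all $y\leq\Delta\Lambda^N_\tau$. My task is to transfer this per-jump information back to the single reference time $t^-$, with the $3\epsilon^{1/3}$ spatial slack and $2\epsilon^{1/3}$ range contraction absorbing (i) the barrier movement across $[t,t+\epsilon]$ and (ii) the Brownian fluctuations of particles absorbed during that interval.

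The first step is to control the Brownian oscillations. For each particle $i$, set $\omega_i:=\sup_{s\in[t,t+\epsilon]}|B^{i,N}_s-B^{i,N}_t|$; the reflection principle gives $\mathbb{P}(\omega_i>\epsilon^{1/3})\leq 4\exp(-\epsilon^{-1/3}/2)$. Lemma~\ref{lemma on bounding} provides a constant $z$ with $\mathbb{P}(\Lambda^N_{T+1}\leq z)\to 1$, and assumption \hyperlink{A1}{(A1)} bounds the initial intensity by $C$, so the number of particles with $W^{i,N}_0\in[0,z+1]$ is stochastically dominated by a Poisson random variable of parameter $O(N)$. A Chernoff bound on the restricted sum $\sum_i\mathbbm{1}_{\{\omega_i>\epsilon^{1/3}\}}$ then shows that the number of ``bad'' particles is below $N\epsilon^{2/3}/4$ with probability at least $1-L_1\epsilon^{1/6}$, provided $N\geq N_\epsilon$. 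Call the resulting good event $\mathcal{G}$.

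On $\mathcal{G}$, for each particle absorbed at time $\tau\in(t^-,t+\epsilon]$, I use the identity
\[
X^{i,N}_{t^-} \;=\; X^{i,N}_{\tau^-} \;+\; (\Lambda^N_{\tau^-}-\Lambda^N_{t^-}) \;-\; (B^{i,N}_{\tau^-}-B^{i,N}_{t^-})
\]
together with $X^{i,N}_{\tau^-}\in[0,\Delta\Lambda^N_\tau]$ to get $X^{i,N}_{t^-}\in[-\omega_i,\,(\Lambda^N_\tau-\Lambda^N_{t^-})+\omega_i]$. Fix $x\leq(\Lambda^N_{t+\epsilon}-\Lambda^N_{t^-})-2\epsilon^{1/3}$ and define $\tau_x:=\inf\{s\geq t^-:\Lambda^N_s\geq\Lambda^N_{t^-}+x+\epsilon^{1/3}\}$. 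Every absorption strictly before $\tau_x$ satisfies $\Lambda^N_\tau-\Lambda^N_{t^-}\leq x+\epsilon^{1/3}$, so its (good) trajectory is located in $[0,x+2\epsilon^{1/3}]$ at time $t^-$; transporting the per-jump physicality at $\tau_x$ back to $t^-$ costs a further $\epsilon^{1/3}$ of spatial width and places the remaining absorbed mass needed to reach the total $N(x+\epsilon^{1/3})$ inside $[0,x+3\epsilon^{1/3}]$. Subtracting the $N\epsilon^{2/3}/4$ bad particles then gives $\nu^N_{t^-}([0,x+3\epsilon^{1/3}])\geq x+\epsilon^{1/3}-\epsilon^{2/3}/4\geq x$ for $\epsilon$ small enough.

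The main technical obstacle is the final jump at $\tau_x$: a large $\Delta\Lambda^N_{\tau_x}$ would push the trajectories of particles absorbed at $\tau_x$ well beyond $[0,x+3\epsilon^{1/3}]$ if one used only the crude bound $X^{i,N}_{t^-}\leq\Lambda^N_{\tau_x}-\Lambda^N_{t^-}+\omega_i$. The fix is to invoke the per-jump physicality $\nu^N_{\tau_x^-}([0,y])\geq y$ for $y\leq\Delta\Lambda^N_{\tau_x}$ and transport that inequality back from $\nu^N_{\tau_x^-}$ to $\nu^N_{t^-}$ using only the oscillation bound on good particles -- this is the Poisson-input analogue of the i.i.d.-input argument in \cite[Lemma~B8.2]{minimal}. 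Combining everything yields $\mathbb{P}(A^{N,\epsilon})\geq\mathbb{P}(\mathcal{G})\geq 1-\tilde L\epsilon^{1/6}$ for some $\tilde L$ independent of $\epsilon$ and of $N\geq N_\epsilon$.
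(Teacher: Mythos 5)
Your approach is in the spirit of \cite[Lemma B8.2]{minimal} but diverges from the paper at a key structural point, and it has one genuine gap.

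\textbf{Structural difference.} The paper does not transport per-jump physicality through a stopping time $\tau_x$. Instead it introduces the events $E_1^{i,k}=\{X^{i,N}_{t^-}-\tfrac{k}{N}-\sup_{h\le\epsilon}|B^{i,N}_{t+h}-B^{i,N}_t|\le 0,\ \tau_i\ge t\}$ and obtains the deterministic inequality $\sum_i\mathbbm{1}_{E_1^{i,k}}\ge k$ whenever $k\le N(\Lambda^N_{t+\epsilon}-\Lambda^N_{t^-})$, directly from the monotonicity comparison Lemma~\ref{Alemmatoboundathing} applied to the barrier frozen at level $\Lambda^N_{t^-}+\tfrac{k-1}{N}$ on $[t,t+\epsilon]$. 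This collapses the whole jump history on $(t,t+\epsilon]$ into a single counting event and avoids the case analysis around the terminal jump $\tau_x$. Your $\tau_x$ decomposition can be made to work (your handling of the final jump via physicality at $\tau_x^-$ and the choice $y=(x+\epsilon^{1/3})-(\Lambda^N_{\tau_x^-}-\Lambda^N_{t^-})\le\Delta\Lambda^N_{\tau_x}$ is sound), but it is strictly more cumbersome than the one-shot comparison the paper uses, and it does not buy anything here.

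\textbf{The gap: far-initialised particles.} You control only the number of bad particles ($\omega_i>\epsilon^{1/3}$) among those with initial position in $[0,z+1]$. But the particles you track---those absorbed in $[t,\tau_x)$ or alive near the barrier at $\tau_x^-$---are not restricted to have started in $[0,z+1]$. On $\{\Lambda^N_T\le z\}$, a particle started at $w>z+1$ can be near the barrier at time $t^-$ with probability of order $\Phi\bigl(-(w-z)/\sqrt{T}\bigr)$, which is a fixed positive constant once $w-z$ is fixed, and integrating over $w>z+1$ gives an $O(N)$ expected number of such tracked particles. Those may also be bad, and your $\mathcal{G}$ event says nothing about them, so the final subtraction $\nu^N_{t^-}([0,x+3\epsilon^{1/3}])\ge(x+\epsilon^{1/3})-\epsilon^{2/3}/4$ does not follow: the error from far tracked particles is $O(1)$ in $\epsilon$, not $o(\epsilon^{1/3})$. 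The paper avoids this by truncating at $M=M(\epsilon)=2c_3T\sqrt{-\log\epsilon}+z$ rather than a fixed level, and separately bounding (i) bad particles started below $M$ and (ii) $E_1$-particles started above $M$. Both errors then vanish as $\epsilon\to0$, and it is precisely these terms that produce the $\epsilon^{1/6}$ rate in the statement (your Chernoff bound on the restricted sum is far better than $\epsilon^{1/6}$, which should itself have been a signal that the dominant error term was missing). To repair your proof you would need to replace the fixed truncation $z+1$ by a slowly growing $M(\epsilon)$, re-derive a bad-particle estimate on $[0,M(\epsilon)]$ (Markov on the fourth moment, or reflection plus Chernoff, both work), and add an explicit Gaussian-tail estimate for tracked/absorbed particles that start above $M(\epsilon)$.
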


The proof follows from a modification of the arguments in \cite[Lemma B8.2]{minimal} and is given in section \ref{physicalappen} in the appendix.

\begin{proposition}
\label{physi}
    Almost surely $\hat\Lambda$ satisfies the physicality condition \eqref{phys3}.
\end{proposition}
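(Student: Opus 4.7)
The plan is to pass $N \to \infty$ in the particle-level estimate of Lemma \ref{lem:physparticleestimate} at each jump time of $\hat\Lambda$, following the approach of \cite[Theorem 6.4]{minimal} with an extra truncation step to handle the infinite particle number. Fix a jump time $t$ of $\hat\Lambda$ and set $\bar\Delta := \Delta\hat\Lambda_t > 0$. Since $(\hat\mu, \hat\Lambda)$ already solves the McKean--Vlasov system by Proposition \ref{proplimit}, the identity $\Delta\hat\Lambda_t = \hat\mu(\tau = t) = \hat\nu_{t^-}((0, \bar\Delta])$ gives $\hat\nu_{t^-}([0,\bar\Delta]) = \bar\Delta$, and the main content of the physicality condition \eqref{phys3} is the inequality
$$\hat\nu_{t^-}([0, x]) \geq x \qquad \text{for every } x \in (0, \bar\Delta);$$
the complementary direction that $\bar\Delta$ is the infimum (not merely a lower bound) will follow from the regularity of the density of $\hat\nu_{t^-}$ combined with the equality $\hat\nu_{t^-}([0,\bar\Delta]) = \bar\Delta$.

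To establish this inequality I would first choose $\epsilon_n \downarrow 0$ with $\sum_n \epsilon_n^{1/6} < \infty$ and each $t + \epsilon_n$ a continuity point of $\hat\Lambda$. Lemma \ref{lem:physparticleestimate} together with Borel--Cantelli then gives, almost surely for $n$ large and $N_n \geq N_{\epsilon_n}$,
$$\nu^{N_n}_{t^-}([0, x + 3\epsilon_n^{1/3}]) \geq x \quad\text{for all } x \leq \Lambda^{N_n}_{t+\epsilon_n} - \Lambda^{N_n}_{t^-} - 2\epsilon_n^{1/3}.$$
To transfer this to $\hat\nu_{t^-}$, I would write $B_y := \{\phi \in \tilde D : \phi_{t^-} \in [0, y],\ \tau(\phi) \geq t\}$, so that $\nu^N_{t^-}([0, y]) = \hat\mu^N(B_y) \geq \hat\mu^N_{S_K}(B_y)$ for any $K > 0$. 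The weak convergence $\hat\mu^N_{S_K} \to \hat\mu_{S_K}$, combined with an open/closed sandwiching in $y$ (as in the proof of Lemma \ref{lemmac}) to handle the discontinuity of the functional $\rho \mapsto \rho(B_y)$, yields $\liminf_N \hat\mu^N_{S_K}(B_y) \geq \hat\mu_{S_K}(B_y)$ for $y$ outside a countable set. The truncation loss is controlled exponentially by Lemma \ref{lemma 1.1}, and sending $K \to \infty$ recovers $\hat\nu_{t^-}([0, y])$ in the limit.

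The main obstacle is the behaviour of $\Lambda^N$ near the jump time $t$ in the M1 topology: $\Lambda^N_{t^-}$ need not converge to $\hat\Lambda_{t^-}$, only to some subsequential limit $\ell \in [\hat\Lambda_{t^-}, \hat\Lambda_t]$, so $\Lambda^N_{t+\epsilon_n} - \Lambda^N_{t^-}$ may tend only to $\hat\Lambda_{t+\epsilon_n} - \ell$, possibly much smaller than $\bar\Delta$, and the particle-level bound then only delivers the inequality for $x < \hat\Lambda_t - \ell$. Following the M1 continuity argument of \cite[Theorem 6.4]{minimal}, the resolution is to apply the lemma also at earlier continuity points $t - \delta$ where $\Lambda^N_{t-\delta}$ is close to $\hat\Lambda_{t^-}$, and to combine the resulting estimates using the monotonicity of $\Lambda^N$ and the Brownian smoothing of particle positions over the intervals of length $\delta$, thereby recovering the full range $x < \bar\Delta$. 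Sending $n \to \infty$, then $K \to \infty$, then $\delta \downarrow 0$ gives $\hat\nu_{t^-}([0, x]) \geq x$ for $x$ in a dense subset of $(0, \bar\Delta)$, hence for every such $x$ by right-continuity of $y \mapsto \hat\nu_{t^-}([0, y])$. Since the set of jump times of $\hat\Lambda$ is countable, a union bound over these times gives the physicality at every jump simultaneously, almost surely.
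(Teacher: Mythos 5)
Your proposal follows essentially the same route as the paper: Lemma~\ref{lem:physparticleestimate} at the particle level, restriction to the initially bounded sets $S_K$ to upgrade vague to weak convergence, an exponential tail bound to control the truncation loss, and a limiting procedure at continuity points of $\hat\Lambda$ to circumvent the M1 difficulty at jump times. You correctly identify the key obstacle --- that at a jump time $t$ of $\hat\Lambda$ the prelimit $\Lambda^N_{t^-}$ need not converge to $\hat\Lambda_{t^-}$ --- but your stated resolution (``combine the resulting estimates using the monotonicity of $\Lambda^N$ and the Brownian smoothing of particle positions over the intervals of length $\delta$'') is imprecise where the paper is explicit. The paper never applies the particle estimate at the jump time $t$ at all: Step~2 is carried out only for pairs of continuity points $t_n,\,t_n+\epsilon_n$ in the co-countable set $J$, and Step~3 takes $t_n\uparrow t$, $t_n+\epsilon_n\downarrow t$ so that $\Lambda_{t_n+\epsilon_n}-\Lambda_{t_n^-}\to\Delta\Lambda_t$, then passes from $\nu_{t_n^-}$ to $\nu_{t^-}$ via the left continuity of $s\mapsto\nu_{s^-}$ (a Dominated Convergence argument on the limiting McKean--Vlasov system, not a particle-level smoothing estimate or a monotonicity argument). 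Your ``Brownian smoothing'' gestures at this left continuity, but is not the argument as written, and the role of $\Lambda^N$ monotonicity you invoke is not part of it. Two further discrepancies worth flagging: you invoke Borel--Cantelli along $\sum_n\epsilon_n^{1/6}<\infty$ where the paper uses a reverse-Fatou/limsup argument (both devices are workable, so this is cosmetic); but your justification of the complementary inequality $\Delta\hat\Lambda_t\geq\inf\{x:\hat\nu_{t^-}([0,x])<x\}$ via ``regularity of the density combined with $\hat\nu_{t^-}([0,\bar\Delta])=\bar\Delta$'' is not a proof --- that equality is compatible with $\hat\nu_{t^-}([0,x])\geq x$ holding strictly past $\bar\Delta$ as well --- and the paper instead invokes the dynamical cascade argument of \cite[Proposition 1.2]{2} for this direction, which is the step you would need to supply.
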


This follows from adapting the proof of \cite[Theorem 6.4]{minimal} to the case of a non integrable density. This is done by considering the restriction of measures to sets $S_K$, then taking a limit over $K$, analogously to the proof of Lemma \ref{lemmac}. We provide a full proof in Section \ref{physicalappen} of the appendix.

Combined with Proposition \ref{proplimit}, this concludes the proof of Theorem \ref{maintheorem}.

\section*{Funding}

This publication is based on work supported by the EPSRC Centre for Doctoral
Training in Mathematics of Random Systems: Analysis, Modelling and Simulation (EP/S023925/1). D.G.M. Flynn is funded by the Charles Coulson Scholarship. 
For the purpose of open access, the author has applied a CC BY public
copyright licence to any author accepted manuscript arising from this
submission.

\bibliography{bible}

\begin{thebibliography}{10}

\bibitem{BHJ2025}
{\sc G.~Baker, B.~Hambly, and P.~Jettkant}, {\em Particle systems and {M}c{K}ean-{V}lasov dynamics with singular interaction through local times}.
\newblock ArXiv:2503.08837, 2025.

\bibitem{zeroundercool}
{\sc G.~Baker and M.~Shkolnikov}, {\em {Zero kinetic undercooling limit in the supercooled Stefan problem}}, Ann. Inst. Henri Poincar\'e Probab. Stat., 58 (2022), pp.~861 -- 871.

\bibitem{PDEresult}
{\sc E.~Bayraktar, G.~Guo, W.~Tang, and Y.~P. Zhang}, {\em {M}c{K}ean–{V}lasov equations involving hitting times: Blow-ups and global solvability.}, Ann. Appl. Probab., 34 (2024), pp.~1600--1622.

\bibitem{Calvez2012}
{\sc V.~Calvez, R.~J. Hawkins, N.~Meunier, and R.~Voituriez}, {\em Analysis of a nonlocal model for spontaneous cell polarization}, SIAM J. Appl. Math., 72 (2012), pp.~594--622.

\bibitem{chadam}
{\sc J.~Chadam and P.~Ortoleva}, {\em The stabilizing effect of surface tension on the development of the free boundary in a planar, one-dimensional, {C}auchy-{S}tefan problem}, IMA J. Appl. Math., 30 (1983), pp.~57--66.

\bibitem{dlachayesswindle}
{\sc L.~Chayes and G.~Swindle}, {\em Hydrodynamic limits for one-dimensional particle systems with moving boundaries}, Ann. Probab., 24 (1996), pp.~559--598.

\bibitem{minimal}
{\sc C.~Cuchiero, S.~Rigger, and S.~Svaluto{-}Ferro}, {\em Propagation of minimality in the supercooled {S}tefan problem}, Ann. Appl. Probab., 33 (2023), pp.~1588--1618.

\bibitem{delarue:hal-00870991}
{\sc F.~Delarue, J.~Inglis, S.~Rubenthaler, and E.~Tanr{\'e}}, {\em {First hitting times for general non-homogeneous 1d diffusion processes: density estimates in small time}}.
\newblock hal-00870991, 2013.

\bibitem{DIRT1}
{\sc F.~Delarue, J.~Inglis, S.~Rubenthaler, and E.~Tanr{\'e}}, {\em {Global solvability of a networked integrate-and-fire model of McKean–Vlasov type}}, Ann. Appl. Probab., 25 (2015), pp.~2096 -- 2133.

\bibitem{delarue2015particle}
{\sc F.~Delarue, J.~Inglis, S.~Rubenthaler, and E.~Tanré}, {\em Particle systems with a singular mean-field self-excitation. application to neuronal networks}, Stoch. Proc. Applic., 125 (2015), pp.~2451--2492.

\bibitem{3}
{\sc F.~Delarue, S.~Nadtochiy, and M.~Shkolnikov}, {\em Global solutions to the supercooled {S}tefan problem with blow-ups: regularity and uniqueness}, Probab. Math. Phys., 3 (2022), p.~171–213.

\bibitem{Dembo_2019}
{\sc A.~Dembo and L.-C. Tsai}, {\em Criticality of a randomly-driven front}, Arch. Ration. Mech. Anal., 233 (2019), p.~643–699.

\bibitem{4}
{\sc J.~N. Dewynne, S.~D. Howison, J.~R. Ockendon, and W.~Q. Xie}, {\em Asymptotic behavior of solutions to the {S}tefan problem with a kinetic condition at the free boundary}, J. Austral. Math. Soc. Ser. B., 31 (1989), p.~81–96.

\bibitem{11}
{\sc S.~N. Ethier and T.~G. Kurtz}, {\em Markov Processes: Characterization and Convergence}, Wiley Series in Probability and Mathematical Statistics: Probability and Mathematical Statistics, John Wiley \& Sons, New York, 1986.

\bibitem{feinstein2022contagious}
{\sc Z.~Feinstein and A.~Søjmark}, {\em Contagious {M}c{K}ean-{V}lasov systems with heterogeneous impact and exposure}, Finance Stoch., 27 (2023), pp.~663--711.

\bibitem{NSsurfacetension}
{\sc Y.~Guo, S.~Nadtochiy, and M.~Shkolnikov}, {\em Stefan problem with surface tension: Uniqueness of physical solutions under radial symmetry}, Archive for Rational Mechanics and Analysis, 248 (2024), p.~90.

\bibitem{2}
{\sc B.~Hambly, S.~Ledger, and A.~Søjmark}, {\em A {M}c{K}ean--{V}lasov equation with positive feedback and blow-ups}, Ann. Appl. Probab., 29 (2019), pp.~2338 -- 2373.

\bibitem{kallenberg}
{\sc O.~Kallenberg}, {\em Random Measures, Theory and Applications}, Springer Cham, 2017.

\bibitem{5}
{\sc S.~Ledger and A.~Søjmark}, {\em Uniqueness for contagious {M}c{K}ean–{V}lasov systems in the weak feedback regime}, Bull. Lond. Math. Soc., 52 (2020), pp.~448--463.

\bibitem{ledger2021mercy}
{\sc S.~Ledger and A.~Søjmark}, {\em At the mercy of the common noise: Blow-ups in a conditional {M}c{K}ean--{V}lasov problem}, Electron. J. Probab., 29 (2021), pp.~1--39.

\bibitem{ledger2024supercooledstefanproblemtransport}
\leavevmode\vrule height 2pt depth -1.6pt width 23pt, {\em The supercooled {S}tefan problem with transport noise: weak solutions and blow-up}.
\newblock Arxiv:2409.20421, 2024.

\bibitem{7}
{\sc V.~A. Malyshev and A.~A. Zamyatin}, {\em Accumulation on the boundary for one-dimensional stochastic particle system}, Probl. Inf. Transm., 43 (2007), pp.~331--343.

\bibitem{mustapha2023wellposednesssupercooledstefanproblem}
{\sc S.~Mustapha and M.~Shkolnikov}, {\em Well-posedness of the supercooled {S}tefan problem with oscillatory initial conditions}, Electron. J. Probab., 29 (2024), pp.~Paper No. 193, 21.

\bibitem{NS19}
{\sc S.~Nadtochiy and M.~Shkolnikov}, {\em Particle systems with singular interaction through hitting times: Application in systemic risk modeling}, Ann. Appl. Probab., 29 (2019), pp.~89--129.

\bibitem{nadtochiy2022tension}
{\sc S.~Nadtochiy and M.~Shkolnikov}, {\em {S}tefan problem with surface tension: global existence of physical solutions under radial symmetry}, Probab. Theory Related Fields, 187 (2023), p.~385–422.

\bibitem{dlaNS}
{\sc S.~Nadtochiy, M.~Shkolnikov, and X.~Zhang}, {\em {Scaling limits of external multi-particle DLA on the plane and the supercooled Stefan problem}}, Ann. Inst. Henri Poincar\'e Probab. Stat., 60 (2024), pp.~658 -- 691.

\bibitem{Ricci_Weiqing_1991}
{\sc R.~Ricci and X.~Wei~Qing}, {\em On the stability of some solutions of the {S}tefan problem}, European J. Appl. Math., 2 (1991), pp.~1--15.

\bibitem{whitt}
{\sc W.~Whitt}, {\em An Introduction to Stochastic-Process Limits and Their Application to Queues}, Springer New York, 2011.

\end{thebibliography}
\bibliographystyle{siam}

\appendix
\section{Continuity of $\tilde h$}
\label{Continuity of h}
In this subsection we prove the continuity of the mapping $\tilde h$.

\begin{proof}
Let $\zeta^n\rightarrow \zeta\in M(\mathcal{C})$ and $l^n\rightarrow l\in \tilde D$. Let $f:\tilde D\rightarrow\mathbb{R}$ be bounded, 1-Lipschitz, and with bounded support.  Since $f$ has bounded support, it is supported on some set $A_1:=\{y\in \tilde D:||y||_\infty\leq M_1\}$. Since $l^n\rightarrow l$, the sequence $l^n$ is contained in some set $A_2:=\{y\in \tilde D:||y||_\infty\leq M_2\}$. Hence for every integer $n$, $f(x-l^n)=0$ 
is supported on the set $A_3$ where $A_3$ is the set $\{x=y\in\tilde D:||y||_\infty\leq M_1+M_2\}$. 

It follows that 
\begin{align*}
&\limsup_{n\rightarrow\infty}\left|\int f(x)d\tilde h(\zeta^n,l^n)-\int f(x)d\tilde h(\zeta,l)\right|\\&=\limsup_{n\rightarrow\infty}\left|\int_{A_3} f(x-l^n)d\zeta^n-\int_{A_3}f(x-l)d\zeta\right|\\
&\leq \limsup_{n\rightarrow\infty}d_{M1}(l^n,l)\limsup_{n\rightarrow\infty}\zeta^n(A_3)+\limsup_{n\rightarrow\infty}\left |\int f(x-l)d\zeta^n(x)-\int f(x-l)d\zeta(x)\right|\\
&=0
\end{align*}

The inequality follows since $f$ is 1-Lipschitz. The final equality follows by the vague convergence of $\zeta^n\rightarrow\zeta$ and the M1 convergence of $l^n\rightarrow l$, since $m(x)=f(x-l)$ is continuous and bounded with bounded support, and since $A_3$ is closed and bounded. 
\end{proof}
\section{Laplace transform}
\label{itoformula}

We prove the result of Lemma \ref{Ito}.

Recall that for a solution $\Lambda$ to \eqref{alternativetomvr}, we denote by $W_t^i$ the value $X_0^i+B_t^i=X_t^i+\Lambda_t$. Further, we denote by $ V(t,x)=\tilde V(t,x-\Lambda_{t})$ the density $\sum_{i=1}^\infty\mathbb{P}(W_t^i\in dx,\tau_i>t)$, and similarly we denote by $ V(t^-,x)=\tilde V(t^-,x-\Lambda_{t^-})$ the density $\sum_{i=1}^\infty\mathbb{P}(W_t^i\in dx,\tau_i\geq t)$. We must check that for an arbitrary $\lambda>0$ and $t>0$:
\begin{align*}&\int_{\Lambda_t}^\infty V(t,x)e^{-\lambda x-\frac{\lambda^2t}{2}}dx+\int_0^\infty (1-g(x))e^{-\lambda x}dx-\frac{1}{\lambda}e^{-\lambda \Lambda_t-\frac{\lambda^2t}{2}}\\&=\frac{\lambda}{2}\int_0^Te^{-\lambda\Lambda_{s^-}-\frac{\lambda^2s}{2}}ds+\sum_{s\leq t}e^{-\frac{\lambda^2s}{2}}\left(-\int_{\Lambda_{s^-}}^{\Lambda_s} V(s^-,x)e^{-\lambda x}dx-\frac{1}{\lambda}\Delta e^{-\lambda\Lambda_s}\right).
\end{align*}

We first apply Ito's formula to $e^{-\lambda W_t^i-\frac{\lambda^2t}{2}}\mathbbm{1}_{\{\tau_i\leq t\}}$. This yields:

\begin{align*}
    e^{-\lambda W_t^i-\frac{\lambda^2t}{2}}\mathbbm{1}_{\{\tau_i>t\}}-e^{-\lambda X_{0^-}^i}=-\lambda \int_0^t e^{-\lambda W_s^i-\frac{\lambda^2s}{2}}\mathbbm{1}_{\{\tau_i> s\}}dB_s^i-e^{-\lambda W_{\tau_i}^i-\frac{\lambda^2\tau_i}{2}}\mathbbm{1}_{\{\tau_i\leq t\}}.
\end{align*}

We take expectations, then sum over $i\in \mathbb{N}$. Checking integrability, we then obtain that

\begin{align}
\label{itoterm2}
\mathbb{E}\left(\sum_{i=1}^\infty e^{-\lambda W_t^i-\frac{\lambda^2t}{2}}\mathbbm{1}_{\{\tau_i\geq t\}}\right)-\mathbb{E}\left(\sum_{i=1}^\infty e^{-X_{0^-}^i}\right)=-\mathbb{E}\left(\sum_{i=1
}^\infty e^{-\lambda W_{\tau_i}^i}\mathbbm{1}_{\{\tau_i\leq t\}}\right).
\end{align}

Write $J$ for the set of discontinuities of $\Lambda_t$.
If $\tau_i\in J^c$, it holds that $W_{\tau_i}=\Lambda_{\tau_i}$. Replacing the $W_{\tau_i}$ terms by $\Lambda_{\tau_i}$ terms at all times $\tau_i\notin J$, the term in the right hand side of the above equation can then be rewritten as 
\begin{align}
\label{discsum}-\left(\mathbb{E}\left(\int_0^te^{-\lambda \Lambda_{s^-}-\frac{\lambda^2s}{2}}d\left(\sum_{i=1}^\infty \mathbbm{1}_{\{\tau_i\leq s\}}\right)\right)+\sum_{s\in J\cap[0,t]}\mathbb{E}\left(\sum_{i=1}^\infty e^{-\frac{\lambda^2s}{2}}\left(e^{-\lambda W_{s}^i}-e^{-\lambda \Lambda_{s^-}}\right)\mathbbm{1}_{\{\tau_i =s\}}\right)\right)\end{align}

As stated in the proof of Lemma \ref{lem:aplus}, the process $\sum_{i=1}^\infty \mathbbm{1}_{\{\tau_i\leq t\}}-\Lambda_t$ is a martingale. Therefore, the first term in \eqref{discsum} can be replaced by the integral
$$\int_0^te^{-\lambda\Lambda_s-\frac{\lambda^2s}{2}}d\Lambda_s.$$
Each term in the sum over $s\in J$ may be rewritten as 
\begin{align*}
    &\mathbb{E}\left(\sum_{i=1}^\infty e^{-\frac{\lambda^2s}{2}}\left(e^{-\lambda W_{s}^i}-e^{-\lambda \Lambda_{s^-}}\right)\mathbbm{1}_{\{\tau_i\geq s\}}\mathbbm{1}_{\{W_s^i\leq \Lambda_s\}}\right)\\
    &=\int_{\Lambda_{s^-}}^{\Lambda_s}e^{-\frac{\lambda^2s}{2}}\left(e^{-\lambda x}-e^{-\lambda \Lambda_{s^-}}\right)V(s^-,x)dx.
\end{align*}

This follows directly by the definition of $V(t^-,x)$.
Denote by $\hat V_t$ the Laplace transform $\int_{0}^\infty \tilde V(t,x)e^{-\lambda x}dx$. The equation \eqref{itoterm2} can now be rewritten as 

\begin{align*}
    \hat V_te^{-\lambda\Lambda_t-\frac{\lambda^2t}{2}}-\hat V_0&=-\int_0^te^{-\lambda \Lambda_{s^-}-\frac{\lambda^2s}{2}}d\Lambda_s-\sum_{s\leq t}\int_{\Lambda_{s^-}}^{\Lambda_s}e^{-\frac{\lambda^2s}{2}}\left(e^{-\lambda x}-e^{-\lambda \Lambda_{s^-}}\right)V(s^-,x)dx,\\
    &=-\int_0^te^{-\lambda \Lambda_{s^-}-\frac{\lambda^2s}{2}}d\Lambda_s-\sum_{s\leq t}e^{-\frac{\lambda^2s}{2}}\left(\int_{\Lambda_{s^-}}^{\Lambda_s}V(s^-,x)e^{-\lambda x}dx-e^{-\lambda \Lambda_{s^-}}\Delta \Lambda_s\right).
\end{align*}

The first term on the right hand side is given by 
$$\int_0^te^{-\frac{\lambda^2s}{2}}d\left(\frac{e^{-\lambda \Lambda_s}}{\lambda }\right)-\sum_{s\leq t}e^{-\frac{\lambda^2s}{2}}\left(e^{-\lambda \Lambda_{s^-}}\Delta \Lambda_s+\frac{\Delta e^{-\lambda \Lambda_s} }{\lambda}\right).$$
Therefore, applying integration by parts to this term, we obtain that

\begin{align*}
    \hat V_te^{-\lambda\Lambda_t-\frac{\lambda^2t}{2}}-\hat V_0&=\frac{\lambda }{2}\int_0^te^{-\lambda \Lambda_s-\frac{\lambda^2s}{2}}ds+\frac{e^{-\lambda \Lambda_t-\frac{\lambda^2t}{2}}-1}{\lambda}-\sum_{s\leq t}e^{-\frac{\lambda^2s}{2}}\left(e^{-\lambda \Lambda_{s^-}}\Delta\Lambda_s+\frac{\Delta e^{-\lambda \Lambda_s}}{\lambda}\right)\\
    &-\sum_{s\leq t}e^{-\frac{\lambda^2s}{2}}\left(\int_{\Lambda_{s^-}}^{\Lambda_s}e^{-\lambda x}V(s^-,x)dx-e^{-\lambda\Lambda_{s^-}}\Delta \Lambda_s\right).\end{align*}

Rearranging this finally yields:
\begin{align*}
    &\left(\hat V_t-\frac{1}{\lambda}\right)e^{-\lambda \Lambda_t-\frac{\lambda^2t}{2}}+\left(\frac{1}{\lambda}-\hat V_0\right)\\
    &=\frac{\lambda}{2}\int_0^te^{-\lambda\Lambda_{s^-}-\frac{\lambda^2s}{2}}ds+\sum_{s\leq t}e^{-\frac{\lambda^2s}{2}}\left(-\int_{\Lambda_{s^-}}^{\Lambda_s}V(s^-,x)e^{-\lambda x}dx-\frac{1}{\lambda}\Delta e^{-\lambda\Lambda_s}\right).
\end{align*}

\section{Finite explosion times}\label{finiteexplosionsec}

\begin{proof}[Proof of Proposition \ref{explosiontime}]
    We start with the first statement. We can consider $T^*<\infty$, otherwise the statement is immediate.

    For an arbitrary $M\in\mathbb{Q}^+$ we define $$\Lambda^{N,M}_t=\Lambda^N_{t\wedge T^{N,M}}\wedge M.$$ 
    We replace $X_t^{i,N}$ by $X_{M,t}^{i,N}=W_t^{i,N}-\Lambda^{N,M}_{t}$, which we view as a process on $[0,T^*+1]$, and extend to processes on $[-1,T^*+2]$ in the same manner as we extend $\Lambda^N$ in Section \ref{convergencestatementsection}. We consider the empirical measure of the $(X_{M,t}^{i,N})_{t\in[-1,T^*+2]}$ particles, and label this empirical measure as $\mu^{N,M}$.  It follows from the definition of $T^{N,M}$ that either $T^{N,M}=T^*+1$, or that $\nu^N_{T^{N,M}-}([0,M-\Lambda^N_{T^{N,M}-}])\geq M-\Lambda^N_{T^{N,M}-}$. In either case, it holds that $\mu^{N,M}(\tau\leq t)\geq \Lambda^{N,M}_t$ for all $t<T^*+1$. 
    Through an analogous argument to the proof of Theorem \ref{maintheorem}, we can show that the processes
    $(\mu^{N,M},\Lambda^{N,M},\vartheta^N,T^{N,M})$ are tight, and that any subsequential limit $(\mu^M,\nu^M,\Lambda^M,\vartheta,T^M)$ is such that

 $(\mu^M,\Lambda^M)$ forms a solution to the McKean-Vlasov type equation \eqref{alternativetomvr}, on the time interval $[0,T^M)$. Therefore, $\Lambda^M$ agrees with the unique solution $\Lambda$ on this time interval. 

Suppose that there existed some $\epsilon>0$ such that $T^M\geq T^*+\epsilon$. We would then obtain a solution to \eqref{alternativetomvr} on a time interval extending past $T^*$. Therefore, for any convergent subsequence $N_K$, for any $\epsilon>0$, $\liminf_{N\rightarrow\infty}\mathbb{P}(T^{N_K,M}<T^*+\epsilon)=1.$
Taking a limit over integers $M$, $M\uparrow\infty$, we obtain that $$\lim_{M\uparrow\infty} \liminf_{N_K\rightarrow\infty}\mathbb{P}(T^{N_K,M}<T^*+\epsilon)=1.$$
For any sequence of integers growing to infinity, we can find a subsequence satisfying the above. Therefore, the result of the first statement holds.

For the reverse direction, we define for any $q\in \mathbb{Q}^+$, $\Lambda^{N,q}_t=\Lambda^N_{t\wedge T^{N}}\wedge q$. We take $\mu^{N,q}$ to be the empirical measure of particles $(W_{t\wedge T^{N}}^{i,N}-\Lambda^{N,q}_t)_{t\in[-1,T^*+2]}$. This system satisfies that $\Lambda^{N,q}_t= \mu^{N,q}(\tau\leq t)\wedge q$.
Analogously to first half of the proof, we can determine that that the processes
$((\mu^{N,q},\Lambda^q)_{q\in \mathbb{Q}^+},T^N)$ are tight (when viewed under the product topology), and that any subsequential limit 
 $((\mu^q,\Lambda^q)_{q\in\mathbb{Q}},T^\infty)$ is such that for each $q$, $(\mu^q,\Lambda^q_t)$ is a solution to the McKean-Vlasov system, up to the time $T^q:=\inf\{t\geq 0:\Lambda^q_t\geq q\}\wedge T^*$.
Therefore, $\Lambda^q$ agrees with the unique solution $\Lambda_t$, on the interval $[0,T^q)$. Note that $T^q\leq T^\infty$ for each $q$. Suppose that $T^\infty<T^*-\epsilon$, for some $\epsilon>0$. It must then hold that for all $q$, $\Lambda_{T^*+2}^q\leq (\Lambda_{T^q-}\wedge q)\leq \Lambda_{T^*-\epsilon}.$ Therefore, for any $q$ such that $\Lambda_{T^*-}> q\geq \Lambda_{T^*-\epsilon}$, it holds that $T^q=\infty$, and $\Lambda_t^q=\Lambda_t$ on the entire interval. This is impossible since $\Lambda_t$ approaches the value $\Lambda_{T^*-}$, while $\Lambda^q_t$ is bounded above by $q$. We therefore obtain that for any $\epsilon>0$, $T^\infty>T^*-\epsilon$. Therefore, for any convergent subsequence $T^{N_K}$,
$$\liminf_{N_K\rightarrow\infty}\mathbb{P}(T^{N_K}\leq T^*-\epsilon)=1.$$

For any sequence of integers increasing to infinity, such a convergent subsequence may be found. The result of the second statement therefore follows.
\end{proof}

\begin{proof}[Proof of Proposition \ref{CDFalpha}]

    We observe that there exists $x^*>0$ and $\beta<1$ such that $\sup_{t\leq T}\sup_{x\leq x^*}V(t,x)\leq \beta$. If not, we can find a time $t\leq T$, where $\lim_{x\downarrow 0}V(t,x)\geq 1$, and hence at this time, $\Lambda$ must explode to infinity. Using this bound on the density $V(t,x)$ and arguing as in \cite[Proposition 2.2]{3}, we can determine that there exists a constant $C_T$ such that $\Lambda$ is 1/2-H\"older continuous on $[0,T]$ and with H\"older norm less than $C_T$. We can also find $c_T>0$ such that for any $t\leq T$, $$\mathbb{E}\left(\sup_{\{x\leq \sup_{s\leq c_T}B_s+2C_T\sqrt{c_T}\}}V(t,x)\right)\leq(1+\beta)/2. $$

    We fix $\beta'$ such that $\beta<\beta'<1$, and we define a sequence $(z_i)_{i\in\mathbb{N}}$ inductively in the following manner:

    $$z_1=1, \quad (1+\beta)\sum_{i=1}^jz_i=(1+\beta') z_j \textrm{ for }j\geq 2.$$

    We can argue analogously to the proof of Theorem \ref{maintheorem} that if $M>\Lambda_T$, $(\Lambda^N_{t})_{t\leq T}\wedge M$ converges in distribution to $(\Lambda_t)_{t\leq T}$. Since $\Lambda$ is continuous, and M$1$ convergence to a continuous function is equivalent to uniform convergence, we further obtain that $\Lambda^N$ converges uniformly in probability to $\Lambda$ on $[0,T]$.

    Let us consider functions of the form $f_t^d=\Lambda_t+d\left(\sum_{i=1}^{\lceil T/c_T\rceil}z_i\right)$, for $d\in\mathbb{R}$.
    We define, with some abuse of notation, $$X_t^{i,d}=X_0^{i,N}+B_t^{i,N}-\Lambda_t-d\left(\sum_{i=1}^{\lceil T/c_T\rceil}z_i\right),$$ and $$\tau^{i,d}=\inf\{t\geq 0:X_t^{i,d}\leq 0\}.$$ Suppose that $\Gamma_N(f_t^d)\leq f_t^d$ for $t< t_1$. Then, for it to hold that $\Gamma_N(f_t^d)\leq f_t^d$ for all $t\leq t_2$, it is sufficient that 
    $$\Gamma_N^{t_1,d}(\Lambda)_t=\frac{1}{N}\sum_{i=1}^\infty\mathbbm{1}_{\{X_{t_1-}^{i,d}\leq \sup_{s\leq t}(-B^{i,N}_{s+t_1}+B^{i,N}_{t_1}+\Lambda_{t_1+s}-\Lambda_{t_1}-dz_2), \tau^{i,d}\geq{t_1}\}}\leq \Lambda_{t_1+t}-\Lambda_{t_1}+dz_2\textrm{ for }t\leq t_2-t_1.$$
Therefore:
\begin{align*}&\mathbb{P}(\Gamma_N(f_t^d)\leq f_t^d\textrm{ for }t\leq t_2)\\
&\leq \mathbb{P}(\Gamma_N(f_t^d)\leq f_t^d \textrm{ for }t< t_1)+\mathbb{P}(\Gamma_N^{t_1,d}(\Lambda)\leq f_{t_1+t}^d-f_{t_1-}^d\textrm{ for }t\leq t_2-t_1).\end{align*}
  
  The points $(X_{t_1-}^{i,d})_{\tau^{i,d}\geq{t_1}}$ are a Poisson point process, with an intensity function $V^d(t_1-,x)$, which is bounded above by $V(t,x+dz_1)\mathbbm{1}_{[0,\infty)}(x)$. For $d(\sum_{i=1}^{\lceil T/c_T\rceil }z_i)\leq C_T\sqrt{c_T}$, $t\leq c_T$, we may bound \begin{align*}\mathbb{E}(\Gamma_N^{t_1,d}(\Lambda)_t)&\leq \int_0^\infty \mathbb{P}(x\leq (\sup_{s\leq t}B_s+(\Lambda_{t_1+s}-\Lambda_{t_1}-z_2d)))V(t_1,x+z_1d)dx,\\
&\leq \int_0^\infty \mathbb{P}(x\leq \sup_{s\leq t}(B_s+\Lambda_{t_1+s}-\Lambda_{t_1})+(z_1+z_2)d)V(t_1,x)dx,\\
&= \Lambda_t-\Lambda_{t_1}+\mathbb{E}\left(\int_{\sup_{s\leq t}(B_s+\Lambda_{t_1+s}-\Lambda_{t_1})}^{\sup_{s\leq t}(B_s+\Lambda_{t_1+s}-\Lambda_{t_1})+(z_1+z_2)d}V(t_1,x)\right),\\
&\leq \Lambda_t-\Lambda_{t_1}+(z_1+z_2)d\mathbb{E}\left(\sup_{x\leq \sup_{\{s\leq c_T}B_s+2C_T\sqrt{c_T}\}}V(t_1,x)\right),\\
&\leq \Lambda_{t}-\Lambda_{t_1}+dz_2(1+\beta')/2.\end{align*}

Arguing identically to the proof of Lemma \ref{lem:aplus}, we can determine that if $y$ satisfies $(\sum_{i=1}^{\lceil T/c_T \rceil }z_i)y/\sqrt{N}\leq C_T\sqrt{c_T}$, then

$$\mathbb{P}\left(\sup_{t\leq c_T}\sqrt{N}(\Gamma_N^{t_1,y/\sqrt{N}}(\Lambda)_t-\Lambda_{t_1+t}+\Lambda_{t_1}-yz_2/\sqrt{N})\geq 0\right)\leq 2e^{-\frac{(\beta'-\beta)^2z_2^2y^2}{(\Lambda_T+\alpha)+(\beta'-\beta)z_2y/\sqrt{N}}}.$$

A similar bound holds for $t_1=0$, and so we obtain that
$$\mathbb{P}(\Gamma_N(f^{y/\sqrt{N}})_t\leq f^{y/\sqrt{N}}\textrm{ for }t\leq 2c_T)\geq 1-2*2e^{-\frac{(\beta'-\beta)^2z_2^2y^2}{\Lambda_T+\alpha+(\beta'-\beta)z_2y/\sqrt{N}}}.$$

We consider a partition of $[0,T]$ given by $t_0=0,$ and $t_{i+1}-t_i=c_T\wedge (T-t_i)_+.$ Iterating our above argument, we obtain that for $(\sum_{i=1}^{\lceil T/c_T \rceil }z_i)y/\sqrt{N}\leq C_T\sqrt{c_T}$
$$\mathbb{P}\left(\Gamma_N(f^{y/\sqrt{N}})_t\leq f^{y/\sqrt{N}}_t\textrm{ for }t\leq T\right)\geq 1- 2\left\lceil\frac{T}{c_T}\right\rceil e^{-\frac{(\beta'-\beta)^2y^2}{\Lambda_T+\alpha+z_{\lceil T/c_T\rceil}(\beta'-\beta)y/\sqrt{N}}}.$$

Applying Lemma \ref{Alemmatoboundathing}, we can see that this implies 

$$\mathbb{P}\left(\sup_{t\leq T}\sqrt{N}(\Lambda^N_t-\Lambda_t)\geq y\left(\sum_{i=1}^{\lceil T/c_T\rceil }z_i\right)\right)\leq \left\lceil\frac{T}{c_T}\right\rceil e^{-\frac{(\beta'-\beta)^2y^2}{\Lambda_T+\alpha+z_{(T/c_T)}(\beta'-\beta)y/\sqrt{N}}},$$

 and further that 
$$\lim_{y\rightarrow\infty}\limsup_{N\rightarrow\infty}\mathbb{P}\left(\sup_{t\leq T}\sqrt{N}(\Lambda^N_t-\Lambda_t)\geq y\left(\sum_{i=1}^{\lfloor T/c_T\rfloor }z_i\right)\right)=0.$$

From analogous arguments, we may also obtain that 
$$\lim_{y\rightarrow\infty}\limsup_{N\rightarrow\infty}\mathbb{P}\left(\sup_{t\leq T}\sqrt{N}(\Lambda_t-\Lambda^N_t)\geq y\right)=0.$$

Therefore, we have determined that $\sqrt{N}\sup_{t\leq T}|\Lambda^N_t-\Lambda_t|$ is stochastically bounded.

\end{proof}

\begin{proof}[Proof of Proposition \ref{BHJextension}]
    We assume without loss of generality that $\alpha\geq 1$, otherwise the result of \cite[Theorem 1.14]{BHJ2025} applies. By the workings of \cite[Proposition 3.2]{BHJ2025}, there exists $\beta<1$, and $x^*>0$ such that $\sup_{t\leq T}\mathbb{P}(X_t\leq x^*)\leq \beta/\alpha.$ From this it can be checked that there exists a constant $C_T$, such that $l$ is 1/2 H\"older continuous on $[0,T]$ with H\"older norm less than $C_T$. 
    
    We choose some $c_T\leq (\frac{x^*}{2\alpha C_T})^2$, and such that $\alpha\left(1-\frac{\beta}{\alpha}\right)\left(1-\textrm{erf}\left(\frac{x^*}{6\sqrt{2c_T}}\right)\right)\leq \frac{1-\beta}{2}.$
    We define a partition of $[0,T]$ by $t_i=0,$ $t_i-t_{i-1}=c_T\wedge (T-t_{i-1})_+$. We write $$\hat X_t^{j,N}=X_0^{j,N}+B_t^{j,N}-\alpha l_t+\hat L_t^{j,N},$$ where $$\hat L_t^{j,N}=\sup_{s\leq t}(X_0^{j,N}+B_s^{j,N}-\alpha l_s)_-,$$ and recall the definition $$X_t^{j,N}=X_0^{j,N}+B_t^{j,N}-\alpha \bar L_t^N+L_t^{j,N}$$ given above Proposition \ref{BHJextension}. 
    We define a random variable $l^N$:
    $$l^N_t:=\frac{1}{N}\sum_{i=1}^N\hat L_t^{i,N}.$$ 
    It follows directly from the definitions that $|\hat X_t^{j,N}-X_t^{j,N}|\leq 2\alpha\sup_{s\leq T}|l_s-\bar L_s^N|.$
    Further, we observe that $$\hat L_t^{j,N}-\hat L_{t_i}^{j,N}=\sup_{s\leq t-t_i}(X_{t_i}^{j,N}+(B^{j,N}_s-B^{j,N}_{t_i})-\alpha(l_t-l_{t_i}))_-$$ and $$L_t^{j,N}-L_{t_i}^{j,N}=\sup_{s\leq t-t_i}(X_{t_i}^{j,N}+(B^j_s-B^j_{t_i})-\alpha (\bar L^N_t-\bar L^N_{t_i}))_-.$$
    These are both zero, unless $\hat X_{t_i}^{j,N}+\inf_{s\leq t-t_i}(B^j_s-B^j_{t_i})-\alpha (l_t-l_{t_i})-3\alpha \sup_{s\leq T}|\bar L_s^N-l_s|\leq 0.$

    Adding this observation to the argument of \cite[Theorem 1.14]{BHJ2025}, we obtain 
    that, for an arbitrary $j$,
    \begin{align*}
    &\sup_{t_{j}\leq s\leq t_{j+1}}|\bar L_s^N-l_s|
    \\&\quad\leq |\bar L^N_{t_j}-l_{t_j}|+\sup_{t_{j}\leq s\leq t_{j+1}}|l_s^N-l_s|\\&\quad+\sup_{t_{j}\leq s\leq t_{j+1}}\alpha |\bar L_s^N-l_s|\frac{1}{N}\sum_{i=1}^N\mathbbm{1}_{\{\hat X_{t_j}^{i,N}\leq \sup_{t_{j}\leq s\leq t_{j+1}}(B_s^{i,N}-B_{t_j}^{i,N})+\alpha(l_{t_{j+1}}-l_{t_j})+3\alpha \sup_{s\leq T}|\bar L_s^N-l_s|\}},\\
    &\implies \sup_{t_{j}\leq s\leq t_{j+1}}|\bar L_s^N-l_s|\\
    &\quad\quad\quad\leq \left(|\bar L^N_{t_j}-l_{t_j}|+\sup_{t_{j}\leq s\leq t_{j+1}}|l_s^N-l_s|\right)\\
    &\hspace{1.5cm}\times\left(1-\alpha\frac{1}{N}\sum_{i=1}^N\mathbbm{1}_{\{\hat X_{t_j}^{i,N}\leq \sup_{t_{j}\leq s\leq t_{j+1}}(B_s^{i,N}-B_{t_j}^{i,N})+\alpha (l_{t_{j+1}}-l_{t_j})+3\alpha \sup_{s\leq T}|\bar L_s^N-l_s|\}}\right)^{-1}.
    \end{align*}
    Therefore, for any $K>0$:
    \begin{align}
        &\mathbb{P}(\sqrt{N}\sup_{t_j\leq s\leq t_{j+1}}|\bar L_s^N-l_s|\geq M,\sup_{s\leq T}|\bar L_s^N-l_s|\leq x^*/9\alpha)\label{eqtnonl}\\
        &\leq \mathbb{P}\left(\sqrt{N}\sup_{s\leq T}|l_s^N-l_s|\geq MK/2\right)+\mathbb{P}\left(1-\alpha \frac{1}{N}\sum_{i=1}^N\mathbbm{1}_{\{\hat X_{t_j}^{i,N}\leq \sup_{s\leq c_T}(B_{s+t_j}^{i,N}-B_{t_j}^{i,N})+5x^*/6\}}\leq K\right)\\&\hspace{0.4cm}+\mathbb{P}(\sqrt{N}|\bar L^N_{t_j}-l_{t_j}|\geq MK/2,\sup_{s\leq T}|\bar L_s^N-l_s|\leq x^*/9\alpha).\nonumber
\end{align}

We bound 
\begin{align*}
    \mathbb{P}(X_{t_j}^{i,N}\leq \sup_{s\leq c_T}B_s+5x^*/6)&=\int_0^\infty\left( \left(1-\textrm{erf}\left(\frac{x-5x^*/6}{\sqrt{2c_T}}\right)\right)\wedge 1 \right)\mathcal{L}(X_{t_j})(dx),\\
    &\leq \int_0^\infty \left( \left(1-\textrm{erf}\left(\frac{x-5x^*/6}{\sqrt{2c_T}}\right)\right)\wedge 1 \right) \left(\frac{\beta}{\alpha}\delta_{0}+\left(1-\frac{\beta}{\alpha}\right)\delta_{x^*}\right)(dx),\\
    &\leq \frac{\beta}{\alpha}+\left(1-\frac{\beta}{\alpha}\right)\left(1-\textrm{erf}\left(\frac{x^*}{6\sqrt{2c_T}}\right)\right),\\
    &\leq \beta/\alpha+(1-\beta)/(2\alpha),\\
    &\leq (1+\beta)/2\alpha.
\end{align*}
Choosing $K=(1-\beta)/4$, the second term on the right hand side of the inequality \eqref{eqtnonl} is bounded above by
$$\mathbb{P}\left(\frac{N(3+\beta)}{4\alpha}\leq Y\right),$$

where Y is a Binomial random variable, with parameters $N$, and $\frac{1+\beta}{2\alpha}$. The above probability converges to 0, as $N\rightarrow\infty$.

Using that $l$ is 1/2-H\"older continuous on $[0,T]$, from an identical argument to the proof of \cite[Theorem 1.14]{BHJ2025} we may determine that $\sqrt{N}\sup_{s\leq T}|l_s^N-l_s|$ is stochastically bounded. Therefore, 
$$\lim_{M\uparrow\infty}\limsup_{N\rightarrow\infty}\mathbb{P}\left(\sqrt{N}\sup_{s\leq T}|l_s^N-l_s|\geq \frac{(1-\beta)M}{4}\right)=0.$$

In total, from the inequality \eqref{eqtnonl}, we obtain that 
\begin{align*}&\lim_{M\uparrow\infty}\limsup_{N\rightarrow\infty}\mathbb{P}(\sup_{t_j\leq s\leq t_{j+1}}\sqrt{N}|\bar L_s^N-l_s|\geq M,\sup_{s\leq T}|\bar L_s^N-l_s|\leq x^*/9\alpha)\\
&=\lim_{M\uparrow\infty}\limsup_{N\rightarrow\infty}\mathbb{P}(\sqrt{N}|\bar L_{t_j}^N-l_{t_j}|\geq M,\sup_{s\leq T}|\bar L_s^N-l_s|\leq x^*/9\alpha)\end{align*}
Arguing inductively, starting with $T=t_{j_{max}}$, we see that
\begin{align*}&\lim_{M\uparrow\infty}\limsup_{N\rightarrow\infty}\mathbb{P}(\sup_{s\leq T}\sqrt{N}|\bar L_s^N-l_s|\geq M,\sup_{s\leq T}|\bar L_s^N-l_s|\leq x^*/9\alpha)\\
&=\lim_{M\uparrow\infty}\limsup_{N\rightarrow\infty}\mathbb{P}(\sqrt{N}|\bar L_0^N-l_0|\geq M,\sup_{s\leq T}|\bar L_s^N-l_s|\leq x^*/9\alpha)=0.\end{align*}
By \cite[Theorem 1.14]{BHJ2025}, $\liminf_{N\rightarrow\infty}\mathbb{P}(\sup_{s\leq T}|\bar L_s^N-l_s|\leq x^*/9\alpha)=1$. Therefore: 

$$\lim_{M\uparrow\infty}\limsup_{N\rightarrow\infty}\mathbb{P}(\sup_{s\leq T}\sqrt{N}|\bar L_s^N-l_s|\geq M)=0.$$
\end{proof}

\section{Proofs deferred from Section \ref{critsec}}
\label{critfull}
We first prove the scaling result given in Theorem \ref{crit}.
\begin{proof}[Proof of Theorem \ref{crit}]

\underline{Lower bound:}

    Recall that $\xi_{N^2}/N\stackrel{}{=}\Lambda^N_1$. The lower bound on $\xi_t$ in the theorem is thus equivalent to 
    $$\lim_{K\rightarrow 0}\liminf_{N\rightarrow\infty }\mathbb{P}(\Lambda^{N}_1\geq KN^{1/3})=1.$$ Consider for a fixed $N$, the function $\Gamma_N(l^{c,-1})$. From an analogous argument to the proof of Lemma \ref{lem:aplus} it follows that 
    $$\mathbb{P}(\sup_{t\leq 1}(\Gamma(l^{c,-1})_t-\Gamma_N(l^{c,-1})_t)\geq x)\leq 2e^{-\frac{Nx^2}{2(\Gamma(l^{c,-1})_1+x/3)}}.$$

    We choose $c=KN^{1/3}$, and $x=1/(2c)=\frac{1}{2KN^{1/3}}$. With this choice the above bound becomes $$\mathbb{P}(\sup_{t\leq 1}\Gamma(l^{KN^{1/3},-1})_t-\Gamma_N(l^{KN^{1/3},-1})_t\geq x)\leq 2e^{-\frac{1}{8(K^3+K^2/(6N^{1/3}))}}.$$  

    Further, applying Lemma \ref{critlem} equation \eqref{critlemcd} to lower bound $\Gamma(l^{c,-1})$, we obtain 
    $$\Gamma(l^{KN^{1/3},-1})_t\geq l^{KN^{1/3},-1}_t+\frac{1}{2KN^{1/3}},$$
    so that
    \begin{align*}\mathbb{P}(\sup_{t\leq 1}(l^{KN^{1/3},-1}_t-\Gamma_N(l^{KN^{1/3},-1})_t)\geq 0)&\leq \mathbb{P}(\sup_{t\leq 1}(\Gamma(l^{KN^{1/3},-1})_t-\Gamma_N(l^{KN^{1/3},-1})_t)\geq x)\\&\leq  2e^{-\frac{1}{8(K^3+K^2/(6N^{1/3}))}}.\end{align*}   
    Consider the event
    $$\Gamma_N(KN^{1/3}t-1)_t\geq KN^{1/3}t-1\textrm{ for }t\leq 1.$$
    Applying Lemma \ref{Alemmatoboundathing}, on this event it follows that $$\Lambda^{N}_1\geq N^{1/3}K-1.$$

    In total we obtain
    $$\mathbb{P}(\Lambda^N_1\geq KN^{1/3}-1)\geq 1- 2e^{-\frac{1}{8(K^3+K^2/(6N^{1/3}))}}.$$
    Sending $N\rightarrow\infty$, and $K\rightarrow0$ then gives the desired bound
    $$\lim_{K\rightarrow 0}\liminf_{N\rightarrow\infty }\mathbb{P}(\Lambda^N_1\geq KN^{1/3})=1.$$

\underline{Upper bound:}

As above, using the rescaling of $\xi_t$, the upper bound on $\xi_t$ in the theorem is equivalent to $$\lim_{K\rightarrow\infty}\limsup_{N\rightarrow\infty}\mathbb{P}(\Lambda^N_1\leq KN^{1/3})=1.$$

Recall from Section \ref{setupsection}, that $W_0^i$ are the points of a Poisson process of intensity $N$.
We define random CDF type functions $F^N,F$ by
\begin{align*}
&F^N(x)=\frac{1}{N}\sum_{i=1}^\infty\mathbbm{1}_{\{W_0^{i,N}\leq x\}},\\
&F(x)=NF^N(x/N).
\end{align*}
The process $(F(x))_{x>0}$ is a standard Poisson process. 

We consider a random function $f_t^{c,m}$, defined by 
\begin{align*}
    &f_t^{c,m}=ct+\varsigma_m,\\
    &\varsigma_m=\inf\{x:F^N(x)-x\leq -m\}.
\end{align*}

Since $(F^N(x+\varsigma_m)-F^N(\varsigma_m))_{x\geq 0}$ is independent of $(F^N(x))_{x\leq \varsigma_m}$, it holds that 
$$\Gamma_N({f_t^{c,m}})_t\stackrel{d}{=}\tilde \Gamma_N(l^c)_t+\varsigma_m-m,$$ where $\tilde \Gamma_N(l^c)_t=\frac{1}{N}\sum_{i=0}^\infty\mathbbm{1}_{\{W_0^{i,N}>\varsigma_m,W_0^{i,N}-\varsigma_m+\inf_{s\leq t}(B_s^{i,N}-cs)\leq 0\}}$ is identically distributed to $\Gamma_N(l^c)$, and is independent of $\varsigma_m$. We choose $c=N^{1/3}$ and $m=(K_1+1/2)N^{-1/3}$, where $K_1$ will be chosen later.

As in the previous step, we may determine 
$$\mathbb{P}(\sup_{t\leq 1}(\tilde \Gamma_N(l^c)_t-\Gamma(l^c)_t)>x)\leq 2e^{-\frac{Nx^2}{2(c+1/2c+x/3)}}.$$
Recalling from Lemma \ref{critlem} equation \eqref{critlemc} that $\Gamma({ct})_t\leq ct+1/2c$, we then obtain that 
\begin{align*}
\mathbb{P}(\sup_{t\leq 1}(\tilde \Gamma_N(l^{N^{1/3}})_t-N^{1/3}t)\geq m)&\leq\mathbb{P}(\sup_{t\leq 1}(\tilde \Gamma_N(l^{N^{1/3}})_t-\Gamma(l^{N^{1/3}})_t)\geq K_1N^{-1/3})
\\
&\leq 2e^{-\frac{K_1^2}{2(1+N^{-1/3}+3K_1N^{-1/3})}}
\end{align*}

Consider the event $\{\tilde \Gamma_N(l^{N^{1/3}})_t\leq N^{1/3}t+m$ for $t\leq 1\}$. On this event it follows that $\Gamma_N{(f_t^{c,m})}_t\leq N^{1/3}t+\varsigma_m=f_t^{c,m}$. Further, applying Lemma \ref{Alemmatoboundathing} it also follows that $\Lambda^{
N}_1\leq \varsigma_m+N^{1/3}$.
Therefore, 
\begin{equation}\label{eqtncrit1}\mathbb{P}(\Lambda^{N}_1\leq \varsigma_m+N^{1/3})\geq \mathbb{P}(\sup_{t\leq 1}(\tilde \Gamma_N(l^{N^{1/3}})_t-N^{1/3}t)<m)\geq 1- 2e^{-\frac{K_1^2}{2(1+N^{-1/3}+3K_1N^{-1/3})}}\end{equation}

To bound $\varsigma_m$, we first note:
\begin{align*}
    \varsigma_m&=\inf\{x:F^N(x)-x\leq -m\},\\
    &=\inf\{x:F(Nx)-Nx\leq -Nm\},\\
    &=\inf\{x:F(Nx)-Nx\leq -N^{2/3}(K_1+1/2)\},\\
    &=N^{1/3}\inf\left\{x:\frac{F(N^{4/3}x)-N^{4/3}x}{N^{2/3}}\leq -(K_1+1/2)\right\}.
\end{align*}

By Lemma \ref{lemmabm}, ${B^N_0(x)=(F(N^{4/3}x)-N^{4/3}x)}/{N^{2/3}}$ converges weakly to a standard Brownian motion. Applying \cite[Lemma 5.6]{DIRT1}, we obtain that for any $y\geq 0$:
\begin{align*}\lim_{N\rightarrow\infty}\mathbb{P}(\varsigma_m\leq N^{1/3}y)&=\lim_{N\rightarrow\infty}\mathbb{P}\left(\inf\left\{x:\frac{F(N^{4/3}x)-N^{4/3}x}{N^{2/3}}\leq(K_1+1/2)\right\}\leq y\right),\\
&=\mathbb{P}(\inf_{x\leq y}B_x\leq -(K_1+1/2)),\\
&= 2\left(1-\Phi\left(\frac{K_1+1/2}{\sqrt{y}}\right)\right),\\
&\geq 1-\frac{K_1+1/2}{\sqrt{2\pi y}},
\end{align*}
where $B$ denotes a standard Brownian motion.

Combining this with the inequality \eqref{eqtncrit1} yields
\begin{align*}
&\liminf_{N\rightarrow\infty }\mathbb{P}(\Lambda^N_1\leq (y+1)N^{1/3})\\
&\geq \liminf_{N\rightarrow\infty} \mathbb{P}(\Lambda^N_1\leq \varsigma_m+N^{1/3},\varsigma_m\leq yN^{1/3}),\\
&\geq 1-\liminf_{N\rightarrow\infty }\mathbb{P}(\Lambda^N_1\leq \varsigma_m+N^{1/3})-\liminf_{N\rightarrow\infty}\mathbb{P}(\varsigma_m>N^{1/3}y),\\
&\geq 1-2e^{-\frac{K_1^2}{2}}-\frac{K_1+1/2}{\sqrt{2\pi y}}.
\end{align*}

Taking $y=K-1$, and setting $K_1=K^{1/3}$, we therefore obtain that

$$\lim_{K\rightarrow\infty}\liminf_{N\rightarrow\infty }\mathbb{P}(\Lambda^N_1\leq KN^{1/3})=1.$$
\end{proof} 

We now prove Lemma \ref{lemmabm2}.
\begin{proof}[Proof of Lemma \ref{lemmabm2}]
    We shall show this result holds when $k=1$. It is simple to extend to the case of finite $k$. It is then also simple to extend to the case of $((B_t^N(x))_{x\in\mathbb{R}^+})_{t\in\mathbb{R^+}}$ by noting that any subsequence of this process has a convergent subsequence, and that the finite dimensional distributions converge appropriately. 
    
    From Lemma \ref{lemmabm}, for any $t\leq T$, the process $(B_0^N(x),B_{t}^N(x))_{x\in\mathbb{R^+}}$ converges weakly to a 2 dimensional Brownian motion $(B_0(x),B_t(x))_{x\in\mathbb{R^+}}$, with $(B_0(0),B_t(0))=0$. We claim that for any fixed $x>0$, 
    $$\mathbb{P}(|B_0(x)-B_t(x)|>0)=0.$$ Taking a union over $x \in \mathbb{Q}$, and then applying the continuity of the processes, this is sufficient to establish that $\mathbb{P}(B_t(x)=B_0(x)\hspace{0.2cm} \forall x\geq 0)=1.$

    To prove our claim, we first calculate exactly the first two moments of \\$\left(F^N_0(N^{1/3}x)-F^N_t(N^{1/3}x)\right)$. These are given by:
    \begin{align*}
    &\mathbb{E}\left(N^2\left(F^N_0(N^{1/3}x)-F^N_t(N^{1/3}x)\right)^2\right)=N^2H_1^2+NH_1+2NH_2,\\
    &\mathbb{E}\left(N\left(F^N_0(N^{1/3}x)-F^N_t(N^{1/3}x)\right)\right)=-NH_1,\\
    &H_1=\frac{\sqrt{t}}{\sqrt{2\pi}}e^{-\frac{N^{2/3}x^2}{2t}}-\frac{N^{1/3}x}{2}\left(1-\textrm{erf}\left(\frac{N^{1/3}x}{\sqrt{2t}}\right)\right),\\
    &H_2=\frac{N^{1/3}x}{2}\left(1-\textrm{erf}\left(\frac{N^{1/3}x}{\sqrt{2t}}\right)\right)+\frac{\sqrt{t}}{\sqrt{2\pi}}\left(1-e^{-\frac{N^{2/3}x}{2t}}\right).
    \end{align*}

    We may therefore bound 
    \begin{align*}
        \mathbb{E}\left(\left(B^N_0(x)-B^N_t(x)\right)^2\right)=\frac{H_1+2H_2}{N^{1/3}}\leq \frac{C_t}{N^{1/3}}.
    \end{align*}
     Above, $C_t$ is a constant depending only on $t$, and which is uniform in $N$.

    Fix $\epsilon>0$. Applying the Portmanteau theorem, and then Markov's inequality, we determine:
    \begin{align*}
        \mathbb{P}(|B_0(x)-B_t(x)|>\epsilon)\leq \liminf_{N\rightarrow\infty}  \frac{\mathbb{E}\left(\left(B^N_0(x)-B^N_t(x)\right)^2\right)}{\epsilon^2}=0.
    \end{align*}

    Sending $\epsilon\downarrow 0$ then proves the claim that $B_0(x)=B_t(x)$ almost surely.  
\end{proof}

\begin{proof}[Proof of Lemma \ref{lemma1incritlabel}]
We first give a Chernoff bound on $M$.
\begin{lemma}
\label{MNbd}
 There exists a constant $C_l$ such that, for arbitrary $x\geq N^{-1/3}$, and $z>0$, we may bound via a Chernoff bound
\begin{align}
&\mathbb{P}\left(NM_x^{N,y}-\mathbb{E}(NM_x^{N,y})\geq N^{2/3}z\right)\leq \exp\left(-N^{1/6}C_lz^2\right).
\end{align}
\end{lemma}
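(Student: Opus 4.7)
The plan is to compute the moment generating function of $NM_x^{N,y}$ exactly via Campbell's formula, and then apply a standard Chernoff bound. Writing $\hat B_s^i := B_{t+s}^{i,N} - B_t^{i,N}$ and $S_x^i := \sup_{s \leq x}\bigl(\tfrac{sN^{1/3}}{2a} + \hat B_s^i\bigr)$, we have $NM_x^{N,y} = \sum_{i=1}^\infty \phi_i$ with
\[
\phi_i = \mathbbm{1}_{\{W_0^{i,N} + B_t^{i,N} \leq y + S_x^i + \beta N^{1/3}\}} - \mathbbm{1}_{\{W_0^{i,N} + B_t^{i,N} \leq y + \beta N^{1/3} + \frac{xN^{1/3}}{2a}\}} \in \{-1, 0, 1\}.
\]
Since $(W_0^{i,N})_i$ form a Poisson point process of intensity $N$ on $\mathbb{R}^+$ with i.i.d.\ marks $(B_t^{i,N}, \hat B^{i,N})_i$ independent of the points, Campbell's formula gives
\[
\log \mathbb{E}[e^{\lambda NM_x^{N,y}}] = N \int_0^\infty \mathbb{E}[e^{\lambda \phi(w, B_t, \hat B)} - 1]\,dw.
\]
Restricting to $\lambda \in (0,1]$ and using $e^u \leq 1 + u + u^2$ for $|u| \leq 1$ together with $|\phi| \leq 1$ yields $\log \mathbb{E}\bigl[e^{\lambda(NM_x^{N,y} - \mathbb{E}[NM_x^{N,y}])}\bigr] \leq \lambda^2 N \int_0^\infty \mathbb{E}[\phi(w)^2]\,dw$.

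The next step is to estimate the variance integral. For each realisation of $(B_t, \hat B)$, the contribution $\phi(w, B_t, \hat B)$ is nonzero precisely when $w + B_t$ falls in the symmetric difference of the two threshold intervals, which is an interval of length $|\psi_x|$ with $\psi_x := S_x - \tfrac{xN^{1/3}}{2a}$. Hence the Lebesgue measure of $\{w \geq 0 : \phi(w) \neq 0\}$ is at most $|\psi_x|$, so $\int_0^\infty \mathbb{E}[\phi(w)^2]\,dw \leq \mathbb{E}[|\psi_x|]$. To bound the latter, I would decompose $\psi_x = \hat B_x + M_x^*$ with $M_x^* := S_x - \tfrac{xN^{1/3}}{2a} - \hat B_x \geq 0$ the drawdown from the running maximum to the endpoint of the drifted process $Z_s := \tfrac{sN^{1/3}}{2a} + \hat B_s$. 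A standard time-reversal identity gives $M_x^* \stackrel{d}{=} \sup_{u \leq x}\bigl(\tilde B_u - \tfrac{N^{1/3}u}{2a}\bigr)$ for a standard Brownian motion $\tilde B$, and since the supremum of a Brownian motion with negative drift $-\mu$ on $[0,\infty)$ is exponentially distributed with rate $2\mu$, $\mathbb{E}[M_x^*] \leq a/N^{1/3}$. Combined with $\mathbb{E}[|\hat B_x|] = \sqrt{2x/\pi}$, this gives $\mathbb{E}[|\psi_x|] \leq \sqrt{2x/\pi} + a/N^{1/3} \leq C\sqrt{x}$ uniformly for $x \geq N^{-1/3}$ and $N$ sufficiently large, with $C$ depending only on $a$.

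Finally, the standard Chernoff bound gives
\[
\mathbb{P}\bigl(NM_x^{N,y} - \mathbb{E}[NM_x^{N,y}] \geq N^{2/3}z\bigr) \leq \exp\bigl(-\lambda N^{2/3}z + \lambda^2 CN\sqrt{x}\bigr)
\]
for $\lambda \in (0,1]$; optimising at $\lambda = z/\bigl(2C\sqrt{x}\,N^{1/3}\bigr)$, which lies in $(0,1]$ in the relevant range of $z$, gives exponent $-z^2 N^{1/3}/(4C\sqrt{x}) \leq -z^2 N^{1/3}/(4C\sqrt{l})$ for $x \leq l$. This is considerably stronger than the claimed $-N^{1/6} C_l z^2$ and establishes the lemma with $C_l := 1/(4C\sqrt{l})$, say, once $N$ is large enough that $N^{1/3} \geq N^{1/6}$. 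The only nontrivial computation is the drawdown/time-reversal identity used to bound $\mathbb{E}[M_x^*]$; otherwise this is a direct marked-Poisson Chernoff calculation and I do not anticipate any real obstacle.
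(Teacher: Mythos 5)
Your proof is correct, and it takes a genuinely different route from the paper's. The paper decomposes $NM_x^{N,y}=N\tilde G_x^{N,y}-N\tilde H_x^{N,y}$ as a difference of two independent Poisson random variables (counting points in disjoint regions), writes the moment generating function $\exp\bigl(c(e^{u}-1)+d(e^{-u}-1)\bigr)$ with $c=N c_x^{N,y}$, $d=N d_x^{N,y}$, optimises the Chernoff exponent at the exact minimiser $\tilde x=e^{u^*}$, and then wrestles with two-sided estimates on $c$ and $c-d$ to extract the rate. You instead apply Campbell's formula to the signed mark functional $\phi\in\{-1,0,1\}$ directly, truncate the MGF via the Bernstein inequality $e^{u}-1-u\le u^2$ for $|u|\le1$, and thereby reduce the task to estimating the second-moment integral $N\int_0^\infty\mathbb{E}[\phi(w)^2]\,dw$. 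Bounding this by $N\,\mathbb{E}|\psi_x|$ and then splitting $\psi_x=\hat B_x+M_x^*$ is a nice simplification: the time-reversal identity for the drawdown $M_x^*$ together with the exponential law of $\sup_{u\ge0}(\tilde B_u-\mu u)$ gives $\mathbb{E}[M_x^*]\le a/N^{1/3}$ with essentially no computation, avoiding the paper's explicit $\Phi$/erf evaluations. Your optimisation produces the exponent $-z^2 N^{1/3}/(4C\sqrt{x})$, which for $N^{-1/3}\le x\le l$ is stronger than the claimed $-C_l N^{1/6}z^2$, so the lemma follows comfortably.

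Two small caveats, both of which the paper's own proof shares so they are not weaknesses relative to it: the optimal $\lambda^*=z/(2CN^{1/3}\sqrt{x})$ is only in $(0,1]$ when $z\lesssim N^{1/6}$, so the bound is not literally uniform over ``arbitrary $z>0$'' as the statement claims (in the application $z$ is a fixed small multiple of $\epsilon$, so this is harmless); and the last step quietly uses $x\le l$, which is the regime in which the lemma is invoked and is what the subscript on $C_l$ already signals, but it is not written in the hypotheses. Neither of these constitutes a gap in the argument itself.
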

This is proven directly below the current proof.

Through analogous arguments to those of Lemma \ref{lem:aplus}, it holds that $G_x^{N,y}-\mathbb{E}(G_x^{N,y})$, and $H_x^{N,y}-\mathbb{E}(H_x^{N,y})$ are martingales under their natural filtrations. Further, for arbitrary $0\leq s\leq m$,

\begin{align*}&\mathbb{P}(\sup_{s\leq x\leq m}\left|G_x^{N,y}-G_s^{N,y}-\mathbb{E}(G_x^{N,y}-G_s^{N,y})\right|>z)\leq 2e^{-\frac{Nz^2}{2\left(\mathbb{E}\left(G_m^{N,y}-G_s^{N,y}\right)+z\right)}},\\
&\mathbb{P}(\sup_{s\leq x\leq m}\left|H_x^{N,y}-H_s^{N,y}-\mathbb{E}(H_x^{N,y}-H_s^{N,y})\right|>z)\leq 2e^{-\frac{Nz^2}{2\left(\mathbb{E}\left(H_m^{N,y}-H_s^{N,y}\right)+z\right)}}.
\end{align*}

It is simple to determine that $\mathbb{E}\left(H_m^{N,y}-H_s^{N,y}\right)\leq \frac{N^{1/3}(m-s)}{2a}$, while, 

\begin{align*}\mathbb{E}\left(G_m^{N,y}-G_s^{N,y}\right)&=\mathbb{E}\left(\int_{y+\sup_{{x\leq s}}\left(\frac{N^{1/3}x}{2a}+\hat B_x\right)}^{y+\sup_{x\leq m}(\frac{N^{1/3}x}{2a}+\hat B_x)}\Phi\left(\frac{z}{\sqrt{t}}\right)dz\right)\leq \frac{N^{1/3}(m-s)}{2a}+\frac{\sqrt{2(m-s)}}{\sqrt{\pi}}.\end{align*}

Therefore, the previous bound becomes

\begin{align}
&\mathbb{P}(\sup_{s\leq x\leq m}\left|G_x^{N,y}-G_s^{N,y}-\mathbb{E}(G_x^{N,y}-G_s^{N,y})\right|>z)\leq 2e^{-\frac{Nz^2}{2\left(N^{1/3}(m-s)/(2a)+\sqrt{m-s}+z\right)}},\label{Geqtn}\\
&\mathbb{P}(\sup_{s\leq x\leq m}\left|H_x^{N,y}-H_s^{N,y}-\mathbb{E}(H_x^{N,y}-H_s^{N,y})\right|>z)\leq 2e^{-\frac{Nz^2}{2\left(N^{1/3}(m-s)/(2a)+z\right)}}.\label{Heqtn}
\end{align}

For any $y,l\in \mathbb{R}^+$ we bound
\begin{align*}
\mathbb{P}&\left(\sup_{x\leq l}M^{N,y}_x-\mathbb{E}(M^{N,y}_x)>\frac{\epsilon}{3N^{1/3}}\right)\\
     \leq \sum_{i=0}^{\lceil N^{1/3}l\rceil }&\mathbb{P}\left(M_{iN^{-1/3}}^{N,y}-\mathbb{E}(M_{iN^{-1/3}}^{N,y})\geq \frac{\epsilon}{6N^{1/3}}\right)\\
    &+\mathbb{P}\left(\sup_{iN^{-1/3}\leq x\leq ({i+1})N^{-1/3}}(M^{N,y}_x-\mathbb{E}(M_x^{N,y}))-(M^{N,y}_{iN^{-1/3}}-\mathbb{E}(M_{iN^{-1/3}}^{N,y}))\geq \frac{\epsilon}{6N^{1/3}}\right),\\
    \leq \sum_{i=0}^{N^{1/3}l}&\mathbb{P}\left(M_{iN^{-1/3}}^{N,y}-\mathbb{E}(M_{iN^{-1/3}}^{N,y})\geq \frac{\epsilon}{6N^{1/3}}\right)\\
    &+\mathbb{P}\left(\sup_{iN^{-1/3}\leq x\leq (i+1)N^{-1/3}}| G_x^{N,y}-G_{iN^{-1/3}}^{N,y}-\mathbb{E}( G_x^{N,y}-G_{iN^{-1/3}}^{N,y})|\geq \frac{\epsilon}{12N^{1/3}}\right)\\
    &+\mathbb{P}\left(\sup_{iN^{-1/3}\leq x\leq (i+1)N^{-1/3}}|H_x^{N,y}-H_{iN^{-1/3}}^{N,y}-\mathbb{E}(H_x^{N,y}-H_{iN^{-1/3}}^{N,y})|\geq \frac{\epsilon}{12N^{1/3}}\right).
\end{align*}

 Applying the bounds from the inequalities Lemma \ref{MNbd}, \eqref{Geqtn}, and \eqref{Heqtn}, we obtain that for $\epsilon\leq 1$ this is bounded above by  

\begin{align*}
    &lN^{1/3} \left(e^{-{N^{1/6}C_l\frac{\epsilon^2}{36}}}+2e^{-\frac{N^{1/3}\epsilon^2}{4a^{-1}+16}}+2e^{-\frac{N^{1/3}\epsilon^2}{a^{-1}+2}}\right)\\
    &\leq 5lN^{1/3} e^{-\tilde CN^{1/6}\epsilon^2} \textrm{ for all sufficiently large }N.
\end{align*}
Here $\tilde C$ is some positive constant. Finally, taking a union bound over $y\in \frac{1}{N}\mathbb{Z}$ such that $C_2N^{1/3}\leq y\leq C_1N^{1/3}$ yields  

\begin{align*}&\mathbb{P}\left(\sup_{\{y\leq C_1N^{1/3},\textrm{ }y\in \frac{1}{N}\mathbb{N}\}}\sup_{\{x\leq l\}}\left(M_x^{N,y}-\mathbb{E}(M_x^{N.y})\right)>\frac{\epsilon}{3N^{1/3}}\right)\\
&\leq (C_1-C_2)N^{5/3}e^{-\tilde CN^{1/6}\epsilon^2}.
\end{align*}
This converges to $0$, as $N\rightarrow\infty$.

\end{proof}

\begin{proof}[Proof of Lemma \ref{MNbd}]

We define 
\begin{align*}
    &\tilde G_{x}^{N,y}={\frac{1}{N}\sum_{i=1}^\infty\mathbbm{1}_{\{W_t^i\leq y+\sup_{s\leq x}(\frac{sN^{1/3}}{2a}+\hat B_s^{i,N})+\beta N^{1/3},W_t^{i,N}>y+\beta N^{1/3}+\frac{N^{1/3}x}{2a}\}}}_,\\
&\tilde H_x^{N,y}={\frac{1}{N}\sum_{i=1}^\infty \mathbbm{1}_{\{W_t^{i,N}\leq y+\beta N^{1/3}+\frac{N^{1/3}x}{2a},W_t^{i,N}> y+\sup_{s\leq x}(\frac{sN^{1/3}}{2a}+\hat B_s^{i,N})+\beta N^{1/3}\}}}_.\\
\end{align*}

It holds that $N\tilde G_x^{N,y}$, and $N\tilde H_x^{N,y}$ are independent Poisson random variables, with
$$M^{N,y}_x=\tilde G_x^{N,y}-\tilde H^{N,y}_x.$$

We define constants $c_x^{N,y},d_x^{N,y}$ by:
\begin{align*}
&p_z=\mathbb{P}\left(z\leq y+\sup_{s\leq x}\left(\frac{N^{1/3}s}{2a}+\hat B_s+\beta N^{1/3}\right)\right),\\
    &c_x^{N,y}=\mathbb{E}(\tilde G_x^{N,y})=\int_{y+\beta N^{1/3}+\frac{N^{1/3x}}{2a}}^\infty p_z\Phi\left(\frac{z}{\sqrt{t}}\right)dz,\\
    &\quad\quad= \frac{\sqrt{x}}{\sqrt{2\pi}}+\frac{a}{2N^{1/3}}+\frac{a}{2N^{1/3}}e^{\frac{N^{2/3}x}{2a^2}}\left(1-\textrm{erf}\left(\frac{N^{1/3}\sqrt{x}}{\sqrt{2}a}\right)\right) +L_{x}^{N,y},\\
    &|L_x^{N,y}|\leq \frac{\sqrt{t}}{\sqrt{2\pi}}e^{-\frac{N^{2/3}\beta^2}{2t}},\\
    &d_x^{N,y}=\mathbb{E}(\tilde H_x^{N,y})=\int_{y+\beta N^{1/3}}^{y+\beta N^{1/3}+\frac{N^{1/3}x}{2a}}(1-p_z)\Phi\left(\frac{z}{\sqrt{t}}\right)dz,\\
    &\quad \quad =c_x^{y,N}+\frac{N^{1/3}x}{4a}\left(1-\textrm{erf}\left(\frac{N^{1/3}\sqrt{x}}{\sqrt{8}a}\right)\right)-\frac{\sqrt{x}}{\sqrt{2\pi}}e^{-\frac{N^{2/3}x}{2}}-\frac{a}{N^{1/3}}\textrm{erf}\left(\frac{N^{1/3}\sqrt{x}}{\sqrt{8}a}\right)+\hat L_x^{N,y},\\
    &|\hat L_x^{N,y}|\leq \frac{\sqrt{t}}{\sqrt{2\pi}}e^{-\frac{N^{2/3}\beta^2}{2t}}.
\end{align*}
Here $\hat B$ denotes a standard Brownian motion.
It is simple to calculate that 
\begin{align*}
    \mathbb{E}(M_x^{N,y})&=c_x^{N,y}-d_x^{N,y},\\
    &=\int_{y+\beta N^{1/3}}^\infty p_z\Phi\left(\frac{z}{t}\right)dz-\int_{y+\beta N^{1/3}}^{y+\beta N^{1/3}+\frac{N^{1/3}x}{2a}}\Phi\left(\frac{z}{t}\right)dz,\\
    &\geq \Phi\left(\frac{\beta N^{1/3}}{\sqrt{t}}\right)\left(\Gamma\left(\frac{Nt^{1/3}}{2a}\right)_x-\frac{N^{1/3}x}{2a}\right),\\
    &\geq 0.
\end{align*}

We may also calculate

\begin{align*}
    c_x^{N,y}-d_x^{N,y}&\leq \left(\Gamma\left(\frac{N^{1/3}t}{2a}\right)_x-\frac{N^{1/3}x}{2a}\right)+ \frac{N^{1/3}\beta}{\sqrt{2\pi}}e^{-\frac{N^{1/3}\beta^2}{2t}},\\
    &\leq \frac{a}{N^{1/3}}+\frac{N^{1/3}\beta}{\sqrt{2\pi}}e^{-\frac{N^{1/3}\beta^2}{2t}}.
\end{align*}

The final line follows via Lemma \ref{critlem} equation \eqref{critlemc}.

We compute \begin{align*}
    \varnothing_x^{N,y}(u)=\mathbb{E}(\exp(uN(M_x^{N,y})))=\exp(Nc_x^{N,y}(e^u-1)+Nd_x^{N,y}(e^{-u}-1)).
\end{align*}

We denote $c=Nc^{
N,y}_x$, $d=Nd^{N,y}_x$, and $b=N^{2/3}z+c-d$.

By Markov's inequality we bound 
\begin{align*}
&\mathbb{P}\left(N(M_x^{N,y})-\mathbb{E}(N(M_x^{N,y}))\geq N^{2/3}z\right),\\
&\leq \min_{u\geq 0}\varnothing_x^{N,y}(u)\exp\left(-\mathbb{E}\left(u\left(N(M_x^{N,y})-N^{2/3}z\right)\right)\right).
\end{align*}
The minimum of 
$$\varnothing_x^{N,y}(u)\exp(-\mathbb{E}\left(u(N(M_x^{N,y})-N^{2/3}z)\right)$$
occurs when  
$$\tilde x=e^u=\frac{b}{2c}+\sqrt{1+\frac{b^2-4c(c-d)}{4c^2}}.$$

For this value of $u$, the function takes value 
$$\exp((\tilde x-1)(c-d/\tilde x)-\log(\tilde x)b).$$

It holds that $\log(\tilde x)\geq \frac{\tilde x-1}{\tilde x}$, hence we may further bound this by 
$$\exp((\tilde x-1)(c(\tilde x-1)-N^{2/3}z)/\tilde x).$$

We may bound, 
\begin{align*}
    \frac{N^{2/3}z}{2c}-\frac{(c-d)^2}{2c^2}\leq \tilde x-1\leq \frac{N^{2/3}z}{2c}+\frac{b^2}{8c^2}.
\end{align*}

For any $\epsilon>0$, and sufficiently large $N$, $c-d\leq (a+\epsilon)N^{2/3}$. As long as $N^{2/3}z/2\geq (N^{2/3}z+c-d)^2/8c$, we may further bound

\begin{align*}&\exp((\tilde x-1)(c(\tilde x-1)-N^{2/3}z)/\tilde x)\\
&\leq \exp\left(-\frac{cN^{2/3}z-(a+\epsilon)^2N^{4/3}}{2c^2+cN^{2/3}z+(N^{2/3}z+(a+\epsilon)N^{2/3})^2}\left(\frac{N^{2/3}z}{2}-\frac{(N^{2/3}z+(a+\epsilon)N^{2/3})^2}{8c}\right)\right)\end{align*}

 For any $\gamma>0$, and $x\geq N^{-2/3+\gamma}$, we may also bound $2N(\sqrt{l}+1)\geq c\geq N^{2/3+\gamma/2}/\sqrt{2\pi}$.
 
 The above bound, can then be bounded by
\begin{align*}
\exp\left(-\frac{N^{\gamma/2}(z-2N^{-\gamma/2}(a^2+\epsilon^2))}{C_lN^{2/3}}\left(\frac{N^{2/3}(z-N^{-\gamma/2}(a^2+z^2+\epsilon^2))}{2}\right)\right),\end{align*}

for some constant $C_l$.

Therefore, there exists some constant, $C_l>0$, such that for arbitrary $x\geq N^{-1/3}$, $z>0$, and all sufficiently large $N$,
\begin{align}
&\mathbb{P}\left(N(M_x^{N,y})-\mathbb{E}((N(M_x^{N,y}))\geq N^{2/3}z\right),\\
&\leq \exp\left(-N^{1/6}C_lz\right).\nonumber
\end{align}
\end{proof}

\begin{proof}[Proof of Lemma \ref{lemmaonMmean}]
    
We compute:
\begin{align*}
    \mathbb{E}(G_x^{N,y})&=\int_{-\infty}^\infty \mathbb{P}\left(z\leq y+\sup_{s\leq x}\left(\frac{N^{1/3}s}{2a}+B_s+\beta N^{1/3}\right)\right)\Phi\left(\frac{z}{\sqrt{t}}\right)dz,\\
    &\leq \int_0^\infty \mathbb{P}\left(z\leq\sup_{s\leq x}\left(\frac{N^{1/3}s}{2a}+B_s\right)\right)dz+\int^{\beta N^{1/3}+y}_{-\infty}\Phi\left(\frac{z}{\sqrt{t}}\right)dz,\\
    &\leq \frac{N^{1/3}x}{2a}+\beta N^{1/3}+\frac{a}{N^{1/3}}+y+e^{-\frac{N^{2/3}\beta^2}{2t}}\text{, applying the result of eqtn. \eqref{critlemc}}.
\end{align*}
We also compute:
\begin{align*}\mathbb{E}(H^{N,y}_x)&=\int_{-\infty}^{y+\frac{xN^{1/3}}{2a}+\beta N^{1/3}}\Phi\left(\frac{z}{\sqrt{t}}\right)dz,\\
&=y+\frac{xN^{1/3}}{2a}+\beta N^{1/3}+\int_{y+\frac{xN^{1/3}}{2a}+\beta N^{1/3}}^\infty \Phi\left(\frac{-z}{\sqrt{t}}\right) dz,\\
&\geq y+\frac{xN^{1/3}}{2a}+\beta N^{1/3}.
\end{align*}

Subtracting these from each other yields the result of the lemma.
\end{proof}
\begin{proof}[Proof of Lemma \ref{lemmaz}]
    
$NZ^{N,y}$ is Poisson distributed, with mean given by:
$$\mathbb{E}(Z^{N,y})=\int_0^\infty 1-\Phi\left(\frac{x+N^{1/3}\beta}{\sqrt{t}}\right)dx+\int_0^y 1-\Phi\left(\frac{x+N^{1/3}\beta}{\sqrt{t}}\right)dx\leq \frac{\sqrt{2t}}{\sqrt{\pi}}e^{-\frac{N^{2/3}\beta^2}{2t}}.$$

Applying standard Poisson tail bounds, followed by a union bound over $y$, we obtain that
$$\mathbb{P}\left(\sup_{\{y\leq C_1N^{1/3},\textrm{ }y\in \frac{1}{N}\mathbb{N}\}}Z^{N,y}>\frac{\epsilon}{2N^{1/3}}\right)\leq BN^{4/3}e^{-\frac{N^{1/3}\epsilon^2}{2(\sqrt{2t}e^{-\beta^2N^{2/3}/(2t)}+\epsilon N^{-1/3})}}.$$

 This converges to 0 exponentially quickly as $N\rightarrow \infty$.
\end{proof}

We prove Corollary \ref{corupper}, which follows from Proposition \ref{propcritupper} by elementary arguments.
\begin{proof}[Proof of Corollary \ref{corupper}]

We introduce subsets of $\{(r,(b_t)_{t\in \hat J}):r\in \tilde D,\textrm{ }(b_t)_{t\in\hat J}\in D(\mathbb{R}^+)^\infty\}.$ For a fixed $t\in\hat J$, and fixed $a,l,\beta,\epsilon>0$ and $0<4C_1\leq C_2$, we define sets:
\begin{align*}&A_1=\{\sup_{s\leq l}(r_{t+s}-r_t-\beta-s/2a)>0\},\\
&A_2=\{\sup_{x\leq l/2a+\beta}b_t(r_t+x)>a+\epsilon\},\\
&A_3=\{r_t>2C_2\}\cap \{r_T<C_2/2\},\\
&A=A_1\cap A_2\cap A_3.
\end{align*}
Each of these sets are open. By the Portmanteau theorem we obtain:
\begin{align*}
    \mathbb{P}((R,(B_s)_{s\in\hat J})\in A)&\leq \liminf_{N\rightarrow\infty}   \mathbb{P}((R^N,(B^N_s)_{s\in\hat J})\in A)\\
    &\leq \liminf_{N\rightarrow\infty}\mathbb{P}(A^N)=0.
\end{align*}
The set $A^N$, is exactly the set given in Lemma \ref{propcritupper}.

Taking a union over $t\in\hat J$, and over rational $a$, $\beta$, $l$, $\epsilon,$ $C_1$ and $C_2$, we determine that almost surely, for all $t\in\hat J$, and all $a,l>0$

$$B_0(R_t+x)\geq a\textrm{ for }x\leq l/2a $$
$$\implies R_{t+s}\leq R_t+\frac{s}{2a}\textrm{ for }s\leq l. $$

For $t\notin \hat J$, we may establish that the result still holds by taking limits over $t\in \hat J$ and using that $R$ is cadlag. Suppose that $B_0(R_t+x)\geq a$ for $x\leq l/2a$.
For any $s\in\hat J$, with $0\leq R_s-R_t\leq l/2a$ it holds that $B_0(R_s+x)\geq a$ for $x\leq (l-2a(R_s-R_t))/2a$, from which it follows that 
$$R_{s+x}\leq R_s+\frac{x}{2a}\textrm{ for }x\leq l-2a(R_s-R_t),$$
hence $$R_{t+x}\leq R_t+(R_s-R_t)+\frac{x}{2a}\textrm{ for }x\leq l-2a(R_t-R_s).$$
Taking a sequence $s\in\hat J$ which decreases to $t$, we obtain that 
$$R_{t+x}\leq R_t+\frac{x}{2a}\textrm{ for }x\leq l.$$

\end{proof}

\begin{proof}[Proof of Lemma \ref{deriv}]
    The first statement follows by taking limits over $l$ and $a$ appropriately in Corollary \ref{corlower} and Corollary \ref{corupper}. To prove the second statement, we first consider an arbitrary $\epsilon>0$, so that $B_0({R_l})>0$ for $l\in [s+\epsilon,t-\epsilon]$. We obtain that 
    $$\int_{R_s}^{R_l}{2(B_0({R_s+x}))_+}dx=(l-s)+c$$ for a constant $c$. The constant $c$ is immediately identified as 0. Since $R$ is non-decreasing, and $(B_0)_+$ is non-negative, this identifies that 
    \begin{align*}R_l-R_{s+\epsilon}&=\inf\left\{x:\int_0^x(2B_0(R_{s+\epsilon}+x))_+dx>l-s-\epsilon\right\},\\
    &=\inf\left\{x:\int_0^x(2B_0(R_{s+\epsilon}+x))_+dx=l-s-\epsilon\right\} \textrm{ for }   l\in[s+\epsilon,t-\epsilon].
    \end{align*}
    We send $\epsilon\downarrow 0$, and apply the right-continuity of $R$ to obtain that for $s< l<t$
     \begin{align*}
     R_l-R_{s}&=\lim_{\epsilon\downarrow 0} \inf\left\{x:\int_{ R_{s+\epsilon}}^x2(B_0(y))_+dy>t-s-\epsilon\right\},\\
    &=\inf\left\{x:\int_{ R_{s}}^x2(B_0(y))_+dy\geq t-s\right\},\\
   &=\inf\left\{x:\int_{ R_{s}}^x2(B_0(y))_+dy> t-s\right\}.\end{align*}
    The final line follows since $R$ must be continuous on $[s,t)$.
\end{proof}
\begin{proof}[Proof of Lemma \ref{jump}]

To prove the first statement, we take a sequence $t_n\uparrow t$ so that \\$\sup_{x\leq (R_{t^-}-R_{t_n})} B_0({R_{t_n}+x})\downarrow 0$. Fix an arbitrary $\epsilon>0$. For all sufficiently large $n$, $\sup_{x\leq R_{t^-}-R_{t_n}+\varpi}B_0({R_{t_n}+x})\leq \epsilon.$
By Corollary \ref{corlower}, it follows that
$$R_{t_n+s}\geq R_{t_n}+s/2\epsilon\textrm{ for }s\leq 2\epsilon(R_{t^-}-R_{t_n}+\varpi).$$
Sending $n\rightarrow \infty$ in the above equation, we obtain that
$$R_{t^-+2\epsilon\varpi}\geq R_{t^-}+\varpi.$$
Since $\epsilon$ was arbitrary, sending $\epsilon\downarrow0$, and applying the right continuity of $R$ yields that
$$R_t\geq R_{t^-}+\omega.$$

The second statement holds through a slight adaptation of Proposition \ref{propcritupper}, and the use of this Proposition in Corollary \ref{corupper}. In this proposition, the set $A_2$ may be replaced by $$\{B^N_t(\beta+x)>a+\epsilon \textrm{ for }x\leq l/2a \},$$ and the result of the proposition follows through identical arguments. Keeping $\beta>0$, rather than sending it to zero as in Corollary \ref{corupper}, we then obtain that almost surely, for any time $t$,
\begin{align*}
&B_0(R_t+\beta+x)\geq a\textrm{ for }x\leq l/2a,\\
&\implies R_{t+x}\leq R_t+\beta +x/2a\textrm{ for }x\leq l.
\end{align*}

Fix an arbitrary $\gamma>0$ and choose $\epsilon_\gamma$ such that $$B_0(R_{t^-}+\inf\{x:B_0({R_{t^-}+x})>0\}+\epsilon_\gamma)\geq \gamma.$$ For a sequence $\gamma\downarrow 0$, $\epsilon_\gamma$ can be chosen such that $\epsilon_\gamma\rightarrow 0$. We choose a further $\epsilon'>0$ such that $$B_0(R_{t^-}+\inf\{x:B_0({R_{t^-}+x})>0\}+\epsilon_\gamma+y)\geq \gamma/2\textrm{ for }y\leq \epsilon'.$$ We take a sequence of times $t_n\uparrow t$, and a sequence $$\beta_{n}=\inf\{x:B_0({R_{t^-}+x})>0\}+\epsilon_\gamma+R_t-R_{t_n},$$
so that for each $n$:
$$B_0(R_{t_n}+\beta_n+y)\geq \gamma/2\textrm{ for }y\leq \epsilon'.$$
We therefore obtain that 
$$R_{t_n+x}\leq R_{t_n}+x/2\gamma+\beta_n \textrm{ for }x\leq 2\gamma \epsilon'.$$ 
Taking a limit over $n\rightarrow \infty$, then sending $\epsilon'\downarrow 0$, we determine that
$$R_{t}\leq R_{t^-}+\inf\{x:B_0({R_{t^-}+x})>0\}+\epsilon_\gamma.$$
Further sending $\gamma\downarrow 0$, we then obtain the upper bound:
$$\Delta R_t\leq \inf\{x:B_0({R_{t^-}+x})>0\}.$$

\end{proof}

\begin{proof}[Proof of Lemma \ref{denseness}]
    
     The process $R$ is strictly increasing and cadlag. It is differentiable almost everywhere, and by Lemma \ref{deriv}
      the set $\{t\leq T:R_t \textrm{ is differentiable at }t\}$ is identical to the set $\{t\leq T:B_0({R_t})>0\}$.
 
    The set $\{x\leq R_T:B_0(x)>0\}$ is open, hence is a countable union of disjoint open intervals $\cup_{i\in\mathbb{N}}(c_i,d_i)$. Let us fix some $i\in\mathbb{N}$. By Lemma \ref{deriv}, for $R_t\in(c_i,d_i)$, $R$ is continuous at $t$. Therefore, $R^{-1}((c_i,d_i))$ is an open interval $(a_i,b_i)$, on which $R$ is continuous. It follows also that $R_{a_i}=c_i$ by applying right continuity. Since $R$ is strictly increasing, the set $\{t\leq T:R_t=c_i\}$ is at most a singleton, and therefore it is exactly $\{a_i\}$.
    Since $i$ was arbitrary, we obtain that $$\{t\leq T:B_{R_t}>0\}=R^{-1}(\{t\leq R_T:B_t>0\})=\cup_{i\in\mathbb{N}}(a_i,b_i)$$ is an open set, and that $R^{-1}(\cup_{i\in\mathbb{N}}\{c_i\})=\cup_{i\in\mathbb{N}}\{a_i\}$.

    The set $\{x\leq R_T:B_0(x)<0\}$ is open, and may be written as $\cup_{i\in\mathbb{N}}(e_i,f_i)$ for disjoint open sets $(e_i,b_i)$. By Lemma \ref{jump}, the preimage of this set, $\{t\leq T:B_0({R_t})<0\}$, must be empty. Consider the preimage of the set $\{e_i\}$ for arbitrary $i$. If, for some $t\leq T$ $R_t=e_i$, it must hold by right continuity that $R_{s}\in(e_i,f_i)$ for some time $s$. However, by Lemma \ref{jump} this is impossible, hence
    $R^{-1}(\cup_{i\in\mathbb{N}}[e_i,f_i))=\phi$. 

    All $t\leq R_T$ are either in a set of form $[c_i,d_i)$, $[e_i,f_i)$ for some $i$, or in $\{R_T\}$. Therefore, all $t<T$ are in the preimage of such a set, hence are in a set of form $[a_i,b_i)$ for some $i$.

\end{proof}
\section{Bounds on hitting linear barriers}
\label{hitting a line}
For non-negative positive constants $c $ and $d$ the value of $\Gamma(l^{c,-d})_t-(ct-d)$ is given explicitly by:
\begin{align}
\label{difference}
&\frac{\sqrt{t}}{\sqrt{2\pi}}e^{-\frac{(ct-d)^2}{2t}}-\frac{(ct-d)}{2}\left(1-\textrm{erf}\left(\frac{ct-d}{\sqrt{2t}}\right)\right)\\
&+\frac{1}{4c}\left(1+\textrm{erf}\left(\frac{ct-d}{\sqrt{2t}}\right)-\left(1-\textrm{erf}\left(\frac{ct+d}{\sqrt{2t}}\right)\right)e^{2cd}\right).\nonumber
\end{align}

The above has derivative given by:
$$-\frac{c}{2}\left(1-\textrm{erf}\left(\frac{ct-d}{\sqrt{2t}}\right)\right)+\frac{e^{-\frac{(ct-d)^2}{2t}}}{\sqrt{2\pi t}},$$

and second derivative given by
$$\frac{((cd-1)t+d^2)}{\sqrt{2^3\pi t^5}}e^{-\frac{(ct-d)^2}{2t}}.$$

First consider the case $cd\geq 1$. In this case, the derivative is initially negative, and the second derivative is always positive. It is clear from the explicit expression, that $\Gamma(l^{c,-d})_t-(ct-d)$ is bounded, and so the derivative must remain non-negative. Therefore, the minimum occurs as $t\rightarrow \infty$, yielding the lower bound
$$\inf_{t\geq 0}\Gamma(l^{c,-d})_t-(ct-d)\geq \frac{1}{2c}.$$
The maximum occurs at time 0, yielding the upper bound of $d$.

From the expression \eqref{difference}, and standard Gaussian tail bounds, it can also be seen that for any $d\geq 0$, $$\Gamma(l^{c,-d})_t-(ct-d)-\frac{1}{2c}\leq \tilde C(1+ct+\sqrt{t})e^{-\frac{c^2t}{2}}.$$
Therefore, for time $t$ greater than $1/c,$ $\Gamma(ct-d)-(ct-d)$ is within $3\tilde Ce^{-c/2}$ of $1/(2c)$.

In the case of $d=0$, the derivative is initially negative, while the second derivative is always positive. Hence the maximum is met at some time $t^*$ where the derivative is zero. Substituting in the condition for zero derivative into the expression for $\Gamma(l^c)_t-ct$ yields
$$\sup_{t\geq 0}\Gamma(l^c)_{t}-ct=\frac{\textrm{erf}\left(\frac{c\sqrt{t^*}}{\sqrt{2}}\right)}{2c}\leq \frac{1}{2c}.$$

From the expression \eqref{difference}, and standard Gaussian tail bounds, it is immediate that 
$$\frac{1}{2c}-(\Gamma(l^c)_t-ct)\leq \tilde Ce^{-\frac{cx^2}{2}}$$ for a constant $\tilde C$. Therefore, for time $t\geq 1/c$, $\Gamma(l^c)_t-ct$ is within $2\tilde Ce^{-c/2}$ of $1/(2c)$.

\section{Blow ups for all time}
\label{infiniteblowup}

We take constants $L\geq 10$, and $\alpha=1/4$.
Consider the density $h$ given by
\begin{align*}
&h(x)=\begin{cases}
    0\quad x\in \cup_{i\in\mathbb{N}_0}[\hat x_{2i-1},\hat x_{2i}]  , \\
    (1-\alpha)L \quad x\in\cup_{i\in\mathbb{N}_0} [\hat x_{2i},\hat x_{2i+1}],
\end{cases}\\
&\hat x_i=\sum_{j=0}^iL^j=\frac{L^{i+1}-1}{L-1},\quad i\in\mathbb{N}_0,\\
&\hat x_{-1}=0.
\end{align*}

This density satisfies condition \hyperlink{A3}{(A3)}. Indeed we compute that
\begin{align*}
&\int_0^{ \hat x_i}h(x)dx=L(1-\alpha)\sum_{j=0\textrm{, } j\textrm{ odd}}^iL^j=(1-\alpha)(L^2+L^4+...+L^{\lfloor \frac{i+1}{2}\rfloor} )\\
&=\begin{cases}
    (1-\alpha )L^2\frac{L^{i+1}-1}{L^2-1} \textrm{ for odd }i,\\
    (1-\alpha )L^2\frac{L^{i}-1}{L^2-1} \textrm{ for even }i,
\end{cases}\\
&=\begin{cases}
    (1-\alpha )\frac{L^2 \hat x_i}{L+1} \textrm{ for odd }i,\\
    (1-\alpha )\frac{L^2(L^i-1) \hat x_i}{(L+1)(L^{i+1}-1)}\textrm{ for even }i,
\end{cases}\\
&\quad\begin{cases}
    \geq \frac{L \hat x_i}{2} \textrm{ for odd }i,\\
    \leq (1-\alpha)\hat x_i\textrm{ for even }i,.
\end{cases}
\end{align*}

We may take $\hat x_{2n}$ to be $x_n$ in condition \hyperlink{A3}{(A3)}, and $y_n=(1-\alpha)\hat x_{2n}=(1-\alpha)x_n$. We thus see that condition \hyperlink{A3}{(A3)} is satisfied, and so physical solutions to \eqref{alternativetomvr} with initial density $g$ exist. From now on fix we such a physical solution $\Lambda_t$.

It is simple to determine that for any physical solution $\Lambda_t$ of \eqref{aligned:MVR}, $\Lambda_t\leq \hat x_i$ up to a time 
$$t=\frac{\pi\alpha^2}{2(1-\alpha)^2 L^2} \hat x_i^2.$$
It is also simple to determine that for even $i$,
$$\Lambda_t\geq \hat x_i\textrm{ for }t=\hat x_i^2.$$
To determine this lower bound we consider a function $f_t=-\epsilon$ on $(0,\hat x_i^2)$, and $f_t=\hat x_i-\epsilon$ on $(\hat x_i^2,\infty)$. Considering $\Gamma_N(f)$ as defined above Lemma \ref{Alemmatoboundathing}, we may argue as in the proof of Lemma \ref{lemma on bounding} that if $E(\Lambda^{N,f_t})\geq f_t+\epsilon$, then $\Lambda^{N}_t\geq f_t$ for all large $N$, and so $\Lambda_{\hat x_i^2}\geq \hat x^2_i$.

To determine the expectation, we bound the density $h$ below by $\tilde h$ which takes value $(1-\alpha)L$ only on $[\hat x_i,L\hat x_{i}]$. It is simple then to bound $E(\Gamma_N(f)_{\hat x_i^2 s})$ below by
$$(1-\alpha)L\hat x_i\left( \frac{L}{2}\left(1-\textrm{erf}\left(\frac{L}{\sqrt{2s}}\right)\right)-\frac{1}{2}\left(1-\textrm{erf}\left(\frac{1}{\sqrt{2s}}\right)\right)+\frac{\sqrt{s}}{\sqrt{2\pi}}(e^{-1/2s}-e^{-L^2/2s})\right)+O(\epsilon).$$

Taking $\epsilon$ sufficiently small, then the above is greater than $\hat x_i=f_{(\hat x^2_i)s}+\epsilon$ for $s\geq 1$. Therefore, we obtain the lower bound on $\Lambda_{\hat \xi^2_i}$.

We now prove Theorem \ref{blowuptheorem}.
\begin{proof}
 We suppose for a contradiction that there is some time $\zeta$ after which there are no further blow-ups. We take a suitably large even $i$ such that $\tau^*=\inf\{t>0:\Lambda_t\geq \hat x_i\}\geq \zeta$. To determine if the McKean-Vlasov type system \eqref{aligned:MVR} has jumps after this point, we restart the system at the time $\tau^*$, with initial density $\tilde V({\tau^*},.)$ corresponding to the measure of non-killed particles at this time. To determine existence of a blow-up for some time after $\tau^*$, we may then apply the criteria of Theorem \ref{blowuptheorem} by giving lower bounds on the density $\tilde V({\tau^*},.)$.

 To obtain a lower bound, we consider the density corresponding to killing particles at $\hat x_i$ while $t\leq \tau^*$, rather than at $\Lambda_t$. Re-centring this at $\Lambda_{\tau^*}=\hat x_i$, we obtain the density corresponding to the measure $$\vartheta(A-\hat x_i)=\int_{\hat x_i}^\infty\mathbb{P}(x+B_{\tau^*}\in A, \inf_{s\leq \tau^*}x+B_s>\hat x_i)h(x)dx.$$
This can be further bounded below by replacing $h$ in the above expression by $(1-\alpha)L\mathbbm{1}_{[\hat x_i,\hat x_{i+1}]}$. In total we obtain the lower bound 
$$\tilde V({\tau^*},x)\geq \frac{(1-\alpha)L}{2}\left(2\textrm{erf}\left(\frac{x}{\sqrt{2\tau^*}}\right)+\textrm{erf}\left(\frac{\hat x_{i+1}-\hat x_{i}-x}{\sqrt{2\tau^*}}\right)-\textrm{erf}\left(\frac{x+\hat x_{i+1}-\hat x_{i}}{\sqrt{2\tau^*}}\right)\right).$$

We recall $\hat x_{i+1}-\hat x_i=L^{i+1}$.  The above expression is decreasing in $\tau^*$ for $x\leq L^{i+1}$. Further $\textrm{erf}\left(\frac{x+L^{i+1}}{\sqrt{2\tau^*}}\right)-\textrm{erf}\left(\frac{L^{i+1}-x}{\sqrt{2\tau^*}}\right)$ may be bounded above by $2\textrm{erf}\left(\frac{x}{\sqrt{2\tau^*}}\right)e^{-\frac{L^{2i+2}-2L^{i+1}x}{2\tau^*}}$. Since $\hat x_i^2\geq\tau^*$ we therefore further decrease the lower bound to
$$\tilde V({\tau^*},x)\geq (1-\alpha)L\textrm{erf}\left(\frac{x}{\sqrt{2}\hat x_i}\right)\left(1-e^{-{\frac{L^{2i+2}}{8\hat x_i^2}}}\right) \textrm{ for }x\leq \frac{3L^{i+1}}{8}.$$

Taking $M=\frac{3L^{i+1}}{8}$ and integrating this over $[0,M]$, we obtain that 
$$\int_0^{M}\tilde V({\tau^*},x)dx\geq (1-\alpha)L \left(1-e^{-{\frac{L^{2i+2}}{8\hat x_i^2}}}\right)\left(M\textrm{erf}\left(\frac{M}{\sqrt{2}\hat x_i}\right)-\frac{\sqrt{2}\hat x_i}{\sqrt{\pi}}\left(1-e^{-\frac{M^2}{2\hat x_i^2}}\right)\right)$$

Since $\hat x_i\leq \frac{L^{i+1}}{L-1}$, this is bounded below by

$$(1-\alpha)L\left(1-e^{-\frac{(L-1)^2}{8}}\right)M\left(\textrm{erf}\left(\frac{3(L-1)}{8\sqrt{2}}\right)-\frac{8\sqrt{2}}{3(L-1)\sqrt{\pi}}\left(1-e^{-\frac{(L-1)^2}{16}}\right)\right).$$

For $L>6$ the coefficient multiplying $M$ is strictly bigger than $2$, and therefore the condition of Theorem \ref{blowuptheorem} applies. It follows that there must be a time greater than $\tau^*$ such that $\Lambda$ is discontinuous at this time, contradicting the assumption that there are no such discontinuities after time $\zeta$.
\end{proof}

\section{Proof deferred from Section \ref{uniquenessappendix}}
\label{uniquenessappendix2}
\begin{proof}[Proof of Proposition \ref{uniquecont}]
Let $\Lambda_t$ be a physical solution to \eqref{alternativetomvr} on $[0,T)$, which shall be compared to the minimal solution $\underline{\Lambda}_t$. Suppose that $t^*:=\inf\{t\geq 0:\underline{\Lambda}_{t}\neq \Lambda_{t}\}\wedge T$ is strictly less than $T$. Without loss of generality we may consider $t^*=0$. We can do this by considering the system started with initial density $\tilde V(t^{*-},x)$, which is the density corresponding to $\int_0^\infty \mathbb{P}(y+B_t-\underline{\Lambda}_{t^*-}\in dx,\tau\geq t^*)g(y)dy$. Further, since the physicality condition \eqref{alternativephysical} uniquely determines the jump size of any physical solution at time zero, we may assume that $\underline{\Lambda}_0=\Lambda_0=0$ by restarting the system with initial density $\tilde V(t^*,x)$. 

By assumption, there exists $\epsilon>0$ such that $\underline{\Lambda}$ is continuous on $[0,\epsilon)$. Since $t^*=0$, and $\Lambda,$ $\underline{\Lambda}$ are right continuous, we can find $\epsilon'>0$, and $0<t_1<t_2$ such that $\Lambda_{t}>\underline{\Lambda_t}+\epsilon'$ for $t\in(t_1,t_2)$.

We consider the expression given in Lemma \ref{Ito} for $\Lambda_t$ and $\underline{\Lambda}_t$, on the interval $[0,t_2]$.
Taking the difference of the expressions, then multiplying both sides by $e^{\lambda \underline{\Lambda}_{t_2}+\frac{\lambda^2t_2}{2}}$ we obtain:
\begin{align*}
    &\int_{\underline{\Lambda}_{t_2}}^\infty \underline{V}(t_2,x)e^{-\lambda x+\lambda \underline{\Lambda}_{t_2}}dx-\int_{\Lambda_{t_2}}^\infty V(t,x)e^{-\lambda x+\lambda\underline{\Lambda}_{t_2}}dx-\frac{1-e^{-\lambda(\Lambda_{t_2}-\underline{\Lambda}_{t_2})}}{\lambda}\\
    &=\frac{\lambda}{2}\int_0^{t_2}(e^{-\lambda \underline{\Lambda}_s}-e^{-\lambda \Lambda_{s}})e^{\lambda \underline{\Lambda}_{t_2}+\frac{\lambda^2(t_2-s)}{2}}ds\\
    &\hspace{0.4cm}-e^{\underline{\lambda\Lambda}_{t_2}+\frac{\lambda^2t_2}{2}}\sum_{s\leq t_2}e^{-\frac{\lambda^2s}{2}}\left(-\int_{\Lambda_{s^-}}^{\Lambda_s} V(s^-,x)e^{-\lambda x}dx-\frac{1}{\lambda}\Delta e^{-\lambda\Lambda_s}\right).
\end{align*}

We claim that the left hand side converges to zero as $\lambda\rightarrow\infty$. From Proposition \ref{6.1}, we obtain the bound $C\geq V(t_2,x),\underline{V}(t_2,x)$. Therefore the first two terms on the right are bounded above by $\frac{C}{\lambda}$. The final term is bounded above by $\frac{1}{\lambda}$, and the validity of the claim is clear.

As stated in the proof of Theorem \ref{conda4}, each term in the sum is negative. By definition of the minimal solution $\underline{\Lambda}_t\leq \Lambda_t$ for all $t$, and so the integral term is non-negative. Therefore, each term on the right hand side of the equation is non-negative. It must then hold that both the sum term, and the integral term converge to zero as $\lambda \rightarrow \infty$.

Consider only the integral term. Applying Fatou's lemma, it holds that 
$$0=\int_0^{t_2}\liminf_{\lambda\uparrow\infty}\frac{\lambda}{2}(e^{-\lambda \underline{\Lambda}_s}-e^{-\lambda \Lambda_{s}})e^{\lambda \underline{\Lambda}_{t_2}+\frac{\lambda^2(t_2-s)}{2}}ds.$$
Therefore:
$$\liminf_{\lambda\uparrow\infty}\frac{\lambda}{2}(e^{-\lambda \underline{\Lambda}_s}-e^{-\lambda \Lambda_{s}})e^{\lambda \underline{\Lambda}_{t_2}+\frac{\lambda^2(t_2-s)}{2}}=0 \textrm{ for }a.e. \hspace{0.1cm}s\in[0,t_2].$$ However, for $s\in (t_1,t_2)$, we can bound this by 
\begin{align*}
    \liminf_{\lambda \uparrow\infty }\frac{\lambda}{2}(1-e^{-\lambda \epsilon'})e^{\lambda (\underline{\Lambda}_{t_2}-\underline{\Lambda}_s)+\frac{\lambda^2(t_2-s)}{2}}\geq \liminf_{\lambda\uparrow\infty}\frac{\lambda}{2}(1-e^{-\lambda \epsilon'})e^{\frac{\lambda^2(t_2-s)}{2}}=\infty.
\end{align*}

This is a contradiction, and therefore it must hold that $\underline{\Lambda}_t=\Lambda_t$. 
\end{proof}

\begin{proof}[Proof of Corollary \ref{contunique3}]
    Let $\Lambda_t$ be any physical solution.
    We bound  \begin{align*}
    &\int_0^\infty \mathbb{P}(X_t^y\leq x,\tau^y>t)g(y)dy\\
    &\leq \int_0^\infty \mathbb{P}(\Lambda_t<y+B_t\leq \Lambda_t+x)g(y)dy,\\
    &\leq\int_0^\epsilon \mathbb{P}(\Lambda_t<y+B_t\leq \Lambda_t+x)dy+C\int_{\epsilon}^\infty\mathbb{P}(\Lambda_t<y+B_t\leq \Lambda_t+x)dy,\\
    &=\int_0^x\left(\frac{1}{2}\left(\textrm{erf}\left(\frac{\epsilon-(y-\Lambda_t)}{\sqrt{2t}}\right)+\textrm{erf}\left(\frac{y-\Lambda_t}{\sqrt{2t}}\right)\right)+\frac{C}{2}\left(1-\textrm{erf}\left(\frac{\epsilon-(y-\Lambda_t)}{\sqrt{2t}}\right)\right) \right)dy.\end{align*}

By the second assumption on $g$, $\Lambda_0=0$. The integrand in the final integral is smooth, and for sufficiently small $t$, the integrand is strictly less than $1$ at $y=0$. Therefore the integrand is strictly less than $1$ in a neighbourhood of zero, and so $\inf\{x:\int_0^\infty \mathbb{P}(X_t^y\leq x,\tau^y>t)g(y)dy<x\}=0$. By the physical jump condition $\Lambda_t$ cannot jump at time $t$. Therefore there is some $T'>0$ on which any physical solution to \eqref{alternativetomvr} is continuous. By Proposition \ref{uniquecont}, this implies physical solutions are unique on $[0,T']$.
\end{proof}
\begin{proof}[Proof of Proposition \ref{contunique2}]
 We approximate the density $g$ to allow application of the results of \cite{PDEresult}. Denote $K^*:=\inf\{x:\int_0^xg(y)dy>0\}$, and consider integers $n>K^*$. We denote $g_n(x)=g(x)\mathbbm{1}_{[K^*+\frac{1}{n},n]}(x)$.
 Fix an arbitrary $n\in \mathbb{N}$. It is simple to construct a sequence of piecewise continuous, non-negative functions $g_n^{k}$ which are supported on $[\frac{1}{n},n]$, and such that $\int_{\frac{1}{n}}^n|g_n(x)- g_n^k(x)|dx<\frac{1}{k}$ for all $k\in\mathbb{N}$. There exists some $k_n$ such that $\frac{1}{k_n}<\frac{1}{2}\int_0^{K^*+\frac{1}{n}}g(y)dy$. Therefore, for any $k\geq k_n$ it holds that $\int_0^xg_n^{k}(x)dx<x$ for all $x>0$, and that
 $$\int_0^x g_n^{k}(y)dy\leq \left(\int_{K^*+\frac{1}{n}}^xg(y)dy\right)\vee 0+\frac{1}{2}\int_0^{K^*+\frac{1}{n}}g(y)dy<\int_0^xg(y)dy \textrm{ for all }x>0.$$ 

From this, it follows that the minimal solution to \eqref{alternativetomvr} with initial density $g_R^{n,k}$ for $k\geq k_n$, must be bounded above at all times by the minimal solution to \eqref{alternativetomvr} with initial density $g$. This can be seen by noting as in \cite{minimal} that the minimal solution to \eqref{alternativetomvr} is given by $\lim_{n\rightarrow\infty}\Gamma^n(0)_t$ where $\Gamma^n$ is the $n$th iterate of the map $\Gamma$, and that $$\int_0^\infty \mathbb{P}(\inf_{s\leq t}(x+B_s-f_s))g(x)dx\geq \int_0^\infty\mathbb{P}(\inf_{s\leq t}(x+B_s-f_s))g_n^k(x)dx$$ for any cadlag function $f$, since $\mathbb{P}(\inf_{s\leq t}(x+B_s-f_s))$ is a decreasing function in $x$.  

Since $\limsup_{x\downarrow 0}g_n^{k}(x)=0$, and $\int_0^xg_n^{k}(y)dy<x$ for all $x>0$, the densities $g_n^k$ for $k\geq k_n$ satisfy the conditions of 
\cite[Theorem 1.1]{PDEresult}. Therefore, for all $k\geq k_n$ there is a global in time classical solution to the supercooled Stefan problem \eqref{pde} with initial data $g_n^{k}$. We denote this solution $\Lambda^{n,k}_t$. It is simple to check that $\Lambda^{n,k}_t$ is a solution to \eqref{alternativetomvr} with initial density $g_n^{k}$. Recall that we denote the minimal solution to \eqref{alternativetomvr} with initial density $g$ by $\underline{\Lambda}_t$. As stated above, for each $n\in\mathbb{N}$, and $k\geq k_n$ it holds that $\Lambda_t^{n,k}\leq \underline{\Lambda}_t$ for all $t\geq 0$.

We extend the processes $\Lambda^{n,k}_t$ to $[-1,\infty)$ by setting $\Lambda^{n,k}_t=0$ for $t\leq 0$. From simple modifications to Lemma 
\ref{Rtight}, we can determine that the sequence $(\Lambda^{n,k}_t)_{k\geq k_n}$ possesses a convergent subsequence under the M1 Skorokhod topology. From further simple modifications to the arguments of \cite[Proposition 2.1]{minimal} we can then determine that any subsequential limit is a solution to \eqref{alternativetomvr} with initial density $g_n$, which we denote $\Lambda^n_t$. This argument can be repeated to obtain a further subsequence of $\Lambda_t^n$, which converges to a solution to \eqref{alternativetomvr} with initial density $g$. Since each process in the sequence $(\Lambda^{n,k}_t)_{n\in\mathbb{N},k\geq k_n}$ is bounded above by $\underline{\Lambda}_t$, it therefore holds that any subsequential limit must be the minimal solution $\underline{\Lambda}_t$, and we determine that
$$\lim_{n\rightarrow\infty }\lim_{k\rightarrow\infty}\Lambda_t^{n,k}=\underline{\Lambda}_t \textrm{ for Lebesgue }a.e. \hspace{0.15cm} t>0.$$

Applying Lemma \ref{Ito}, we determine that for any $n\in \mathbb{N}$, and $k\geq k_n$,
\begin{align}\label{ito3}\frac{1}{\lambda}-\int_0^\infty g_n(x)e^{-\lambda x}dx=\frac{\lambda}{2}\int_0^\infty e^{-\lambda \Lambda^{n,k}_s-\frac{\lambda^2s}{2}}ds.\end{align}

Applying the Dominated Convergence Theorem, we can take the limit over $k$, then over $n$ in the above expression to obtain that

$$\frac{1}{\lambda}-\int_0^\infty g(x)e^{-\lambda x}dx=\frac{\lambda}{2}\int_0^\infty e^{-\lambda \underline{\Lambda}_s-\frac{\lambda^2s}{2}}ds.$$

As in the proof of Proposition \ref{uniquecont}, we let $\Lambda_t$ be a physical solution to \eqref{alternativetomvr}. Subtracting the expression above from the expression given in \ref{Ito} and sending $t\rightarrow\infty$, we obtain that

\begin{align*}
0&=\frac{\lambda}{2}\int_0^\infty (e^{-\lambda \underline{\Lambda}_s}-e^{-\Lambda_s})e^{-\frac{\lambda^2s}{2}}ds-\sum_{s<\infty}e^{-\frac{\lambda^2s}{2}}\left(\int_{\Lambda_{s^-}}^{\Lambda_s}V(s^-,x)e^{-\lambda x}dx-\frac{\Delta e^{-\lambda \Lambda_s}}{\lambda}\right),
\\&\geq \frac{\lambda}{2}\int_0^\infty (e^{-\lambda \underline{\Lambda}_s}-e^{-\Lambda_s})e^{-\frac{\lambda^2s}{2}}ds.
\end{align*}

In the inequality we have used that each term in the sum is negative, as shown in the proof of Theorem \ref{conda4}. Since $\underline{\Lambda}_t$ is the minimal solution to \eqref{alternativetomvr}, the term inside the integral is non-negative, and therefore must be zero almost everywhere. Therefore $\Lambda_t=\underline{\Lambda}_t$ for almost every $t>0$, and thus for all $t>0$ since both processes are right continuous.
\end{proof}

\section{Physicality of solutions}
\label{physicalappen}
We first prove the prerequisite Lemma~\ref{lem:physparticleestimate}. 
\begin{proof}
First note that ${N}(\Lambda_{t+\e}^{N} - \Lambda_{t-}^{N})$ is equal to the number 
of particles absorbed in the time interval $[t,t+\e]$. 
Define events $E_1^{i,k}$ by:
\begin{align*}
E_1^{i,k} := \big\{ X_{t-}^{i,N} - \frac{k}{N} -\sup_{h \leq \e} |B_{t+h}^{i,N} - B_{t}^{i,N}|\leq0,~ \tau_i \geq t \big\}.
\end{align*}
This is similar to the term in \cite[Lemma 3.10]{ledger2021mercy}, but without an $\epsilon(1+\sup_{s\leq t}|X^{i,N}_s|)$ term, since we do not include a Lipschitz drift in the particle movement.
Due to the minimal jump condition, for any $k\leq {N}(\Lambda_{t+\e}^{N} - \Lambda_{t-}^{N})$, it must hold that 
\begin{align}
\sum_{i=1}^\infty \mathbbm{1}_{E_1^{i,k}}\geq k.\label{eq:kEeventestimate}
\end{align}
If not then we could bound $\Lambda_{t+s}-\Lambda_{t^-}$, by $\frac{k-1}{N}$, by using Lemma \ref{Alemmatoboundathing}.

Fix $x\in\mathbb{R}$ such that $x \leq (\Lambda_{t+\e}^{N} - \Lambda_{t-}^{N}) - 2 \e^{1/3}$ and set 
$k_0 := \lfloor x + 2\e ^{1/3} \rfloor \leq N(\Lambda_{t+\e}^{N} - \Lambda_{t-}^{N}).$ This choice implies that $x \geq \frac{k_0}{N} - 2 \e^{1/3}$ as well as $\frac{k_0}{N} \geq x + 2 \e^{1/3} - \frac{1}{N}$. It also holds that \eqref{eq:kEeventestimate} holds for $k=k_0$.
Define an event $E_2^i$ by:
\begin{align*}
E_2^i := \big\{\sup_{h \leq \e} |B_{t+h}^{i,N} - B_{t}^{i,N}| \geq \e^{1/3}\big\}.
\end{align*}
On the event $E_1^{i,k_0} \cap (E_2^{i})^{c},$ it holds that $X_{t-}^i -  \frac{k_0}{N} \leq \e^{1/3},$ and hence
\begin{align*}
X_{t-}^{i,N} - x \leq X_{t-}^{i,N} -  \left(\frac{k_0}{N} - 2 \e^{1/3}\right) \leq 3\e^{1/3}.
\end{align*}
It follows that
\begin{align*}
\lawstopped_{t-}^{N}([0, x + 3\e^{1/3}]) \geq \frac{1}{N} \sum_{i=1}^{\infty} \mathbbm{1}_{E_1^{i,k_0}} \mathbbm{1}_{(E_2^{i})^c}.
\end{align*}
We set $A_1^M=\{\frac{1}{N}\sum_{i=1}^\infty \mathbbm{1}_{E_2^i}\mathbbm{1}_{\{|X_{-1}^{i,N}|\leq M\}}<\frac{1}{2}\epsilon^{1/3}\}$, and set \\ $A_2^M=\{\frac{1}{N}\sum_{i=1}^\infty\mathbbm{1}_{E^{i,k_0}\mathbbm{1}_{\{X^{i,N}_{-1}>M\}}}<\frac{1}{2}\epsilon^{1/3}\}$. We then set $E^M=A_1^M\cap A_2^M$. We find that on the event $E^M$: 
\begin{align*}
\lawstopped_{t-}^{N}([0, x + 3\e^{1/3}]) &\geq \frac{1}{N}\sum_{i=1}^\infty\mathbbm{1}_{E^{i,k_0}}\mathbbm{1}_{(E_2^i)^c}\mathbbm{1}_{\{X^{i,N}_{-1}\leq M\}},\\&\geq \frac{1}{N} \sum_{i=1}^{\infty} \mathbbm{1}_{E_1^{i,k_0}}-\frac{1}{N} \sum_{i=1}^{\infty} \mathbbm{1}_{E_1^{i,k_0}}\mathbbm{1}_{\{|X^{i,N}_{-1}|>M\}}- \frac{1}{N}\sum_{i=1}^{\infty} \mathbbm{1}_{E_2^i}\mathbbm{1}_{\{X^{i,N}_{-1}\leq M\}}, \\&\geq \frac{k_0}{N} - \e^{1/3} \geq (x + 2\e^{1/3} - {1}/{N}) - \e^{1/3} = x + \e^{1/3} - {1}/{N}.
\end{align*}
Taking $N \geq \e^{-1/3}$ implies $\e^{1/3} - 1/N \geq 0$. Therefore, we conclude that on $E^M$
\begin{align*}
\lawstopped_{t-}^{N}([0, x + 3\e^{1/3}]) \geq x \textrm{ for any }x \leq (\Lambda_{t+\e}^N - \Lambda_{t-}^{N}) - 2 \e^{1/3},
\end{align*}

and so $E^M\subset A^{N,\epsilon}$.
To prove the result of the lemma, it is then sufficient to choose $M$ appropriately such that $\mathbb{P}((E^M)^c)\leq 1-\tilde L\epsilon^{1/6}$.

We first bound $\mathbb{P}((A_2^M)^c)$.
By Markov's inequality, and the definition of $E_1^{i,k_0}$ \begin{align*}
\mathbb{P}((A_2^M)^c)&\leq \mathbb{P}(\Lambda^N_T>z)+\mathbb{P}((A_2^M)^c,\Lambda_T^N\leq z),\\
&\leq \mathbb{P}(\Lambda^N_T>z)+\frac{2\epsilon^{-1/3}}{N}\sum_{i=1}^\infty\mathbb{P}(E_1^{i,k_0},X^{i,N}_{-1}>M,\Lambda^N_T\leq z),\\
&\leq \mathbb{P}(\Lambda^N_T>z)\\
&+\frac{2\epsilon^{-1/3}}{N}\sum_{i=1}^\infty \mathbb{P}(X^i_{-1}>M,X_{-1}^{i,N}-\sup_{s\leq t}|B_{s}^{i,N}|-\sup_{h\leq \epsilon}|B_{t+h}^{i,N}-B_t^{i,N}|\leq z+\epsilon).
\end{align*}

We may take $z$ to be one greater than the $z$ given in Lemma \ref{lemma on bounding}. Thus, for all large enough $N$, the first term here is less than $\epsilon^{1/3}$. Using the bound \hyperlink{A1}{(A1)} on the density $g$, and the independence of the Brownian motions from the initial Poisson point process, the second term may be bounded above by
$$2C\epsilon^{-1/3}\int_M^\infty \mathbb{P}(x-z\leq \sup_{s\leq t}|B_s^{1}|+\sup_{h\leq \epsilon}|B_{t+h}^{1}-B^{1}_t|)dx.$$
Using the strong Markov property of the Brownian motion, and the reflection principle this is then bounded above by
$$2C\epsilon^{-1/3}\int_M^\infty \int_0^\infty\mathbb{P}(x-z-y\leq \sup_{s\leq t}|B_s^1|)\frac{4}{\sqrt{2\pi\epsilon}}e^{-\frac{y^2}{8\epsilon}}dydx.$$
From standard Gaussian tail bounds, the reflection principle, this may be bounded above by

\begin{align*}
&2C\epsilon^{-1/3} \left(\frac{c_1\sqrt{\epsilon}}{\sqrt{\pi}}+c_2\Phi\left(-\frac{(M-z)}{c_3 T}\right)+c_4\Phi\left(-\frac{M-z}{c_5\epsilon}\right)\right)\\
&\leq  C_1\epsilon^{1/6}+C_2\epsilon^{-1/3}\exp\left(-\frac{(M-z)^2}{2c_3T}\right).
\end{align*}

We take $M$ to be of size $2c_3T\sqrt{-\log(\epsilon)}+z$. The above is then bounded by $L_1\epsilon^{1/6}$ for some constant $L_1$.

We must also bound $\mathbb{P}\left(\left(A_1^{M}\right)^c\right)$.
This may be done using Markov's inequality in the same manner as above. From Markov's inequality we bound $\mathbb{P}((A_1^{M})^c)$ by 

$$\mathbb{P}(\Lambda^N_t>z)+\frac{2\epsilon^{-1/3}}{N}\sum_{i=1}^\infty\mathbb{P}(E_2^{i,k_0},X^i_{-1}\leq M,\Lambda^N_T\leq z).$$
As above, from Lemma \ref{lemma on bounding}, the first term may be made arbitrarily small by taking $N$ suitably large.
The sum term may be bounded by
$$\frac{2\epsilon^{-1/3}}{N}\sum_{i=1}^\infty\mathbb{P}(\sup_{h\leq \e}|B_{t+h}^{i,N}-B^{i,N}_t|\geq \epsilon^{1/3},X^i_{-1}\leq M).$$

 The above sum is the expected number of particles, initially started below $M$, such that their Brownian motions satisfy the appropriate conditions described above. Using the independence of the Brownian motions from the initial Poisson point process, and the bound on the density $g$ from condition \hyperlink{A1}{(A1)} we may then bound the above by
$${2\epsilon^{-1/3}}C(2c_3T+z)\sqrt{-\log(\epsilon)}\mathbb{P}(\sup_{h\leq \e}|B_{t+h}^1-B_t^1|\geq \epsilon^{1/3}).$$
From Markov's inequality, this may be bounded by $2C\epsilon^{1/3}\sqrt{-\log(\epsilon)}$. This may then be further bounded by $L_2\epsilon^{1/6}$ for all small enough $\epsilon$. 

In total then we have obtained that for all small enough $\epsilon$, and large enough $N$, choosing $M=2c_3T\sqrt{-\log(\epsilon)}+z$,
$$\mathbb{P}((E^M)^c)\leq \epsilon^{1/6}+L_1\epsilon^{1/6}+L_2\epsilon^{1/6}\leq \tilde L\epsilon^{1/6}.$$
Recalling that $E^M\subset A^{N,\epsilon} $, we obtain that 
$$\mathbb{P}(A^{N,\epsilon})\geq 1-\tilde L\epsilon^{1/6}$$
\end{proof}

We can now prove Proposition \ref{physi}.
\begin{proof}
In this proof we omit the $\hat{}$ hat notation when working with the representations here, to simplify notation. We write out the proof in steps which are equivalent to the steps in the proof of \cite[Theorem 6.4]{minimal}, and with similar notation where possible.

\underline{Step 1:} We first show convergence of the $\nu^N_{t^-}$ measures.
For an integer $K$, we define $\nu^{N,K}_t$,$\nu^{K}_t$ in a similar manner to $\nu^N_t$, defining for the re-centred particle system \eqref{aligned:Particlealpha}
\begin{align*}
\nu^{N,K}_{t}:=\frac{1}{N}\sum_{i=1}^\infty \delta_{X^{i,N}_t}\mathbbm{1}_{\{\tau_i^N>t\}}\mathbbm{1}_{\{X^{i,N}_{-1}\leq K\}},
\end{align*}
 while analogously for the McKean-Vlasov system \eqref{aligned:MVR}:
 \begin{align*}
\nu^K_{t}:=\sum_{i=1}^\infty \mathbb{E}(\delta_{X^{i}_t}\mathbbm{1}_{\{\tau_i>t\}}\mathbbm{1}_{\{X^{i}_{-1}\leq K\}}).
\end{align*}

The measures $\nu^{N,K}_{t^-},\nu^K_{t^-}$ are defined analogously, with $X_{t-}^{i,N},X_{t^-}^{i}$ and $\tau_i^N\geq t$, or $\tau_i\geq t$ respectively.

 For $t \geq 0,$ let $\pi_{t^-}(x)$ denote the map $x \rightarrow x_{t^-}$ and define the transformation $S:\tilde D \rightarrow \mathbb{R}$ through 
$S_{t-}(x) = \pi_{t-}(x)(1-\ellfunc_{t-}(x))$. Here $\lambda_{t^-}(x)=\lim_{s\uparrow t}\lambda_s(x)=\mathbbm{1}_{\{\tau(x)<t\}}$.
For $f \in C_b(\mathbb{R})$ and $V \in M(\tilde D)$ it holds that
\begin{align*}
(S_{t-})_*V(f) = \int_{}f(x_{t-}\mathbbm{1}_{\{\tau_0(x)\geq t\}})~\textrm{d}V(x) = \int_{}f(x_{t-})\mathbbm{1}_{\{\tau_0(x)\geq t\}}~\textrm{d}V(x) + f(0) V(\ellfunc_{t-}).
\end{align*}
It then follows that
\begin{align*}
\nu_{t-}^{N,K}( f)&= (S_{t-})_*(\mu^N_{S_K})( f) - {f(0)}\mu^N_{S_K}(\lambda_{t^-}), \quad
 \nu_{t-}^K( f) = (S_{t-})_*(\mu_{S_K})(f) - {f(0)} \mu_{S_K}(\lambda_{t^-}). 
\end{align*}
Recall from the proof of Lemma \ref{lemmac}, that there exists a co-countable set $J$ on which $\Lambda_t$ is continuous in $t$ almost surely. We claim that almost surely for $t\in J$ that $S_{t^-}$ is continuous at $\mu_{S_K}$ a.e. point in $\tilde D$. First note that $\pi_{t-}$ is
continuous at all paths $x \in \tilde D$ such that $t$ is a continuity point of $x$. Hence for $t\in J$, this is almost surely continuous under almost every $\mu$. We recall that the crossing property holds almost surely under almost every realisation of $\mu_{S_K}=\tilde h(\nu_{S_K},\Lambda)$. Arguing as in the manner of \cite[Proposition 5.8]{delarue2015particle}, we can determine that $\lambda_{t^-}$ is continuous at $t\in J$, almost surely under almost every $\mu_{S_K}$. Applying the Portmanteau Theorem, we may verify that $\nu_{t^-}^{N,K}\rightarrow \nu_{t^-}^K$ weakly for $t\in J$ almost surely. 

From this result, we can then determine that $\nu_{t^-}^N\rightarrow \nu_{t^-}$ vaguely. Indeed, if we consider a continuous, bounded function $f$ of bounded support, then $f$ has support contained certainly in some set of form $\{x_{t^-}\leq K'\}$ for some constant $K'$. It holds that $$\nu_{t^-}^N(f)=\nu_{t^-}^{N,K}(f)+\frac{1}{N}\sum_{i=1}^\infty f(X_{t^-}^{i,N})\mathbbm{1}_{\{\tau_i^N>t\}}\mathbbm{1}_{\{X_{-1}^{i,N}\geq K\}}.$$
The second term may be bounded by 
$$||f||_{\infty}\frac{1}{N}\sum_{i=1}^N\mathbbm{1}_{\{X_{-1}^{i,N}\geq K,X_{t^-}^{i,N}\leq K'\}}.$$

Through similar arguments to those used in bounding $\sum_{i=1}^\infty \mathbbm{1}_{A^{i,N}_K}$ in Lemma \ref{lemmac}, we obtain that the limsup of this term as $N\rightarrow\infty$ is almost surely bounded by a term of the form $C'\exp(-\frac{(K-K'-z)^2}{2T+2})$ for some constant $C'$. From the weak convergence of $\nu_{t^-}^{N,K}$ we then obtain that there are constants $C_1,C_2$ such that
$$\limsup_{N\rightarrow\infty}\nu_{t^-}^N(f)\leq \nu_{t^-}^K(f)+C_1\exp(-C_2 K^2).$$

We can also obtain that
$$\liminf_{N\rightarrow\infty}\nu_{t^-}^N(f)\geq\nu_{t^-}^K(f).$$
Sending $K\rightarrow\infty$, and applying the Dominated Convergence Theorem, we thus obtain that $\nu^{N}_{t^-}(f)\rightarrow \nu_{t^-}(f)$. Since $f$ was arbitrary this determines that $\nu^N_{t^-}\rightarrow \nu_{t^-}(f)$ vaguely. 

\underline{Step 2:} Fix a sufficiently small $\e>0$. We take the limit as $N\to\infty$ of the inequality given in Lemma \ref{lem:physparticleestimate}.
Recall that this inequality determines:
$$\mathbb{P}\left(\nu^N_{t^-}([0, x+3\epsilon^{1/3}])\geq x\quad\forall x\leq (\Lambda^N_{t+\epsilon}-\Lambda^N_{t^-})-L\epsilon^{1/3}\right)\geq 1-\tilde L\epsilon^{1/6}.$$

Fix $t, t+\e \in [0,T)\cap J$ and take $\beta>0$.
We introduce the events
\begin{align*}
A_\beta^{N}:= \left\{\Lambda_{t+\e}-\Lambda_{t^-}-\beta \leq \Lambda_{t+\e}^{N} - \Lambda_{t-}^{N}\right\}.
\end{align*}
By the assumption that $t,t+\epsilon\in J$, we know that $\Lambda_{t+\e}^{N} - \Lambda_{t-}^{N} \to \Lambda_{t+\e} - \Lambda_{t-}$, and therefore it holds that
$ \lim_{N\to\infty} \mathbb{P}({A_\beta^N}) = 1.$

We note that $\nu_{t^-}$ has no atoms almost surely. This may be argued in the same manner as \cite[Proposition 2.1]{2}. Recalling the vague convergence from Step 1, it holds that almost surely $\limsup_{N\rightarrow\infty}\nu^N_{t^-}[0,x+3\epsilon^{1/3}]= \nu_{t^-}[0,x+3\epsilon^{1/3}]$ since this set is bounded, and $\nu_{t^-}$ has no atoms almost surely.
Therefore, using the Reverse Fatou Lemma,
\begin{align*}
    &\mathbb{P}(\nu_{t^-}[0,x+3\epsilon^{1/3}])\geq x\quad\forall x\leq \Lambda_{t+\e}-\Lambda_{t^-}-2\e^{1/3}-\beta),\\
    &=\mathbb{P}(\limsup_{N\rightarrow\infty}\nu_{t^-}^N[0,x+3\epsilon^{1/3}])\geq x\quad\forall x\leq \Lambda_{t+\e}-\Lambda_{t^-}-2\e^{1/3}-\beta)\\
    &\geq \limsup_{N\rightarrow\infty}\left( \mathbb{P}(\nu_{t^-}^N([0,x+3\epsilon^{1/3}])\geq x\quad\forall x \leq (\Lambda_{t+\e}^{N} - \Lambda_{t-}^{N}) - 2\e^{1/3} )-\mathbb{P}((A_\beta^N)^c)\right)\\
    &\geq 1-\tilde L\epsilon^{1/6}.
\end{align*}

Since this holds for any $\beta>0$, sending $\beta\rightarrow 0$ yields that 
\begin{align}\mathbb{P}(\nu_{t^-}([0,x+3\epsilon^{1/3}])\geq x\quad\forall x< \Lambda_{t+\e}-\Lambda_{t^-}-2\e^{1/3})\geq 1-\tilde L\epsilon^{1/6}.\label{refertonow}
\end{align}

\underline{Step 3:}
By application of the Dominated Convergence Theorem, it can be checked that the map $t\rightarrow \nu_{t^-}$ is left continuous. Consider an arbitrary fixed $t<T$. Consider $t_n\leq  t\leq t_n+\epsilon_n$ with $t_n\uparrow t$, $\epsilon_n\downarrow t$ and $t_n,t_n+\epsilon_n\in J$. We may then determine from taking limits in \eqref{refertonow} that 
$$P(\nu_{t^-}[0,x]\geq x\quad \forall x<\Delta \Lambda_t)=1.$$

This shows then that almost surely $\Delta\Lambda_t\leq \inf\{x:\nu_{t^-}([0,x])<x\}$. 
Further the arguments of \cite[Proposition 1.2]{2} may be directly applied to the case of non-integrable density $g$, by changing $P(\tau\leq t)$ to $\sum_{i=1}^\infty P(\tau_i\leq t)$. 
This argument determines that $\Delta\Lambda_t\geq \inf\{x:\nu_{t^-}([0,x])<x\}$.
Therefore, for any subsequential limit $\Lambda$ of $\Lambda^N$, almost surely
$$\Delta\Lambda_t=\inf\{x:\nu_{t^-}([0,x])<x\}.$$
Taking a countable union of $t\notin J$, we then determine that the physicality condition holds for $\Lambda$ almost surely.
\end{proof}

\end{document}